\newtheorem{thm}{Theorem}[section]
\newtheorem{lem}[thm]{Lemma}
\newtheorem{defi}[thm]{Definition}
\newtheorem{rem}[thm]{Remark}
\newtheorem{prop}[thm]{Proposition}
\newtheorem{cor}[thm]{Corollary}
\newtheorem{thmA}{Theorem}
\def\QQ{\mathbb{Q}}
\def\ZZ{\mathbb{Z}}
\def\FF{\mathbb{F}}
\def\calR{\mathcal{R}}
\def\calI{\mathcal{I}}
\def\TT{\mathbb{T}}
\newcommand{\GL}{\mathrm{GL}}
\def\Ga1{\Gamma_1}
\def\rhob{\bar\rho}
\def\tr{\operatorname{tr}}
\def\ad{\text{ad}}
\def\red{\text{red}}
\def\univ{\text{univ}}
\def \Hom{\text{Hom}}
\def\tan{\text{tan}}
\def\defo{\text{def}}
\def\pd{\text{pd}}
\DeclareMathOperator{\Ext}{Ext}
\DeclareFontFamily{U}{wncy}{}
\DeclareFontShape{U}{wncy}{m}{n}{<->wncyr10}{}
 \DeclareSymbolFont{mcy}{U}{wncy}{m}{n}
 \DeclareMathSymbol{\Sh}{\mathord}{mcy}{"58}
\begin{document}
\baselineskip 18pt

\title{Effect of increasing the ramification on pseudo-deformation rings}
\author{Shaunak V. Deo} 
\email{shaunakdeo@iisc.ac.in}
\address{Department of Mathematics, Indian Institute of Science, Bangalore 560012, India}
\date{}

\subjclass[2010]{11F80(primary); 11F70, 11F33, 13H10(secondary)}
\keywords{pseudo-representations; deformation of Galois representations; structure of deformation rings}

\begin{abstract}
Given a continuous, odd, semi-simple $2$-dimensional representation of $G_{\QQ,Np}$ over a finite field of odd characteristic $p$ and a prime $\ell$ not dividing $Np$, we study the relation between the universal deformation rings of the corresponding pseudo-representation  for the groups $G_{\QQ,N\ell p}$ and $G_{\QQ,Np}$. As a related problem, we investigate when the universal pseudo-representation arises from an actual representation over the universal deformation ring. Under some hypotheses, we prove analogues of theorems of Boston and B\"{o}ckle for the reduced pseudo-deformation rings. We improve these results when the pseudo-representation is unobstructed and $p$ does not divide $\ell^2-1$. When the pseudo-representation is unobstructed and $p$ divides $\ell+1$, we prove that the universal deformation rings in characteristic $0$ and $p$ of the pseudo-representation for $G_{\QQ,N\ell p}$ are not local complete intersection rings. As an application of our main results, we prove a big $R=\TT$ theorem.
\end{abstract}

\maketitle

\section{Introduction}

In \cite{Bos}, Boston studied the effect of enlarging the set of primes that can ramify on the structure of the universal deformation ring of an odd, absolutely irreducible representation of $\text{Gal}(\overline\QQ/\QQ)$ over a finite field which is attached to a modular eigenform of weight $2$. His results were generalized by B\"{o}ckle in \cite{Bo} to any continuous $2$-dimensional representation of $\text{Gal}(\overline\QQ/\QQ)$ over a finite field such that the centralizer of its image is exactly scalars. The aim of this paper is to study the same problem for pseudo-deformation rings i.e. universal deformation rings of pseudo-representations.

This article has two parts. In the first part, we analyze when a pseudo-representation arises from an actual representation. In the second part, we use the results obtained in the first part to study how the structure of the universal deformation ring of a $2$-dimensional Galois pseudo-representation changes after allowing ramification at additional primes. We will now elaborate on each part.

All the representations and pseudo-representations of pro-finite groups considered in this article are assumed to be continuous unless mentioned otherwise.

\subsection{Pseudo-representation arising from a representation}
Let $G$ be a pro-finite group and $R$ be a complete Noetherian local (CNL for short) ring. Roughly speaking, a $2$-dimensional pseudo-representation of $G$ over $R$ is a tuple of functions $(t,d) : G \to R$ which `behaves like' the trace and determinant of a $2$-dimensional representation of $G$ over $R$. In particular, if $\rho : G \to \GL_2(R)$ is a representation of $G$, then $(\tr(\rho),\det(\rho)) : G \to R$ is a pseudo-representation of $G$ of dimension $2$. But the converse to this statement is not necessarily true.

The notion of pseudo-representation that we are going to use throughout the article was introduced and studied by Chenevier in \cite{C}. Chenevier's theory of pseudo-representations generalized the theory of psuedo-characters developed by Rouquier in \cite{R}. We refer the reader to \cite[Section 1.4]{BK} for definition and properties of $2$-dimensional pseudo-representations and to \cite{C} for general theory of pseudo-representations.

Now suppose $p$ is an odd prime, $\FF$ is a finite field of characteristic $p$ and $G$ is a pro-finite group satisfying the finiteness condition $\Phi_p$ of Mazur (see \cite[Section 1.1]{M}). Denote the ring of Witt vectors of $\FF$ by $W(\FF)$. Suppose $\rhob_0 : G \to \GL_2(\FF)$ is a representation such that $\rhob_0 = \chi_1 \oplus \chi_2$ where $\chi_1, \chi_2 : G \to \FF^\times$ are \emph{distinct} characters (i.e. $\chi_1 \neq \chi_2$). 

Let $R$ be a CNL $W(\FF)$-algebra with residue field $\FF$ and $(t,d) : G \to R$ be a pseudo-representation of $G$ deforming $(\tr(\rhob_0),\det(\rhob_0))$. Then we address the following question in the first part of the article:\\
Does there exist a representation $\rho : G \to \GL_2(R)$ such that $t=\tr(\rho)$ and $d=\det(\rho)$ ?\\
If there does exist such a representation $\rho$, then we say that the pseudo-representation $(t,d)$ arises from a representation.

\subsubsection{\bf Motivation}
In \cite{Bos}, Boston used the techniques and results from the theory of pro-$p$ groups to determine how the deformation ring of an absolutely irreducible Galois representation changes after enlarging the set of ramifying primes. The same techniques were used by B\"{o}ckle in \cite{Bo} to extend Boston's results to residually non-split reducible representations (see \cite[Theorem $4.7$]{Bo}). However, their method crucially depends on working with actual representations (and not just pseudo-representations). So, in order to use their techniques and results, we first investigate when a Galois pseudo-representation arises from an actual representation.

Moreover, this question is also of an independent interest for any pro-finite group (and not just for the Galois groups). Therefore, we do not restrict ourselves to Galois groups in the first part of the article and work with a general pro-finite group. 

\subsubsection{\bf Main results}
Recall that we have $\rhob_0 : G \to \GL_2(\FF)$ with $\rhob_0 = \chi_1 \oplus \chi_2$. Let $\chi := \chi_1\chi_2^{-1}$. For $i \in \{1,-1\}$, we denote the dimension of the cohomology group $H^j(G,\chi^i)$ as a vector space over $\FF$ by $\dim(H^j(G,\chi^i))$.

\begin{thmA}[see Theorem~\ref{reduceprop}, Theorem~\ref{generalthm}]
\label{thma}
Suppose $\dim(H^1(G,\chi^i)) = 1$ and $H^2(G,\chi^i)=0$ for some $i \in \{1,-1\}$ and fix such an $i$. Then:
\begin{enumerate}
\item If $R$ is a \emph{reduced} CNL $W(\FF)$-algebra with residue field $\FF$, then every pseudo-representation $(t,d) : G \to R$ deforming $(\tr(\rhob_0),\det(\rhob_0))$ arises from a representation.
\item Suppose $H^2(G,1) =0 $, $\dim(H^1(G,\chi^{-i})) \in \{1,2,3\}$ and $\dim(H^2(G,\chi^{-i})) < \dim(H^1(G,\chi^{-i}))$. If $R$ is a CNL \emph{$\FF$-algebra} with residue field $\FF$, then every pseudo-representation $(t,d) : G \to R$ deforming $(\tr(\rhob_0),\det(\rhob_0))$ arises from a representation.
\end{enumerate}
\end{thmA}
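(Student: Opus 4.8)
The plan is to leverage the hypothesis $\dim(H^1(G,\chi^i)) = 1$ to control the structure of extensions between $\chi_1$ and $\chi_2$ appearing in any deformation. Without loss of generality (by swapping the roles of $\chi_1$ and $\chi_2$ and passing to $\chi^{-1}$ if needed), suppose $i=1$, so that $\dim(H^1(G,\chi)) = 1$; this means there is essentially a unique nontrivial extension class of $\chi_2$ by $\chi_1$. First I would recall from Chenevier's theory (or \cite[Section 1.4]{BK}) the Generalized Matrix Algebra (GMA) structure: given a pseudo-representation $(t,d): G \to R$ deforming $(\tr(\rhob_0), \det(\rhob_0))$ with $R$ a CNL $W(\FF)$-algebra and residual representation split with distinct characters, $(t,d)$ is the trace/determinant of a Cayley--Hamilton algebra, and there is an associated data of ideals $\calI_{12} \cdot \calI_{21} \subseteq \mathfrak{m}_R$ and $R$-modules $B_{12}, B_{21}$ with a pairing into $R$, such that $(t,d)$ arises from an actual representation $\rho: G \to \GL_2(R)$ precisely when one can realize this GMA as an honest $2\times 2$ matrix algebra, equivalently when the "reducibility ideal" behaves well enough — concretely, when $B_{12}$ (or $B_{21}$) can be taken to be free of rank $1$ over $R$, i.e.\ is generated by a single element as an $R$-module.

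The key step is to show that $B_{12}$ is generated by one element over $R$. The module $B_{12}$ has a natural map to $H^1(G, \chi)$ after reducing mod $\mathfrak{m}_R$ (the extension classes of $\chi_2$ by $\chi_1$ that actually occur), and by Nakayama's lemma, $B_{12}$ is generated by $\dim_\FF(B_{12}/\mathfrak{m}_R B_{12})$ elements; the hypothesis $\dim(H^1(G,\chi)) = 1$ forces this to be at most $1$. So $B_{12}$ is a cyclic $R$-module, $B_{12} \cong R/J$ for some ideal $J$. In the \emph{reduced} case of part (1), I would then argue that $J = 0$: the pairing $B_{12} \otimes B_{21} \to R$ together with the trace condition forces $J$ to be contained in every minimal prime (via the reducibility ideal being contained in $\mathfrak{m}_R$ and the residual characters being distinct, which makes the off-diagonal entries topologically nilpotent), hence $J = 0$ since $R$ is reduced. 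Thus $B_{12}$ is free of rank $1$, and one can explicitly write down $\rho$ by choosing a generator $b$ of $B_{12}$ and a generator $c$ of $B_{21}$ with $bc = u$ the pairing element, setting the off-diagonal entry functions to be $b \cdot (\text{coefficient})$ and $c \cdot(\text{coefficient})$, and checking the cocycle/matrix identities.

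For part (2), where $R$ is merely a CNL $\FF$-algebra (not necessarily reduced), the cyclicity argument for $B_{12}$ still gives $B_{12} \cong R/J$, but now $J$ need not vanish. Here I would use the additional hypotheses — $H^2(G,1)=0$, $\dim(H^1(G,\chi^{-1})) \in \{1,2,3\}$, and $\dim(H^2(G,\chi^{-1})) < \dim(H^1(G,\chi^{-1}))$ — to control the obstruction to deforming a representation (rather than merely a pseudo-representation): the condition $H^2(G,1)=0$ kills the obstruction to deforming the determinant-type/scalar part, and the strict inequality on $\chi^{-1}$-cohomology, combined with the small range $\{1,2,3\}$, gives enough room to show that the universal deformation ring of $\rhob_0$ (as a representation) surjects onto $R$ compatibly, or equivalently that the reducibility ideal / the ideal $J$ is forced to be zero by a dimension count on tangent and obstruction spaces. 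Concretely I expect to compare the pseudo-deformation functor with the representation-deformation functor and show the natural transformation between them is an isomorphism under these numerical constraints, perhaps by induction on the length of $R$ using a small extension argument and the $\FF$-algebra hypothesis to split off $W(\FF)$-torsion issues.

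The main obstacle I anticipate is part (2): ruling out that $J \ne 0$ requires genuinely using the interplay of \emph{all} the listed cohomological hypotheses simultaneously, and the case analysis over $\dim(H^1(G,\chi^{-1})) \in \{1,2,3\}$ suggests the argument is delicate and not uniform — presumably each value requires its own bound on the number of generators and relations of the relevant deformation ring, and the inequality $\dim(H^2(G,\chi^{-i})) < \dim(H^1(G,\chi^{-i}))$ is exactly what is needed to make the obstruction map non-surjective onto the relevant piece. Establishing that the pseudo-deformation ring and representation-deformation ring coincide (so that the universal pseudo-representation, which lives over the former, automatically arises from the universal representation over the latter, and hence over any quotient $R$) is where the real work lies; the reduced case (1) is comparatively soft, relying mainly on cyclicity plus "torsion-free kills the defect."
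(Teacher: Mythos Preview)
Your overall strategy for part~(1) --- pass to the GMA, use $\dim H^1(G,\chi)=1$ and Nakayama to get $B_{12}\cong R/J$ cyclic, then argue $J=0$ --- is the right shape, but the step ``$J$ is contained in every minimal prime because the off-diagonal entries are topologically nilpotent'' is a genuine gap. One checks that $J=\mathrm{Ann}_R(I)$ where $I=m'(B_{12}\otimes B_{21})$ is the reducibility ideal; for $J=0$ in a reduced $R$ you need $I\not\subset P$ for every minimal prime $P$. But if $t\pmod P$ happens to be reducible (a sum of two characters), then $I\subset P$, and $J$ can be nonzero: take for instance $R$ a product of two power-series rings with $t$ irreducible on one factor and reducible on the other. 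So reducedness plus cyclicity does \emph{not} force $B_{12}$ to be free. The paper handles this by working one prime at a time (Proposition~\ref{primeprop}): over a domain $R/P$, either $t\pmod P$ is not reducible and then $B_P$ is a nonzero fractional ideal of the fraction field, hence free of rank~$1$; or $t\pmod P$ is reducible, and then one needs the extra hypothesis $H^2(G,\chi^i)=0$ (which you do not invoke) to lift a nonzero class from $H^1(G,\chi^i)$ to $H^1(G,\tilde\chi)$ and build a non-split representation deforming $\rhob_x$. From this one gets $(\calR^{\pd}_{\rhob_0})^{\red}\simeq(\calR^{\defo}_{\rhob_x})^{\red}$, and the claim for arbitrary reduced $R$ follows by pushing forward the universal representation.

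For part~(2), your sketch (``dimension count on tangent and obstruction spaces'', ``induction on the length of $R$ via small extensions'') does not locate the actual mechanism. The paper does not argue by small extensions at all. Instead it reduces to the universal case $R=R^{\pd}_{\rhob_0}$ and uses a presentation $R^{\pd}_{\rhob_0}\simeq \FF[[X_1,\dots,X_{m+2k}]]/I$ with $I$ generated by at most $\dim H^2(G,\chi^{-i})\le m-1$ elements (this is where the Wang-Erickson input \cite{WE} and the hypotheses $H^2(G,1)=0$, $H^2(G,\chi^i)=0$ enter; see Lemma~\ref{strlem}). One must then show $\mathrm{Ann}(B)=0$, equivalently that no nonzero $y$ satisfies $y\cdot I_{\rhob_0}=0$ where $I_{\rhob_0}=m'(B\otimes C)$. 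The argument is pure commutative algebra: any prime containing $I_{\rhob_0}$ has height $\ge m$ (Lemma~\ref{redlem}), while the associated primes of $I$ have height $\le$ (number of generators of $I$) by Cohen--Macaulayness of the regular ring $\FF[[X_1,\dots,X_{m+2k}]]$. For $m\in\{1,2,3\}$ one has $0$, $1$, or $2$ generators, and a short case analysis (trivial; principal ideal; two coprime elements after dividing by the gcd in the UFD) forces $y\in I$. This is why the restriction $m\le 3$ appears and why the bound $\dim H^2(G,\chi^{-i})<m$ is exactly what is needed; your proposal does not identify either of these points.
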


As a consequence of the theorem above, we get that certain pseudo-deformation rings are isomorphic to appropriate deformation rings of residually reducible, non-split representations (see Theorem~\ref{reduceprop} and Theorem~\ref{generalthm} for more details). In \S\ref{galrepsubsec}, we list the consequences of these results for Galois groups.

\begin{rem}
 The hypotheses $\dim(H^1(G,\chi^i)) = 1$ and $H^2(G,\chi^i)=0$ are used to construct the representations whose existence is claimed in the first part of Theorem~\ref{thma}.
The hypotheses of the second part are used along with results of \cite{WE} to get a description of the structure of the universal mod $p$ deformation ring of $(\tr(\rhob_0),\det(\rhob_0))$. This description is crucially used to construct a representation which gives rise to the universal mod $p$ pseudo-representation deforming $(\tr(\rhob_0),\det(\rhob_0))$.
In Proposition~\ref{repnprop}, we prove that the hypothesis $\dim(H^1(G,\chi^i))=1$ for some $i \in \{1,-1\}$ is necessary for the second part of Theorem~\ref{thma} to hold.
However, it is not clear whether Theorem~\ref{thma} holds without any of the other hypotheses.
\end{rem}

\subsection{Level raising for pseudo-deformation rings}
In the second part, we specialize the set-up introduced in \S\ref{repsec} to the case where $G = G_{\QQ,Np}$ and $\rhob_0$ is an odd representation. To be precise, we consider a reducible, semi-simple, odd representation $\rhob_0 : G_{\QQ,Np} \to \GL_2(\FF)$ where $p$ is an odd prime, $\FF$ is a finite extension of $\FF_p$, $N$ is an integer not divisible by $p$. Thus $\rhob_0 = \chi_1 \oplus \chi_2$ where $\chi_1, \chi_2 : G_{\QQ,Np} \to \FF^\times$ are characters and let $\chi := \chi_1\chi_2^{-1}$.

Let $\calR^{\pd}_{\rhob_0}$ be the universal deformation ring of the pseudo-representation $(\tr(\rhob_0),\det(\rhob_0)) : G_{\QQ,Np} \to \FF$ in the category of CNL $W(\FF)$-algebras with residue field $\FF$. Suppose $\ell$ is a prime not dividing $Np$. Then we have a natural surjective map $G_{\QQ,N\ell p} \twoheadrightarrow G_{\QQ,Np}$ and via this surjective map, we can view $(\tr(\rhob_0),\det(\rhob_0))$ as a pseudo-representation of $G_{\QQ,N\ell p}$. Let $\calR^{\pd,\ell}_{\rhob_0}$ be the universal deformation ring of the pseudo-representation $(\tr(\rhob_0),\det(\rhob_0))$ for the group $G_{\QQ,N\ell p}$ in the category of CNL $W(\FF)$-algebras with residue field $\FF$.

Our aim is to compare $\calR^{\pd,\ell}_{\rhob_0}$ with $\calR^{\pd}_{\rhob_0}$ and determine the structure of $\calR^{\pd,\ell}_{\rhob_0}$ in terms of the structure of $\calR^{\pd}_{\rhob_0}$.
\subsubsection{\bf Motivation}
Our interest in the problem mainly arises from its potential application to determining the structure of characteristic $0$ and characteristic $p$ Hecke algebras (as defined in \cite{BK} and \cite{D}) and to the level raising of modular forms.

In \cite{Bos}, Boston connects the increase in the space of deformations, after allowing ramification at an additional prime $\ell$, to the level raising of modular forms. To be precise, he shows, using the results of Ribet and Carayol, that every new component of the bigger deformation space contains a point corresponding to a modular eigenform which is new at $\ell$. 

When the residual representation is reducible, the level raising results for modular forms are not known in all cases (see \cite{BM}, \cite{Y} and \cite{D2} for known cases of level raising results for reducible $\rhob_0$). So if $\rhob_0$ comes from a newform of level $N$ and the level raising results are not known for it, then results along the lines of \cite{Bos} for pseudo-deformation ring can be treated as evidence for level raising for $\rhob_0$. 

On the other hand, suppose $\rhob_0$ comes from a newform of level $N$ and level raising is known $\rhob_0$. Then, we are interested in studying the relationship between $\mathbb{T}^{\Gamma_1(N\ell)}_{\rhob_0}$, the $\rhob_0$-component of characteristic $0$ Hecke algebra of level $N\ell$ and $\mathbb{T}^{\Gamma_1(N)}_{\rhob_0}$, the $\rhob_0$-component of the characteristic $0$ Hecke algebra of level $N$ (see \cite{BK} and \cite{D} for the definitions of these Hecke algebras).
In particular, we want to explore if the structure of $\mathbb{T}^{\Gamma_1(N\ell)}_{\rhob_0}$ can be obtained from the structure of $\mathbb{T}^{\Gamma_1(N)}_{\rhob_0}$. 

Note that we have surjective maps $\calR^{\pd,\ell}_{\rhob_0} \twoheadrightarrow \mathbb{T}^{\Gamma_1(N\ell)}_{\rhob_0}$ and $\calR^{\pd}_{\rhob_0} \twoheadrightarrow \mathbb{T}^{\Gamma_1(N)}_{\rhob_0}$ which are known to be isomorphisms in certain cases. Thus, exploring this question for deformation rings serves as a good starting point for this study and it also gives us an idea of what to expect in the case of Hecke algebras. We are also interested in exploring similar questions for mod $p$ Hecke algebra of level $N\ell$ and $N$ (as defined in \cite{D} and \cite{BK}).

\subsubsection{\bf Main results}
Recall that we have an odd $\rhob_0 : G_{\QQ,Np} \to \GL_2(\FF)$ with $\rhob_0 = \chi_1 \oplus \chi_2$ and $\chi=\chi_1\chi_2^{-1}$. For $i \in \{1,-1\}$, denote the restriction of $\chi^i$ to the decomposition group at $\ell$ by $\chi^i|_{G_{\QQ_\ell}}$. Let $\omega_p$ be the mod $p$ cyclotomic character, $R^{\pd,\ell}_{\rhob_0} := \calR^{\pd,\ell}_{\rhob_0}/(p)$ and $R^{\pd}_{\rhob_0} := \calR^{\pd}_{\rhob_0}/(p)$. For a ring $R$, we denote by $(R)^{\red}$ its maximal reduced quotient. Using results of \S\ref{galrepsubsec} and \cite{Bo}, we prove:

\begin{thmA}
\label{thmb}
Suppose $\dim(H^1(G_{\QQ,Np},\chi^i)) =1$ and $\dim(H^1(G_{\QQ,Np},\chi^{-i})) =m$ for some $i \in \{1,-1\}$. Let $\ell$ be a prime such that $p \nmid \ell^2-1$ and $\chi^{-i}|_{G_{\QQ_{\ell}}} = \omega_p|_{G_{\QQ_{\ell}}}$. Then:
\begin{enumerate}
\item There exists $r_1,\cdots,r_{n'},\Phi \in W(\FF)[[X_1,\cdots,X_n,X]]$ such that $$(\calR^{\pd,\ell}_{\rhob_0})^{\red} \simeq (W(\FF)[[X_1,\cdots,X_n,X]]/(r_1,\cdots,r_{n'},X(\Phi-\ell)))^{\red}$$ and $(\calR^{\pd}_{\rhob_0})^{\red} \simeq (W(\FF)[[X_1,\cdots,X_n]]/(\bar r_1,\cdots,\bar r_{n'}))^{\red},$ where $r_i \pmod{X} = \bar r_i$. 
\item Suppose $m=1,2$ and $p \nmid \phi(N)$. Then there exists $r_1,\cdots,r_{n'},\Phi \in \FF[[X_1,\cdots,X_n,X]]$ such that $$R^{\pd,\ell}_{\rhob_0} \simeq \FF[[X_1,\cdots,X_n,X]]/(r_1,\cdots,r_{n'},X(\Phi-\ell))$$ and $R^{\pd}_{\rhob_0} \simeq \FF[[X_1,\cdots,X_n]]/(\bar r_1,\cdots,\bar r_{n'}),$ where $r_i \pmod{X} = \bar r_i$. 
\end{enumerate}
\end{thmA}

\begin{rem}
The hypotheses of Theorem~\ref{thmb} make sure that the hypotheses of first and second part of Theorem~\ref{thma} hold for both $ G_{\QQ,Np}$ and $G_{\QQ,N\ell p}$ in the first and second part of Theorem~\ref{thmb}, respectively.
This allows us to combine Theorem~\ref{thma} and results of \cite{Bo} to get Theorem~\ref{thmb}.
However, the description of the structure of $\calR^{\pd,\ell}_{\rhob_0}$ is expected to get more complicated if we relax one or more hypotheses of Theorem~\ref{thmb}.
This is illustrated in the results given below.
\end{rem}

 We call $\rhob_0$ unobstructed when $\dim(H^1(G_{\QQ,Np},\chi^i))=1$ for $i \in \{1,-1\}$. Note that if $N=1$, then any $\rhob_0$ is unobstructed if Vandiver's conjecture is true (\cite[Theorem 22]{BK}). Moreover, \cite[Theorem 22]{BK} also gives some examples of unobstructed $\rhob_0$'s if $N=1$. 
Note that if $\rhob_0$ is unobstructed and $p \nmid \phi(N)$, then $\calR^{\pd}_{\rhob_0} \simeq W(\FF)[[X,Y,Z]]$.
We then prove slightly more precise results after assuming $\rhob_0$ is unobstructed and $p \nmid \phi(N)$.
\begin{thmA}[See Corollary~\ref{strcor} and Theorem~\ref{ramunobsprop}]
\label{thmc}
Suppose $\rhob_0$ is unobstructed, $p \nmid \phi(N)$ and $\ell$ is a prime such that $\ell \nmid Np$, $p \nmid \ell^2-1$ and $\chi^i|_{G_{\QQ_\ell}} = \omega_p$ for some $i \in \{1,-1\}$. Then:
\begin{enumerate}
\item $\calR^{\pd,\ell}_{\rhob_0} \simeq W(\FF)[[X_1,X_2,X_3,X_4]]/(X_4f)$ for some non-zero $f \in W(\FF)[[X_1,X_2,X_3,X_4]]$,
\item Moreover if $p^2 \nmid \ell^{p-1}-1$, then $\calR^{\pd,\ell}_{\rhob_0} \simeq W(\FF)[[X_1,X_2,X_3,X_4]]/(X_4X_2)$.
\end{enumerate}
\end{thmA}

\begin{rem}
The hypotheses that $\rhob_0$ is unobstructed, $p \nmid \ell^2-1$ and $\chi^i|_{G_{\QQ_\ell}} = \omega_p$ for some $i \in \{1,-1\}$ of Theorem~\ref{thmc} make sure that the hypotheses of Theorem~\ref{thmb} are satisfied.
The hypotheses $\rhob_0$ is unobstructed and $p \nmid \phi(N)$ imply that $\calR^{\pd}_{\rhob_0} \simeq W(\FF)[[X,Y,Z]]$.
Moreover, combining these hypotheses with $p^2 \nmid \ell^{p-1}-1$, we get a set of generators of the cotangent space of $R^{\pd,\ell}_{\rhob_0}$.
All this information is then combined with Theorem~\ref{thma} to prove Theorem~\ref{thmc}.
\end{rem}

The case $p | \ell+1$ turns out to be different from the other cases which also happens in \cite{Bos} and \cite{Bo}.
\begin{thmA}[see Theorem~\ref{reducestrthm}, Theorem~\ref{lcithm}, Corollary~\ref{lcicor}]
\label{thmd}
Suppose $\rhob_0$ is unobstructed, $p \nmid \phi(N)$ and $\ell$ is a prime such that $\ell \nmid Np$, $p \mid\mid \ell +1$ and $\chi|_{G_{\QQ_\ell}} = \omega_p$. Then $$(R^{\pd,\ell}_{\rhob_0})^{\red} \simeq \FF[[X,Y,Z,X_1,X_2]]/(X_1X_2,X_1Y,X_2Y).$$
Moreover, both $R^{\pd,\ell}_{\rhob_0}$ and $\calR^{\pd,\ell}_{\rhob_0}$ are not local complete intersection rings.
\end{thmA}

\begin{rem}
The hypotheses $\rhob_0$ is unobstructed and $p \nmid \phi(N)$ imply that $\calR^{\pd}_{\rhob_0} \simeq W(\FF)[[X,Y,Z]]$.
Moreover, combining these hypotheses with $p \mid\mid \ell + 1$, we get a set of generators of the cotangent space of $R^{\pd,\ell}_{\rhob_0}$.
All this information is crucially used to prove Theorem~\ref{thmd}.
\end{rem}

Recall that Mazur's conjecture (\cite{M}) predicts that the mod $p$ universal deformation ring of an absolutely irreducible $2$-dimensional representation of $G_{\QQ,Np}$ over some finite extension of $\FF_p$ has Krull dimension $3$. This also implies that the mod $p$ universal deformation ring is always a local complete intersection ring. From the theorem above, we find examples of mod $p$ universal pseudo-deformation rings of Krull dimension $3$ which are not local complete intersection rings. On the other hand, in \cite{BCh}, Bleher and Chinburg found examples of absolutely irreducible representations of profinite groups such that the corresponding universal deformation rings (in the sense of Mazur) are not locally complete intersection rings.

Finally, as an application, we prove an $R=\TT$ theorem for big $p$-adic Hecke algebras and pseudo-representation rings in \S\ref{heckesec} similar to the ones proved by B\"ockle in \cite{Bo4} (see Theorem~\ref{rthm} and Corollary~\ref{rcor} for more details). We also give examples where the hypotheses of our `big' $R=\TT$ theorem are satisfied. 

\subsection{Outline of the proof of main results}
Since $\chi_1 \neq \chi_2$, it follows, from \cite{BC} and \cite{Bel}, that a pseudo-representation $(t,d) : G \to R$ lifting $(\tr(\rhob_0),\det(\rhob_0))$ arises from a representation of $G$ taking values in a faithful Generalized Matrix Algebra (GMA) $A = \begin{pmatrix} R & B\\ C & R\end{pmatrix}$ over $R$.
The assumption $\dim(H^1(G,\chi^i))=1$ for some $i \in \{1,-1\}$ implies that $A$ can be chosen in such a way that $B$ is generated by at most $1$ element as an $R$-module.
Moreover, if $G = G_{\QQ,Np}$ or $G_{\QQ,N\ell p}$ and $\rhob_0$ is unramified at $\ell$, then this representation is tamely ramified at $\ell$.

Now if $B$ is a free $R$-module of rank $1$ (i.e. the annihilator of $B$ is $(0)$), then it follows that $A$ is isomorphic to a subalgebra of $M_2(R)$ which means $(t,d)$ arises from a representation over $R$.
Faithfulness of $A$ implies that this is equivalent to the annihilator of the ideal $I := m'(B \otimes C) \subset R$, obtained by multiplication of $B$ and $C$, being $(0)$.
Note that $I$ is the reducibility ideal of $(t,d)$ (in the sense of \cite{BC}). 

Now if $R$ is an integral domain and $(t,d)$ is not reducible, then it means $I \neq (0)$ and hence, the previous paragraph implies that $(t,d)$ arises from a representation over $R$.
Since $\dim(H^1(G,\chi^i))=1$, it follows, after changing the basis if necessary, that this representation is a deformation of a fixed reducible, non-split representation $\rhob_{x_0}$ whose semi-simplification is $\rhob_0$.
On the other hand, if $(t,d)$ is reducible, then we construct, using results and techniques of \cite{T}, a deformation of $\rhob_{x_0}$ to $R$ which gives rise to $(t,d)$. This proves the first part of Theorem~\ref{thma}.

To prove the second part of Theorem~\ref{thma}, we first use its hypotheses along with \cite[Theorem $3.3.1$]{WE} to prove that $R^{\pd}_{\rhob_0}$ is a quotient of a power series ring by an ideal generated by at most $2$ elements.
This description, along with some commutative algebra, is then used to prove that the annihilator of the reducibility ideal of the universal mod $p$ pseudo-deformation of $(\tr(\rhob_0),\det(\rhob_0))$ is trivial.
Combining this with the discussion above gives the second part of Theorem~\ref{thma}.

Note that Theorem~\ref{thma} relates certain quotients of $R^{\pd}_{\rhob_0}$ with the corresponding quotients of the deformation ring of $\rhob_{x_0}$.
We use the results of \S\ref{galsubsec} to conclude that these relations hold in the setting of Galois groups appearing in Theorem~\ref{thmb} and combine them with \cite[Theorem $4.7$]{Bo} to prove Theorem~\ref{thmb}.

To prove the first part of Theorem~\ref{thmc}, we combine results of \S\ref{galsubsec}, second part of Theorem~\ref{thma}, the relation between the tame inertia group and the Frobenius at $\ell$ and some basic commutative algebra to prove that $\mathcal{R}^{\pd,\ell}_{\rhob_0}$ is isomorphic to the universal deformation ring of $\rhob_{x_0}$ for $G_{\QQ,N\ell p}$. The result then follows from \cite[Theorem $4.7$]{Bo} and \cite[Theorem $2.4$]{Bo2}.
To prove the second part of Theorem~\ref{thmc}, we first find a set of generators of the tangent space of $\mathcal{R}^{\pd,\ell}_{\rhob_0}$.
Combining this with the relation between the tame inertia group and the Frobenius at $\ell$ and the first part of Theorem~\ref{thmc} yields the theorem.

The proof of Theorem~\ref{thmd} is carried out in several steps. We first find a set of generators of the cotangent space of $R^{\pd,\ell}_{\rhob_0}$ and then use the relation between the tame inertia group and the Frobenius at $\ell$ to prove that $(R^{\pd,\ell}_{\rhob_0})^{\red}$ is a quotient of  $\FF[[X,Y,Z,X_1,X_2]]/(X_1X_2,X_1Y,X_2Y)$.
We then prove that $R^{\pd,\ell}_{\rhob_0}$ has at least $3$ distinct prime ideals $P_0$, $P_1$ and $P_2$ such that $R^{\pd,\ell}_{\rhob_0}/P_j \simeq \FF[[x,y,z]]$ for all $0 \leq j \leq 2$ from which the first part of Theorem~\ref{thmd} follows.
Note that GMAs play a crucial role in obtaining the results mentioned above.
We then use the GMA corresponding to the universal mod $p$ pseudo-representation deforming $(\tr(\rhob_0),\det(\rhob_0))$ and the relation between the tame inertia group and the Frobenius at $\ell$ to get some relations satisfied by the generators of the cotangent space of $R^{\pd,\ell}_{\rhob_0}$ found above.
We then use some basic commutative algebra and first part of Theorem~\ref{thmd} to prove the second part of Theorem~\ref{thmd}.

\subsection{Wayfinding}
In \S\ref{prelimsec}, we collect definitions and background results that we use in the rest of the article. In \S\ref{pdsubsec}, we introduce the pseudo-deformation rings which we will be working with throughout the article. In \S\ref{gmasubsec}, we introduce the notion of Generalized Matrix Algebras (GMAs) and collect results which will be used in the rest of the article. In \S\ref{reducesubsec}, we introduce the notion of reducible pseudo-characters and study its properties. In \S\ref{nonsplitsubsec}, we review the definition and properties of the deformation ring of a residually reducible non-split representation. In \S\ref{galsubsec}, we prove some additional results for Galois groups which will be used later.
 In \S\ref{repsec}, we analyze when a pseudo-representation arises from a representation. In \S\ref{ramsec}, we study how the pseudo-deformation ring changes after enlarging the set of ramifying primes. In \S\ref{heckesec}, we apply results from \S\ref{ramsec} to prove an $R=\TT$ theorem and also give some examples where the hypotheses of the theorem are satisfied.

\subsection{Notations and conventions} 
For a pro-finite group $G$, we will use the following convention: all the representations, pseudo-representations, cohomology groups and $\Ext^i$ groups of $G$ that we will work with are assumed to be continuous unless mentioned otherwise. Given a representation $\rho$ of $G$ defined over $\FF$, we denote by $\dim(H^i(G,\rho))$, the dimension of $H^i(G,\rho)$ as a vector space over $\FF$. 

For a prime $q$, denote by $G_{\QQ_{q}}$ the absolute Galois group of $\QQ_{q}$ and by $I_q$, the inertia group at $q$. Denote the Frobenius element at $q$ by $\text{Frob}_q$.
For an integer $M$, denote by $G_{\QQ,Mp}$ the Galois group of a maximal algebraic extension of $\QQ$ unramified outside \{\text{primes }$q$ s.t. ${q} | Mp$\} $\cup$ $\{\infty\}$ over $\QQ$ and fix an embedding $i_{q,M} : G_{\QQ_q} \to G_{\QQ,Mp}$. For a fixed $M$, such an embedding is well defined upto conjugacy. 

For a representation $\rho$ of $G_{\QQ,Mp}$ denote by $\rho|_{G_{\QQ_q}}$ the representation $\rho \circ i_{q,M}$ of $G_{\QQ_q}$. Moreover, for an element $g \in G_{\QQ_q}$, we denote $\rho(i_{q,M}(g))$ by $\rho(g)$. If $\rho|I_q$ factors through the tame inertia quotient of $I_q$, then, given an element $g$ in the tame inertia group at $q$, we write $\rho(g)$ for $\rho(i_{q,M}(g'))$ where $g'$ is any lift of $g$ in $G_{\QQ_q}$. For a pseudo-representation $(t,d)$ of $G_{\QQ,Mp}$ denote by $(t|_{G_{\QQ_q}}, d|_{G_{\QQ_q}})$ the pseudo-representation $(t \circ i_{q,M}, d \circ i_{q,M})$ of $G_{\QQ_q}$.

We denote the mod $p$ cyclotomic character of $G_{\QQ,Mp}$ by $\omega_p$. For a prime $q$, we will also denote $\omega_p|_{G_{\QQ_q}}$ by $\omega_p$ by abuse of notation.
For a finite field $\FF$, we denote the ring of its Witt vectors by $W(\FF)$ and we will denote the Teichmuller lift of an element $a \in \FF$ to $W(\FF)$ by $\widehat a$.

For a local ring $R$ with residue field $\FF$, denote by $\tan(R)$ the tangent space of $R$ and denote by $\dim(\tan(R))$ the dimension of $\tan(R)$ as a vector space over $\FF$.

{\bf Acknowledgments:} I would like to thank Carl Wang-Erickson for helpful correspondence regarding \cite{WE} and the Introduction section of this article. I would also like to thank Gabor Wiese, Anna Medvedovsky and John Bergdall for many helpful conversations. I would like to thank the anonymous referee for many useful comments and suggestions which helped in improving the exposition.
Most of this work was done when the author was a postdoc at the University of Luxembourg.

\section{Preliminaries}
\label{prelimsec}

Even though we are primarily interested in the deformation rings of Galois pseudo-representations, we are going to take a slightly more general approach in this and the next section. To be precise, instead of $G_{\QQ,Np}$ and odd $\rhob_0$, we are going to consider a profinite group $G$ which satisfies the finiteness condition $\Phi_p$ given by Mazur in \cite[Section $1.1$]{M} and a continuous representation $\rhob_0 : G \to \GL_2(\FF)$ such that $\rhob_0=\chi_1 \oplus \chi_2$ with $\chi_1 \neq \chi_2$ and $\chi=\chi_1/\chi_2$. 

Most of the results that we state/prove in this section are well known.

\subsection{Psuedo-deformation rings}
\label{pdsubsec}
We now introduce the pseudo-deformation rings with which we will be studying for the rest of the article. Let $\mathcal{C}$ be the category whose objects are local complete noetherian rings with residue field $\FF$ and the morphisms between the objects are local morphisms of $W(\FF)$-algebras. Let $\mathcal{C}_0$ be the full sub-category of $\mathcal{C}$ consisting of local complete noetherian $\FF$-algebras with residue field $\FF$.

Now $\rhob_0$ is a $2$-dimensional representation of $G$ over $\FF$. Hence, $(\tr(\rhob_0),\det(\rhob_0)) : G \to \FF$ is a 2 dimensional pseudo-representation of $G$ over $\FF$. Let $D_{\rhob_0}$ be the functor from $\mathcal{C}$ to the category of sets which sends an object $R$ of $\mathcal{C}$ with maximal ideal $m_R$ to the set of continuous pseudo-representations $(t,d)$ of $G$ to $R$ such that $t \pmod{m_R} = \tr(\rhob_0)$ and $d \pmod{m_R} = \det(\rhob_0)$. Let $\bar D_{\rhob_0}$ be the restriction of $D_{\rhob_0}$ to the sub-category $ \mathcal{C}_0$.

From \cite{C}, it follows that the functors $D_{\rhob_0}$ and $\bar D_{\rhob_0}$ are representable by objects of $\mathcal{C}$ and $\mathcal{C}_0$, respectively. Let $R^{\pd}_{\rhob_0}$ and $\calR^{\pd}_{\rhob_0}$ be the local complete Noetherian rings with residue field $\FF$ representing $\bar D_{\rhob_0}$ and $D_{\rhob_0}$, respectively. So we have $\calR^{\pd}_{\rhob_0}/(p) \simeq R^{\pd}_{\rhob_0}$. Let $(t^{\univ}, d^{\univ})$ be the universal pseudo-representation of $G$ to $R^{\pd}_{\rhob_0}$ deforming $(\tr\rhob_0,\det\rhob_0)$. Let $(T^{\univ}, D^{\univ})$ be the universal pseudo-representation of $G$ to $\calR^{\pd}_{\rhob_0}$ deforming $(\tr\rhob_0,\det\rhob_0)$.

As $p$ is odd, it follows that a $2$-dimensional pseudo-representation $(t,d)$ of $G$ to an object $R$ of $\mathcal{C}$ is determined by $t$ which is a pseudo-character of dimension $2$ in the sense of Rouquier (\cite{R}) (see \cite[Section $1.4$]{BK}). Indeed if $p$ is odd and $(t,d) : G \to R$ is a $2$-dimensional pseudo-representation, then $d(g) = \frac{t(g)^2-t(g^2)}{2}$ for all $g \in G$. So, in this case, the theory of pseudo-representations is same as the theory of pseudo-characters. 

Hence, it follows that $\calR^{\pd}_{\rhob_0}$ (resp. $R^{\pd}_{\rhob_0}$) is the universal deformation ring and $T^{\univ}$ (resp. $t^{\univ}$) is the universal pseudo-character of the pseudo-character $\tr(\rhob_0)$ in the category $\mathcal{C}$ (resp. $\mathcal{C}_0$). Therefore, for simplicity,  we will be working with the residual pseudo-character $\tr(\rhob_0)$ and the universal pseudo-characters $T^{\univ}$ and $t^{\univ}$ deforming $\tr(\rhob_0)$ instead of working with the corresponding pseudo-representations. 

Denote the pseudo-character obtained by composing $t^{\univ}$ with the surjective map $R^{\pd}_{\rhob_0} \to (R^{\pd}_{\rhob_0})^{\red}$ by $t^{\univ,\red}$ and the pseudo-character obtained by composing $T^{\univ}$ with the surjective map $\calR^{\pd}_{\rhob_0} \to (\calR^{\pd}_{\rhob_0})^{\red}$ by $T^{\univ,\red}$.

We will frequently specialize to the case where $G=G_{\QQ,Np}$ and $\rhob_0$ is odd. However, even after specializing to this case, we will keep using the notation introduced above unless mentioned otherwise.

\subsection{Reminder on Generalized Matrix Algebras (GMAs)}
\label{gmasubsec}
In this subsection, we recall some standard definitions and results about Generalized Matrix Algebras which will be used frequently in the rest of the article. From now on, we will use the abbreviation GMA for Generalized Matrix Algebra. Our main references for this section are \cite[Section $2.2$]{Bel} (for GMAs of type $(1,1)$), \cite[Section $2.3$]{Bel} (for topological GMAs) and \cite[Chapter $1$]{BC} (for the general theory of GMAs).  For more information, we refer the reader to them.

We first recall the definition of a topological Generalized Matrix Algebra of type $(1,1)$. Let $R$ be a complete Noetherian local ring with maximal ideal $m_R$ and residue field $\FF$. So $R$ is a topological ring under the $m_R$-adic topology which we fix from now on. Let $A = \begin{pmatrix} R & B\\ C & R \end{pmatrix}$ be a topological GMA of type $(1,1)$ over $R$. This means the following:
\begin{enumerate}
\item $B$ and $C$ are topological $R$-modules,
\item An element of $A$ is of the form $\begin{pmatrix} a & b\\ c & d\end{pmatrix}$ with $a, d \in R$, $b \in B$ and $c \in C$,
\item There exists a continuous morphism $m' : B \otimes_{R} C \to R$ of $R$-modules such that for all $b_1$, $b_2 \in B$ and $c_1$, $c_2 \in C$, $m'(b_1 \otimes c_1)b_2 = m'(b_2 \otimes c_1)b_1$ and $m'(b_1 \otimes c_1)c_2 = m'(b_1 \otimes c_2)c_1$.
\end{enumerate}
So $A$ is a topological $R$-algebra with the addition given by $$\begin{pmatrix} a_1 & b_1\\ c_1 & d_1\end{pmatrix} + \begin{pmatrix} a_2 & b_2\\ c_2 & d_2\end{pmatrix} = \begin{pmatrix} a_1+a_2 & b_1+b_2\\ c_1+c_2 & d_1+d_2\end{pmatrix},$$ the multiplication given by $$\begin{pmatrix} a_1 & b_1\\ c_1 & d_1\end{pmatrix}.\begin{pmatrix} a_2 & b_2\\ c_2 & d_2\end{pmatrix} =\\ \begin{pmatrix} a_1a_2+m'(b_1 \otimes c_2) & a_1b_2+d_2b_1\\ d_1c_2+a_2c_1 & d_1d_2+m'(b_2 \otimes c_1)\end{pmatrix}$$ and the topology given by the topology on $R$, $B$ and $C$. 

For the rest of this article, GMA means topological GMA unless mentioned otherwise. By abuse of notation, we will always denote by $m'$ the multiplication map $B \otimes_{R} C \to R$ for any GMA and any $R$. From now on, given a profinite group $G$ and a GMA $A$, a representation $\rho : G \to A^*$ means a continuous homomorphism from $G$ to $A^*$ unless mentioned otherwise. If $\rho : G \to A^*$ is a representation, then we denote the $R$-submodule of $A$ generated by $\rho(G)$ by $R[\rho(G)]$. Note that $R[\rho(G)]$ is a subalgebra of $A$. If $\rho :G \to A^*$ is a representation such that $\rho(g) =\begin{pmatrix} a_g & b_g \\ c_g & d_g\end{pmatrix}$ for every $g \in G$, then we define $\tr(\rho) : G \to R$ by $\tr(\rho)(g) := a_g+d_g$. For a topological $R$-module $B$, we denote by $\Hom_{R}(B/m_RB,\FF)$ the set of all continuous $R$-module homomorphisms from $B/m_RB$ to $\FF$.

\begin{defi}
Let $A = \begin{pmatrix} R & B\\ C & R \end{pmatrix}$ be a GMA with the map $m' : B \otimes_{R} C \to R$ giving the multiplication in $A$. We say that $A$ is faithful if the following conditions hold:
\begin{enumerate}
\item If $b \in B$ and $m'(b \otimes c)=0$ for all $c \in C$, then $b=0$,
\item If $c \in C$ and $m'(b \otimes c)=0$ for all $b \in B$, then $c=0$.
\end{enumerate}
\end{defi}

\begin{defi}
We say that $A'$ is an $R$-sub-GMA of $A$ if there exists an $R$-submodule $B'$ of $B$ and an $R$-submodule $C'$ of $C$ such that $m'(B' \otimes C') \subset R$ and $A' = \begin{pmatrix} R & B'\\ C' & R\end{pmatrix}$ i.e. $A' = \left\{\begin{pmatrix} a & b\\ c & d\end{pmatrix} \in A | b \in B', c \in C'\right\}$ (see \cite[Section $2.2$]{Bel} for the definitions of sub-GMA and $R$-sub-GMA). Note that $A'$ is a sub-algebra of $A$ and hence, a GMA over $R$.
\end{defi}

\begin{defi}
 Let $R$ be an object of $\mathcal{C}$ and $t : G \to R$ be a pseudo-character deforming $\tr(\rhob_0)$. We will say that $t$ is reducible if there exists characters $\eta_1$, $\eta_2 : G \to R^*$ such that $t = \eta_1 + \eta_2$ and $\eta_i$ is a deformation of $\chi_i$ for $i=1,2$.
\end{defi}    

\begin{lem}
\label{genlem}
Let $R$ be a complete Noetherian local ring with maximal ideal $m_R$ and residue field $\FF$. Let $t : G \to R$ be a pseudo-character deforming $\tr(\rhob_0)$. Then, there exists a faithful GMA $A = \begin{pmatrix} R & B\\ C & R\end{pmatrix}$ and a representation $\rho : G \to A^*$ such that
\begin{enumerate}
\item\label{parti} For $g \in G$, if $\rho(g) = \begin{pmatrix} a_g & b_g\\ c_g & d_g\end{pmatrix}$, then $a_g \equiv \chi_1(g) \pmod{m_R}$, $d_g \equiv \chi_2(g) \pmod{m_R}$ and $t(g) = a_g+d_g$ (i.e. $t=\tr(\rho)$),
\item $m'(B \otimes_{R} C) \subset m_R$, where $m'$ is the map giving the multiplication in $A$,
\item $R[\rho(G)]=A$,
\item\label{partiv} $B$ and $C$ are finitely generated $R$-modules,
\item\label{partv} the minimal number of generators of $B$ as an $R$-module is at most $ \dim(H^1(G,\chi))$ and the minimal number of generators of $C$ as an $R$-module is at most $ \dim(H^1(G,\chi^{-1}))$,
\item\label{partvi} $t \pmod{I}$ is reducible, where $I := m'(B \otimes C)$.
\end{enumerate}
\end{lem}
\begin{proof}
As $\chi_1 \neq \chi_2$, $\rhob_0$ is residually multiplicity free. We have assumed that $G$ satisfies the finiteness condition. Hence, the existence of $A$ and $\rho$ with the properties \eqref{parti}-\eqref{partiv} follows from parts (i), (v), (vii) of \cite[Proposition $2.4.2$]{Bel}. 
To prove part~\eqref{partvi}, observe that $a_{gg'} \equiv a_{g}a_{g'} \pmod{I}$ and $d_{gg'} \equiv d_{g}d_{g'} \pmod{I}$.

The proof of part~\eqref{partv} of the lemma is same as that of \cite[Theorem $1.5.5$]{BC}. We only give a brief summary here. Given $f \in \Hom_{R}(B/m_RB, \FF)$, we get a morphism of $R$-algebras $f^* : A \to M_2(\FF)$, such that $$f^*\left(\begin{pmatrix} a & b\\ c & d\end{pmatrix}\right) = \begin{pmatrix} a \pmod{m} & f(b)\\ 0 & d \pmod{m}\end{pmatrix}.$$ From the first assumption, it follows that restriction of $f^*$ to $\rho(G)$ is an extension of $\chi_2$ by $\chi_1$ and hence, an element $\tilde f^*$ of $H^1(G,\chi)$ (see proof of \cite[Theorem $1.5.5$]{BC} for more details). So we get a linear map $j : \Hom_{R}(B/m_RB,\FF) \to H^1(G,\chi)$ sending $f$ to $\tilde f^*$.  Since $R[\rho(G)] =A$, we get that the map $j$ is injective. Hence, Nakayama's lemma gives the assertion about the number of generators of $B$. The assertion about the number of generators of $C$ follows similarly.
\end{proof}

\begin{rem}
\label{redrem}
It follows, from parts~\eqref{partv} and ~\eqref{partvi} of Lemma~\ref{genlem}, that if $H^1(G,\chi^i)=0$ for some $i \in \{1,-1\}$, then $T^{\univ}$ is reducible and hence, it arises from a $2$-dimensional $G$-representation over $\calR^{\pd}_{\rhob_0}$.
\end{rem}

Thus, from Lemma~\ref{genlem}, we see that a psuedo-character $t : G \to R$ deforming $\tr(\rhob_0)$ arises from a representation over $R$ if the GMA found in Lemma~\ref{genlem} corresponding to the tuple $(G,t,R)$ is isomorphic to a subalgebra of $M_2(R)$. 
\begin{lem}
\label{annihlem}
Let $A = \begin{pmatrix} R & B\\ C & R\end{pmatrix}$ be a faithful GMA over $R$ and $\rho : G \to A^*$ be a representation. Then:
\begin{enumerate}
\item If $y \in R$ is an element such that either $yB=0$ or $yC=0$, then $ym'(B \otimes C)=0$,
\item If $B$ is a free $R$-module of rank $1$, then there exists an $R$-algebra isomorphism $\phi$ between $A$ and the $R$-subalgebra of $M_2(R)$ given by $\begin{pmatrix}R & R\\ m'(B \otimes C) & R\end{pmatrix}$ such that $\phi(\tr(\rho(g))) = \tr(\rho(g))$ for every $g \in G$.
\end{enumerate}
\end{lem}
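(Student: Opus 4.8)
The plan is to prove the two parts in order, using the faithfulness of $A$ as the main tool. For part (1), suppose $yB = 0$ (the case $yC=0$ is symmetric). Then for all $b \in B$ and $c \in C$ we have $y\,m'(b\otimes c) = m'((yb)\otimes c) = m'(0\otimes c) = 0$, using $R$-bilinearity of $m'$; hence $y\,m'(B\otimes C)=0$ directly, with no appeal to faithfulness even needed. So part (1) is a one-line computation. I would state it as above and move on.

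For part (2), assume $B$ is free of rank $1$ over $R$, and fix a generator $e$ of $B$. The plan is to build the isomorphism $\phi$ explicitly. The nontrivial point is to pin down $C$: faithfulness condition (2) says the $R$-linear map $C \to \Hom_R(B,R) \cong R$, $c \mapsto (b \mapsto m'(b\otimes c))$, is injective, so identifying $C$ with its image we may regard $C$ as the $R$-submodule $m'(B\otimes C) \subseteq R$ (here one uses that $B$ free of rank $1$ makes $\Hom_R(B,R)$ canonically $R$ via evaluation at $e$; note $m'(B\otimes C) = m'(e\otimes C)$ is exactly this image). Under this identification, I would define $\phi : A \to M_2(R)$ by sending $\begin{pmatrix} a & b \\ c & d\end{pmatrix}$, with $b = \beta e$ for a unique $\beta \in R$ and $c$ identified with $\gamma := m'(e\otimes c) \in m'(B\otimes C)$, to $\begin{pmatrix} a & \beta \\ \gamma & d \end{pmatrix}$. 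Then I would check that $\phi$ is an injective $R$-algebra homomorphism: additivity is clear, and for multiplicativity one compares the GMA multiplication formula (with $m'(b_1\otimes c_2)$ in the top-left and $m'(b_2\otimes c_1)$ in the bottom-right entries) against ordinary matrix multiplication in $\begin{pmatrix} R & R \\ m'(B\otimes C) & R\end{pmatrix}$, where the product of the off-diagonal entries $\beta_1\gamma_2$ must be shown to equal $m'(b_1\otimes c_2)$ — which holds because $m'(b_1\otimes c_2) = m'(\beta_1 e\otimes c_2) = \beta_1 m'(e\otimes c_2) = \beta_1\gamma_2$. The image is visibly $\begin{pmatrix} R & R \\ m'(B\otimes C) & R\end{pmatrix}$, and injectivity follows from freeness of $B$ (so $\beta$ is determined by $b$) together with faithfulness condition (2) (so $c$ is determined by $\gamma$). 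Finally, since $\phi$ preserves diagonal entries, $\tr(\phi(\rho(g))) = \tr(\rho(g))$ for every $g\in G$; by the convention in the excerpt, $\tr(\rho(g)) = a_g + d_g \in R$ is just a scalar, so the identity $\phi(\tr(\rho(g))) = \tr(\rho(g))$ is the statement that $\phi$ fixes this scalar (viewed inside $A$ via $R \hookrightarrow A$), which is immediate since $\phi$ is an $R$-algebra map.

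The only mild subtlety — and the step I would be most careful about — is the identification of $C$ with $m'(B\otimes C)$: this requires that $\Hom_R(B,R)$ be canonically identified with $R$ via evaluation at the chosen generator $e$, and that the evaluation-at-$e$ composite $C \to \Hom_R(B,R) \to R$ be exactly $c\mapsto m'(e\otimes c)$ with image $m'(B\otimes C)$; faithfulness is what guarantees this composite is injective, so that $\phi$ is well-defined as a map out of $A$ (i.e. distinct elements of $A$ with the same matrix image cannot occur). Everything else is routine bookkeeping with the GMA multiplication law. I would present part (2) as: fix the generator, make the identification, define $\phi$ on matrices, verify it is an $R$-algebra isomorphism onto the stated subalgebra, and observe the trace compatibility.
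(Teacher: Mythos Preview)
Your proposal is correct and follows essentially the same approach as the paper: part (1) is the same one-line $R$-bilinearity computation, and for part (2) both you and the paper fix a generator $e$ of $B$ and define $\phi$ by sending $\begin{pmatrix} a & b \\ c & d\end{pmatrix}$ to $\begin{pmatrix} a & \beta \\ m'(e\otimes c) & d\end{pmatrix}$ where $b = \beta e$. The one place you are more careful than the paper is in invoking faithfulness condition (2) to verify injectivity of $\phi$ on the $C$-component; the paper simply asserts $\tilde f$ is a homomorphism and leaves the isomorphism claim implicit, so your added remark is a genuine (if minor) improvement in rigor.
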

\begin{proof}
\begin{enumerate}
\item Note that $m' : B \otimes C \to R$ is a map of $R$-modules. Hence, for evey $y \in R$, $b \in B$ and $c \in C$, $m'(yb \otimes c) = m'(b \otimes yc) = ym'(b \otimes c)$. The first part follows immediately from this.
\item Fix a generator $\gamma$ of $B$. This choice gives us an $R$-module isomorphism $f_\gamma : B \to R$ such that $b=f_\gamma(b)\gamma$ for every $b \in B$. Consider the map $\tilde f : A \to A'$ which sends $\begin{pmatrix} a & b\\ c & d\end{pmatrix} \in A$ to $\begin{pmatrix} a & f_{\gamma}(b)\\ m'(\gamma \otimes c) & d\end{pmatrix}$. It is easy to check, using the facts that the multiplication map $m' : B \otimes_{R} C \to R$ is $R$-linear and $f_\gamma(b)m'(\gamma \otimes c)=m'(b \otimes c)$, that $\tilde f$ is a continuous homomorphism of $R$-algebras. Note that if $a \in A$, then $\tr(a) = \tr(\tilde f(a))$. This finishes the proof of the second part.
\end{enumerate}
\end{proof}

When $R$ is reduced, it turns out that any GMA representation comes `very close' to being a true representation. To be precise, every GMA representation over a reduced ring comes from a true representation over its total fraction field. We record this as a formal result below.

\begin{lem}
\label{redgmalem}
Let $R$ be a reduced complete Noetherian local ring with maximal ideal $m_R$ and residue field $\FF$. Let $K$ be the total fraction field of $R$. If $A = \begin{pmatrix} R & B\\ C & R\end{pmatrix}$ is a faithful GMA, then there exist fractional ideals $B'$ and $C'$ of $K$ and $R$-module isomorphisms $\phi : B \to B'$ and $\psi: C \to C'$ such that 
\begin{enumerate}
\item For all $b' \in B'$ and $c' \in C'$, $b'.c' \in R$, where $.$ denotes the multiplication in $K$,
\item If $A' = \begin{pmatrix} R & B'\\ C' & R\end{pmatrix} \subset M_2(K)$, then $A'$ is a $R$-sub-algebra of $M_2(K)$,
\item The map $\Phi : A \to A'$ given by $\Phi \left(\begin{pmatrix} a & b\\ c & d \end{pmatrix}\right) = \begin{pmatrix} a & \phi(b)\\ \psi(c) & d \end{pmatrix}$ is an isomorphism of $R$-algebras. 
\end{enumerate}
\end{lem}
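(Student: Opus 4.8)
The plan is to work one ``side'' (the module $B$) at a time and localize at the minimal primes of $R$. Since $R$ is reduced Noetherian, $K = \mathrm{Frac}(R) = \prod_{i} K_i$ where $K_i$ are the residue fields at the (finitely many) minimal primes $\mathfrak{p}_i$; each $K_i$ is a field and $R \hookrightarrow K$ because $R$ is reduced. The multiplication map $m' : B \otimes_R C \to R$ gives, after tensoring with $K$, a pairing $B_K \otimes_K C_K \to K$, and faithfulness of $A$ together with flatness of $K$ over $R$ should ensure this pairing stays nondegenerate on each factor $K_i$. The key structural point is that over each field $K_i$ the $K_i$-vector spaces $(B_K)_i$ and $(C_K)_i$ must be $1$-dimensional: indeed, the condition $m'(b_1\otimes c_1)b_2 = m'(b_2\otimes c_1)b_1$ from the GMA axioms says that for any fixed $c_1$ with $m'(-\otimes c_1)\neq 0$, every $b_2$ is a $K_i$-multiple of $b_1$; nondegeneracy guarantees such a $c_1$ exists, so $\dim_{K_i}(B_K)_i = 1$, and symmetrically for $C$. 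Hence $B_K \cong K$ and $C_K \cong K$ as $K$-modules, and under these identifications $m'$ becomes ordinary multiplication $K \otimes_K K \to K$ (after rescaling one of the two isomorphisms, which is harmless).

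With that in hand, first I would produce the isomorphism $\phi : B \to B'$. The natural map $B \to B \otimes_R K = B_K \cong K$ has image a finitely generated $R$-submodule $B'$ of $K$, i.e. a fractional ideal, and this map is \emph{injective}: its kernel is the $R$-torsion of $B$, which lies in every localization $B_{\mathfrak{p}_i}$ as the torsion there, but an element killed by localization at all minimal primes is killed by an element not in any minimal prime, hence (as $R$ is reduced, so $\bigcup \mathfrak{p}_i$ contains all zero-divisors) is zero. So $\phi : B \xrightarrow{\sim} B'$. Symmetrically $\psi : C \xrightarrow{\sim} C'$ with $C'$ a fractional ideal of $K$, and by the previous paragraph we may arrange the identifications so that $\phi(b)\cdot\psi(c) = m'(b\otimes c) \in R$ for all $b,c$; this is precisely (1). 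Then (2) is automatic: $B' \cdot C' \subseteq R$ is exactly the closure condition needed for $A' = \begin{pmatrix} R & B' \\ C' & R\end{pmatrix}$ to be an $R$-subalgebra of $M_2(K)$ (the only nonobvious products in $M_2(K)$ are $B'C'$ and $C'B'$, both landing in $R$). Finally (3): the map $\Phi$ is $R$-linear and additive by construction, and it respects multiplication because the $(1,1)$- and $(2,2)$-entries of a product in $A$ involve $m'(b_1\otimes c_2)$ and $m'(b_2\otimes c_1)$, which $\Phi$ sends to $\phi(b_1)\psi(c_2)$ and $\phi(b_2)\psi(c_1)$ — exactly the corresponding entries of the product in $M_2(K)$; it is bijective since $\phi,\psi$ are.

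The step I expect to be the main obstacle is the dimension count $\dim_{K_i}(B_K)_i = 1$ and the accompanying nondegeneracy claim: one has to be careful that faithfulness of $A$ (a condition over $R$) actually forces the localized pairing over each $K_i$ to be nonzero on both sides, since $a$ priori faithfulness could be ``spread across'' different minimal primes. The clean way around this is to observe that the GMA axiom $m'(b_1\otimes c_1)\,b_2 = m'(b_2\otimes c_1)\,b_1$ already shows, over \emph{any} ring, that $B$ modulo the submodule killed by $m'(-\otimes c)$ for all $c$ is ``rank $\le 1$'' in a suitable sense, and then faithfulness kills that submodule, so $B_K$ is genuinely rank $1$ at each minimal prime — one just needs to assemble this localization-by-localization. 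Once the rank-one statement is secured, the rest is the routine bookkeeping sketched above, essentially repackaging the proof of Lemma~\ref{annihlem}(2) with $R$ replaced by $K$ and ``free of rank $1$'' replaced by ``fractional ideal.''
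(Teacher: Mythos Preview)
The paper's own proof is a one-line citation to \cite[Proposition 1.3.12]{BC}; your proposal is essentially a reconstruction of the argument behind that proposition, and the overall strategy (localize at minimal primes, use the GMA identity $m'(b_1\otimes c_1)b_2 = m'(b_2\otimes c_1)b_1$ to force rank $\le 1$ over each $K_i$, then embed $B$ and $C$ in $K$) is correct.

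There is one genuine gap in your injectivity step for $B \to B_K$. You correctly identify the kernel as the torsion submodule, but your sentence ``an element killed by an element not in any minimal prime \ldots\ is zero'' does not follow: in an arbitrary $R$-module, $sb=0$ with $s$ a non-zerodivisor of $R$ does not force $b=0$. You must use faithfulness here: from $sb=0$ one gets $s\cdot m'(b\otimes c)=m'(sb\otimes c)=0$ for every $c\in C$, hence $m'(b\otimes c)=0$ in $R$ since $s$ is regular, hence $b=0$ by faithfulness. Once this is in place the rest of your sketch goes through. Two minor points of phrasing: the lemma does not assume $B$ is finitely generated, so drop ``finitely generated'' from your description of $B'$ (an $R$-submodule of $K$ suffices here); and it can happen that $(B_K)_i=0$ for some $i$ (necessarily exactly when $(C_K)_i=0$, by the nondegeneracy argument you outline), so $B_K$ need not equal $K$ but only embed in it --- which is all you need.
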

\begin{proof}
This follows directly from \cite[Proposition 1.3.12]{BC}.
\end{proof}

\subsection{Reducibility properties of pseudo-characters}
\label{reducesubsec}
We will now define a reducible pseudo-character and study properties of it. We begin by computing tangent space dimension of $R^{\pd}_{\rhob_0}$ under some hypothesis.
\begin{lem}
\label{tandimlem}
Suppose $H^2(G,1)=0$. Let $k=\dim(H^1(G,1))$, $m= \dim (H^1(G,\chi)$) and $n=\dim (H^1(G,\chi^{-1}))$. Then $\dim(\tan(R^{\pd}_{\rhob_0})) = 2k+mn$.
\end{lem}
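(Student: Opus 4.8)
The tangent space of $R^{\pd}_{\rhob_0}$ is $D_{\rhob_0}(\FF[\epsilon]/(\epsilon^2))$, the set of pseudo-characters $t : G \to \FF[\epsilon]$ lifting $\tr(\rhob_0)$, with its natural $\FF$-vector space structure. The plan is to parametrize this set concretely. First I would invoke Lemma~\ref{genlem} applied to $R = \FF[\epsilon]/(\epsilon^2)$: every deformation $t$ of $\tr(\rhob_0)$ to $\FF[\epsilon]$ is the trace of a GMA representation $\rho : G \to A^*$ with $A = \begin{pmatrix} \FF[\epsilon] & B \\ C & \FF[\epsilon] \end{pmatrix}$ faithful, $m'(B \otimes C) \subset (\epsilon)$, and $\FF[\epsilon][\rho(G)] = A$. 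Since $m'(B\otimes C) \subseteq (\epsilon)$ and $(\epsilon)^2 = 0$, the multiplication $m'$ actually lands in $\epsilon\FF \cong \FF$, which means $B$ and $C$ are killed by $\epsilon$, i.e.\ they are $\FF$-vector spaces. Writing $\rho(g) = \begin{pmatrix} a_g & b_g \\ c_g & d_g\end{pmatrix}$ with $a_g = \chi_1(g) + \epsilon \alpha(g)$, $d_g = \chi_2(g) + \epsilon \delta(g)$, the multiplicativity of $\rho$ unwinds into: $b : G \to B$ is a cocycle for the $G$-action twisting by $\chi_1\chi_2^{-1} = \chi$ (so a cocycle valued in $\chi \otimes_\FF B$), $c$ is a cocycle valued in $\chi^{-1} \otimes_\FF C$, $\alpha$ and $\delta$ are cocycles valued in the trivial representation $1$ (with a correction term coming from $m'(b_g \otimes c_h)$, but that term is $\epsilon$-torsion times an element already in $\epsilon\FF$, hence zero, so $\alpha,\delta \in Z^1(G,1)$ honestly), and the generation condition $\FF[\epsilon][\rho(G)] = A$ forces $b$ to span $B$ over $\FF$ and $c$ to span $C$ over $\FF$.

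Next I would quotient by the ``change of basis'' (conjugation) ambiguity, which is exactly what passing from representations to their traces does. Conjugating $\rho$ by $\begin{pmatrix} 1 & \epsilon x \\ \epsilon y & 1 \end{pmatrix}$ (for $x \in \Hom_\FF(C,\FF)$-type data and $y \in \Hom_\FF(B,\FF)$, roughly) shifts $\alpha$ and $\delta$ by coboundaries and shifts $b, c$ by coboundaries as well; conjugating by the diagonal $\begin{pmatrix} 1+\epsilon u & 0 \\ 0 & 1 + \epsilon v\end{pmatrix}$ does nothing to the trace. The upshot I expect: the pseudo-character $t = \tr(\rho)$ depends, up to the conjugations that preserve $t$, exactly on the pair of cohomology classes $[\alpha], [\delta] \in H^1(G,1)$ together with the ``reducibility obstruction'' data measuring how $B$ and $C$ fit together, which is the image of $[b] \otimes [c]$ under the cup product $H^1(G,\chi) \times H^1(G,\chi^{-1}) \to H^2(G,1)$. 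Since $H^2(G,1) = 0$ by hypothesis, there is no constraint linking $b$ and $c$, and the data $(B, C, b, c)$ modulo isomorphism is freely a choice of a subspace of $H^1(G,\chi)$ (namely the span of $[b]$, but since we need $b$ to generate $B$ and $B$ can be up to $m$-dimensional, this is really a point of $H^1(G,\chi) \otimes_\FF (\text{something})$) --- more precisely $B \cong \chi$-cocycle data up to coboundary contributes $H^1(G,\chi)$, giving $m$ parameters, $C$ contributes $n$ parameters, and their interaction contributes the product $mn$ (this is the ``$m'$'' part of the GMA: $m'$ is an element of $\Hom_\FF(B \otimes C, \FF) \cong \FF^{mn}$, which is genuinely free to choose because $H^2(G,1)=0$ kills the associativity/cocycle obstruction). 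Meanwhile $\alpha, \delta$ each contribute $H^1(G,1)$, i.e.\ $k$ each, for $2k$. Summing: $\dim \tan(R^{\pd}_{\rhob_0}) = 2k + mn$.

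I would make this precise by constructing an explicit bijection $\tan(R^{\pd}_{\rhob_0}) \xrightarrow{\sim} H^1(G,1)^{\oplus 2} \oplus (H^1(G,\chi) \otimes_\FF H^1(G,\chi^{-1})^\vee)^\vee$-style object --- cleanest is to note the reducibility ideal picture: a deformation $t$ over $\FF[\epsilon]$ is reducible (sum of two characters) iff $m' = 0$, and the reducible ones form the subspace of dimension $2k$ (the two independent characters deforming $\chi_1, \chi_2$, each with $H^1(G,1) = k$ parameters), while the full tangent space fibers over this with the ``$m'$-coordinate'' ranging over $\Hom_\FF(B/m_R B \otimes C/m_R C, \FF)$; by Lemma~\ref{extlem}, $\dim B/m_R B \le m$ and $\dim C/m_R C \le n$, and one checks the universal GMA achieves equality, so the extra coordinates number exactly $mn$. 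The main obstacle is the bookkeeping: verifying that conjugation exhausts precisely the redundancy in $(\alpha, \delta, b, c)$ — i.e.\ that the map to $(H^1(G,1)^2, \text{cup-product data})$ is both well-defined (independent of GMA choice) and injective — and confirming that the hypothesis $H^2(G,1) = 0$ is exactly what decouples the $b$-$c$ interaction so that all $mn$ values of $m'$ are realized. I would model the argument on the standard computation of the tangent space of the deformation ring of $\chi_1 \oplus \chi_2$ (as in \cite{BC}, Chapter 1), where the reducibility filtration on pseudo-deformations is controlled by precisely these cup products.
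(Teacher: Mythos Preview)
The paper's own proof is a one-line citation of \cite[Theorem~2]{B} (Bella\"{i}che's general computation of the tangent space of a pseudo-deformation ring) together with \cite[Proposition~20]{BK}, after translating $\Ext^i_G(\chi_j,\chi_k)$ into the cohomology groups $H^i(G,\chi_k\chi_j^{-1})$. Your proposal is essentially an attempt to re-derive that theorem by hand in this $2$-dimensional case via GMA representations over $\FF[\epsilon]$. The strategy is sound and is indeed how Bella\"{i}che's result is proved, but two steps in your write-up are not correct as stated.

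First, your claim that $\alpha,\delta \in Z^1(G,1)$ ``honestly'' because ``that term is $\epsilon$-torsion times an element already in $\epsilon\FF$, hence zero'' is wrong. From $a_{gh}=a_ga_h+m'(b_g\otimes c_h)$ one gets (after dividing by $\chi_1$) that $\tilde\alpha(gh)-\tilde\alpha(g)-\tilde\alpha(h)$ equals the image of $m'(b_g\otimes c_h)$ in $\epsilon\FF\simeq\FF$, which is generally nonzero; $b_g$ being $\epsilon$-torsion does not kill $m'(b_g\otimes c_h)$. What is true is that this $2$-cochain is a cup product of the classes of $b$ and $c$, hence its cohomology class lies in $H^2(G,1)=0$, so it is a coboundary and one may adjust $\alpha$ (and $\delta$) by a $1$-cochain to make them genuine cocycles. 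You invoke $H^2(G,1)=0$ later, but you use it as a ``constraint linking $b$ and $c$'' rather than as what actually fixes the diagonal cocycles; the role of the hypothesis is the latter.

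Second, your accounting of the $mn$ parameters is muddled: you write both ``$m$ parameters, $n$ parameters, and their interaction $mn$'' and ``$m'\in\Hom_\FF(B\otimes C,\FF)\simeq\FF^{mn}$'', and these are not the same thing (and $B,C$ vary with the deformation, so $m'$ is not an independent choice). The clean statement, which is the content of \cite[Theorem~2]{B}, is that the tangent space sits in a short exact sequence
\[
0 \longrightarrow H^1(G,1)^{\oplus 2} \longrightarrow \tan(R^{\pd}_{\rhob_0}) \longrightarrow H^1(G,\chi)\otimes_\FF H^1(G,\chi^{-1}) \longrightarrow H^2(G,1),
\]
where the left-hand piece is the subspace of reducible first-order deformations and the right-hand map is the cup-product obstruction. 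Your reducibility-ideal picture in the last paragraph is closer to this, but you would still need to identify the cokernel precisely with $H^1(G,\chi)\otimes H^1(G,\chi^{-1})$ (not with a choice of subspaces, nor with an abstract $\Hom(B\otimes C,\FF)$) and to check that the obstruction map really is the cup product. Once those identifications are made carefully, the vanishing of $H^2(G,1)$ gives $2k+mn$ directly.
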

\begin{proof}
  Recall that $\Ext^1_{G} (\eta, \delta) \simeq H^1(G,\delta/\eta)$ and $\Ext^2_{G} (\eta, \eta) \simeq H^2(G,1)$ for any continuous characters $\eta$, $\delta : G \to \FF^\times$. 
 Now the lemma directly follows from \cite[Theorem $2$]{B} (see also \cite[Proposition 20]{BK}).
\end{proof}

\begin{lem}
\label{maxlem}
If $J$ is an ideal of $R^{\pd}_{\rhob_0}$ such that $t^{\univ} \pmod{J} = \tr(\rhob_0)$, then $J$ is the maximal ideal of $R^{\pd}_{\rhob_0}$.
\end{lem}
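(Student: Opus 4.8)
The plan is to show that if $J \subseteq R^{\pd}_{\rhob_0}$ is an ideal with $t^{\univ} \bmod J = \tr(\rhob_0)$, then the quotient map $R^{\pd}_{\rhob_0} \to R^{\pd}_{\rhob_0}/J$ factors through $\FF$, which forces $J = m_{R^{\pd}_{\rhob_0}}$ by universality. Set $R := R^{\pd}_{\rhob_0}/J$, an object of $\mathcal{C}_0$ (it is a quotient of $R^{\pd}_{\rhob_0}$, hence an $\FF$-algebra since $\FF = R^{\pd}_{\rhob_0}/m$, a local complete Noetherian $\FF$-algebra with residue field $\FF$), and let $\bar t : G \to R$ be the pseudo-character induced by $t^{\univ}$. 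The hypothesis says precisely that $\bar t$ equals the composition of $\tr(\rhob_0) : G \to \FF$ with the structure map $\FF \hookrightarrow R$; that is, $\bar t$ takes values in the subfield $\FF \subseteq R$ and, as a pseudo-character, is the pushforward of $\tr(\rhob_0)$ along $\FF \to R$.

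The key step is then to invoke the universal property of $R^{\pd}_{\rhob_0}$ (equivalently, of $t^{\univ}$) in the category $\mathcal{C}_0$. Since $\tr(\rhob_0) : G \to \FF$ is itself a deformation of $\tr(\rhob_0)$ to the object $\FF \in \mathcal{C}_0$, there is a unique morphism $R^{\pd}_{\rhob_0} \to \FF$ in $\mathcal{C}_0$ pushing $t^{\univ}$ to $\tr(\rhob_0)$ — and this is just the reduction map $\pi : R^{\pd}_{\rhob_0} \to R^{\pd}_{\rhob_0}/m = \FF$. Now the composite $\iota \circ \pi : R^{\pd}_{\rhob_0} \to \FF \to R$ (where $\iota : \FF \to R$ is the structure map) is a morphism in $\mathcal{C}_0$ pushing $t^{\univ}$ to $\bar t$. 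On the other hand, the quotient map $q : R^{\pd}_{\rhob_0} \to R$ also pushes $t^{\univ}$ to $\bar t$, by the definition of $\bar t$. By the uniqueness clause in the universal property, $q = \iota \circ \pi$. Hence $J = \ker q = \ker(\iota \circ \pi) = \ker \pi = m_{R^{\pd}_{\rhob_0}}$, since $\iota$ is injective. This shows $J$ is the maximal ideal.

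I do not expect any serious obstacle here; the statement is essentially a formal consequence of the universal property, once one checks the small points carefully. The only thing to be a little careful about is that the universal property is being applied in $\mathcal{C}_0$ rather than $\mathcal{C}$: this is legitimate because $R^{\pd}_{\rhob_0}$ represents $\bar D_{\rhob_0}$ on $\mathcal{C}_0$ (the functor of pseudo-character deformations restricted to $\FF$-algebras), and $R = R^{\pd}_{\rhob_0}/J$ together with $\FF$ are both objects of $\mathcal{C}_0$. One should also note explicitly that a pseudo-character deforming $\tr(\rhob_0)$ to $R$ is the same datum as a point of $\bar D_{\rhob_0}(R)$ — here I am using that $p$ is odd, so (as recalled in subsection~\ref{pdsubsec}) the pseudo-representation $(\bar t, \bar d)$ is recovered from $\bar t$ alone, so that working with pseudo-characters is harmless. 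With these remarks in place the argument is complete.
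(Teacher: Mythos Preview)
Your proof is correct and follows essentially the same approach as the paper: both compare the quotient map $R^{\pd}_{\rhob_0} \to R^{\pd}_{\rhob_0}/J$ with the composite $R^{\pd}_{\rhob_0} \to \FF \to R^{\pd}_{\rhob_0}/J$, observe that both push $t^{\univ}$ to the same pseudo-character, and conclude equality by the universal property. Your version is slightly more explicit about the category-theoretic bookkeeping, but the argument is the same.
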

\begin{proof}
Let $f : R^{\pd}_{\rhob_0} \twoheadrightarrow R^{\pd}_{\rhob_0}/J$ be the natural surjective homomorphism. Let $g : R^{\pd}_{\rhob_0} \to R^{\pd}_{\rhob_0}/J$ be the morphism obtained by composing the natural surjective morphism $R^{\pd}_{\rhob_0} \to \FF$ with the map $\FF \to R^{\pd}_{\rhob_0}/J$ giving the $\FF$-algebra structure on $R^{\pd}_{\rhob_0}/J$. As $t^{\univ} \pmod{J} = \tr(\rhob_0)$, we see that $f \circ t^{\univ} = g \circ t^{\univ}$. Hence, by the universality of $R^{\pd}_{\rhob_0}$, we get that $f=g$. Therefore, we get that $J$ is the maximal ideal of $R^{\pd}_{\rhob_0}$.
\end{proof}

Before proceeding further, we introduce some more notation. Let $G^{\text{ab}}$ denote the continuous abelianization of $G$. 
\begin{lem}
\label{redlem}
Let $J$ be an ideal of $R^{\pd}_{\rhob_0}$ such that $t^{\univ} \pmod{J}$ is reducible. If $H^2(G,1) =0$ and $\dim(H^1(G,1))=k$, then $\dim(\tan(R^{\pd}_{\rhob_0}/J)) \leq 2k$ and Krull dimension of $R^{\pd}_{\rhob_0}/J$ is at most $2k$.
\end{lem}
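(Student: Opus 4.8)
The plan is to reduce both statements to standard facts about deformations of characters. Since $t^{\univ} \pmod{J}$ is reducible, by definition there exist continuous characters $\eta_1, \eta_2 : G \to (R^{\pd}_{\rhob_0}/J)^*$ deforming $\chi_1, \chi_2$ respectively, with $t^{\univ} \pmod{J} = \eta_1 + \eta_2$. First I would observe that such a pair $(\eta_1,\eta_2)$ is determined by a pair of deformations of characters, and that the deformation functor of a character $\chi_i$ of $G$ (in the category $\mathcal{C}$, with fixed residual determinant situation) is pro-represented by a ring whose tangent space is $H^1(G, 1)$ — this uses the hypothesis $H^2(G,1)=0$, which guarantees the deformation problem for each character is unobstructed, so the universal deformation ring of $\chi_i$ is a power series ring $W(\FF)[[x_1,\dots,x_k]]$ in $k = \dim H^1(G,1)$ variables (equivalently $\eta_i$ factors through the $p$-completed abelianization, which by the remark preceding the lemma is a torsion-free $\ZZ_p$-module of rank $k$, modulo a rank coming from the finiteness).

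The key step is then to produce a surjection from the completed tensor product $\mathcal{D}_{\chi_1} \,\widehat\otimes_{W(\FF)}\, \mathcal{D}_{\chi_2}$ of the two character deformation rings onto $R^{\pd}_{\rhob_0}/J$. Indeed, the pair $(\eta_1, \eta_2)$ over $R^{\pd}_{\rhob_0}/J$ gives, by universality of each $\mathcal{D}_{\chi_i}$, a map $\mathcal{D}_{\chi_1}\,\widehat\otimes\,\mathcal{D}_{\chi_2} \to R^{\pd}_{\rhob_0}/J$; I would argue this is surjective because $R^{\pd}_{\rhob_0}/J$ is topologically generated over $W(\FF)$ by the values $t^{\univ}(g) \pmod J = \eta_1(g) + \eta_2(g)$ together with the values $d^{\univ}(g) \pmod{J} = \eta_1(g)\eta_2(g)$ (recall $p$ is odd so the pseudo-character determines the pseudo-representation), and all of these lie in the image. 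Since $\mathcal{D}_{\chi_i} \simeq W(\FF)[[x_1,\dots,x_k]]$ for each $i$, the source has Krull dimension $1 + 2k$ and tangent space dimension $2k$; hence $R^{\pd}_{\rhob_0}/J$, being a quotient, has Krull dimension $\le 1 + 2k$ and $\dim \tan \le 2k$.

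The remaining subtlety — and what I expect to be the main obstacle — is getting the Krull dimension bound down from $1+2k$ to $2k$ as claimed. This is exactly the point where one must use that $R^{\pd}_{\rhob_0}/J$ is an $\FF$-algebra's worth better than naively expected, or rather: although $R^{\pd}_{\rhob_0}$ is a $W(\FF)$-algebra, the relation $d^{\univ}(g) = \frac{t^{\univ}(g)^2 - t^{\univ}(g^2)}{2}$ forces $\eta_1\eta_2 = \det$-lift is already pinned down by $t^{\univ}$, so in fact only \emph{one} of the two characters is a free parameter up to the constraint that their product is the fixed determinant deformation — wait, the determinant itself deforms. More carefully: the correct statement is that the character deformation ring records deformations of $\chi_i$ with no determinant constraint, so one genuinely has two copies, but $p$ is not inverted, so one checks that $R^{\pd}_{\rhob_0}/J$ is actually a quotient of $W(\FF)[[x_1,\dots,x_{2k}]]$ (not $W(\FF)[[x_1,\dots,x_{2k}]]$ itself with $W(\FF)$ contributing a dimension), because the two character deformations agree modulo $p$ with a common reduction and the $\ZZ_p$-extension directions overlap appropriately; concretely $\dim(\tan) = 2k$ bounds Krull dimension by $2k$ via the standard inequality $\dim R \le \dim \tan(R)$ for a Noetherian local ring. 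I would therefore conclude the Krull dimension bound directly from the tangent space bound, $\dim(R^{\pd}_{\rhob_0}/J) \le \dim \tan(R^{\pd}_{\rhob_0}/J) \le 2k$, which sidesteps the subtlety entirely.
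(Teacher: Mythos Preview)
Your proposal is correct and is essentially the paper's argument in a more functorial packaging: both show that the maximal ideal of $R^{\pd}_{\rhob_0}/J$ is generated by at most $2k$ elements coming from the two character deformations. The paper does this concretely by choosing topological generators $g_1,\dots,g_k$ of the pro-$p$ completion of $G^{\mathrm{ab}}$, writing $\tilde\chi_j(g_i)=\chi_j(g_i)(1+x_{i,j})$, and then invoking Lemma~\ref{maxlem} to conclude that the $x_{i,j}$ generate the maximal ideal; your surjection from $\mathcal{D}_{\chi_1}\,\widehat\otimes\,\mathcal{D}_{\chi_2}$ encodes exactly the same information.

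One slip worth fixing: $R^{\pd}_{\rhob_0}$ is by definition the characteristic-$p$ pseudo-deformation ring (an object of $\mathcal{C}_0$), so the character deformation rings should be taken over $\FF$, giving $\mathcal{D}_{\chi_i}\simeq\FF[[x_1,\dots,x_k]]$ directly. There is no $W(\FF)$-direction here and hence no ``$1+2k$ versus $2k$'' issue to resolve; your closing observation that Krull dimension is bounded by the tangent-space dimension for a Noetherian local ring already gives the clean conclusion, and the intervening paragraph can simply be deleted.
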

\begin{proof}
Denote $R^{\pd}_{\rhob_0}/J$ by $R$ and $t^{\univ} \pmod{J}$ by $t'$ for the rest of the proof. Suppose $t'=\tilde\chi_1+\tilde\chi_2$, where $\tilde\chi_1$, $\tilde\chi_2 : G \to R^*$ are characters deforming $\chi_1$ and $\chi_2$, respectively.

As $H^2(G,1)=0$ and $\dim(H^1(G,1))=k$, we see that $\varprojlim_{i}G^{\text{ab}}/(G^{\text{ab}})^{p^i} \simeq \prod_{i=1}^{k}\ZZ_p$. Let $\{g_1,\cdots,g_k\}$ be a set of topological generators of the abelian pro-$p$ group  $\varprojlim_{i}G^{\text{ab}}/(G^{\text{ab}})^{p^i}$. For all $1 \leq i \leq k$, there exist $x_i$, $y_i \in R$ such that  $\tilde\chi_1(g_i) = \chi_1(g)(1+x_i)$ and $\tilde\chi_2(g_i) = \chi_2(g)(1+y_i)$. Let $I$ be the ideal of $R$ generated by the set $\{x_1,\cdots,x_k,y_1,\cdots,y_k\}$.

Since $\{g_1,\cdots,g_k\}$ is a set of topological generators of $\varprojlim_{i}G^{\text{ab}}/(G^{\text{ab}})^{p^i}$, we see that $t' \pmod{I} = \tr(\rhob_0)$. So, by Lemma~\ref{maxlem}, the kernel of the natural surjective map $R^{\pd}_{\rhob_0} \to R/I$ is the maximal ideal of $R^{\pd}_{\rhob_0}$ and hence, $I$ is the maximal ideal of $R$. This proves the claim about $\dim(\tan(R))$. The claim about the Krull dimension of $R$ follows directly from  $\dim(\tan(R)) \leq 2k$.
\end{proof}

\begin{rem}
Comparing Lemma~\ref{redlem} and Lemma~\ref{tandimlem}, we see that if $H^2(G,1)=0$, $H^1(G,\chi) \neq 0$ and $H^1(G,\chi^{-1}) \neq 0$, then $t^{\univ}$ is not reducible.
\end{rem}

\begin{rem}
Note that Lemma~\ref{redlem} is also true when $H^2(G,1) \neq 0$ but we don't prove it here as we will mostly restrict ourselves to the case $H^2(G,1)=0$ in what follows.
\end{rem}

\subsection{Deformation rings of reducible non-split representations}
\label{nonsplitsubsec}

We have $\rhob_0 = \chi_1 \oplus \chi_2$ for some \emph{distinct} characters $\chi_1$, $\chi_2  : G \to \FF^\times$. Let $\chi = \chi_1/\chi_2$. Thus, $\chi : G \to \FF^\times$ is a non-trivial character. For a non-zero element $x \in H^1(G, \chi)$, denote by $\rhob_x$ the corresponding representation of $G$. So $\rhob_x : G \to \GL_2(\FF)$ is such that $\rhob_x = \begin{pmatrix} \chi_1 & *\\ 0 & \chi_2 \end{pmatrix}$ where $*$ corresponds to $x$. Similarly, for a non-zero element  $y \in H^1(G, \chi^{-1})$, denote by $\rhob_y$ the corresponding representation of $G$.

Let $x \in H^1(G, \chi^i)$ with $i \in \{1,-1\}$ be a non-zero element. Denote by $\calR^{\defo}_{\rhob_x}$ the universal deformation ring of $\rhob_x$ in the category $\mathcal{C}$ in the sense of Mazur (\cite{M}). Note that, for a non-zero $x \in H^1(G, \chi^i)$ with $i \in \{1,-1\}$, the centralizer of the image of $\rhob_x$ is exactly the set of scalar matrices as $\chi \neq 1$. Hence, the existence of $\calR^{\defo}_{\rhob_x}$ follows from \cite{M} and \cite{Ra}. Let $R^{\defo}_{\rhob_x}$ be the universal deformation ring of $\rhob_x$ in characteristic $p$. So we have $\calR^{\defo}_{\rhob_x}/(p) \simeq R^{\defo}_{\rhob_x}$. Let $\rho^{\univ}_x : G \to \GL_2(\calR^{\defo}_{\rhob_x})$ be the universal deformation of $\rhob_x$.

We will frequently specialize to the case where $G=G_{\QQ,Np}$. However, even after specializing to this case, we will keep using the notation introduced above unless mentioned otherwise.

\begin{lem}
\label{shortlem}
Let $x \in H^1(G,\chi^i)$, with $i \in \{1,-1\}$, be a non-zero element. Let $\dim (H^1(G, \chi^i)) =m$, $\dim(H^1(G,\chi^{-i})) =n$ and $\dim (H^1(G,1)) = k$. Then $\dim(H^1(G,\ad(\rhob_x))) = \dim(\tan(R^{\defo}_{\rhob_x})) \leq m+n+2k-1$.
\end{lem}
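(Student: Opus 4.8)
The plan is to compute $\dim H^1(G,\ad(\rhob_x))$ by decomposing $\ad(\rhob_x)$ as a $G$-module with respect to the filtration coming from the fact that $\rhob_x$ is an upper-triangular (non-split) extension. Without loss of generality take $i = 1$, so $\rhob_x = \begin{pmatrix} \chi_1 & * \\ 0 & \chi_2\end{pmatrix}$. Writing $\ad(\rhob_x) = \mathrm{End}_\FF(\rhob_x)$ with its conjugation action, there is a natural $G$-stable filtration of $\ad(\rhob_x)$ whose graded pieces, for the block structure, consist of: the scalars (trivial module $\mathbf{1}$, appearing with multiplicity $2$ in the diagonal part), the off-diagonal "lower" piece $\chi^{-1}$, and the off-diagonal "upper" piece $\chi$. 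More precisely I would use the $G$-stable subspace $V_1$ of matrices that are upper triangular together with the line of strictly-upper-triangular matrices; one gets a short exact sequence $0 \to W \to \ad(\rhob_x) \to \chi^{-1} \to 0$ where $W$ is an extension with graded pieces $\mathbf{1}, \mathbf{1}, \chi$ (here the non-splitness of $\rhob_x$ makes $W$ a non-trivial extension, which is exactly what will let us win the $-1$).

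The first step is to write down the relevant exact sequences carefully and take the long exact sequence in continuous cohomology. From $0 \to W \to \ad(\rhob_x) \to \chi^{-1} \to 0$ we get
\[
\dim H^1(G,\ad(\rhob_x)) \le \dim H^1(G,W) + \dim H^1(G,\chi^{-1}) = \dim H^1(G,W) + n,
\]
using that $H^0 \to H^0$ is not needed for an inequality but noting $H^1(G,W) \to H^1(G,\ad) \to H^1(G,\chi^{-1})$ is exact. Then I would analyze $W$, which sits in $0 \to \mathbf{1} \to W \to W' \to 0$ with $W'$ an extension of $\chi$ by $\mathbf{1}$; in fact $W' = \rhob_x \otimes \chi_1^{-1}$ essentially, i.e. $W'$ realizes the class $x \in H^1(G,\chi)$ and is a \emph{non-split} extension. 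So $\dim H^1(G,W) \le \dim H^1(G,\mathbf{1}) + \dim H^1(G,W') = k + \dim H^1(G,W')$, and from $0 \to \mathbf{1}\to W' \to \chi \to 0$ the connecting map $H^0(G,\chi) \to H^1(G,\mathbf 1)$ — wait, $\chi$ nontrivial so $H^0(G,\chi)=0$ — rather I use $H^0(G,\chi) = 0$ and the piece of the long exact sequence $H^0(G,\chi) \to H^1(G,\mathbf 1) \to H^1(G,W') \to H^1(G,\chi) \to H^2(G,\mathbf 1)$. Here is where the $-1$ comes in: the map $H^1(G,W') \to H^1(G,\chi)$ has image contained in the subspace killed by cup product with the class $x$ (the extension class of $W'$), equivalently the image lands in a codimension-$\ge 1$ subspace because $x$ itself is \emph{not} in the image — the obstruction to lifting $x$ to $H^1(G,W')$ is $x \cup x$ or more simply $x$ maps to a nonzero element of $H^2$ under the connecting homomorphism is not what we want; the clean statement is that $x \notin \mathrm{Im}(H^1(G,W') \to H^1(G,\chi))$ since $W'$ being non-split means $\delta(x) = x \cup [\text{class of } W'] \ne 0$... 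I would instead argue directly: $\mathrm{Im}(H^1(G,W')\to H^1(G,\chi))$ is the kernel of the connecting map $H^1(G,\chi) \to H^2(G,\mathbf 1)$ given by cup product with $x$, and since $x \cup x$ need not vanish this is where care is needed. The robust route, and the one I'd actually carry out, is: $\dim H^1(G,W') \le \dim H^1(G,\mathbf 1) + \dim(\ker: H^2(G,\mathbf 1))$ handled via $H^1(G,W') \le k + (n-1)$ using that the composite $H^1(G,\mathbf 1) \hookrightarrow H^1(G,W')$ is injective (as $H^0(G,\chi)=0$) and the quotient $H^1(G,W')/H^1(G,\mathbf 1) \hookrightarrow H^1(G,\chi)$ is \emph{proper} because $W'$ is non-split (if it were all of $H^1(G,\chi)$, then $x$ would lift, forcing $W'$ to be a split-compatible extension — contradiction with non-splitness). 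Assembling: $\dim H^1(G,\ad(\rhob_x)) \le n + k + (k + (n-1)) = m + n + 2k - 1$?

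Let me recount the bookkeeping, since the $m$ must appear. Reversing the order of the filtration: take $0 \to \chi \to \ad(\rhob_x) \to W'' \to 0$ where $W''$ has pieces $\mathbf 1, \mathbf 1, \chi^{-1}$. This contributes $\dim H^1(G,\chi) = m$ from the sub. Then $W''$ contributes, by the same non-splitness argument applied to its $\chi^{-1}$-quotient piece (which realizes $x$ viewed appropriately, still non-split), at most $2k + (n - 1)$. Total: $m + 2k + n - 1$, as claimed. So the steps are: (1) set up the two-step $G$-filtration of $\ad(\rhob_x)$ with sub $\cong \chi^i$ and the complementary extension $W''$; (2) bound $\dim H^1(G,\chi^i) = m$; (3) bound $\dim H^1(G,W'')$ using $0 \to \mathbf 1 \to W'' \to W''' \to 0$ (contributing $k$) and then $0 \to \mathbf 1 \to W''' \to \chi^{-i} \to 0$, where non-splitness of the original extension forces the map $H^1(G,W''') \to H^1(G,\chi^{-i})$ to have image of codimension at least $1$, giving $\le k + (n-1)$; (4) add up. The equality $\dim H^1(G,\ad(\rhob_x)) = \dim \tan(R^{\defo}_{\rhob_x})$ is the standard deformation-theory identification (Mazur), valid because $\rhob_x$ has scalar centralizer, which was noted just above the lemma in subsection~\ref{nonsplitsubsec}.

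The main obstacle is pinning down \emph{precisely} why non-splitness of $\rhob_x$ forces a dimension drop of exactly $1$ — i.e. that the connecting homomorphism $\delta : H^1(G,\chi^{-i}) \to H^2(G,\mathbf 1)$ (cup product with the extension class $x$) is nonzero, or equivalently that $x \notin \ker \delta$. The cleanest argument is: $\ker \delta = \mathrm{Im}(H^1(G,W''') \to H^1(G,\chi^{-i}))$, and if $x$ (the class defining $\rhob_x$, now inside $H^1(G,\chi^{-i})$ after the identifications — here one must be careful about whether it's $x$ or the "dual" class; for $i=1$ the relevant quotient piece of $W''$ is $\chi^{-1}$ and the extension class governing $W'''$ is the image of $x$ under a natural map, which is again $x$-like) were in this image, one could split off a compatible sub-extension contradicting that the centralizer of $\rhob_x$ is scalar. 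I would present this via the explicit description: lifting $x$ to $H^1(G,W''')$ would produce a nonzero nilpotent element in the centralizer algebra of $\rhob_x$ inside $M_2(\FF)$, contradicting scalarity. That scalar-centralizer fact is exactly what makes the $-1$ appear and is available from the discussion preceding the lemma.
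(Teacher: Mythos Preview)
Your filtration idea is sound and can be made to work, but you have misidentified where the crucial $-1$ comes from, and as written the argument has a genuine gap.

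You claim the $-1$ arises at the \emph{last} step of the filtration: from $0 \to \mathbf 1 \to W''' \to \chi^{-i} \to 0$ you assert that the map $H^1(G,W''') \to H^1(G,\chi^{-i})$ has image of codimension $\ge 1$, i.e.\ that the connecting homomorphism $\delta: H^1(G,\chi^{-i}) \to H^2(G,\mathbf 1)$ (cup product with the extension class $[W''']$) is nonzero. But nothing forces this. Indeed, under the hypothesis $H^2(G,\mathbf 1)=0$ --- which is precisely the running assumption in Lemma~\ref{tandimlem}, Proposition~\ref{repnprop}, Lemma~\ref{strlem}, and Theorem~\ref{generalthm} --- this map is surjective and you get $\dim H^1(G,W''') = k+n$, not $k+n-1$. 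Your attempted justification (``lifting $x$ to $H^1(G,W''')$ would produce a nilpotent centralizer element'') is a type mismatch: $x$ lives in $H^1(G,\chi^{i})$, while the map in question has target $H^1(G,\chi^{-i})$; there is no canonical element of the latter to test.

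The $-1$ actually comes from the \emph{first} step. In the paper's argument one splits off the trace (using $p$ odd) to write $\ad(\rhob_x)=\mathbf 1\oplus\ad^0(\rhob_x)$ and then uses $0\to\chi^i\to\ad^0(\rhob_x)\to V\to 0$. Since the centralizer of $\rhob_x$ is scalar, $H^0(G,\ad^0(\rhob_x))=0$, while $\dim H^0(G,V)=1$; hence the connecting map $H^0(G,V)\to H^1(G,\chi^i)$ is injective, killing one dimension from $H^1(G,\chi^i)$. Combined with the trivial bound $\dim H^1(G,V)\le k+n$ from $0\to\mathbf 1\to V\to\chi^{-i}\to 0$, this gives $\dim H^1(G,\ad^0)\le (m-1)+(k+n)$ and the result follows. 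If you prefer your filtration $0\to\chi^i\to\ad(\rhob_x)\to W''\to 0$ without first splitting off the trace, the same mechanism works: one checks directly that $\dim H^0(G,W'')=2$ while $\dim H^0(G,\ad(\rhob_x))=1$, so the connecting map $H^0(G,W'')\to H^1(G,\chi^i)$ has one-dimensional image, again yielding the $-1$ at this stage and not later.
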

\begin{proof}
Recall that $\dim(\tan(R^{\defo}_{\rhob_x})) = \dim H^1(G,\text{ad}(\rhob_x))$ (see \cite{M}). As $p$ is odd, $\text{ad}(\rhob_x) = 1 \oplus \text{ad}^0(\rhob_x)$. We have the following two exact sequences of $G$-modules:
\begin{enumerate}
\item $0 \to \chi^i \to \text{ad}^0(\rhob_x) \to V \to 0$,
\item $0 \to 1 \to V \to \chi^{-i} \to 0$.
\end{enumerate}
So, from the second short exact sequence, we get $\dim(H^1(G,V)) \leq \dim(H^1(G,1)) + \dim(H^1(G,\chi^{-i}))=k+n$.
Since $\dim(H^0(G, V))=1$, the exact sequence of cohomology groups arising from the first short exact sequence gives $\dim(H^1(G,\text{ad}^0(\rhob_x))) \leq \dim(H^1(G,V)) + \dim(H^1(G,\chi^{i}))-1$. 
Combining these two inequalities, we get that $\dim(H^1(G,\text{ad}^0(\rhob_x))) \leq k+m+n-1$ and hence, $\dim(H^1(G,\text{ad}(\rhob_x)))=\dim(H^1(G,\text{ad}^0(\rhob_x)))+\dim(H^1(G,1))=\dim(H^1(G,\text{ad}^0(\rhob_x)))+k \leq 2k+m+n-1$.
\end{proof}

\begin{lem}
Suppose $\dim(H^1(G,\chi)) =1$. Then for any non-zero $x,x' \in H^1(G,\chi)$, $\calR^{\defo}_{\rhob_x} \simeq \calR^{\defo}_{\rhob_{x'}}$.
\end{lem}
\begin{proof}
As $\dim(H^1(G,\chi)) =1$, if $x$, $x' \in H^1(G,\chi)$ are both non-zero, then $x'=ax$ for some non-zero $a \in \FF$. Therefore, by conjugating $\rhob_x$ by the matrix $\begin{pmatrix} a & 0\\ 0 & 1\end{pmatrix}$, we get $\rhob_{x'}$. Hence, we see that $\calR^{\defo}_{\rhob_x} \simeq \calR^{\defo}_{\rhob_{x'}}$.
\end{proof}

Note that given any non-zero element $x \in H^1(G, \chi^i)$ with $i \in \{1,-1\}$, one has a map $\Psi_x : \calR^{\pd}_{\rhob_0} \to \calR^{\defo}_{\rhob_x}$ induced by the trace of $\rho^{\univ}_x$. We now recall a result due to Kisin (\cite[Corollary $1.4.4(2)$]{Ki}) on the nature of the map $\Psi_x$:
\begin{lem}
\label{surjlem}
If $\dim(H^1(G,\chi^i)) =1$ for some $i \in \{1,-1\}$ and $x \in H^1(G,\chi^i)$ is a non-zero element, then the map $\Psi_x : \calR^{\pd}_{\rhob_0} \to \calR^{\defo}_{\rhob_x}$ is surjective.
\end{lem}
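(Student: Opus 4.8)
The plan is to exhibit the map $\Psi_x : \calR^{\pd}_{\rhob_0} \to \calR^{\defo}_{\rhob_x}$ as a surjection by checking surjectivity on tangent spaces (cotangent spaces) and then invoking completeness and Nakayama. Concretely, one knows that a local morphism of complete Noetherian local $W(\FF)$-algebras with residue field $\FF$ is surjective if and only if the induced map on mod-$m^2$ quotients (equivalently, on tangent spaces) is surjective; so it suffices to prove that the dual map $\tan(\calR^{\defo}_{\rhob_x}) \to \tan(\calR^{\pd}_{\rhob_0})$ is injective, or rather that the map $\tan(\calR^{\defo}_{\rhob_x})^\vee \to \tan(\calR^{\pd}_{\rhob_0})^\vee$ hits everything. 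The reference given is Kisin \cite[Corollary 1.4.4(2)]{Ki}, so the intended argument is surely the deformation-theoretic one that I sketch next.

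First I would set up the two tangent spaces explicitly. The tangent space of $\calR^{\defo}_{\rhob_x}$ is $H^1(G,\ad(\rhob_x))$, which as in Lemma~\ref{shortlem} sits in the filtration coming from $0 \to \chi^i \to \ad^0(\rhob_x) \to V \to 0$ and $0 \to 1 \to V \to \chi^{-i} \to 0$ (together with the split-off piece $\ad = 1 \oplus \ad^0$). The tangent space of $\calR^{\pd}_{\rhob_0}$, on the other hand, is the space of first-order pseudo-deformations of $\tr(\rhob_0)$; a first-order deformation $\rho = \rhob_x + \varepsilon(\cdots)$ of $\rhob_x$ maps under $\Psi_x$ to the pseudo-deformation $(\tr\rho, \det\rho)$. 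A tangent vector in $\tan(\calR^{\pd}_{\rhob_0})$ records, roughly, a deformation of the pair of diagonal characters $(\chi_1,\chi_2)$ together with a first-order ``reducibility obstruction'' measured by the product $H^1(G,\chi^i) \otimes H^1(G,\chi^{-i})$ (this is exactly the content of Lemma~\ref{tandimlem}: $\dim\tan(R^{\pd}_{\rhob_0}) = 2k + mn$ when $H^2(G,1)=0$).

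The key step is the following: surjectivity of $\Psi_x$ on tangent spaces amounts to showing that every such pseudo-deformation datum is realized by an honest deformation of $\rhob_x$. The deformations of the two diagonal characters are clearly all realized (deform $\rhob_x$ by an upper-triangular cocycle twisting the diagonal). The crucial point is the ``off-diagonal'' part: because $\dim H^1(G,\chi^i)=1$, the extension class of $\rhob_x$ already uses up the unique direction in $H^1(G,\chi^i)$, and one checks that the cup product $H^1(G,\chi^i) \times H^1(G,\chi^{-i}) \to H^2(G,1)$ paired against the fixed class $x$ produces, via the lower-left entry of a GMA deformation, precisely the reducibility-direction part of $\tan(\calR^{\pd}_{\rhob_0})$; since $x \neq 0$ spans $H^1(G,\chi^i)$, pairing against it is as surjective onto $H^1(G,\chi^{-i})^\vee$-worth of directions as possible, and this is exactly what is needed. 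Put differently: the image of $\Psi_x$ on tangent vectors contains all $2k$ diagonal directions and all $mn = 1\cdot n = n$ mixed directions, which is everything. Then Nakayama's lemma (applied to the finitely generated $\calR^{\pd}_{\rhob_0}$-module $\calR^{\defo}_{\rhob_x}$, or directly to the cotangent comparison) upgrades tangent-level surjectivity to surjectivity of $\Psi_x$.

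The main obstacle I expect is the bookkeeping in the ``crucial step'': carefully identifying, for a first-order deformation of $\rhob_x$ written in GMA / upper-triangular-plus-$\varepsilon$ form, which piece of its associated pseudo-character corresponds to which summand of $\tan(\calR^{\pd}_{\rhob_0})$, and verifying that the map is surjective onto the $H^1(G,\chi^{-i})$-factor rather than just landing in a proper subspace — this is where the hypothesis $\dim H^1(G,\chi^i)=1$ is genuinely used (it makes the extension $\rhob_x$ unique up to scaling, so no ``choice of $x$'' obstructs the surjection, and it forces the mixed part of the pseudo-tangent space to have dimension exactly $n$, matching what the deformations of $\rhob_x$ can see). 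Since this is precisely Kisin's \cite[Corollary 1.4.4(2)]{Ki}, in the write-up I would simply cite it after recording the tangent-space computation, rather than reproving the cup-product analysis from scratch.
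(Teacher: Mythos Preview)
The paper gives no proof of this lemma at all: it is stated as a known result of Kisin and simply cited as \cite[Corollary~1.4.4(2)]{Ki}. Since your proposal ends with exactly that citation, your bottom line matches the paper.

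That said, the sketch you give before the citation has a genuine directional error. For a local map $\Psi_x:\calR^{\pd}_{\rhob_0}\to\calR^{\defo}_{\rhob_x}$ of complete Noetherian local $W(\FF)$-algebras, surjectivity is equivalent to \emph{injectivity} of the induced map $\tan(\calR^{\defo}_{\rhob_x})\to\tan(\calR^{\pd}_{\rhob_0})$, i.e.\ to the statement ``a first-order deformation of $\rhob_x$ whose trace is the trivial pseudo-deformation is itself trivial.'' You state this correctly at first, but then your ``key step'' paragraph argues for \emph{surjectivity} of that tangent map (``every pseudo-deformation datum is realized by an honest deformation of $\rhob_x$''), which is the wrong direction and would instead go toward injectivity of $\Psi_x$.

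Your line of attack can be repaired by a dimension count: if you prove surjectivity of $\tan(\calR^{\defo}_{\rhob_x})\to\tan(\calR^{\pd}_{\rhob_0})$ and combine it with the inequality $\dim\tan(\calR^{\defo}_{\rhob_x})\le 2k+m+n-1=2k+n$ from Lemma~\ref{shortlem} and the equality $\dim\tan(\calR^{\pd}_{\rhob_0})=2k+n$ from Lemma~\ref{tandimlem} (here $m=1$), then the map is a bijection and you are done. But Lemma~\ref{tandimlem} needs $H^2(G,1)=0$, which is \emph{not} among the hypotheses of Lemma~\ref{surjlem}. So your sketch, as written, proves a weaker statement than the one claimed. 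The direct injectivity argument (deformation determined by its trace when $\dim H^1(G,\chi^i)=1$) avoids this extra hypothesis, and is what one should extract from Kisin.
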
 

\subsection{Some additional results for Galois groups}
\label{galsubsec}
We now turn our attention to the case when $G=G_{\QQ,Mp}$ for some integer $M$ and state some results which will be used later. Throughout this subsection, we assume that $N$ is an integer not divisible by $p$, $\rhob_0 : G_{\QQ,Np} \to \GL_2(\FF)$ is odd and $\rhob_0 = \chi_1 \oplus \chi_2$ where $\chi_i : G_{\QQ,Np} \to \FF^{\times}$ is a character for $i=1,2$.
\subsubsection{Dimension of certain Galois cohomology groups}
We begin by computing dimension of certain Galois cohomology groups. These computations will be used later mainly to compute dimensions of tangent spaces of deformation and pseudo-deformation rings.

\begin{lem}
\label{cohomlem}
 Let $\ell$ be a prime such that $\ell \nmid Np$. Let $\chi : G_{\QQ,Np} \to \FF^{\times}$ be an odd character. Then, the following holds:
\begin{enumerate}
\item If $p \nmid \phi(N)$, then $\dim (H^1(G_{\QQ,Np},1))=1$ and  $\dim (H^2(G_{\QQ,Np},1))=0$,
\item $\dim (H^1(G_{\QQ,Np},\chi)) > 0$ and $\dim (H^2(G_{\QQ,Np},\chi))= \dim (H^1(G_{\QQ,Np},\chi))-1$,
\item If $\dim (H^1(G_{\QQ,Np},\chi))=1$ and $\chi|_{G_{\QQ_\ell}} = \omega_p$, then $\dim (H^1(G_{\QQ,N\ell p},\chi))=2$,
\item If $\dim (H^1(G_{\QQ,Np},\chi))=1$ and $\chi|_{G_{\QQ_\ell}} \neq \omega_p$, then $\dim (H^1(G_{\QQ,N\ell p},\chi))=1$,
\item $\dim(H^1(G_{\QQ,N\ell p},\chi)) - \dim(H^1(G_{\QQ,N p},\chi)) \leq 1$.
\end{enumerate}
\end{lem}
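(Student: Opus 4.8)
The plan is to compute each cohomology group via the global Euler characteristic formula and local–global comparison, exploiting that $\chi$ is odd throughout. For part (1), the triviality of $H^2(G_{\QQ,Np},1)$ under $p \nmid \phi(N)$ follows from class field theory: $H^1(G_{\QQ,Np},1) = \Hom(G_{\QQ,Np}^{\mathrm{ab}},\FF)$, and the maximal pro-$p$ quotient of $G_{\QQ,Np}^{\mathrm{ab}}$ is controlled by the $p$-part of the ray class group of conductor $Np$; when $p \nmid \phi(N)$ only the cyclotomic $\ZZ_p$-extension contributes, giving a free $\ZZ_p$-module of rank $1$, hence $\dim H^1 = 1$ and $\dim H^2 = 0$ (equivalently, this is the standard fact that $\QQ$ has no unramified-outside-$Np$ abelian pro-$p$ extensions beyond the cyclotomic one in this case). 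For part (2), I would apply the global Euler characteristic formula
\begin{equation*}
\dim H^0(G_{\QQ,Np},\chi) - \dim H^1(G_{\QQ,Np},\chi) + \dim H^2(G_{\QQ,Np},\chi) = -\dim H^0(G_{\QQ_\infty},\chi),
\end{equation*}
using $\dim H^0(G_{\QQ,Np},\chi) = 0$ since $\chi \neq 1$, and $\dim H^0(G_{\QQ_\infty},\chi) = 1$ exactly because $\chi$ is odd (so complex conjugation acts by $-1$, and the invariants at the archimedean place vanish — wait, rather: for an odd character the local term at $\infty$ contributes $1$ to the Euler characteristic deficit). This yields $\dim H^2 = \dim H^1 - 1$, and $\dim H^1 > 0$ follows since $\dim H^1 \geq 1 + \dim H^2 \geq 1$.

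For parts (3) and (4), I would compare $H^1(G_{\QQ,N\ell p},\chi)$ with $H^1(G_{\QQ,Np},\chi)$ using the inflation–restriction sequence together with the local condition at $\ell$. Since $\chi$ is unramified at $\ell$, a class in $H^1(G_{\QQ,N\ell p},\chi)$ comes from $H^1(G_{\QQ,Np},\chi)$ precisely when its restriction to $I_\ell$ (which factors through the tame quotient, i.e. through $H^1(I_\ell^{\mathrm{tame}},\chi)$) vanishes; the relevant local group $H^1(G_{\QQ_\ell},\chi|_{G_{\QQ_\ell}})$ has dimension $\dim H^0(G_{\QQ_\ell},\chi) + \dim H^0(G_{\QQ_\ell},\chi^{-1}\omega_p)$ by the local Euler characteristic formula and local duality, and the restriction-to-inertia part has dimension $\dim H^0(G_{\QQ_\ell},\chi^{-1}\omega_p)$ — which is $1$ if $\chi|_{G_{\QQ_\ell}} = \omega_p$ and $0$ otherwise (using $p \nmid \ell - 1$ to rule out the "other" coincidence $\chi|_{G_{\QQ_\ell}} = 1$ forcing extra invariants; more precisely $p\nmid \ell-1$ and $p\nmid \ell$ give $H^0(G_{\QQ_\ell},\chi)=0$ unless $\chi$ is trivial on $G_{\QQ_\ell}$, excluded since then $\chi|_{G_{\QQ_\ell}}=\omega_p$ would force $p\mid\ell-1$). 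Feeding this into the Poitou–Tate nine-term sequence comparing the two Selmer-type groups (unramified-at-$\ell$ versus arbitrary-at-$\ell$, with the dual conditions) and using that $\dim H^1(G_{\QQ,Np},\chi)=1$ forces the global-to-local map to be as surjective as possible, I get the jump of exactly $1$ in case (3) and no jump in case (4). Part (5) is then the general statement that enlarging the ramification set by one prime $\ell$ (with $p\nmid\ell-1$) increases $\dim H^1$ by at most $\dim H^1(I_\ell^{\mathrm{tame}},\chi) \leq 1$, which is the unconditional half of the argument for (3)–(4) and needs no hypothesis on $\dim H^1(G_{\QQ,Np},\chi)$.

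The main obstacle I anticipate is bookkeeping the dual Selmer group in the Poitou–Tate sequence for parts (3)–(5): one must check that relaxing the condition at $\ell$ on $\chi$ correspondingly shrinks (or leaves unchanged) the strict Selmer group for $\chi^{-1}\omega_p$ by the right amount, and that the contribution is governed entirely by $H^0(G_{\QQ_\ell},\chi^{-1}\omega_p)$. The hypothesis $p \nmid \ell - 1$ is what makes the local cohomology at $\ell$ of a character concentrated in the expected single "ramified" direction (killing any unexpected $H^2$ or extra $H^0$), so I would isolate that local computation as a preliminary step before assembling the global comparison; the oddness of $\chi$ and the assumption $\dim H^1(G_{\QQ,Np},\chi)=1$ then pin down the global maps uniquely.
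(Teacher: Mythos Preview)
Your proposal is correct and follows essentially the same route as the paper: parts (1) and (2) via class field theory/Kronecker--Weber and the global Euler characteristic formula, and parts (3)--(5) via the Greenberg--Wiles form of Poitou--Tate duality, where the hypothesis $\dim H^1(G_{\QQ,Np},\chi)=1$ forces the dual strict Selmer group $H^1_0(G_{\QQ,Np},\chi^{-1}\omega_p)$ to vanish, so that the jump at $\ell$ is exactly $\dim H^0(G_{\QQ_\ell},\chi^{-1}\omega_p)$. The only organizational difference is that the paper treats the boundary case $\chi=\omega_p$ separately by a direct Kummer-theory count (where $\dim H^1(G_{\QQ,Np},\omega_p)$ equals one plus the number of primes dividing $N$), whereas your uniform Poitou--Tate argument covers this case as well once the local invariants are tracked carefully; your version is slightly more streamlined, the paper's slightly more explicit.
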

\begin{proof}
 As we have assumed $p \nmid \phi(N)$ in the first part, the Kronecker-Weber theorem implies that $\dim(H^1(G_{\QQ,Np},1))=1$. So, from the global Euler characteristic formula, we get $H^2(G_{\QQ,Np},1)=0$ which proves the first part.

 Since $\chi$ is assumed to be odd, the global Euler characteristic formula implies $\dim(H^1(G_{\QQ,Np},\chi)) - \dim(H^2(G_{\QQ,Np},\chi)) = 1$ which means $\dim (H^1(G_{\QQ,Np},\chi)) > 0$. This proves the second part.

 If $\chi = \omega_p$, then by Kummer theory, $\dim(H^1(G_{\QQ,Np},\omega_p)) = 1 +$ number of distinct primes dividing $N$ (see the proof of \cite[Proposition $24$]{D} and the remark after it). Thus, $\dim(H^1(G_{\QQ,N\ell p},\omega_p)) = 1 + \dim(H^1(G_{\QQ,Np},\omega_p))$. Therefore, if $\dim(H^1(G_{\QQ,Np},\omega_p)) = 1$ then $N=1$ and hence,  $\dim(H^1(G_{\QQ,N\ell p},\omega_p)) = 2$. This proves the third part for $\chi=\omega_p$.

If $\chi \neq \omega_p$ and $\chi$ is odd, then, by the Greenberg-Wiles version of the Poitou-Tate duality (\cite[Theorem $2$]{Wa}), we see that $$\dim(H^1(G_{\QQ,Np},\chi)) = \dim(H^1_0(G_{\QQ,Np},\chi^{-1}\omega_p)) + 1+ \sum_{q | Np}\dim(H^0(G_{\QQ_q},\chi^{-1}\omega_p|_{G_{\QQ_{q}}})),$$ where $$H^1_0(G_{\QQ,Np},\chi^{-1}\omega_p) =  \ker(H^1(G_{\QQ,Np},\chi^{-1}\omega_p) \to \prod_{q|Np} H^1(G_{\QQ_q},\chi^{-1}\omega_p|_{G_{\QQ_{q}}})).$$

Therefore, we get that $\dim(H^1(G_{\QQ,N\ell p},\chi)) - \dim(H^1(G_{\QQ,Np},\chi)) \leq \dim(H^0(G_{\QQ_\ell},\chi^{-1}\omega_p|_{G_{\QQ_{\ell}}})) \leq 1$ which proves the last part of the lemma.

Now from the equality above, we see that if $\dim(H^1(G_{\QQ,N p},\chi)) = 1$, then $H^1_0(G_{\QQ,Np},\chi^{-1}\omega_p) = 0$ and hence, $H^1_0(G_{\QQ,N\ell p},\chi^{-1}\omega_p) = 0$. Hence, we get $\dim(H^1(G_{\QQ,N\ell p},\chi)) - \dim(H^1(G_{\QQ,Np},\chi)) = \dim(H^0(G_{\QQ_\ell},\chi^{-1}\omega_p|_{G_{\QQ_{\ell}}}))$. This finishes the proof of the remaining parts of the lemma.
\end{proof}

\begin{lem}
\label{cohomlem2}
Suppose $p \nmid \phi(N)$. Let $\ell$ be a prime such that $\ell \nmid Np$ and $p \nmid \ell-1$. Let $\rho : G_{\QQ,Np} \to \GL_2(\FF)$ be an odd representation such that $\text{End}_{G_{\QQ,Np}}(\rho) = \FF$. Then, the following holds:
\begin{enumerate}
\item $\dim (H^2(G_{\QQ,Np},\ad(\rho)))= \dim (H^1(G_{\QQ,Np},\ad(\rho)))-3$,
\item If $p | \ell +1$, $\dim( H^1(G_{\QQ,Np},\ad(\rho))) = 3$ and $\rho|_{G_{\QQ_{\ell}}} = \eta \oplus \omega_p\eta$, then $\dim(H^1(G_{\QQ,N\ell p},\ad(\rho))) = 5$.
\end{enumerate}
\end{lem}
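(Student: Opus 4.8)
The plan is to deduce both statements from the global Euler characteristic formula together with the local-at-$\ell$ cohomology computation and the Greenberg--Wiles (Poitou--Tate) duality machinery, exactly in the style of Lemma~\ref{cohomlem}. For part (1), since $\rho$ is odd and $\text{End}_{G_{\QQ,Np}}(\rho)=\FF$, we have $H^0(G_{\QQ,Np},\ad(\rho))=\FF$, and since $\rho$ is odd the complex conjugation acts on the $3$-dimensional space $\ad^0(\rho)$ with trace $-1$, so $\dim \ad(\rho)^{c=1}=2$ where $c$ denotes complex conjugation. The global Euler characteristic formula then gives $\dim H^0 - \dim H^1 + \dim H^2 = -\dim\ad(\rho)^{c=-1} = -2$ (the dimension of the ``minus part''), hence $\dim H^1 - \dim H^2 = \dim H^0 + 2 = 3$, which is exactly the claimed identity. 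I would phrase this carefully: for a finite $G_{\QQ,Np}$-module $V$ of $p$-power order with $p$ odd, $\chi(G_{\QQ,Np},V) = -\dim V^{c=-1}$ where $c$ is a complex conjugation, applied to $V=\ad(\rho)$.

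For part (2), the strategy is first to pin down the local term at $\ell$ using the hypothesis $\rho|_{G_{\QQ_\ell}}=\eta\oplus\omega_p\eta$, then feed it into the comparison between level $N$ and level $N\ell$. Concretely, I would use the standard exact sequence relating $H^1(G_{\QQ,N\ell p},\ad(\rho))$ and $H^1(G_{\QQ,Np},\ad(\rho))$ via inflation-restriction and the local contribution at $\ell$; since $\ell\nmid Np$ and $\rho$ is unramified at $\ell$, the new deformations at $\ell$ are governed by $H^1(G_{\QQ_\ell}/I_\ell,\ldots)$ and $H^1(I_\ell,\ad(\rho))^{\text{Frob}_\ell}$, and the dimension jump is controlled by $\dim H^1(I_\ell,\ad(\rho))^{\text{Frob}_\ell}=\dim H^0(G_{\QQ_\ell},\ad(\rho)(-1))$ (the ``$-1$'' being the cyclotomic twist coming from tame inertia acting on $\ell$-power roots of unity). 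With $\rho|_{G_{\QQ_\ell}}=\eta\oplus\omega_p\eta$ one computes $\ad(\rho)|_{G_{\QQ_\ell}} = 1\oplus 1\oplus \omega_p \oplus \omega_p^{-1}$, so $\ad(\rho)(-1)|_{G_{\QQ_\ell}}=\omega_p^{-1}\oplus\omega_p^{-1}\oplus 1\oplus\omega_p^{-2}$; using $p\mid\ell+1$ (so $\omega_p(\text{Frob}_\ell)=\ell\equiv -1$, whence $\omega_p^{-2}(\text{Frob}_\ell)=1$ but $\omega_p^{-1}(\text{Frob}_\ell)=-1\neq 1$) one gets $\dim H^0(G_{\QQ_\ell},\ad(\rho)(-1))=2$. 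Then the analogue of part (5) of Lemma~\ref{cohomlem} (Greenberg--Wiles duality, using $p\nmid\phi(N)$ to control the Selmer-group comparison) shows the jump is exactly this local dimension, giving $\dim H^1(G_{\QQ,N\ell p},\ad(\rho)) = 3 + 2 = 5$.

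The step I expect to be the main obstacle is justifying that the dimension jump from level $N$ to level $N\ell$ is \emph{exactly} $\dim H^0(G_{\QQ_\ell},\ad(\rho)(-1))$ rather than merely bounded above by it --- i.e.\ showing the relevant dual Selmer group does not also grow. This is the same phenomenon as in parts (3)--(4) of Lemma~\ref{cohomlem}, and the key input is that the ``dual'' condition is a strict (unramified-everywhere, or more precisely the $0$-Selmer) condition, so enlarging the allowed ramification set does not enlarge the dual group; one then reads off the jump from the exact Greenberg--Wiles formula. I would handle this by writing the Greenberg--Wiles formula for the Selmer group of $\ad(\rho)$ with unramified conditions away from $N\ell p$ and its dual (a Selmer group for $\ad(\rho)^*\cong\ad(\rho)(1)$ with the $0$-condition at $\ell$), note that the $0$-Selmer group is insensitive to adding $\ell$ to the ramification set, and subtract the two formulas. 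The local factors away from $\ell$ cancel, and part (1) plus the explicit value $\dim H^0(G_{\QQ_\ell},\ad(\rho)(-1))=2$ computed above then yields the result. A subsidiary check is that $\rho$ being unramified at $\ell$ forces $H^0(G_{\QQ_\ell},\ad(\rho))$, $H^1(G_{\QQ_\ell}/I_\ell,\ad(\rho))$ and the local Euler characteristic at $\ell$ to behave as expected, which is routine since $\ad(\rho)|_{G_{\QQ_\ell}}$ is an explicit sum of powers of $\omega_p$.
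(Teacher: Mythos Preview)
Your approach is essentially the paper's: part (1) via the global Euler characteristic formula, part (2) via Greenberg--Wiles plus a local computation at $\ell$. The paper works with $\ad^0(\rho)$ and adds the trivial summand separately, while you work with $\ad(\rho)$ directly; this is cosmetic.

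There is, however, a genuine gap in your handling of the ``equality'' step. Your claim that ``the $0$-Selmer group is insensitive to adding $\ell$ to the ramification set'' is false: passing from $Np$ to $N\ell p$ imposes an \emph{additional} $0$-condition at $\ell$ on the dual Selmer group, so one only gets $H^1_0(G_{\QQ,N\ell p},\ad(\rho)(1)) \subseteq H^1_0(G_{\QQ,Np},\ad(\rho)(1))$, hence after subtracting the two Greenberg--Wiles formulas only the inequality
\[
\dim H^1(G_{\QQ,N\ell p},\ad(\rho)) - \dim H^1(G_{\QQ,Np},\ad(\rho)) \;\leq\; \dim H^0(G_{\QQ_\ell},\ad(\rho)(1)).
\]
To get equality you must show $H^1_0(G_{\QQ,Np},\ad(\rho)(1))=0$. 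The paper does this directly: it writes the Greenberg--Wiles formula at level $N$ for $\ad^0(\rho)$, bounds the right side below by $2$ using the local Euler characteristic at $p$ and the archimedean contribution, and compares with the hypothesis $\dim H^1(G_{\QQ,Np},\ad^0(\rho))=2$ to force $H^1_0=0$. Equivalently, you could observe that part (1) with $\dim H^1=3$ gives $H^2(G_{\QQ,Np},\ad(\rho))=0$, and then invoke the Poitou--Tate duality $\Sh^2(M)\cong\Sh^1(M^*(1))^\vee$ to conclude $H^1_0(G_{\QQ,Np},\ad(\rho)(1))=0$. Your closing phrase ``part (1) plus the explicit value\ldots'' hints at this, but you should make the step explicit rather than rely on the incorrect ``insensitive'' claim.

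Two minor corrections: $\ad(\rho)^*\cong\ad(\rho)$ (self-dual via the trace form), so the Tate dual is $\ad(\rho)(1)$, not $\ad(\rho)^*\cong\ad(\rho)(1)$ as you wrote. And the local term arising from Greenberg--Wiles is naturally $\dim H^0(G_{\QQ_\ell},\ad(\rho)(1))$; your use of $\ad(\rho)(-1)$ happens to give the same answer only because $p\mid\ell+1$ forces $\omega_p|_{G_{\QQ_\ell}}=\omega_p^{-1}|_{G_{\QQ_\ell}}$.
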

\begin{proof}
 As $\rho$ is assumed to be odd and $\text{End}_{G_{\QQ,Np}}(\rho) = \FF$, the first part of the lemma follows directly from the global Euler characteristic formula. 

 To prove the second part of the lemma, observe that  $\dim(H^1(G_{\QQ,Np},\text{ad}^0(\rho)))=2$ because we are assuming $p \nmid \phi(N)$ and $\dim(H^1(G_{\QQ,Np},\text{ad}(\rho)))=3$.
Now, by the Greenberg-Wiles version of the Poitou-Tate duality (\cite[Theorem $2$]{Wa}), we get that $\dim(H^1(G_{\QQ,Np},\text{ad}^0(\rho)))\geq \dim(H^1_0(G_{\QQ,Np},(\text{ad}^0(\rho))^*\otimes\omega_p)) + \dim(H^1(G_{\QQ_p},\text{ad}^0(\rho)))-\dim(H^0(G_{\QQ_p},\text{ad}^0(\rho))) + \dim(H^0(G_{\QQ},\text{ad}^0(\rho)))-\dim(H^0(G_{\QQ},(\text{ad}^0(\rho))^* \otimes \omega_p))+ \dim(H^1(G_{\infty},\text{ad}^0(\rho)))- \dim(H^0(G_{\infty},\text{ad}^0(\rho)))$, where $$H^1_0(G_{\QQ,Np},(\text{ad}^0(\rho))^*\otimes\omega_p) =  \ker(H^1(G_{\QQ,Np},(\text{ad}^0(\rho))^*\otimes\omega_p) \to \prod_{q|Np} H^1(G_{\QQ_q},(\text{ad}^0(\rho))^*\otimes\omega_p|_{G_{\QQ_{q}}})).$$

\begin{enumerate}
\item Note that $H^0(G_{\QQ},\text{ad}^0(\rho))=0$. As $\rho$ is odd, $\dim(H^0(G_{\infty},\text{ad}^0(\rho)))=1$. As $|G_{\infty}|=2$ and $p>2$, we have $H^1(G_{\infty},\text{ad}^0(\rho))=0$,
\item Suppose $\dim(H^0(G_{\QQ},(\text{ad}^0(\rho))^* \otimes \omega_p))=k'$. By the local Euler characteristic formula, $\dim(H^1(G_{\QQ_p},\text{ad}^0(\rho)|_{G_{\QQ_p}}))-\dim(H^0(G_{\QQ_p},\text{ad}^0(\rho)|_{G_{\QQ_p}}))=3+\dim(H^0(G_{\QQ_p},(\text{ad}^0(\rho))^* \otimes \omega_p|_{G_{\QQ_p}}))$ $\geq 3+k'$.
\end{enumerate}

Hence, we get that $\dim(H^1(G_{\QQ,Np},\text{ad}^0(\rho))) \geq 3+k'-1-k' + \dim(H^1_0(G_{\QQ,Np},(\text{ad}^0(\rho))^*\otimes\omega_p)) =2 + \dim(H^1_0(G_{\QQ,Np},(\text{ad}^0(\rho))^*\otimes\omega_p))$. As $\dim(H^1(G_{\QQ,Np},\text{ad}^0(\rho)))=2$, we get that $H^1_0(G_{\QQ,Np},(\text{ad}^0(\rho))^*\otimes\omega_p)=0$.

Hence, we get that for any prime $\ell$, $\dim(H^1(G_{\QQ,N\ell p},\text{ad}^0(\rho)))$ $= \dim(H^1(G_{\QQ,Np},\text{ad}^0(\rho))) + \dim(H^0(G_{\QQ_{\ell}}, (\text{ad}^0(\rho))^* \otimes \omega_p|_{G_{\QQ_{\ell}}}))$. Now let $\ell$ be a prime such that $\ell \equiv -1 \pmod{p}$ and $\rho|_{G_{\QQ_{\ell}}}= \eta \oplus \omega_p \eta$. In this case $\omega_p|_{G_{\QQ_{\ell}}}=\omega_p^{-1}|_{G_{\QQ_{\ell}}}$. Therefore, $\text{ad}^0(\rho)|_{G_{\QQ_{\ell}}} \simeq 1 \oplus \omega_p|_{G_{\QQ_{\ell}}} \oplus \omega_p|_{G_{\QQ_{\ell}}}$ and we get that $\dim(H^1(G_{\QQ,N\ell p},\text{ad}^0(\rho)))$ $= \dim(H^1(G_{\QQ,Np},\text{ad}^0(\rho))) + 2$ $=2+2=4$. As $p \nmid \phi(N\ell)$, we have $\dim(H^1(G_{\QQ,N\ell p},\text{ad}(\rhob_x))) = 5$. 
\end{proof}

\subsubsection{GMA results for $G_{\QQ,N\ell p}$}
We now view $\rhob_0$ as a representation of $G_{\QQ, N\ell p}$ for some prime $\ell \nmid Np$. We will state results which will be used later while analyzing how pseudo-deformation rings change after allowing ramification at an additional prime.
 For a prime $\ell$, denote by $\tilde\ell$ be the Teichmuller lift of $\ell \pmod{p}$ in $\ZZ_p$. So $\ell/\tilde\ell \in 1+p\ZZ_p$. Recall that, for $\alpha \in \FF$, we denoted its Teichmuller lift in $W(\FF)$ by $\hat\alpha$. 
\begin{lem}
\label{gmalem}
Let $R$ be a complete Noetherian local ring with maximal ideal $m_R$ and residue field $\FF$. Let $\ell$ be a prime such that $\ell \nmid Np$ and $\chi|_{G_{\QQ_{\ell}}} \neq 1$. Let $t : G_{\QQ,N\ell p} \to R$ be a pseudo-character deforming $\tr(\rhob_0)$.  Let $g_{\ell}$ be a lift of $\text{Frob}_{\ell}$ in $G_{\QQ_{\ell}}$. Then, there exists a faithful GMA $A = \begin{pmatrix} R & B\\ C & R\end{pmatrix}$ and a representation $\rho : G_{\QQ,N\ell p} \to A^*$ satisfying the properties of Lemma~\ref{genlem} such that
\begin{enumerate}
 \item $t=\tr(\rho)$ and $\rho(g_{\ell}) = \begin{pmatrix} \widehat{\chi_1(\text{Frob}_{\ell})}(1+a) & 0\\ 0 & \widehat{\chi_2(\text{Frob}_{\ell})}(1+d)\end{pmatrix}$,
\item $R[\rho(G_{\QQ_{\ell}})]$ is a sub $R$-GMA of $A$,
\item\label{bhaag3} $\rho|_{I_{\ell}}$ factors through the $\ZZ_p$-quotient of the tame inertia group at $\ell$.
\end{enumerate}
Moreover, if $\ell/\tilde\ell$ is a topological generator of $1+p\ZZ_p$ and $J$ is an ideal of $R$ such that $t \pmod{J}$ is reducible, then the ideal generated by $p$, $a$, $d$ and $J$ is the maximal ideal of $R$.
\end{lem}
\begin{proof}
Since $\rhob_0$ is assumed to be odd, we get that $\chi_1 \neq \chi_2$ and $\rhob_0$ is residually multiplicity free. We know that $G_{\QQ,N\ell p}$ satisfies the finiteness condition. Moreover, we are assuming that $\chi|_{G_{\QQ_\ell}} \neq 1$ which means $\rhob_0(g_\ell)$ has distinct eigenvalues. 
The existence $A$ and $\rho$ satisfying properties of Lemma~\ref{genlem} and the first part of the lemma follow from parts (i), (iii), (v) and (vii) of \cite[Proposition $2.4.2$]{Bel}. As $a \not\equiv d \pmod{m_R}$, the claim that $R[\rho(G_{\QQ_{\ell}})]$ is a sub $R$-GMA of $A$  follows from \cite[Lemma $2.4.5$]{Bel}.

To prove the third part of the lemma, let $K_0$ be the maximal extension of $\QQ$ unramified outside the set of primes dividing $N\ell p$ and $\infty$. So $G_{\QQ,N\ell p}=\text{Gal}(K_0/\QQ)$. Let $K$ be the extension of $\QQ$ fixed by $\ker(\rhob_0)$. So $K$ is a sub-extension of $K_0$ and $\ell$ is unramified in $K$. By \cite[Lemma $3.8$]{C}, the pseudo-character $ t$ factors through $G_{\QQ,N\ell p}/H$, where $H \subset \text{Gal}(K_0/K)$ is the smallest closed normal subgroup of $G_{\QQ,N\ell p}$ such that $\text{Gal}(K_0/K)/H$ is a pro-$p$ quotient of $\text{Gal}(K_0/K)$. 

Let $g \in H$. As $ t$ factors through $G_{\QQ,N\ell p}/H$, we get $ t(xg)= t(x)$ for all $x \in G_{\QQ,N\ell p}$. Thus, we have $\tr(\rho(g'g))=\tr(\rho(g'))$ for all $g' \in G_{\QQ,N\ell p}$. Let $A = \begin{pmatrix} R & B\\ C & R \end{pmatrix}$ and $\rho(g) =\begin{pmatrix} a & b \\ c & d\end{pmatrix}$. As $R[\rho(G_{\QQ,N\ell p})]=A$, we get $\tr\left(\begin{pmatrix} a' & b'\\ c' & d'\end{pmatrix} . \begin{pmatrix} a & b\\ c & d\end{pmatrix}\right) = \tr\left( \begin{pmatrix} a' & b'\\ c' & d'\end{pmatrix}\right)$ for all $\begin{pmatrix} a' & b'\\ c' & d'\end{pmatrix} \in A$. Putting $a'=1$ and $b'=c'=d'=0$ gives us $a=1$. Putting $d'=1$ and $b'=c'=a'=0$ gives us $d=1$. Putting $b'=a'=d'=0$, we get $m'(b \otimes c')=0$ for all $c' \in C$. So faithfulness of $A$ implies $b=0$. Similarly, putting $c'=a'=d'=0$ gives us $c=0$ which proves that $\rho(g)$ is identity.

 As $\ell$ is unramified in $K$, we get that $I_\ell \subset \text{Gal}(K_0/K)$. Therefore, we see that $\rho|_{I_{\ell}}$ factors through the $\ZZ_p$-quotient of the tame inertia group at $\ell$. 

We will now prove the remaining part of the Lemma.
Let $I$ be the ideal of $R$ generated by $p$, $a$, $d$ and $J$ and $t' =t \pmod{I}$. Suppose $\psi_1$, $\psi_2 : G_{\QQ,N\ell p} \to (R/I)^*$ are characters deforming $\chi_1$ and $\chi_2$ such that $t' = \psi_1+\psi_2$.
As $a,d \in I$, we get that $t'(g_\ell) = \chi_1(\text{Frob}_\ell)+\chi_2(\text{Frob}_\ell)$ and $\frac{t'(g_\ell)^2-t'(g_\ell^2)}{2} = \chi_1\chi_2(\text{Frob}_\ell)$.
On the other hand, we have $t'(g_\ell) = \psi_1(g_\ell)+\psi_2(g_\ell)$ and $\frac{t'(g_\ell)^2-t'(g_\ell^2)}{2} = \psi_1\psi_2(g_\ell)$.
Therefore, $\psi_1(g_\ell)$ and $\psi_2(g_\ell)$ are roots of the polynomial $f(x) = x^2-(\chi_1(\text{Frob}_\ell)+\chi_2(\text{Frob}_\ell))x+\chi_1\chi_2(\text{Frob}_\ell) \in R/I[x]$.
As $\chi|_{G_{\QQ_{\ell}}} \neq 1$, $\chi_1(\text{Frob}_\ell) \neq \chi_2(\text{Frob}_\ell)$. Hence, from Hensel's lemma, we get that $\psi_i(g_\ell) = \chi_i(\text{Frob}_\ell)$ for $i=1,2$.

Thus, for $i=1, 2$, $\psi_i$ is a deformation of $\chi_i$ with $\psi_i(g_{\ell})=\chi_i(\text{Frob}_{\ell})$. As $p \nmid \ell-1$, both $\psi_1$ and $\psi_2$ are unramified at $\ell$. Since $p \nmid \phi(N\ell)$ and $\ell/\tilde\ell$ is a topological generator of $1+p\ZZ_p$, it follows that the image of $g_{\ell}$ in $\varprojlim_{i}G^{\text{ab}}_{\QQ,N\ell p}/(G^{\text{ab}}_{\QQ,N\ell p})^{p^i} \simeq \ZZ_p$ is a topological generator of $\varprojlim_{i}G^{\text{ab}}_{\QQ,N\ell p}/(G^{\text{ab}}_{\QQ,N\ell p})^{p^i}$. Therefore, it follows, from \cite[Section $1.4$]{M}, that $\psi_1 = \chi_1$ and $\psi_2=\chi_2$. Thus, we have $t' = \tr(\rhob_0)$. Since the map $R^{\pd,\ell}_{\rhob_0} \to R$ induced by $t$ is surjective, we get, from Lemma~\ref{maxlem}, that $I$ is the maximal ideal of $R$.
\end{proof}

\begin{lem}
\label{gengmalem}
Suppose $\dim(H^1(G_{\QQ,Np},\chi)) = \dim(H^1(G_{\QQ,Np},\chi^{-1})) =1$. Let $\ell$ be a prime such that $\ell \equiv -1 \pmod{p}$ and $\chi|_{G_{\QQ_{\ell}}} = \omega_p|_{G_{\QQ_{\ell}}}$. Let $R$ be a complete Noetherian local ring with maximal ideal $m_R$ and residue field $\FF$. Let $t : G_{\QQ,N\ell p} \to R$ be a pseudo-character deforming $\tr(\rhob_0)$.
Let $A = \begin{pmatrix} R & B\\ C & R \end{pmatrix}$ be the GMA associated to $t$ in Lemma~\ref{gmalem} and $\rho : G_{\QQ,N\ell p} \to A^*$ be the corresponding representation given by Lemma~\ref{gmalem}. Let $i_{\ell}$ be a topological generator of $\ZZ_p$-quotient of $I_{\ell}$ and suppose $\rho(i_{\ell})=\begin{pmatrix} a & b\\ c & d\end{pmatrix}$. Then:
\begin{enumerate}
\item Both $B$ and $C$ are generated by at most $2$ elements,
\item There exist $b' \in B$ and $c' \in C$ such that $B$ and $C$ are generated by $\{b,b'\}$ and $\{c,c'\}$ as $R$-modules, respectively.
\end{enumerate}
\end{lem}
\begin{proof}
As $\dim(H^1(G_{\QQ,Np},\chi)) = \dim(H^1(G_{\QQ,Np},\chi^{-1})) =1$, $\ell \equiv -1 \pmod{p}$ and $\chi|_{G_{\QQ_{\ell}}} = \omega_p|_{G_{\QQ_{\ell}}}$, Lemma~\ref{cohomlem} implies that $\dim(H^1(G_{\QQ,N\ell p},\chi)) = \dim(H^1(G_{\QQ,N\ell p},\chi^{-1})) =2$. The first part of the lemma now follows from part~\eqref{partv} of Lemma~\ref{genlem}.

 By Lemma~\ref{gmalem}, $\rho(i_{\ell})$ is well defined and $\rho(I_{\ell})$ is generated by $\rho(i_{\ell})$. Let $j_1 :  \Hom_{R}(B/ m_RB,\FF) \to H^1(G_{\QQ,N\ell p},\chi)$ and $j_2 :  \Hom_{R}(C/ m_RC,\FF) \to H^1(G_{\QQ,N\ell p},\chi^{-1})$ be the injective maps obtained in the proof of part~\eqref{partv} of Lemma~\ref{genlem}. Let $y$ be an element of the subspace $\Hom_{R}(B/R.b + m_RB,\FF)$ of $\Hom_{R}(B/ m_RB,\FF)$. So, $j_1(y)$ is an element of $H^1(G_{\QQ,N\ell p},\chi)$ such that $j_1(y)(I_{\ell})=0$ i.e. $j_1(y)$ is unramified at $\ell$. Thus, $j_1(y)$ lies in the image of the injective map $H^1(G_{\QQ,Np},\chi) \to H^1(G_{\QQ,N\ell p},\chi)$. Hence, $\dim(\Hom_{R}(B/R.b + m_RB,\FF)) \leq \dim(H^1(G_{\QQ,Np},\chi))=1$, Therefore, by Nakayama's lemma, $B/R.b$ is generated by at most $1$ element. By the same logic, we also get that $C/R.c$ is generated by at most $1$ element. So if $B = R.b$, then we can take $b'=0$. Otherwise, $B/R.b$ is generated by one element and let $b'$ be a lift of the generator in $B$. Thus, $\{b,b'\}$ generates $B$ in both the cases. The lemma for $C$ and $c$ follows similarly.
\end{proof}

\section{Comparison between $\calR^{\MakeLowercase{{pd}}}_{\rhob_0}$ and $\calR^{\MakeLowercase{{def}}}_{\rhob_x}$}
\label{repsec}

In this section, we will explore the question of determining when the universal pseudo-character $T^{\univ}$ comes from a representation defined over $\calR^{\pd}_{\rhob_0}$. 
We do this by first assuming the existence of such a representation to study its implications. Then, we will study if the necessary conditions found this way are sufficient for the existence of such a representation and its consequences on the relationship between $\calR^{\pd}_{\rhob_0}$ and $\calR^{\defo}_{\rhob_x}$. Note that, from Remark~\ref{redrem}, we already know that $T^{\univ}$ comes from a representation if either $H^1(G,\chi)$ or $H^1(G,\chi^{-1})$ is $0$. Hence, for the rest of the article, we are going to assume that both $H^1(G,\chi)$ and $H^1(G,\chi^{-1})$ are non-zero. Note that, when $G= G_{\QQ,Np}$ and $\rhob_0$ is odd, this assumption is satisfied by Lemma~\ref{cohomlem}. In the last subsection, we state the implications of the main results found in the general scenario for the case $G=G_{\QQ,Np}$.

\subsection{Necessary condition for $t^{\univ}$ to come from a representation}
\label{necsubsec}
The existence of a representation over $\calR^{\pd}_{\rhob_0}$ with trace $T^{\univ}$ implies that $t^{\univ}$ is the trace of a representation defined over $R^{\pd}_{\rhob_0}$.
We will first assume the existence of a representation over $R^{\pd}_{\rhob_0}$ with trace $t^{\univ}$ to relate the rings $R^{\pd}_{\rhob_0}$ and $R^{\defo}_{\rhob_x}$. Specifically, we will compare the dimensions of their tangent spaces to get the necessary conditions for the existence of the required representation. This will give us a necessary condition for $T^{\univ}$ to be the trace of a representation. 

\begin{prop}
\label{repnprop}
Suppose $H^2(G,1)=0$. If there exists a continuous representation $\rho : G \to \GL_2(R^{\pd}_{\rhob_0})$ such that $\tr\rho = t^{\univ}$, then either $\dim (H^1(G, \chi)) =1$ or $\dim (H^1(G,\chi^{-1})) =1$.
\end{prop}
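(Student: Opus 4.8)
The plan is to exploit the dimension formulas established earlier and a comparison between the tangent space of $R^{\pd}_{\rhob_0}$ and the tangent spaces of the deformation rings $R^{\defo}_{\rhob_x}$. Recall from Lemma~\ref{tandimlem} that, under the hypothesis $H^2(G,1)=0$, if $k=\dim(H^1(G,1))$, $m=\dim(H^1(G,\chi))$ and $n=\dim(H^1(G,\chi^{-1}))$, then $\dim(\tan(R^{\pd}_{\rhob_0}))=2k+mn$. On the other hand, Lemma~\ref{shortlem} bounds $\dim(\tan(R^{\defo}_{\rhob_x}))\leq m+n+2k-1$ for a non-zero $x\in H^1(G,\chi^i)$, and symmetrically for a non-zero $y\in H^1(G,\chi^{-1})$. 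So the strategy is: from a representation $\rho$ with $\tr\rho=t^{\univ}$, produce a surjection (or at least a dominant map) from $R^{\pd}_{\rhob_0}$ onto some $R^{\defo}_{\rhob_x}$, whence $2k+mn=\dim(\tan(R^{\pd}_{\rhob_0}))$ would have to be $\leq m+n+2k-1$, i.e. $mn\leq m+n-1$, i.e. $(m-1)(n-1)\leq 0$, forcing $m=1$ or $n=1$ (both $m,n\geq 1$ by our standing assumption).

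First I would reduce the given representation to a conveniently normalized form. Since $\tr\rho=t^{\univ}$ deforms $\tr(\rhob_0)=\chi_1+\chi_2$ with $\chi_1\neq\chi_2$, by Hensel/idempotent-lifting arguments (as in the GMA formalism of Lemma~\ref{genlem}) one may conjugate $\rho$ over $R^{\pd}_{\rhob_0}$ so that $\rho \equiv \begin{pmatrix}\chi_1 & 0\\ 0 & \chi_2\end{pmatrix}\pmod{m}$; write $\rho(g)=\begin{pmatrix}a_g & b_g\\ c_g & d_g\end{pmatrix}$ with $a_g\equiv\chi_1(g)$, $d_g\equiv\chi_2(g)$. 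The off-diagonal entries land in $R^{\pd}_{\rhob_0}$, not in a larger GMA, precisely because $\rho$ is a genuine matrix representation. Next I would look at the reduction of $\rho$ modulo a suitable ideal so that it becomes upper- (or lower-) triangular but non-split. Concretely, let $I=(\{c_g : g\in G\})$; then $\rho\bmod I$ is upper-triangular with diagonal characters $\bar a$, $\bar d$ deforming $\chi_1,\chi_2$, and the extension class gives a non-zero $x\in H^1(G,\chi)$ with values in $R^{\pd}_{\rhob_0}/I$ — unless the whole thing is split, which would force $t^{\univ}$ reducible and contradict our standing assumption (via Remark~\ref{redrem} / Lemma~\ref{redlem} and the dimension count). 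So, after reducing further to make the diagonal characters equal to $\chi_1,\chi_2$ themselves (quotient by the ideal cutting out $a_g=\chi_1(g)$, $d_g=\chi_2(g)$), one obtains an honest deformation of some $\rhob_x$, $x\in H^1(G,\chi)$, over a quotient $S$ of $R^{\pd}_{\rhob_0}$. By universality of $R^{\defo}_{\rhob_x}$ this produces a map $R^{\defo}_{\rhob_x}\to S$; the key point to check is that it is \emph{surjective}, equivalently that $S$ is generated as a $W(\FF)$-algebra by the matrix entries of the reduced $\rho$, which holds because $S$ is a quotient of $R^{\pd}_{\rhob_0}$ and the entries of $\rho$ already generate everything the trace sees. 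Composing with $R^{\pd}_{\rhob_0}\twoheadrightarrow S$ gives a surjection onto $S$, and then $\dim(\tan(S))\leq\dim(\tan(R^{\defo}_{\rhob_x}))\leq m+n+2k-1$.

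The last step is the inequality chase. I would want $\dim(\tan(S))\geq 2k+mn$, or more precisely that the quotient passing from $R^{\pd}_{\rhob_0}$ to $S$ does not drop the tangent dimension below $2k+mn$ — or alternatively, argue directly that $\dim(\tan(R^{\pd}_{\rhob_0}))\leq \dim(\tan(R^{\defo}_{\rhob_x}))$ by constructing a closed immersion of functors on dual numbers: a pseudo-deformation to $\FF[\epsilon]$ that is the trace of a $\GL_2(\FF[\epsilon])$-deformation, together with the reducibility-killing normalization, lands in the image of $\Spec R^{\defo}_{\rhob_x}$ or $\Spec R^{\defo}_{\rhob_y}$. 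Here one must be slightly careful: a first-order pseudo-deformation lifting to a matrix representation might only be triangulable after reduction, so the clean statement is that $\tan(R^{\pd}_{\rhob_0})$ injects into $\tan(R^{\defo}_{\rhob_x})\oplus(\text{something accounting for the choice }x\leftrightarrow y)$; unwinding the short exact sequences of Lemma~\ref{shortlem} on $\FF[\epsilon]$-points makes this precise. Either way the arithmetic is $mn\le m+n-1\iff(m-1)(n-1)\le 0$, and since $m,n\ge 1$ we conclude $m=1$ or $n=1$, which is exactly $\dim(H^1(G,\chi))=1$ or $\dim(H^1(G,\chi^{-1}))=1$.

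The main obstacle I anticipate is the surjectivity/tangent-dimension comparison between $R^{\pd}_{\rhob_0}$ (or its relevant quotient $S$) and $R^{\defo}_{\rhob_x}$: one has to be careful that reducing $\rho$ to triangular non-split form and then to fixed diagonal characters does not secretly collapse too much, and that the resulting map of deformation rings is genuinely surjective rather than merely a map. The cleanest route is probably to work entirely on dual-number points, combining Lemma~\ref{tandimlem} with the cohomology exact sequences in the proof of Lemma~\ref{shortlem}, since there the bookkeeping of how the $mn$-dimensional "off-diagonal" part of $\tan(R^{\pd}_{\rhob_0})$ maps into the $\ad$-cohomology is most transparent, and the dichotomy $m=1$ or $n=1$ drops out of $(m-1)(n-1)\le 0$.
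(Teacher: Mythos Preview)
Your overall numerical strategy is exactly the paper's: compare $\dim(\tan(R^{\pd}_{\rhob_0}))=2k+mn$ with the bound $2k+m+n-1$ from Lemma~\ref{shortlem} and deduce $(m-1)(n-1)\le 0$. But the way you set up the comparison has the map going the wrong direction, and the normalization step that creates this problem is itself unjustified.

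Concretely: you assert that one may conjugate $\rho$ so that $\rho\equiv\begin{pmatrix}\chi_1&0\\0&\chi_2\end{pmatrix}\pmod{\mathfrak m}$. This is not generally possible. The reduction $\rhob:=\rho\bmod\mathfrak m$ is a well-defined representation in $\GL_2(\FF)$ whose semisimplification is $\rhob_0$; by Brauer--Nesbitt it is isomorphic either to $\rhob_0$ or to some non-split $\rhob_x$, and you cannot choose which by conjugation. (The GMA normalization of Lemma~\ref{genlem} only says the \emph{diagonal} entries reduce to $\chi_1,\chi_2$; it does not force the off-diagonals of an honest $\GL_2$-valued $\rho$ into $\mathfrak m$.) Having forced the split case, you are then obliged to pass to a proper quotient $S$ of $R^{\pd}_{\rhob_0}$ to manufacture a non-split reduction, and the resulting map $R^{\defo}_{\rhob_x}\to S$ gives only $\dim(\tan S)\le 2k+m+n-1$, which says nothing about $\dim(\tan R^{\pd}_{\rhob_0})$. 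You notice this yourself and try to patch it on dual numbers, but no such patch can work: quotienting can only lower tangent dimension.

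The paper's fix is a clean dichotomy on $\rhob$. If $\rhob\simeq\rhob_0$, then after conjugation all off-diagonal entries lie in $\mathfrak m$, so $t^{\univ}\bmod\mathfrak m^2$ is a sum of two characters; Lemma~\ref{redlem} then forces $\dim(\tan(R^{\pd}_{\rhob_0}/\mathfrak m^2))\le 2k$, contradicting $2k+mn>2k$. Hence $\rhob\simeq\rhob_x$ for some nonzero $x$, so $\rho$ itself (no quotient needed) is a deformation of $\rhob_x$. Universality yields a map $R^{\defo}_{\rhob_x}\to R^{\pd}_{\rhob_0}$, surjective because the values $t^{\univ}(g)=\tr\rho(g)$ topologically generate $R^{\pd}_{\rhob_0}$. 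Now the inequality runs the right way: $2k+mn=\dim(\tan R^{\pd}_{\rhob_0})\le\dim(\tan R^{\defo}_{\rhob_x})\le 2k+m+n-1$, and you are done.
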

\begin{proof}
 From Lemma~\ref{tandimlem}, we know that $\dim(\tan(R^{\pd}_{\rhob_0})) = 2k+mn$. As $m \neq 0$ and $n \neq 0$, $\dim(\tan(R^{\pd}_{\rhob_0})) > 2k$. Let $\mathfrak{m}$ be the maximal ideal of $R^{\pd}_{\rhob_0}$.

Suppose there exists a continuous representation $\rho : G \to \GL_2(R^{\pd}_{\rhob_0})$ such that $\tr\rho = t^{\univ}$. Let $\rhob$ be its reduction modulo $\mathfrak{m}$. As $\tr\rhob = \tr\rhob_0$, it follows, from the Brauer-Nesbitt theorem, that $\rhob$ is isomorphic over $\FF$ to either $\rhob_0$ or $\rhob_x$ for some $x \in H^1(G,\chi)$ or $H^1(G,\chi^{-1})$ with $x \neq 0$. 

Suppose $\rhob \simeq \rhob_0$. So, by changing the basis if necessary, we can assume that $\rhob = \rhob_0$. For $g \in G$, let $\rho(g) = \begin{pmatrix} a_g & b_g\\ c_g & d_g\end{pmatrix}$. Therefore, we see that $b_g$, $c_g \in \mathfrak{m}$, $a_g \equiv \chi_1(g) \pmod{\mathfrak{m}}$ and $d_g \equiv \chi_2(g) \pmod{\mathfrak{m}}$.  Thus, we get two characters $\tilde\chi_1$, $\tilde\chi_2 : G \to (R^{\pd}_{\rhob_0}/ \mathfrak{m}^2)^*$ such that 
$t^{\univ} \pmod{\mathfrak{m}^2} = \tr(\rho)  \pmod{\mathfrak{m}^2} = \tilde\chi_1+\tilde\chi_2$, $\tilde\chi_1(g) = a_g \pmod{\mathfrak{m}^2}$ and  $\tilde\chi_2(g) = d_g \pmod{\mathfrak{m}^2}$.

By Lemma~\ref{redlem}, we get that $\dim(\tan(R^{\pd}_{\rhob_0}/\mathfrak{m}^2)) \leq 2k$. But this contradicts the fact that $\dim(\tan(R^{\pd}_{\rhob_0})) > 2k$. So we conclude that $\rhob \not\simeq \rhob_0$.

Thus, $\rhob \simeq \rhob_x$ for some $x \in H^1(G,\chi^i)$ with $i \in \{1,-1\}$ and $x \neq 0$. So, by changing the basis if necessary, we can assume that $\rhob = \rhob_x$. This means that $\rho$ is a deformation of $\rhob_x$ and hence, there exists a continuous morphism $\phi_x : R^{\defo}_{\rhob_x} \to R^{\pd}_{\rhob_0}$. Moreover, $\phi_x$ is surjective as the elements $t^{\univ}(g) = \tr(\rho(g))$ with $g \in G$ are topological generators of $R^{\pd}_{\rhob_0}$ as a local complete $\FF$-algebra (\cite[Remark $3.5$]{C}). So, in particular, $\dim(\tan(R^{\defo}_{\rhob_x})) \geq \dim(\tan(R^{\pd}_{\rhob_0}))$.

From Lemma~\ref{shortlem}, we know that $\dim(\tan(R^{\defo}_{\rhob_x})) \leq 2k+m+n-1$. So, we get that $2k+m+n-1 \geq 2k+mn$ which implies that $0 \geq (m-1)(n-1)$. Therefore, we conclude that either $m=1$ or $n=1$.
\end{proof}

\begin{rem}
Proposition~\ref{repnprop} also follows from \cite[Theorem $4$]{B}.
\end{rem}

\begin{rem}
It is not clear how to prove Proposition~\ref{repnprop} when $H^2(G,1) \neq 0$ by employing the techniques used above or \cite[Theorem $4$]{B}. This is primarily because one can not determine the exact dimension of $\tan(R^{\pd}_{\rhob_0})$ using \cite[Theorem $2$]{B} when $H^2(G,1) \neq 0$.
\end{rem}

\subsection{Existence of the representation over $(\calR^{\MakeLowercase{{pd}}}_{\rhob_0})^{\red}$}
\label{redrepsubsec}
We will now explore whether the necessary condition for $T^{\univ}$ to be the trace of a representation defined over $\calR^{\pd}_{\rhob_0}$ obtained in Proposition~\ref{repnprop} is sufficient or not. We begin by proving that any deformation of $\tr(\rhob_0)$ to a domain comes from a representation when $\dim(H^1(G,\chi^i)) =1$ for some $i \in \{1,-1\}$.

Note that we don't need the hypothesis that $H^2(G,1) =0$ for the results proved in this subsection.

\begin{prop}
\label{primeprop}
Suppose there exists an $i \in \{1,-1\}$ such that $\dim(H^1(G,\chi^i)) =1$, $H^2(G,\chi^i)=0$. For such an $i$, fix a non-zero $x \in H^1(G,\chi^i)$. Let $P$ be a prime of $\calR^{\pd}_{\rhob_0}$. Then there exists a representation $\rho : G \to \GL_2(\calR^{\pd}_{\rhob_0}/P)$ such that $\rho$ is a deformation of $\rhob_x$ and $\tr(\rho) = T^{\univ} \pmod{P}$.
\end{prop}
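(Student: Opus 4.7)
\emph{Proof proposal.}
The plan is to construct a deformation of $\rhob_x$ over $R := \calR^{\pd}_{\rhob_0}/P$ (a domain with residue field $\FF$) carrying trace $t := T^{\univ} \pmod{P}$. By the symmetry swapping $\chi$ and $\chi^{-1}$ and the two off-diagonal components of a GMA, I may assume $i = 1$. Apply Lemma~\ref{genlem} to obtain a faithful GMA $A = \begin{pmatrix} R & B \\ C & R \end{pmatrix}$ and a representation $\rho : G \to A^*$ with $\tr(\rho) = t$, $m'(B \otimes_R C) \subseteq m_R$, and $R[\rho(G)] = A$; since $\dim H^1(G,\chi) = 1$, Lemma~\ref{extlem} further forces $B$ to be generated by at most one element over $R$. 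I then split according to whether $B = 0$.

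Suppose first that $B \neq 0$, and write $B = R b_0$. I upgrade cyclicity to freeness: by faithfulness of $A$, some $c \in C$ satisfies $m'(b_0 \otimes c) \neq 0$, and any $r \in R$ with $r b_0 = 0$ then gives $r \cdot m'(b_0 \otimes c) = m'(r b_0 \otimes c) = 0$, which forces $r = 0$ as $R$ is a domain. Hence $B \cong R$, and Lemma~\ref{annihlem}(2) identifies $A$ with an $R$-subalgebra of $M_2(R)$, yielding $\rho' : G \to \GL_2(R)$ with $\tr(\rho') = t$. The reduction $\bar\rho'$ is upper triangular because $m'(B \otimes C) \subseteq m_R$, and under the injection $j : \Hom_R(B/m_R B, \FF) \to H^1(G,\chi)$ from the proof of Lemma~\ref{extlem} its upper-right cocycle realises the image of the non-zero map $B/m_R B \to \FF$ sending $\bar b_0 \mapsto 1$. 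Since $\dim H^1(G,\chi) = 1$, this class is a non-zero scalar multiple of $x$, and a diagonal conjugation of $\rho'$ by a Teichm\"uller-lifted matrix adjusts its reduction to be exactly $\rhob_x$.

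Suppose instead that $B = 0$; then $\rho$ is lower triangular, its diagonal entries $\tilde\chi_1, \tilde\chi_2 : G \to R^*$ are characters deforming $\chi_1, \chi_2$, and $t = \tilde\chi_1 + \tilde\chi_2$ is reducible. I construct the deformation directly as $\rho'' = \begin{pmatrix} \tilde\chi_1 & b \\ 0 & \tilde\chi_2 \end{pmatrix}$, where $b \in Z^1(G, R(\tilde\chi_1 \tilde\chi_2^{-1}))$ is a cocycle lifting $x$. I build $b$ step by step along the tower $\{R/m_R^n\}_{n \geq 1}$: the obstruction to lifting a class from $R/m_R^n$ to $R/m_R^{n+1}$ lies in $H^2(G, (m_R^n/m_R^{n+1})(\chi))$, which is a direct sum of copies of $H^2(G, \chi)$ (as $m_R$ acts trivially on $m_R^n/m_R^{n+1}$) and therefore vanishes by hypothesis. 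Passing to the inverse limit produces the required $b$. This reducible case is the main obstacle, as the hypothesis $H^2(G, \chi^i) = 0$ is precisely what guarantees that whenever the trace lifts only to a reducible pseudo-character, the non-split extension $\rhob_x$ still admits a compatible deformation.
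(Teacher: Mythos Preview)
Your proof is correct and follows essentially the same two-case strategy as the paper's own argument. The only notable differences are cosmetic: the paper splits on whether $t$ is reducible rather than on whether $B=0$ (your split is slightly finer, since $B=0$ implies $t$ reducible but not conversely, though your $B\neq 0$ argument handles both subcases uniformly), and in the non-trivial-$B$ case the paper invokes Lemma~\ref{redgmalem} to realize $B$ as a fractional ideal of $\operatorname{Frac}(R)$ before concluding it is free, whereas your direct use of faithfulness plus the domain property is cleaner and avoids that lemma entirely. In the reducible case your step-by-step cocycle lifting is exactly the paper's argument (phrased there via Tate's identification $H^1(G,\tilde\chi)\simeq\varprojlim_n H^1(G,\bar\chi_n)$ and surjectivity of the transition maps), so the two proofs are interchangeable.
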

\begin{proof}
Without loss of generality, assume $\dim(H^1(G,\chi)) =1$, $H^2(G,\chi)=0$. For the rest of the proof, denote $\calR^{\pd}_{\rhob_0}/P$ by $R$ and $T^{\univ} \pmod{P}$ by $t$. Let $K$ be the fraction field of $R$ and $m$ be the maximal ideal of $R$. 

Suppose $t$ is not reducible. Let $A=\begin{pmatrix} R & B \\ C & R\end{pmatrix}$ be the faithful GMA obtained for the pseudo-character $t : G \to R$ in Lemma~\ref{genlem} and $\rho$ be the corresponding representation. By Lemma~\ref{redgmalem}, we can take $A$ to be an $R$-subalgebra of $M_2(K)$. 

As $t$ is not reducible, we have $B$, $C \neq 0$. Hence, by Part~\eqref{partv} of Lemma~\ref{genlem}, $B$ is generated by $1$ element over $R$. As $B$ is a non-zero fractional ideal of the quotient field $K$ of $R$, it follows that the annihilator of $B$ is $0$. So $B$ is a free module of rank $1$ over $R$. Hence, by second part of Lemma~\ref{annihlem}, we get a representation $\rho' : G \to \GL_2(R)$ such that $\tr(\rho') = \tr(\rho)=t$ and $\rho' \pmod{m} = \rhob_{x_0}$ for some non-zero $x_0 \in H^1(G,\chi)$. As $\dim(H^1(G,\chi) ) =1$, for any non-zero $x \in H^1(G,\chi)$, $\rhob_x \simeq \rhob_{x_0}$. Hence, given a non-zero $x \in H^1(G,\chi)$, we can conjugate $\rho'$ by a suitable matrix to get a deformation of $\rhob_x$ with trace $t$.

Now suppose $t$ is reducible. So we have $t = \tilde\chi_1 + \tilde\chi_2$ where $\tilde\chi_i$ is a deformation of $\chi_i$ for $i=1,2$. Let $\tilde\chi = \tilde\chi_1\tilde\chi_2^{-1}$. For every $n >0$, denote $\tilde\chi \pmod{m^n} : G \to (R/m^n)^*$ by $\bar\chi_n$. This makes $R/m^n$ into a $G$-module for every $n > 0$. So $\bar\chi_1 =\chi$. For every $n > 0$, the natural map $R \to R/m^n$ is a map of $G$-modules and it induces a map $f_n : H^1(G,\tilde\chi) \to H^1(G,\bar\chi_n)$. These maps induce a map $f : H^1(G,\tilde\chi) \to \varprojlim_{n} H^1(G,\bar\chi_n)$. As $H^0(G,\bar\chi_n)= 0$ for all $n > 0$, we get, by \cite[Corollary 2.2]{T} and its proof, that the natural map $f$ is an isomorphism.

Now, for every $n>0$, the natural exact sequence $0 \to m^n/m^{n+1} \to R/m^{n+1} \to R/m^n \to 0$ is an exact sequence of discrete $G$-modules. As the modules are discrete, we get an exact sequence $H^1(G,R/m^{n+1}) \to H^1(G,R/m^n) \to H^2(G,m^n/m^{n+1})$ from the exact sequence of cohomology groups (see \cite[Section 2]{T} for more details). Note that $H^1(G,R/m^{n+1}) = H^1(G,\bar\chi_{n+1})$ and $H^1(G,R/m^{n}) = H^1(G,\bar\chi_{n})$. As $\chi_{n+1} \pmod{m/m^{n+1}} = \chi$, we see that $H^2(G,m^n/m^{n+1}) \simeq H^2(G,\chi)^{\oplus r}$ for some $r > 0$. Therefore, $H^2(G,m^n/m^{n+1}) = 0$ which means the map $H^1(G,R/m^{n+1}) \to H^1(G,R/m^n)$ is surjective for every $n > 0$. Therefore, the natural map $H^1(G,\tilde\chi) \to H^1(G,\chi)$ is surjective. 

Given a non-zero $x \in H^1(G,\chi)$, there exists a $\tilde x \in H^1(G,\tilde\chi)$ such that $f_1(\tilde x)=x$. Therefore, the representation $\rho : G \to \GL_2(R)$ given by $\rho(g) = \begin{pmatrix} \tilde\chi_1(g) & \tilde\chi_2(g)\tilde x(g)\\ 0 & \tilde\chi_2(g)\end{pmatrix}$ is a deformation of $\rhob_x$ with trace $t$.
\end{proof}

\begin{thm}
\label{reduceprop}
 Suppose there exists an $i \in \{1,-1\}$ such that $\dim(H^1(G,\chi^i)) =1$ and $H^2(G,\chi^i)=0$. Fix such an $i$ and let $x \in H^1(G,\chi^i)$ be a non-zero element. Then the map $\Psi_x : \calR^{\pd}_{\rhob_0} \to \calR^{\defo}_{\rhob_x}$ induces an isomorphism between $(\calR^{\pd}_{\rhob_0})^{\red}$ and $(\calR^{\defo}_{\rhob_x})^{\red}$.
\end{thm}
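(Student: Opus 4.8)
### Proof proposal

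The plan is to show that $\Psi_x$ induces mutually inverse maps between $(\calR^{\pd}_{\rhob_0})^{\red}$ and $(\calR^{\defo}_{\rhob_x})^{\red}$. By Lemma~\ref{surjlem}, since $\dim(H^1(G,\chi^i))=1$, the map $\Psi_x$ itself is surjective, hence so is the induced map $(\Psi_x)^{\red} : (\calR^{\pd}_{\rhob_0})^{\red} \to (\calR^{\defo}_{\rhob_x})^{\red}$. So the whole content is to produce a map in the other direction that is inverse to it. First I would use Proposition~\ref{primeprop}: for each minimal prime $P$ of $\calR^{\pd}_{\rhob_0}$ it gives a deformation $\rho_P : G \to \GL_2(\calR^{\pd}_{\rhob_0}/P)$ of $\rhob_x$ with trace $T^{\univ} \bmod P$. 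Taking the product over all minimal primes and noting $(\calR^{\pd}_{\rhob_0})^{\red} \hookrightarrow \prod_P \calR^{\pd}_{\rhob_0}/P$, I would like to assemble these into a single deformation of $\rhob_x$ over $(\calR^{\pd}_{\rhob_0})^{\red}$ whose trace is $T^{\univ,\red}$.

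The key step is therefore: \emph{the pseudo-character $T^{\univ,\red}$ on $(\calR^{\pd}_{\rhob_0})^{\red}$ arises from a deformation of $\rhob_x$.} Once this is established, the universal property of $\calR^{\defo}_{\rhob_x}$ yields a map $\calR^{\defo}_{\rhob_x} \to (\calR^{\pd}_{\rhob_0})^{\red}$, which factors through $(\calR^{\defo}_{\rhob_x})^{\red}$ since the target is reduced; call it $\theta$. Then $(\Psi_x)^{\red} \circ \theta$ is a self-map of $(\calR^{\defo}_{\rhob_x})^{\red}$ that is compatible with the universal deformation's trace up to the relevant identifications, hence is the identity by universality; conversely $\theta \circ (\Psi_x)^{\red}$ is the identity on $(\calR^{\pd}_{\rhob_0})^{\red}$ because both sides induce the pseudo-character $T^{\univ,\red}$ and $(\calR^{\pd}_{\rhob_0})^{\red}$ is topologically generated by the values of $T^{\univ,\red}$ (by \cite[Remark~3.5]{C}, applied to the reduced quotient). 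So the only real work is constructing the representation over the reduced ring.

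To build that representation I would work inside a GMA. Apply Lemma~\ref{genlem} to the pseudo-character $T^{\univ,\red} : G \to (\calR^{\pd}_{\rhob_0})^{\red}$, obtaining a faithful GMA $A = \begin{pmatrix} R & B \\ C & R \end{pmatrix}$ over $R := (\calR^{\pd}_{\rhob_0})^{\red}$ with $R[\rho(G)]=A$ and $B,C$ finitely generated. Since $\dim(H^1(G,\chi^i))=1$, Lemma~\ref{extlem} shows $B$ (taking $i=1$; the case $i=-1$ is symmetric, replacing $B$ by $C$) is generated by one element over $R$. The point is then to check $B$ is actually \emph{free} of rank $1$, i.e. that no nonzero element of $R$ kills $B$. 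Here I would use that $R$ is reduced: by Lemma~\ref{redgmalem} the GMA $A$ embeds, after an $R$-module isomorphism, into $M_2(K)$ where $K$ is the total fraction ring of $R$, with $B$ identified with a fractional ideal; a cyclic fractional ideal of the total fraction ring of a reduced ring, arising this way, has zero annihilator exactly when the reducible locus is not all of $\operatorname{Spec} R$ — and the reducible locus is a proper closed subset because at every minimal prime $P$ the specialization is $\rho_P$, which is \emph{not} reducible (its reduction is the non-split $\rhob_x$, and any reducible deformation would have reducible reduction, contradiction — more precisely, by Proposition~\ref{primeprop} combined with the fact that a reducible $t$ cannot be the trace of an irreducible-residually representation, or directly: reducibility of $T^{\univ}\bmod P$ would by Lemma~\ref{redlem}-type reasoning force $B\otimes_R R/P = 0$ on a dense set). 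Granting freeness of $B$, Lemma~\ref{annihlem}(2) converts $\rho$ into an honest representation $\rho' : G \to \GL_2(R)$ with $\tr(\rho') = T^{\univ,\red}$; conjugating by a diagonal matrix arranges $\rho' \bmod m_R = \rhob_x$, so $\rho'$ is the desired deformation of $\rhob_x$.

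The main obstacle I anticipate is exactly the freeness of $B$ over the reduced ring $R$: Lemma~\ref{extlem} gives cyclicity but the annihilator could a priori be a nonzero ideal supported on some (necessarily non-minimal, in fact nowhere-dense) components, and ruling this out requires carefully combining the fractional-ideal description from Lemma~\ref{redgmalem} with the fact that the reducibility ideal $m'(B\otimes_R C)$ is not contained in any minimal prime — equivalently, that the irreducible locus is dense. Once the annihilator of $B$ is seen to lie in every minimal prime and hence (by reducedness) to be zero, the rest is the formal universal-property bookkeeping sketched above.
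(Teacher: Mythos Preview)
Your instinct to invoke Proposition~\ref{primeprop} prime-by-prime is exactly right, and once you have the map $f:\calR^{\defo}_{\rhob_x}\to \calR^{\pd}_{\rhob_0}/P$ for each prime $P$, you are one line from the paper's proof. The paper does \emph{not} attempt to assemble these into a single representation over $(\calR^{\pd}_{\rhob_0})^{\red}$. It simply observes that $f\circ\Psi_x$ carries $T^{\univ}$ to $T^{\univ}\bmod P$, so by the universal property of $\calR^{\pd}_{\rhob_0}$ the composite $f\circ\Psi_x$ equals the projection $\calR^{\pd}_{\rhob_0}\to\calR^{\pd}_{\rhob_0}/P$; hence $\ker(\Psi_x)\subset P$. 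Since $P$ was arbitrary, $\ker(\Psi_x)$ is contained in the nilradical, and together with surjectivity (Lemma~\ref{surjlem}) this gives the isomorphism on reduced quotients. No gluing and no freeness argument are needed.

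Your detour through the GMA over $R=(\calR^{\pd}_{\rhob_0})^{\red}$ has a genuine gap at the freeness of $B$. First, the assertion that a cyclic fractional ideal in the total fraction ring has zero annihilator ``exactly when the reducible locus is not all of $\operatorname{Spec} R$'' is false: what you actually need is that the generator of $B$ is nonzero at \emph{every} minimal prime, which is strictly stronger than the reducibility ideal being proper. Second, your justification that $T^{\univ}\bmod P$ is irreducible at every minimal prime conflates two notions of reducibility. You write that ``any reducible deformation would have reducible reduction'', but an upper-triangular representation over $R/P$ with nontrivial extension class is a reducible representation whose reduction is the non-split $\rhob_x$; indeed, Proposition~\ref{primeprop} constructs exactly such a $\rho_P$ in the case where $T^{\univ}\bmod P$ \emph{is} a sum of characters (this is precisely where the hypothesis $H^2(G,\chi^i)=0$ enters, to lift the class $x$). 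At such a prime the $B$-component of your global GMA may well vanish, producing a nonzero annihilator. You recognize in your last paragraph that one must show the reducibility ideal is not contained in any minimal prime, but nothing in the hypotheses guarantees this, and the paper's argument shows it is unnecessary.
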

\begin{proof}
Without loss of generality, suppose $\dim(H^1(G,\chi))=1$ and $H^2(G,\chi)=0$. Let $x \in H^1(G,\chi)$ be a non-zero element and let $P$ be a prime ideal of $\calR^{\pd}_{\rhob_0}$. From Proposition~\ref{primeprop}, there is a representation $\rho : G \to \GL_2(\calR^{\pd}_{\rhob_0}/P)$ deforming $\rhob_x$ such that $\tr(\rho) = T^{\univ} \pmod{P}$. Hence, there exists a map $f : \calR^{\defo}_{\rhob_x} \to \calR^{\pd}_{\rhob_0}/P$ such that $\rho = f \circ \rho^{\univ}_x$. Hence, we have $f \circ \tr(\rho^{\univ}_x) = T^{\univ} \pmod{P}$. Recall that $\Psi_x \circ T^{\univ} = \tr(\rho^{\univ}_x)$. Hence, from the universal property of $\calR^{\pd}_{\rhob_0}$, it follows that the natural surjective map $\calR^{\pd}_{\rhob_0} \to \calR^{\pd}_{\rhob_0}/P$ is same as $f \circ \Psi_x$. Hence, $\ker(\Psi_x) \subset P$ for every prime $P$ of $\calR^{\pd}_{\rhob_0}$. This finishes the proof of the theorem.
\end{proof}

\subsection{Existence of the representation over $R^{\MakeLowercase{{pd}}}_{\rhob_0}$}
\label{repsubsec}
It is natural to ask if the non-reduced version of Theorem~\ref{reduceprop} is true or not.
In order to get an idea about the answer, we will now study if there exists a representation over $R^{\pd}_{\rhob_0}$ with trace $t^{\univ}$. 
We first prove a lemma about the structure of $R^{\pd}_{\rhob_0}$:
\begin{lem}
\label{strlem}
Suppose $H^2(G,1) = 0$, $\dim(H^1(G,1)):=k$ and $\dim(H^1(G,\chi^i)) =1$ for some $i \in \{1,-1\}$. For such an $i$, let $\dim(H^1(G,\chi^{-i})) := m$, $\dim(H^2(G,\chi^{-i})):=m'$ and $\dim(H^2(G,\chi^i)):=n'$. Then, $R^{\pd}_{\rhob_0} \simeq \FF[[X_1,\cdots,X_{m+2k}]]/I$ where $I$ is an ideal of $ \FF[[X_1,\cdots,X_{m+2k}]]$ generated by at most $m'+mn'$ elements.
\end{lem}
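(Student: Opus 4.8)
The plan is to compute the tangent space of $R^{\pd}_{\rhob_0}$ and the obstruction space governing the relations, then translate this into a presentation. Since $\dim(H^1(G,\chi^i)) = 1$ for the fixed $i$, Lemma~\ref{tandimlem} (applied with the roles of $\chi$ and $\chi^{-1}$ read off according to $i$) gives $\dim(\tan(R^{\pd}_{\rhob_0})) = 2k + 1 \cdot m = m + 2k$. This immediately yields a surjection $\FF[[X_1,\dots,X_{m+2k}]] \twoheadrightarrow R^{\pd}_{\rhob_0}$, so it remains to bound the minimal number of generators of the kernel $I$ by $m' + mn'$, where $m' = \dim H^2(G,\chi^{-i})$ and $n' = \dim H^2(G,\chi^i)$. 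By the standard deformation-theoretic argument (Mazur), the minimal number of relations is bounded by the dimension of the obstruction space for the pseudo-deformation functor $\bar D_{\rhob_0}$.

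First I would invoke Lemma~\ref{genlem} to realize the universal pseudo-character $t^{\univ}$ as the trace of a GMA representation $\rho^{\univ} : G \to A^*$ with $A = \begin{pmatrix} R^{\pd}_{\rhob_0} & B \\ C & R^{\pd}_{\rhob_0}\end{pmatrix}$, where by Lemma~\ref{extlem} the module $B$ (in the normalization where $i = 1$; otherwise $C$) is generated by a single element, and $C$ (resp. $B$) by at most $m$ elements. The deformation problem for $t^{\univ}$ is then controlled by the deformation problem of the pair of characters together with the extension data in $B$ and $C$. Concretely, following the GMA formalism of \cite{BC}, the tangent and obstruction spaces of $\bar D_{\rhob_0}$ fit into the cohomology of $\ad(\rhob_0) = 1 \oplus 1 \oplus \chi \oplus \chi^{-1}$: the characters contribute $H^2(G,1) = 0$ to the obstructions, and the ``off-diagonal'' deformations — a free rank-one $B$ and a rank-$\le m$ module $C$ — contribute obstructions living in $H^2(G,\chi^i)$ (one copy for each of the $m$ generators of $C$, giving $mn'$) and in $H^2(G,\chi^{-i})$ (one copy for the single generator of $B$, giving $m'$). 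Summing, the obstruction space has dimension at most $m' + mn'$, hence $I$ is generated by at most $m' + mn'$ elements.

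The main obstacle I anticipate is making the last step rigorous: one must carefully set up the tangent–obstruction calculus for the \emph{pseudo}-deformation functor (equivalently, the deformation functor of the pseudo-character) rather than for an honest representation, since $\rhob_0$ itself is reducible and split. The cleanest route is probably to pass through the GMA: lift $\rho^{\univ}$ step by step along small extensions $0 \to I_0 \to \tilde R \to R \to 0$, where at each stage the obstruction to lifting the diagonal characters vanishes because $H^2(G,1)=0$, and the obstruction to lifting the chosen generators of $B$ and $C$ (compatibly with the multiplication map $m'$) lands in $H^2(G,\chi^i)^{\oplus m} \oplus H^2(G,\chi^{-i})$. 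The compatibility constraint imposed by $m'$ and the $R$-bilinearity conditions in the definition of a GMA must be checked not to introduce extra relations — here the rank-one-ness of $B$ is crucial, as in Lemma~\ref{annihlem}, since it lets one eliminate $B$ entirely and reduce to tracking $C$ and the scalar $m'(B\otimes C) \subseteq m_R$. Once this bookkeeping is done, Nakayama's lemma converts the obstruction bound into the bound on minimal generators of $I$, completing the proof.
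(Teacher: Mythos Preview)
Your approach differs substantially from the paper's. The paper does not develop any obstruction theory by hand; it simply invokes \cite[Theorem 3.3.1]{WE}, which presents $R^{\pd}_{\rhob_0}$ as a quotient of an auxiliary ring $R^1_D$ by an ideal generated by at most
\[
k_0 = \sum_{j=1}^{2}\dim\Ext^2_{G}(\chi_j,\chi_j) + \dim\Ext^2_{G}(\chi_1,\chi_2)\cdot\dim\Ext^1_{G}(\chi_2,\chi_1) + \dim\Ext^2_{G}(\chi_2,\chi_1)\cdot\dim\Ext^1_{G}(\chi_1,\chi_2)
\]
elements, and then uses \cite[Fact 3.2.7]{WE} to check that $R^1_D$ has Krull dimension equal to its embedding dimension $m+2k$, hence is itself the power series ring. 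Plugging in the numbers gives $k_0 = 0 + m'\cdot 1 + n'\cdot m = m'+mn'$. So the paper's proof is a two-line citation of Wang--Erickson's $A_\infty$ machinery, whereas you are attempting to rebuild that machinery in this special case via a GMA lifting argument.

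Your outline reaches the correct numerical answer, and the heuristic that off-diagonal obstructions should contribute one copy of $H^2(G,\chi^{\pm i})$ per generator on the opposite side is exactly the content of \cite[Theorem 3.3.1]{WE}. But two points deserve caution. First, you invoke ``rank-one-ness of $B$'' as in Lemma~\ref{annihlem}, but at this stage only cyclicity of $B$ is known (Lemma~\ref{extlem}); freeness is precisely what Theorem~\ref{generalthm} later establishes under additional hypotheses, so you cannot use it here. Second, making the obstruction count rigorous --- lifting generators of $B$ and $C$ together with the multiplication map $m'$ compatibly through small extensions, and showing no further relations appear --- is genuinely delicate for the pseudo-deformation functor (as opposed to a representation functor), and is essentially the substance of \cite{WE}. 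Your proposal is therefore a correct sketch of \emph{why} the bound holds, but as a self-contained proof it would need to either reproduce the relevant portion of \cite{WE} or cite it, which is what the paper does.
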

\begin{proof}
By \cite[Theorem 3.3.1]{WE}, we see that $R^{\pd}_{\rhob_0}$ is a quotient of a certain ring $R^1_D$ by an ideal $I$ generated by at most $k_0$ elements, where $$k_0= \sum_{j=1}^{2}\dim(\Ext^2_{G}(\chi_j,\chi_j)) + \dim(\Ext^2_{G}(\chi_1,\chi_2)).\dim(\Ext^1_{G}(\chi_2,\chi_1)) +  \dim(\Ext^2_{G}(\chi_2,\chi_1)).\dim(\Ext^1_{G}(\chi_1,\chi_2)).$$ 
Recall that $\Ext^2_{G}(\eta,\delta) \simeq H^2(G,\delta/\eta)$ for any characters $\eta$, $\delta : G \to \FF^\times$ and we have assumed $H^2(G,1)=0$. Therefore, we see that $k_0 = \sum_{j=1}^{2} 0 + (m').1 + m.n' = m'+mn'$. 

The ring $R^1_D$ is defined in \cite[Definition 3.2.3]{WE}. 
From the definition, we see that $R^1_D$ is a quotient of power series ring in $m_0$ variables over $\FF$, where $$m_0=\sum_{i=1}^{2} \dim(\Ext^1_G(\chi_i,\chi_i)) + \dim(\Ext^1_G(\chi_1,\chi_2))\dim(\Ext^1_G(\chi_2,\chi_1)).$$ 
By \cite[Fact 3.2.6]{WE}, Krull dimension of $R^1_D$ is $1 - 2 + \sum_{1 \leq i,j \leq 2} \dim(\Ext^1_G(\chi_i,\chi_j))$.
Since we are assuming that $\dim(H^1(G,\chi^i)) =1$ for some $i \in \{1,-1\}$ and $\dim(\Ext^1_{G}(\chi_1,\chi_1)) = \dim(\Ext^1_{G}(\chi_2,\chi_2)) = k$, we get that $m_0=2k+m$ and Krull dimension of $R^1_D$ is $2k+m$. Hence, we have $R^1_D \simeq \FF[[X_1,\cdots,X_{2k+m}]]$. 
This completes the proof of the lemma.
\end{proof}

We are now ready to prove an improvement of Theorem~\ref{reduceprop}.
\begin{thm}
\label{generalthm}
Suppose $H^2(G,1)=0$. Suppose there exists an $i \in \{1,-1\}$ such that $\dim(H^1(G,\chi^i)) =1$, $H^2(G,\chi^i)=0$, $\dim(H^1(G,\chi^{-i})) \in \{1,2,3\}$ and $\dim(H^2(G,\chi^{-i})) < \dim(H^1(G,\chi^{-i}))$. Then, there exists a representation $\rho : G \to \GL_2(R^{\pd}_{\rhob_0})$ such that $\tr(\rho) = t^{\univ}$ and for any non-zero $x \in H^1(G,\chi^i)$, $R^{\pd}_{\rhob_0} \simeq R^{\defo}_{\rhob_x}$.
\end{thm}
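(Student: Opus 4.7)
Set $k := \dim H^1(G,1)$ and, WLOG, take $i = 1$, so $m := \dim H^1(G,\chi^{-1}) \in \{1,2,3\}$ and $m' := \dim H^2(G,\chi^{-1}) < m$. The strategy is to construct a continuous representation $\rho : G \to \GL_2(R^{\pd}_{\rhob_0})$ with $\tr(\rho) = t^{\univ}$ whose residual representation is a non-split extension of $\chi_2$ by $\chi_1$. Such a $\rho$ cannot be residually $\rhob_0$ itself, for then the proof of Proposition~\ref{repnprop} (using Lemmas~\ref{tandimlem} and~\ref{redlem} together with $H^2(G,1) = 0$) would give $\dim\tan R^{\pd}_{\rhob_0} \le 2k$, contradicting $\dim\tan R^{\pd}_{\rhob_0} = 2k + m > 2k$. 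Since $\dim H^1(G,\chi) = 1$ implies that any two non-split extensions of $\chi_2$ by $\chi_1$ are conjugate, after conjugation $\rho\bmod\mathfrak{m} \simeq \rhob_x$ for any prescribed nonzero $x$; the resulting classifying map $R^{\defo}_{\rhob_x} \to R^{\pd}_{\rhob_0}$ is then a section of the surjection $\Psi_x$ of Lemma~\ref{surjlem} (by trace compatibility and the universal property of $R^{\pd}_{\rhob_0}$), forcing $\Psi_x$ to be an isomorphism and establishing both conclusions of the theorem.

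To produce $\rho$, apply Lemma~\ref{genlem} to $t^{\univ}$: this yields a faithful GMA $A = \begin{pmatrix} R^{\pd}_{\rhob_0} & B \\ C & R^{\pd}_{\rhob_0} \end{pmatrix}$ with multiplication map $\mu : B \otimes_{R^{\pd}_{\rhob_0}} C \to R^{\pd}_{\rhob_0}$, and a representation $\rho^{\mathrm{GMA}} : G \to A^{\times}$ with $\tr(\rho^{\mathrm{GMA}}) = t^{\univ}$. Lemma~\ref{extlem} combined with $\dim H^1(G,\chi) = 1$ forces $B$ to be cyclic; fix a generator $b \in B$, and let $J := \mathrm{Ann}_{R^{\pd}_{\rhob_0}}(b)$ and $I := \mathrm{Im}(\mu) \subseteq R^{\pd}_{\rhob_0}$. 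By Lemma~\ref{annihlem}(1) we have $JI = 0$. The key assertion is $J = 0$: then $B$ is free of rank one over $R^{\pd}_{\rhob_0}$, and Lemma~\ref{annihlem}(2) embeds $A$ as an $R^{\pd}_{\rhob_0}$-subalgebra of $M_2(R^{\pd}_{\rhob_0})$, converting $\rho^{\mathrm{GMA}}$ into the desired honest representation $\rho$.

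The main obstacle is proving $J = 0$, and it is here that the bound $m \in \{1,2,3\}$ is used. The plan is to exhibit a non-zero-divisor of $R^{\pd}_{\rhob_0}$ inside $I$, for then $JI = 0$ forces $J = 0$. Two ingredients combine. First, Lemma~\ref{strlem} presents $R^{\pd}_{\rhob_0} \simeq P/(f_1,\dots,f_s)$ with $P := \FF[[X_1,\dots,X_{2k+m}]]$ a regular UFD of dimension $2k + m$ and $s \le m' \le m - 1 \le 2$, so $\dim R^{\pd}_{\rhob_0} \ge (2k+m) - s \ge 2k + 1$. Second, by Lemma~\ref{ideallem} the pseudo-character $t^{\univ}\bmod I$ is reducible, so $R^{\pd}_{\rhob_0}/I$ is a quotient of the universal reducible pseudo-deformation ring; using $H^2(G,1) = 0$, this latter ring identifies with the regular ring $\FF[[T_1,\dots,T_{2k}]]$ of dimension $2k$ (parametrising pairs of character deformations of $\chi_1,\chi_2$), so $\dim R^{\pd}_{\rhob_0}/I \le 2k < 2k + 1 \le \dim R^{\pd}_{\rhob_0}$. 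A case analysis on $s \in \{0,1,2\}$, using the UFD property of $P$ and the minimality of the generators $(f_1,\dots,f_s)$ to exclude common factors among the $f_j$, shows that $R^{\pd}_{\rhob_0}$ is Cohen--Macaulay in every case (regular when $s = 0$, a hypersurface domain when $s = 1$, a codimension-two complete intersection when $s = 2$; the last is the delicate case, where the presence of a common factor in $f_1,f_2$ would contradict the minimality of the presentation). Since associated primes in a Cohen--Macaulay local ring coincide with its equidimensional set of minimal primes, the strict inequality $\dim R^{\pd}_{\rhob_0}/I < \dim R^{\pd}_{\rhob_0}$ prevents $I$ from being contained in any associated prime, delivering the required non-zero-divisor in $I$. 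This gives $J = 0$, the representation $\rho$ is constructed, and the theorem follows.
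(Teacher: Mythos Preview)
Your overall strategy matches the paper's: build the faithful GMA from Lemma~\ref{genlem}, use $\dim H^1(G,\chi)=1$ and Lemma~\ref{extlem} to make $B$ cyclic, prove $\mathrm{Ann}(B)=0$ by showing that the image $I$ of the multiplication map is not contained in any associated prime of $R^{\pd}_{\rhob_0}$, and then pass to $\GL_2$ via Lemma~\ref{annihlem}(2). The reducibility/tangent-space inputs (Lemmas~\ref{tandimlem}, \ref{ideallem}, \ref{redlem}) and the final deduction of $R^{\pd}_{\rhob_0}\simeq R^{\defo}_{\rhob_x}$ from the universal properties are also correct.

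The gap is in your $s=2$ case. You assert that ``the presence of a common factor in $f_1,f_2$ would contradict the minimality of the presentation,'' and from this conclude that $R^{\pd}_{\rhob_0}$ is Cohen--Macaulay. This inference is false: minimal generators of an ideal in a regular local UFD can share a common irreducible factor. For instance, in $\FF[[X,Y,Z,W,T]]$ the elements $XY$ and $XZ$ are minimal generators of $(XY,XZ)$, yet the quotient has associated primes $(X)$ of height $1$ and $(Y,Z)$ of height $2$, so it is not equidimensional and hence not Cohen--Macaulay. Nothing in Lemma~\ref{strlem} rules out such a presentation for $R^{\pd}_{\rhob_0}$ when $m=3$, so your dimension inequality $\dim R^{\pd}_{\rhob_0}/I < \dim R^{\pd}_{\rhob_0}$ alone does not prevent $I$ from sitting inside a low-height associated prime.

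The paper does not claim $R^{\pd}_{\rhob_0}$ is Cohen--Macaulay. In the $s=2$ case (which forces $m=3$) it lifts everything to the power series ring $P$, extracts the gcd $f$ of the two relations $\alpha,\beta$, and argues in two stages. First, since $\tilde y\,\tilde I\subset(\alpha,\beta)\subset(f)$ and every prime containing $\tilde I$ has height $\ge 3>1$, the $s=1$ argument against the principal ideal $(f)$ gives $f\mid\tilde y$. Writing $\tilde y=f\tilde y'$, $\alpha=f\alpha'$, $\beta=f\beta'$ with $\alpha',\beta'$ coprime, one has $\tilde y'\,\tilde I\subset(\alpha',\beta')$; now $\alpha',\beta'$ \emph{is} a regular sequence, so $P/(\alpha',\beta')$ is Cohen--Macaulay with all associated primes of height exactly $2<3$, and the same height bound on $\tilde I$ forces $\tilde y'\in(\alpha',\beta')$, whence $\tilde y\in(\alpha,\beta)$. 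This gcd reduction is the missing step in your proposal.
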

\begin{proof}
Without loss of generality, assume $\dim(H^1(G,\chi)) =1$. So we have $\dim(H^1(G,\chi^{-1})) \in \{1,2,3\}$. Let $A=\begin{pmatrix} R^{\pd}_{\rhob_0} & B \\ C & R^{\pd}_{\rhob_0}\end{pmatrix}$ be the GMA attached to the pseudo-character $t^{\univ} : G_{\QQ,Np} \to R^{\pd}_{\rhob_0}$ in Lemma~\ref{genlem} and let $\rho$ be the corresponding representation. Define $I_{\rhob_0}:= m'(B \otimes_{R^{\pd}_{\rhob_0}} C)$. So, by Lemma~\ref{annihlem}, if $y \in R^{\pd}_{\rhob_0}$ and $y.B=0$ then $y.I_{\rhob_0}=0$. 

Suppose $\dim(H^1(G,1)) := k$ and $\dim(H^1(G,\chi^{-1})) := m$. Then, by Lemma~\ref{strlem}, $R^{\pd}_{\rhob_0} \simeq \FF[[X_1,X_2,\cdots,X_{m+2k}]]/I$, where $I$ is an ideal of $\FF[[X_1,X_2,\cdots,X_{m+2k}]]$ generated by at most $\dim(H^2(G,\chi^{-1}))$ elements. Note that, by assumption, $\dim(H^2(G,\chi^{-1})) \leq m-1$. As $m \neq 0$, it follows from Lemma~\ref{tandimlem} and Lemma~\ref{redlem}, that $\dim(\tan(R^{\pd}/I_{\rhob_0})) < \dim(\tan(R^{\pd}_{\rhob_0}))$ and hence, $I_{\rhob_0} \neq (0)$. Therefore, $B$ and $C$ are non-zero.

Let $y \in R^{\pd}_{\rhob_0}$ be such that $y.I_{\rhob_0} = 0$ in $R^{\pd}_{\rhob_0}$. Let $\tilde y$ be a lift of $y$ in $\FF[[X_1,X_2,\cdots,X_{m+2k}]]$ and $\tilde I$ be the inverse image of $I_{\rhob_0}$ in $\FF[[X_1,X_2,\cdots,X_{m+2k}]]$. So we have $\tilde y.\tilde I \subset I$. Let us denote $\FF[[X_1,\cdots,X_{m+2k}]]$ by $R$ for the rest of the proof.

By Lemma~\ref{redlem}, we know that if $ P$ is a prime ideal of $R$ containing $\tilde I$, then its height is at least $m$. Suppose $\tilde y \not\in I$. Then, it follows that the ideal $\tilde I$ of $R$ consists of zero-divisors for $R/I$. Hence, it is contained in the union of primes associated to the ideal $I$. It follows, from the prime avoidance lemma (\cite[Lemma $3.3$]{E}), that $\tilde I$ is contained in some prime associated to $I$. Now, we will do a case by case analysis.

Suppose $I=(0)$. Since $\tilde I \neq (0)$, $\tilde y.\tilde I \subset I$ implies $\tilde y= 0$ and hence, $y=0$.

Suppose $I = (\alpha)$ for some non-zero $\alpha \in R$. This means $m$ is either $2$ or $3$ as minimal number of generators of $I$ is at most $m-1$. As $\alpha \neq 0$, it follows that $\alpha$ is a regular element in $R$. Note that $R$ is a regular local ring and hence, a Cohen-Macaulay ring (\cite[Corollary $18.17$]{E}). Therefore, every prime associated to $(\alpha)$ is minimal over it and hence, has height $1$ (\cite[Corollary $18.14$]{E}). As the height of any prime ideal of $R$ containing $\tilde I$ is at least $2$, it can not be contained in any prime associated to $(\alpha)$. Therefore, we get that $\tilde y \in (\alpha)$ which means $y=0$. 

Suppose $I=(\alpha,\beta)$ with $\alpha \nmid \beta$ and $\beta \nmid \alpha$. In this case $m=3$ as minimal number of generators of $I$ is at most $m-1$. Now, $R$ is regular local ring and hence, a UFD (see \cite[Theorem 19.19]{E}). Let $f$ be a gcd of $\alpha$ and $\beta$. Let $\alpha'$ and $\beta' \in R$ be such that $f.\alpha'=\alpha$, $f.\beta'=\beta$. Hence, $\alpha'$ and $\beta'$ are co-prime. By the argument given in the previous case, we get  that if $\tilde y. \tilde I \in I$, then $f | \tilde y$. Let $\tilde y' = \tilde y/f \in R$. So $\tilde y' \in R$ and $\tilde y'.\tilde I \subset (\alpha',\beta')$. 

Suppose $\tilde y' \not\in (\alpha',\beta')$. Then, by the argument given above, $\tilde I$ is contained in some prime associated to $(\alpha',\beta')$.

 As $\alpha'$ and $\beta'$ are co-prime, it follows that $\alpha'$, $\beta'$ is a regular sequence in $R$. Using \cite[Corollary $18.14$]{E} again, we see that every prime associated to $(\alpha',\beta')$ is minimal over it and hence, has height $2$. As the height of any prime ideal of $R$ containing $\tilde I$ is at least $3$, it can not be contained in any prime associated to $(\alpha',\beta')$. Hence, we get contradiction. So we get that $\tilde y' \in (\alpha',\beta')$ which means $\tilde y \in (\alpha,\beta)$ and $y=0$.

So, in both cases, we have $y=0$ which means the annihilator ideal of $B$ is $(0)$. 

As we are assuming $\dim(H^1(G,\chi)) =1$, it follows, from Part~\eqref{partv} of Lemma~\ref{genlem}, that $B$ is generated by at most one element over $R^{\pd}_{\rhob_0}$. On the other hand, we know $B$ is non-zero which means $B$ is generated by one element over $R^{\pd}_{\rhob_0}$. This, combined with the fact that annihilator of $B$ is $(0)$, implies that $B$ is a free $R^{\pd}_{\rhob_0}$-module of rank $1$. Now second part of Lemma~\ref{annihlem} gives a representation $\rho : G \to \GL_2(R^{\pd}_{\rhob_0})$ with $\tr(\rho)=t^{\univ}$.

Moreover, from the second part of Lemma~\ref{annihlem}, we see that $\rho'$ is a deformation of $\rhob_x$ for some non-zero $x \in H^1(G,\chi)$. Therefore, it induces a map $\psi'_x : R^{\defo}_{\rhob_x} \to R^{\pd}_{\rhob_0}$. So we get a map $\psi'_x \circ \psi_x : R^{\pd}_{\rhob_0} \to R^{\pd}_{\rhob_0}$. Now for all $g \in G$, $\psi'_x \circ \psi_x(t^{\univ}(g)) = \psi'_x(\tr(\rho^{univ}_x(g)))=\tr(\rho'(g))=t^{\univ}(g)$. Therefore, the universal property of $R^{\pd}_{\rhob_0}$ implies that $\psi'_x \circ \psi_x$ is just the identity map. Hence, $\psi_x$ is injective which means $\psi_x$ is an isomorphism. This proves the theorem.
\end{proof}

\begin{rem}
More generally, if we remove the assumption $\dim(H^1(G,\chi^{-i})) \in \{1,2,3\}$, the proof of Theorem~\ref{generalthm} still works if we know that $R^{\pd}_{\rhob_0}$ is isomorphic to a quotient of $\FF[[X_1,\cdots,X_{2k+m}]]$ by an ideal $I$ such that  the height of any prime associated to $I$ is at most $m-1$. In particular, the proof works if $I$ is generated by at most $2$ elements. Note that if $m \geq 6$ and $I$ is generated by at most $2$ elements, then the Krull dimension of $R^{\pd}_{\rhob_0}$ is $\geq 4$. In \cite[Section $4$]{Bo3}, there are examples of $R^{\defo}_{\rhob_x}$ having arbitrary large Krull dimension. So the possibility that $I$ is generated by $2$ elements cannot be ruled out even when $m \geq 6$.
\end{rem}

\begin{rem}
Without the assumption $\dim(H^1(G,\chi^{-i})) \in \{1,2,3\}$, we know that $R^{\pd}_{\rhob_0} \simeq \FF[[X_1,\cdots,X_{m+2k}]]/I$, where $I$ is an ideal generated by at most $m-1$ elements. If $I$ is generated by at least $3$ elements and we do not know that the height of any prime associated to $I$ is at most $m-1$, then we can not use the method of the proof of Theorem~\ref{generalthm}. To be precise, the analysis of the annihilator of $B$ breaks down. The main reason of this breakdown is the following: if the minimal number of generators of an ideal $I$ of the ring $\FF[[X_1,\cdots,X_{m+2k}]]$ is at least $3$ and at most $m-1$, then for $y \in \FF[[X_1,\cdots,X_m]]$, $yP \subset I$ for a prime ideal of height $m$ does not necessarily imply that $y \in I$. For example, consider the ideal $I=(xu^2,yv^2,x^2u-y^2v)$ in $\FF[[x,y,u,v,z,w]]$ with $m=4$ and $k=1$. Now, $xyuv \not\in I$ but $\{xyuv.x,xyuv.y,xyuv.u,xyuv.v\} \subset I$. However, if we can prove that the annihilator of $B$ is $(0)$, then the proof of Theorem~\ref{generalthm} would imply the existence of such a representation.
\end{rem}

\subsection{Existence of the representation over $\calR^{\MakeLowercase{{pd}}}_{\rhob_0}$}
In this subsection, we will turn our attention to the characteristic $0$ deformation ring $\calR^{\pd}_{\rhob_0}$ to see if we can extend Theorem~\ref{generalthm} in characteristic $0$ to prove existence of the representation over $\calR^{\pd}_{\rhob_0}$ with trace $T^{\univ}$.

\begin{prop}
\label{nonpseudoprop}
Suppose $H^2(G,1)=0$. Suppose there exists an $i \in \{1,-1\}$ such that $\dim(H^1(G,\chi^i)) =1$, $H^2(G,\chi^i)=0$ and $\dim(H^2(G,\chi^{-i})) < \dim(H^1(G,\chi^{-i}))$. For such an $i$, let $x \in H^1(G,\chi^i)$ be a non-zero element. Suppose $p$ is not a zero-divisor in $\calR^{\pd}_{\rhob_0}$.  For such an $i$, if $\dim(H^1(G,\chi^{-i})) \in \{1,2,3\}$, then there exists a representation $\tau : G_{\QQ,Np} \to \GL_2(\calR^{\pd}_{\rhob_0})$ such that $\tr(\tau) = T^{\univ}$. As a consequence, the map $\Psi_x : \calR^{\pd}_{\rhob_0} \to \calR^{\defo}_{\rhob_x}$ is an isomorphism. 
\end{prop}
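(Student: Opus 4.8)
The plan is to mimic the argument of Theorem~\ref{generalthm}, replacing the role of Lemma~\ref{strlem} (which is unavailable in characteristic $0$, since it relies on \cite{WE}) by a direct analysis of the GMA over $\calR^{\pd}_{\rhob_0}$ combined with the extra hypothesis that $p$ is a non-zero-divisor. First I would set up, as in Theorem~\ref{generalthm}, the faithful GMA $A = \begin{pmatrix} \calR^{\pd}_{\rhob_0} & B \\ C & \calR^{\pd}_{\rhob_0}\end{pmatrix}$ attached by Lemma~\ref{genlem} to the universal pseudo-character $T^{\univ} : G \to \calR^{\pd}_{\rhob_0}$, together with its representation $\rho$, and write $I_{\rhob_0} := m'(B \otimes C)$. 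By Lemma~\ref{genlem} and the hypothesis $\dim(H^1(G,\chi^i)) = 1$, the module $B$ (say, normalizing so that $i=1$) is generated by at most one element over $\calR^{\pd}_{\rhob_0}$; the goal, exactly as before, is to show that $B$ is \emph{free} of rank $1$, i.e. that the annihilator of $B$ vanishes, so that the second part of Lemma~\ref{annihlem} produces the desired representation $\tau : G \to \GL_2(\calR^{\pd}_{\rhob_0})$ with $\tr(\tau) = T^{\univ}$.

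The key step is the vanishing of $\mathrm{Ann}(B)$. Here I would exploit the non-zero-divisor hypothesis on $p$ to reduce to the mod-$p$ situation: since $\calR^{\pd}_{\rhob_0}/(p) \simeq R^{\pd}_{\rhob_0}$, reducing the GMA $A$ modulo $p$ yields (up to the usual identifications) the GMA attached to $t^{\univ}$ over $R^{\pd}_{\rhob_0}$, whose $B$-part I'll call $\bar B$; the hypotheses of Theorem~\ref{generalthm} are satisfied in characteristic $p$, so $\bar B$ is free of rank $1$ over $R^{\pd}_{\rhob_0}$, in particular $\mathrm{Ann}(\bar B) = (0)$. Now if $y \in \calR^{\pd}_{\rhob_0}$ annihilates $B$, then $\bar y$ annihilates $\bar B$, so $y \in (p)$, say $y = p y_1$; since $p B = p B$ and $p$ is a non-zero-divisor, $y B = 0$ forces $y_1 B = 0$ (using that $B$ is $p$-torsion-free, which follows because $B$ embeds, after tensoring with the total fraction ring, into a product of fields once we know $\calR^{\pd}_{\rhob_0}$ is reduced — or more directly, $B$ is a finitely generated module over a Noetherian local ring on which $p$ is a non-zero-divisor, and the structure of $A$ as an algebra with $p$ regular makes $B$ $p$-torsion-free). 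Iterating, $y \in \bigcap_n (p^n) = (0)$ by Krull's intersection theorem. Hence $\mathrm{Ann}(B) = (0)$, $B$ is free of rank $1$, and Lemma~\ref{annihlem}(2) gives $\tau$ with $\tau \bmod \mathfrak m = \rhob_x$ for some non-zero $x \in H^1(G,\chi^i)$.

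For the final assertion, I would argue exactly as at the end of the proof of Theorem~\ref{generalthm}: the representation $\tau$ is a deformation of $\rhob_x$, hence induces $\Psi'_x : \calR^{\defo}_{\rhob_x} \to \calR^{\pd}_{\rhob_0}$; composing, $\Psi'_x \circ \Psi_x$ fixes each $T^{\univ}(g) = \tr(\tau(g))$, so by the universal property of $\calR^{\pd}_{\rhob_0}$ (the traces topologically generate it, by \cite[Remark $3.5$]{C}) it is the identity. Thus $\Psi_x$ is injective; it is surjective by Lemma~\ref{surjlem}, so it is an isomorphism.

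The main obstacle I expect is the torsion-freeness argument in the reduction step: one must be careful that $B$ (a finitely generated module over the GMA, not over $\calR^{\pd}_{\rhob_0}$ a priori with nice properties) is $p$-torsion-free, so that $p y_1 B = 0$ genuinely forces $y_1 B = 0$. The cleanest route is probably to first observe that $\calR^{\pd}_{\rhob_0}$ is reduced — which one can extract from Theorem~\ref{reduceprop} (it identifies $(\calR^{\pd}_{\rhob_0})^{\red}$ with $(\calR^{\defo}_{\rhob_x})^{\red}$, and under the stated hypotheses with small $H^1$ one can hope $\calR^{\defo}_{\rhob_x}$, equivalently $\calR^{\pd}_{\rhob_0}$, is already reduced) — and then to invoke Lemma~\ref{redgmalem} to realize $B$ as a fractional ideal of the total fraction ring, which is visibly $p$-torsion-free; alternatively, one argues directly with associated primes of $\calR^{\pd}_{\rhob_0}$ as in Theorem~\ref{generalthm}, using that $p$ avoids all of them. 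I would present the fractional-ideal version since it parallels the characteristic-$p$ proof most closely and keeps the bookkeeping minimal.
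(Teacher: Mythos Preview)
Your overall architecture is right and matches the paper's: set up the faithful GMA over $\calR^{\pd}_{\rhob_0}$, show the annihilator of $B$ vanishes by reducing modulo $p$ and iterating via the non-zero-divisor hypothesis, then conclude as in Theorem~\ref{generalthm}. But there is a genuine gap at exactly the point you flag as ``the main obstacle'', and neither of your proposed fixes works.

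The issue is the step ``$y=py_1$ and $yB=0$ imply $y_1B=0$''. This requires $B$ to be $p$-torsion-free, and you have not established that. Your parenthetical ``$B$ is a finitely generated module over a Noetherian local ring on which $p$ is a non-zero-divisor, and the structure of $A$ as an algebra with $p$ regular makes $B$ $p$-torsion-free'' is simply false in general (take $\ZZ_p/p$ over $\ZZ_p$). Your other route, deducing that $\calR^{\pd}_{\rhob_0}$ is reduced from Theorem~\ref{reduceprop}, does not work either: that theorem only identifies $(\calR^{\pd}_{\rhob_0})^{\red}$ with $(\calR^{\defo}_{\rhob_x})^{\red}$, which says nothing about whether $\calR^{\pd}_{\rhob_0}$ itself is reduced. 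There is a second, related gap: you assert that the reduction $B/pB$ coincides with the $B$-part of the faithful GMA built afresh over $R^{\pd}_{\rhob_0}$ via Lemma~\ref{genlem}, so that Theorem~\ref{generalthm} tells you $\mathrm{Ann}(B/pB)=0$. But faithfulness need not survive base change, so this identification is not automatic.

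The paper sidesteps both problems with one move: instead of working with the module $B$, pass immediately to the ideal $\mathcal{I}_{\rhob_0}:=m'(B\otimes C)\subset\calR^{\pd}_{\rhob_0}$ (Lemma~\ref{annihlem}(1) gives $yB=0\Rightarrow y\mathcal{I}_{\rhob_0}=0$). Let $I$ be the image of $\mathcal{I}_{\rhob_0}$ in $R^{\pd}_{\rhob_0}$. By Lemma~\ref{ideallem}, $T^{\univ}\pmod{\mathcal{I}_{\rhob_0}}$ is reducible, hence so is $t^{\univ}\pmod{I}$; now the height/associated-prime analysis \emph{inside the proof} of Theorem~\ref{generalthm} (not its conclusion about freeness of some $\bar B$) applies directly to $I$ and gives $\bar y=0$, i.e.\ $y\in(p)$. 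Writing $y=p^{k_0}y'$ with $y'\notin(p)$, the non-zero-divisor hypothesis on $p$ now cancels cleanly \emph{because $\mathcal{I}_{\rhob_0}$ lives inside the ring}: $p^{k_0}y'\mathcal{I}_{\rhob_0}=0$ in $\calR^{\pd}_{\rhob_0}$ forces $y'\mathcal{I}_{\rhob_0}=0$, and repeating yields the contradiction $y'\in(p)$. No torsion-freeness of $B$ and no comparison of GMAs is needed.
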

\begin{proof}
 Without loss of generality, assume $\dim(H^1(G,\chi)) =1$. 
Let $A=\begin{pmatrix} \calR^{\pd}_{\rhob_0} & \mathcal{B} \\ \mathcal{C} & \calR^{\pd}_{\rhob_0}\end{pmatrix}$ be the GMA attached to the pseudo-character $T^{\univ} : G \to \calR^{\pd}_{\rhob_0}$ in Lemma~\ref{genlem}. From Lemma~\ref{annihlem} and the proof of Theorem~\ref{generalthm}, we see that it is sufficient to prove that the annihilator of $\mathcal{B}$ is $(0)$.

Suppose $m'(\mathcal{B} \otimes_{\calR^{\pd}_{\rhob_0}} \mathcal{C}) = \mathcal{I}_{\rhob_0}$. Suppose $y \in \calR^{\pd}_{\rhob_0}$, $y\mathcal{B}=0$ and $y \neq 0$.  So, by Lemma~\ref{annihlem}, we get $y\calI_{\rhob_0}=0$. Let $I$ be the image of the ideal $(p,\calI_{\rhob_0})$ in $\calR^{\pd}_{\rhob_0}/(p)$ and $\bar y$ be the image of $y$ in $\calR^{\pd}_{\rhob_0}/(p)$. Hence, we get $\bar yI=0$. By Lemma~\ref{redlem} and Part~\eqref{partvi} of Lemma~\ref{genlem}, it follows that if $P$ is a prime of $R^{\pd}_{\rhob_0}$ minimal over $I$, then $\dim(R^{\pd}_{\rhob_0}/P) \leq 2k$, where $\dim(H^1(G,1)) := k$. Now from the proof of Theorem~\ref{generalthm} it follows that $\bar y=0$. 

Hence, we see that $y \in (p)$. As $y \neq 0$, there exists a positive integer $k_0$ such that $y=p^{k_0}y'$ with $y' \not\in (p)$. Since $p$ is not a zero divisor in $\calR^{\pd}_{\rhob_0}$, it follows that $y'\calI_{\rhob_0} = 0$. As $y' \neq 0$, the argument given in the previous paragraph implies $y' \in (p)$ and hence, gives us a contradiction. Therefore, we get $y=0$. This means that $\calI_{\rhob_0} \neq (0)$.

This, along with the fact $\dim(H^1(G,\chi)) =1$, implies that $\mathcal{B}$ is free $\calR^{\pd}_{\rhob_0}$-module of rank $1$. Following the proof of Theorem~\ref{generalthm} from here, we get the representation with trace $T^{\univ}$ and see that $\Psi_x$ is an isomorphism for all non-zero $x \in H^1(G,\chi)$.
\end{proof}

 Finally, we now give a result which will be used in the next section.

\begin{prop}
\label{nondefprop}
Suppose $H^2(G,1)=0$. Suppose there exists an $i \in \{1,-1\}$ such that $\dim(H^1(G,\chi^i)) =1$, $H^2(G,\chi^i)=0$, $\dim(H^1(G,\chi^{-i})) \in \{1,2,3\}$ and $\dim(H^2(G,\chi^{-i})) < \dim(H^1(G,\chi^{-i}))$. Let $x \in H^1(G,\chi^i)$ be a non-zero element. If $p$ is not a zero-divisor in $\calR^{\defo}_{\rhob_x}$, then the map $\Psi_x : \calR^{\pd}_{\rhob_0} \to \calR^{\defo}_{\rhob_x}$ is an isomorphism.
\end{prop}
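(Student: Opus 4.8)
The plan is to reduce Proposition~\ref{nondefprop} to the situation already handled in Proposition~\ref{nonpseudoprop} by transporting the hypothesis ``$p$ is not a zero-divisor'' from $\calR^{\defo}_{\rhob_x}$ to $\calR^{\pd}_{\rhob_0}$ along the map $\Psi_x$, and then invoking the surjectivity and tangent-space comparisons already established. Recall from Lemma~\ref{surjlem} that, since $\dim(H^1(G,\chi^i))=1$, the map $\Psi_x : \calR^{\pd}_{\rhob_0} \to \calR^{\defo}_{\rhob_x}$ is surjective. Write $\mathfrak{a}=\ker(\Psi_x)$, so $\calR^{\defo}_{\rhob_x} \simeq \calR^{\pd}_{\rhob_0}/\mathfrak{a}$. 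The first step is to observe that $\mathfrak{a} \subset (p)$ inside $\calR^{\pd}_{\rhob_0}$: indeed, by Theorem~\ref{generalthm} (whose hypotheses are exactly those assumed here, in characteristic $p$), the induced map $R^{\pd}_{\rhob_0} \to R^{\defo}_{\rhob_x}$ on the mod-$p$ quotients is an isomorphism, i.e. $\Psi_x$ becomes an isomorphism after tensoring with $\calR^{\pd}_{\rhob_0}/(p)$, so $\mathfrak{a} + (p) = (p)$, whence $\mathfrak{a}\subset (p)$.

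The second step is to show $\mathfrak{a}=(0)$ using that $p$ is not a zero-divisor in $\calR^{\defo}_{\rhob_x}=\calR^{\pd}_{\rhob_0}/\mathfrak{a}$. Suppose $\mathfrak{a}\neq 0$ and pick $0\neq a\in\mathfrak{a}$. Since $a\in(p)$, write $a=p^{k_0}a'$ with $k_0\geq 1$ maximal, so $a'\notin(p)$ and $a'\neq 0$; here I would want $p$ to be a non-zero-divisor in $\calR^{\pd}_{\rhob_0}$ itself to make the factorization legitimate and to cancel the power of $p$. That is the key auxiliary point: $p$ non-zero-divisor in $\calR^{\defo}_{\rhob_x}=\calR^{\pd}_{\rhob_0}/\mathfrak{a}$ together with $\mathfrak{a}\subset(p)$ forces $p$ to be a non-zero-divisor in $\calR^{\pd}_{\rhob_0}$ — if $pz=0$ in $\calR^{\pd}_{\rhob_0}$ then $p\bar z=0$ in the quotient gives $\bar z=0$, i.e. $z\in\mathfrak{a}\subset(p)$, so $z=pz_1$ and $p^2 z_1=0$; iterating and using completeness/Noetherianity (Krull intersection, $\bigcap_n (p^n)=0$ in the complete local Noetherian domain-candidate) yields $z=0$. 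With $p$ a non-zero-divisor in $\calR^{\pd}_{\rhob_0}$ in hand, the hypotheses of Proposition~\ref{nonpseudoprop} are satisfied, so $\Psi_x$ is an isomorphism, i.e. $\mathfrak{a}=(0)$, a contradiction unless $\mathfrak{a}=0$ to begin with.

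Thus the cleanest route is: (i) from $p$ not a zero-divisor in $\calR^{\defo}_{\rhob_x}$ and $\ker(\Psi_x)\subset(p)$ (via Theorem~\ref{generalthm} mod $p$), deduce $\bigcap_n(p^n)\cap\ker(\Psi_x)$ is controlled and ultimately that $p$ is not a zero-divisor in $\calR^{\pd}_{\rhob_0}$; (ii) apply Proposition~\ref{nonpseudoprop} verbatim to conclude $\Psi_x$ is an isomorphism. The main obstacle I anticipate is step (i): one must carefully justify that $p$ non-zero-divisor passes up through the surjection $\calR^{\pd}_{\rhob_0}\twoheadrightarrow\calR^{\defo}_{\rhob_x}$, which requires both $\ker(\Psi_x)\subset(p)$ and the Krull intersection theorem in the complete local Noetherian ring $\calR^{\pd}_{\rhob_0}$; once this is settled, the rest is a direct appeal to Proposition~\ref{nonpseudoprop}. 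Alternatively, one could bypass the factorization argument by noting that the GMA analysis in the proof of Proposition~\ref{nonpseudoprop} only used ``$p$ not a zero-divisor'' to rule out the annihilator of $\mathcal{B}$ being nonzero; tracing through, the hypothesis on $\calR^{\defo}_{\rhob_x}$ can be fed in at precisely the point where one concludes $y'\notin(p)$ leads to a contradiction, since $y'$ maps to a nonzero element annihilating the image of $\mathcal{B}$ in the deformation ring, and there $p$ is regular.
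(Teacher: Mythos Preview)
Your proposal is correct and shares its core with the paper's proof: both establish $\ker(\Psi_x)\subset(p)$ via the mod-$p$ isomorphism of Theorem~\ref{generalthm}, and both finish with a $p$-adic factorization argument using Krull intersection $\bigcap_n(p^n)=0$ in the complete local Noetherian ring $\calR^{\pd}_{\rhob_0}$.

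The difference is that the paper is more direct and avoids your detour through Proposition~\ref{nonpseudoprop}. Once $\ker(\Psi_x)\subset(p)$ is known, the paper simply takes $0\neq h\in\ker(\Psi_x)$, writes $h=p^{n_0}h'$ with $h'\notin(p)$ (legitimate by Krull intersection alone, no regularity of $p$ in the source needed), observes $h'\notin(p)\supset\ker(\Psi_x)$ so $\Psi_x(h')\neq 0$, and then $p^{n_0}\Psi_x(h')=\Psi_x(h)=0$ contradicts $p$ being regular in $\calR^{\defo}_{\rhob_x}$. This is exactly the argument you sketch in your ``second step'' before pivoting; your worry that the factorization requires $p$ to be a non-zero-divisor in $\calR^{\pd}_{\rhob_0}$ is misplaced---existence of such a factorization needs only $\bigcap_n(p^n)=0$, and uniqueness of $h'$ is irrelevant. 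So your step~(i) (lifting regularity of $p$ to $\calR^{\pd}_{\rhob_0}$) and the appeal to Proposition~\ref{nonpseudoprop} are valid but superfluous: the paper concludes $\ker(\Psi_x)=0$ in one stroke and then cites Lemma~\ref{surjlem} for surjectivity.
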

\begin{proof}
We have the following commutative diagram:
\[ \begin{tikzcd}
\calR^{\pd}_{\rhob_0} \arrow{r}{\Psi_x} \arrow[swap]{d}{f_1} & \calR^{\defo}_{\rhob_x} \arrow{d}{f_2} \\
R^{\pd}_{\rhob_0} \arrow{r}{\psi_x}& R^{\defo}_{\rhob_x}
\end{tikzcd}
\]
Here the vertical maps $f_1$ and $f_2$ are the morphisms induced by $t^{\univ}$ and $\rho^{\univ}_x$, respectively. Now, $\ker(f_1)$ is the ideal generated by $p$ in $\calR^{\pd}_{\rhob_0}$, while $\ker(f_2)$ is the ideal generated by $p$ in $\calR^{\defo}_{\rhob_x}$. By Theorem~\ref{genthm}, $\psi_x$ is an isomorphism. So $\ker(\psi_x \circ f_1)=\ker(f_1)=(p)$. As $\psi_x \circ f_1 = f_2 \circ \Psi_x$, it follows that $\ker(f_2 \circ \Psi_x) = (p)$. Thus $\ker(\Psi_x) \subset (p)$.

Let $h \in \ker(\Psi_x)$. So $h \in (p)$. Suppose $h \neq 0$. As $\calR^{\pd}_{\rhob_0}$ is a complete local ring, $\cap_{n \geq 1}(p^n)=(0)$. Therefore, we have $h = p^{n_0}h'$ where $n_0 \geq 1$ is an integer, $h' \in \calR^{\pd}_{\rhob_0}$ and $h' \not\in (p)$. Thus, $h' \not\in \ker(\Psi_x)$ and hence, $\Psi_x(h') \neq 0$. But $\Psi_x(h) = 0$. So we get $\Psi_x(h) = \Psi_x(p^{n_0}.h')=p^{n_0}.\Psi_x(h')=0$. Thus, we get that $p$ is a zero-divisor in $\calR^{\defo}_{\rhob_x}$ which contradicts our assumption. Therefore, it follows that $\ker(\Psi_x)=(0)$. From Lemma~\ref{surjlem}, we know that $\Psi_x$ is surjective. Hence, it follows that $\Psi_x$ is an isomorphism.
\end{proof}

\subsection{Consequences for Galois groups}
\label{galrepsubsec}
In this subsection, we list the consequences of results proved in this section so far for $G_{\QQ,Np}$. To be precise, let $N$ be an integer not divisible by $p$ and $\rhob_0 : G_{\QQ,Np} \to \GL_2(\FF)$ be an odd, semi-simple, reducible representation. So there exist characters $\chi_1, \chi_2 : G_{\QQ,Np} \to \FF^\times$ such that $\rhob_0 = \chi_1 \oplus \chi_2$ and $\chi_1 \neq \chi_2$. Let $\chi = \chi_1\chi_2^{-1}$. We will now see the consequences of the main results of previous subsections in the present setup.

\begin{thm}
\label{redprop}
Suppose $\dim(H^1(G_{\QQ,Np},\chi^i)) =1$ for some $i \in \{1,-1\}$. Fix such an $i$ and let $x \in H^1(G_{\QQ,Np},\chi^i)$ be a non-zero element. Then the map $\Psi_x : \calR^{\pd}_{\rhob_0} \to \calR^{\defo}_{\rhob_x}$ induces an isomorphism between $(\calR^{\pd}_{\rhob_0})^{\red}$ and $(\calR^{\defo}_{\rhob_x})^{\red}$.
\end{thm}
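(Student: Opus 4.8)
The strategy is to obtain this as a direct specialization of the general Theorem~\ref{reduceprop}. That theorem yields precisely the asserted isomorphism under the two hypotheses $\dim(H^1(G,\chi^i)) = 1$ and $H^2(G,\chi^i) = 0$; the first is assumed here, so the whole task reduces to showing that, in the Galois setting with $\rhob_0$ odd, the condition $\dim(H^1(G_{\QQ,Np},\chi^i)) = 1$ \emph{automatically} forces $H^2(G_{\QQ,Np},\chi^i) = 0$. In other words, the content of Theorem~\ref{redprop} over Theorem~\ref{reduceprop} is entirely a cohomological bookkeeping point special to $G_{\QQ,Np}$.

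First I would record that $\chi^i$ is an odd, and in particular nontrivial, character of $G_{\QQ,Np}$. Let $c \in G_{\QQ,Np}$ be a complex conjugation. Oddness of $\rhob_0 = \chi_1 \oplus \chi_2$ means $\chi_1(c)\chi_2(c) = \det(\rhob_0)(c) = -1$; since $p$ is odd, the values $\chi_1(c), \chi_2(c)$ lie in $\{\pm 1\}$, so exactly one of them equals $-1$, and therefore $\chi(c) = \chi_1(c)\chi_2(c)^{-1} = -1$, and likewise $\chi^{-1}(c) = -1$. As $\chi_1 \neq \chi_2$, the character $\chi^i$ is also nontrivial. Hence Lemma~\ref{cohomlem}(2) applies to $\chi^i$ — note that part (2) of that lemma only uses oddness of the character together with the global Euler characteristic formula, not the auxiliary prime $\ell$ occurring in the statement of the lemma — and it gives
\[
\dim(H^2(G_{\QQ,Np},\chi^i)) = \dim(H^1(G_{\QQ,Np},\chi^i)) - 1 = 0,
\]
using the hypothesis $\dim(H^1(G_{\QQ,Np},\chi^i)) = 1$. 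Thus $H^2(G_{\QQ,Np},\chi^i) = 0$.

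Finally, $G_{\QQ,Np}$ satisfies Mazur's finiteness condition $\Phi_p$, so Theorem~\ref{reduceprop} is applicable with $G = G_{\QQ,Np}$, the odd semi-simple $\rhob_0 = \chi_1 \oplus \chi_2$, the character $\chi$, the index $i$, and the fixed nonzero $x \in H^1(G_{\QQ,Np},\chi^i)$; it shows that $\Psi_x$ induces an isomorphism $(\calR^{\pd}_{\rhob_0})^{\red} \simeq (\calR^{\defo}_{\rhob_x})^{\red}$, which is exactly the claim. There is no genuine obstacle in this argument: the only step deserving care is the verification that $\chi^i$ is odd (and nontrivial), which is what licenses the application of Lemma~\ref{cohomlem}(2).
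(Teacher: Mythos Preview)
Your proof is correct and follows exactly the paper's approach: the paper's proof is the single line ``This follows from Lemma~\ref{cohomlem} and Theorem~\ref{reduceprop},'' and you have simply unpacked this by verifying that $\chi^i$ is odd (so that Lemma~\ref{cohomlem}(2) yields $H^2(G_{\QQ,Np},\chi^i)=0$) and then invoking Theorem~\ref{reduceprop}.
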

\begin{proof}
This follows from Lemma~\ref{cohomlem} and Theorem~\ref{reduceprop}.
\end{proof}

\begin{thm}
\label{genthm}
Suppose $p \nmid \phi(N)$ and $\dim(H^1(G_{\QQ,Np},\chi^i)) =1$ for some $i \in \{1,-1\}$. Moreover, for such an $i$, assume that $\dim(H^1(G_{\QQ,Np},\chi^{-i})) \in \{1,2,3\}$. Then, there exists a representation $\rho : G_{\QQ,Np} \to \GL_2(R^{\pd}_{\rhob_0})$ such that $\tr(\rho) = t^{\univ}$ and for any non-zero $x \in H^1(G_{\QQ,Np},\chi^i)$, $R^{\pd}_{\rhob_0} \simeq R^{\defo}_{\rhob_x}$.
\end{thm}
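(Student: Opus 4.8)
The plan is to verify that the hypotheses of Theorem~\ref{generalthm} are satisfied in the Galois setting, so that the conclusion follows directly. Concretely, I would set $G = G_{\QQ,Np}$ and check each condition of Theorem~\ref{generalthm} using Lemma~\ref{cohomlem}: first, since $p \nmid \phi(N)$, part~(1) of Lemma~\ref{cohomlem} gives $H^2(G_{\QQ,Np},1) = 0$, which is the standing hypothesis of Theorem~\ref{generalthm}. Next, fix the index $i \in \{1,-1\}$ for which $\dim(H^1(G_{\QQ,Np},\chi^i)) = 1$; this is given. Part~(2) of Lemma~\ref{cohomlem} applied to the odd character $\chi^i$ tells us $\dim(H^2(G_{\QQ,Np},\chi^i)) = \dim(H^1(G_{\QQ,Np},\chi^i)) - 1 = 0$, so $H^2(G_{\QQ,Np},\chi^i) = 0$.

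For the remaining two conditions, I would again use part~(2) of Lemma~\ref{cohomlem}, now applied to the character $\chi^{-i}$ (which is also odd, since $\chi^i$ is odd and the parity of a character of $G_{\QQ,Np}$ is preserved under inversion as $-1$ is its own inverse modulo the relevant scalars). This gives $\dim(H^2(G_{\QQ,Np},\chi^{-i})) = \dim(H^1(G_{\QQ,Np},\chi^{-i})) - 1 < \dim(H^1(G_{\QQ,Np},\chi^{-i}))$, which is exactly the condition $\dim(H^2(G,\chi^{-i})) < \dim(H^1(G,\chi^{-i}))$. Finally, the hypothesis of the present theorem explicitly assumes $\dim(H^1(G_{\QQ,Np},\chi^{-i})) \in \{1,2,3\}$, so that condition is immediate. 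With all hypotheses of Theorem~\ref{generalthm} verified, its conclusion yields a representation $\rho : G_{\QQ,Np} \to \GL_2(R^{\pd}_{\rhob_0})$ with $\tr(\rho) = t^{\univ}$ and the isomorphism $R^{\pd}_{\rhob_0} \simeq R^{\defo}_{\rhob_x}$ for any non-zero $x \in H^1(G_{\QQ,Np},\chi^i)$.

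The only subtlety to be careful about — and the closest thing to an obstacle — is the parity issue: I must confirm that $\chi^{-i}$ is odd whenever $\chi^i$ is, so that part~(2) of Lemma~\ref{cohomlem} (which requires an odd character) applies to it. Since $\rhob_0 = \chi_1 \oplus \chi_2$ is odd, complex conjugation $c$ acts with eigenvalues $1$ and $-1$, so $\chi_1(c)\chi_2(c) = -1$, whence $\chi(c) = \chi_1(c)\chi_2(c)^{-1} = -1$ and therefore $\chi^i(c) = -1 = \chi^{-i}(c)$ for $i \in \{1,-1\}$; thus both $\chi^i$ and $\chi^{-i}$ are odd. This is routine but worth stating explicitly. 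Everything else is a direct bookkeeping exercise combining Lemma~\ref{cohomlem} with Theorem~\ref{generalthm}, so the proof is short.
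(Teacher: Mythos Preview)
Your proposal is correct and follows exactly the paper's approach: the paper's proof is the single sentence ``The theorem follows from Lemma~\ref{cohomlem} and Theorem~\ref{generalthm},'' and you have simply spelled out the verification of each hypothesis (including the parity check for $\chi^{\pm i}$, which the paper leaves implicit).
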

\begin{proof}
The theorem follows from Lemma~\ref{cohomlem} and Theorem~\ref{generalthm}.
\end{proof}

\begin{prop}
\label{nonzeroprop}
Suppose $p \nmid \phi(N)$ and $\dim(H^1(G_{\QQ,Np},\chi^i)) =1$ for some $i \in \{1,-1\}$. For such an $i$, assume that $\dim(H^1(G_{\QQ,Np},\chi^{-i})) \in \{1,2,3\}$. Let $x \in H^1(G_{\QQ,Np},\chi^i)$ be a non-zero element. If $p$ is not a zero-divisor in either $\calR^{\pd}_{\rhob_0}$ or $\calR^{\defo}_{\rhob_x}$, then there exists a representation $\tau : G_{\QQ,Np} \to \GL_2(\calR^{\pd}_{\rhob_0})$ such that $\tr(\tau) = T^{\univ}$ and the map $\Psi_x : \calR^{\pd}_{\rhob_0} \to \calR^{\defo}_{\rhob_x}$ is an isomorphism.
\end{prop}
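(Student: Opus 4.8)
The plan is to deduce this from the general profinite-group statements Proposition~\ref{nonpseudoprop} and Proposition~\ref{nondefprop}, by verifying that their cohomological hypotheses hold for $G = G_{\QQ,Np}$ with $\rhob_0$ odd. First I would record the needed vanishing and comparison facts. Since $\rhob_0$ is odd, $\det\rhob_0 = \chi_1\chi_2$ is odd, so $\chi = \chi_1/\chi_2$ satisfies $\chi(c) = -1$ at a complex conjugation $c$; as $\chi_1 \neq \chi_2$ this makes $\chi$ (and hence $\chi^{-1}$, and thus $\chi^{\pm i}$ for any $i \in \{1,-1\}$) a nontrivial odd character. The hypothesis $p \nmid \phi(N)$ gives $H^2(G_{\QQ,Np},1) = 0$ by Lemma~\ref{cohomlem}(1). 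Applying the global Euler characteristic computation of Lemma~\ref{cohomlem}(2) to $\chi^i$ and to $\chi^{-i}$ gives $\dim(H^2(G_{\QQ,Np},\chi^i)) = \dim(H^1(G_{\QQ,Np},\chi^i)) - 1 = 0$, using $\dim(H^1(G_{\QQ,Np},\chi^i)) = 1$, and $\dim(H^2(G_{\QQ,Np},\chi^{-i})) = \dim(H^1(G_{\QQ,Np},\chi^{-i})) - 1 < \dim(H^1(G_{\QQ,Np},\chi^{-i}))$. Together with the standing assumption $\dim(H^1(G_{\QQ,Np},\chi^{-i})) \in \{1,2,3\}$, these are exactly the hypotheses on $(G,\chi)$ in Proposition~\ref{nonpseudoprop} and Proposition~\ref{nondefprop}.

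Next I would split into the two cases permitted by the zero-divisor hypothesis. If $p$ is not a zero-divisor in $\calR^{\pd}_{\rhob_0}$, then Proposition~\ref{nonpseudoprop} directly produces a representation $\tau : G_{\QQ,Np} \to \GL_2(\calR^{\pd}_{\rhob_0})$ with $\tr(\tau) = T^{\univ}$ and shows that $\Psi_x$ is an isomorphism. If instead $p$ is not a zero-divisor in $\calR^{\defo}_{\rhob_x}$, then Proposition~\ref{nondefprop} shows that $\Psi_x : \calR^{\pd}_{\rhob_0} \to \calR^{\defo}_{\rhob_x}$ is an isomorphism; composing the universal deformation $\rho^{\univ}_x$ entrywise with $\Psi_x^{-1}$ gives a representation $\tau := \GL_2(\Psi_x^{-1}) \circ \rho^{\univ}_x : G_{\QQ,Np} \to \GL_2(\calR^{\pd}_{\rhob_0})$, and since $\Psi_x \circ T^{\univ} = \tr(\rho^{\univ}_x)$ we get $\tr(\tau) = \Psi_x^{-1}\circ\tr(\rho^{\univ}_x) = T^{\univ}$. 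In both cases the asserted conclusion holds.

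The points requiring care are purely bookkeeping: checking that oddness of $\rhob_0$ forces $\chi^{\pm 1}$ to be odd (so that Lemma~\ref{cohomlem}(2) applies to both $\chi^i$ and $\chi^{-i}$), and tracking the universal-property identities that identify $\tr(\tau)$ with $T^{\univ}$ in the second case. The genuine content — controlling the annihilator of the module $\mathcal{B}$ in the GMA attached to $T^{\univ}$, respectively transporting the isomorphism through the mod-$p$ reduction square — has already been carried out in Proposition~\ref{nonpseudoprop} and Proposition~\ref{nondefprop}, so I expect no essential obstacle at this stage: the statement is a specialization of those results to the Galois setting, packaged via Lemma~\ref{cohomlem}.
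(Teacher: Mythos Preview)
Your proposal is correct and follows exactly the paper's approach: the paper's proof is the single line ``Follows from Lemma~\ref{cohomlem}, Proposition~\ref{nonpseudoprop} and Proposition~\ref{nondefprop},'' and you have simply unpacked this by verifying via Lemma~\ref{cohomlem} that the cohomological hypotheses of those two propositions hold and then invoking them in the two respective cases. The only extra detail you supply --- transporting $\rho^{\univ}_x$ along $\Psi_x^{-1}$ to obtain $\tau$ in the second case --- is indeed needed since Proposition~\ref{nondefprop} only asserts that $\Psi_x$ is an isomorphism, and your argument for it is correct.
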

\begin{proof}
Follows from Lemma~\ref{cohomlem}, Proposition~\ref{nonpseudoprop} and Proposition~\ref{nondefprop}.
\end{proof}

\section{Increasing the ramification}
\label{ramsec}
From now on, we will focus on the case where $G= G_{\QQ,Np}$ and $\rhob_0$ is a reducible, odd, semi-simple representation of $G_{\QQ,Np}$.
Let $\ell$ be a prime such that $\ell \nmid Np$. As $G_{\QQ,Np}$ is a quotient of $G_{\QQ,N\ell p}$, the representations $\rhob_x$ with $x \in H^1(G_{\QQ,Np},\chi^i)$ with $i \in \{1,-1\}$ are also representations of $G_{\QQ,N\ell p}$ and $(\tr(\rhob_0),\det(\rhob_0))$ is also a pseudo-representation of $G_{\QQ,N\ell p}$. Let $\calR^{\pd,\ell}_{\rhob_0}$ and $R^{\pd,\ell}_{\rhob_0}$ be the universal deformation rings of $(\tr(\rhob_0),\det(\rhob_0))$ considered as a pseudo-representation of $G_{\QQ,N\ell p}$ in the categories $\mathcal{C}$ and $\mathcal{C}_0$ respectively. For a non-zero $x \in H^1(G_{\QQ,Np},\chi^i)$ with $i \in \{1,-1\}$,  Let $\calR^{\defo,\ell}_{\rhob_x}$ and $R^{\defo,\ell}_{\rhob_x}$ be the universal deformation rings of $\rhob_x$ considered as a representation of $G_{\QQ,N\ell p}$ in the categories $\mathcal{C}$ and $\mathcal{C}_0$, respectively.

We keep the notation from previous sections for $G_{\QQ,Np}$. In this section, we will study the relationship between $\calR^{\pd,\ell}_{\rhob_0}$ (resp. $R^{\pd,\ell}_{\rhob_0}$) and $\calR^{\pd}_{\rhob_0}$ (resp. $R^{\pd}_{\rhob_0}$) using the results obtained in the previous section and results from \cite{Bo}.

Before proceeding further, let us establish some more notation. Let $t^{\univ,\ell}$ be the universal pseudo-character from $G_{\QQ,N\ell p}$ to $R^{\pd,\ell}_{\rhob_0}$ deforming $\tr(\rhob_0)$ and $T^{\univ,\ell}$ be the universal pseudo-character from $G_{\QQ,N\ell p}$ to $\calR^{\pd,\ell}_{\rhob_0}$ deforming $\tr(\rhob_0)$. Denote the pseudo-character obtained by composing $t^{\univ,\ell}$ with the surjective map $R^{\pd,\ell}_{\rhob_0} \to (R^{\pd,\ell}_{\rhob_0})^{\red}$ by $(t^{\univ,\ell})^{\red}$.

\subsection{Comparison between $\calR^{\pd,\ell}_{\rhob_0}$ and $\calR^{\pd}_{\rhob_0}$}
\label{compsubsec}
We are now ready to compare $\calR^{\pd,\ell}_{\rhob_0}$ and $\calR^{\pd}_{\rhob_0}$. We begin with an easy case first.
\begin{lem}
If $p \nmid \ell-1$ and $\chi|_{G_{\QQ_{\ell}}} \neq \omega_p|_{G_{\QQ_{\ell}}},\omega^{-1}_p|_{G_{\QQ_{\ell}}},1$, then $\calR^{\pd,\ell}_{\rhob_0} \simeq \calR^{\pd}_{\rhob_0}$.
\end{lem}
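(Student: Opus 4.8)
The goal is to show that adding a prime $\ell$ to the level changes nothing at the level of pseudo-deformation rings, precisely when the relevant local cohomology at $\ell$ vanishes. The natural surjection $G_{\QQ,N\ell p} \to G_{\QQ,Np}$ gives, by functoriality, a surjective map $\calR^{\pd,\ell}_{\rhob_0} \to \calR^{\pd}_{\rhob_0}$ (surjectivity because $T^{\univ}$ pulls back to a deformation of $\tr(\rhob_0)$ for $G_{\QQ,N\ell p}$, hence factors through $\calR^{\pd,\ell}_{\rhob_0}$, and because the traces topologically generate; cf. \cite[Remark 3.5]{C}). To get an inverse, I would produce a pseudo-character $T^{\univ,\ell} : G_{\QQ,N\ell p} \to \calR^{\pd}_{\rhob_0}$ deforming $\tr(\rhob_0)$ that is \emph{unramified at $\ell$}, i.e.\ factors through $G_{\QQ,Np}$; then universality of $\calR^{\pd,\ell}_{\rhob_0}$ gives a map $\calR^{\pd,\ell}_{\rhob_0} \to \calR^{\pd}_{\rhob_0}$ which together with the surjection above and the two universal properties forces an isomorphism.

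**Key steps.** First I would apply Lemma~\ref{gmalem} to the tuple $(\calR^{\pd,\ell}_{\rhob_0}, \ell, T^{\univ,\ell}, g_\ell)$: since $p \nmid \ell - 1$ forces $\chi(\text{Frob}_\ell) \neq 1$ (because $\chi|_{G_{\QQ_\ell}} \neq 1$), this yields a faithful GMA $A = \begin{pmatrix} \calR^{\pd,\ell}_{\rhob_0} & B \\ C & \calR^{\pd,\ell}_{\rhob_0}\end{pmatrix}$ and a representation $\rho : G_{\QQ,N\ell p} \to A^*$ with $\tr(\rho) = T^{\univ,\ell}$, with $\rho(g_\ell)$ diagonal, and with $R[\rho(G_{\QQ_\ell})]$ a sub-$R$-GMA of $A$. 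Next, Lemma~\ref{tamelem} shows $\rho|_{I_\ell}$ factors through the $\ZZ_p$-quotient of tame inertia at $\ell$, generated by some $i_\ell$ with $\rho(i_\ell) = \begin{pmatrix} a & b \\ c & d \end{pmatrix}$. The crucial point is to show $\rho(i_\ell) = \mathrm{id}$, which will follow from the hypothesis $\chi|_{G_{\QQ_\ell}} \neq \omega_p|_{G_{\QQ_\ell}}, \omega_p^{-1}|_{G_{\QQ_\ell}}$: this condition means exactly that $H^0(G_{\QQ_\ell}, \chi|_{G_{\QQ_\ell}}) = H^0(G_{\QQ_\ell}, \chi^{-1}|_{G_{\QQ_\ell}}) = 0$ \emph{and} (combined with $p \nmid \ell-1$, so $H^0(G_{\QQ_\ell},1)$ over the tame quotient is handled) there are no extra ramified classes; concretely, using that $\rho(g_\ell)\rho(i_\ell)\rho(g_\ell)^{-1} = \rho(i_\ell)^{\ell}$ from the tame relation, and that $\rho(g_\ell)$ is diagonal with distinct diagonal entries $\alpha, \delta$ reducing to $\chi_1(\text{Frob}_\ell) \neq \chi_2(\text{Frob}_\ell)$, one gets $(\alpha/\delta) b = b^{\ell}$... more cleanly: $a \equiv 1, d \equiv 1 \pmod{m_R}$ forces $a = d = 1$ by the pro-$p$ / tameness structure (order prime to $p$ in the image), while $b, c$ satisfy eigenvalue-type equations whose reductions have no solutions because $\chi(\text{Frob}_\ell) \neq \ell^{\pm 1} \bmod p$ — this is where $\chi|_{G_{\QQ_\ell}} \neq \omega_p^{\pm 1}|_{G_{\QQ_\ell}}$ enters — forcing $b = c = 0$.

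**Concluding.** Once $\rho(i_\ell) = \mathrm{id}$, and since $I_\ell$ acts through the tame $\ZZ_p$-quotient topologically generated by $i_\ell$, the representation $\rho$ (hence $T^{\univ,\ell} = \tr \rho$) is trivial on $I_\ell$, so $T^{\univ,\ell}$ factors through $G_{\QQ,Np}$. By universality of $\calR^{\pd}_{\rhob_0}$ we get a map $\calR^{\pd}_{\rhob_0} \to \calR^{\pd,\ell}_{\rhob_0}$; composing with the surjection $\calR^{\pd,\ell}_{\rhob_0} \to \calR^{\pd}_{\rhob_0}$ in both orders and invoking uniqueness in the universal properties (both compositions fix the respective universal pseudo-characters) shows the two maps are mutually inverse, giving $\calR^{\pd,\ell}_{\rhob_0} \simeq \calR^{\pd}_{\rhob_0}$.

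**Main obstacle.** The one genuinely delicate point is the vanishing $\rho(i_\ell) = \mathrm{id}$: I need to rule out both a ramified scalar part (handled by $p \nmid \ell - 1$, which makes the relevant tame quotient have order prime to $p$ on the scalar block, forcing $a = d = 1$) and a ramified off-diagonal part $b$ or $c$, which is exactly where the three excluded values of $\chi|_{G_{\QQ_\ell}}$ — namely $1$, $\omega_p|_{G_{\QQ_\ell}}$, $\omega_p^{-1}|_{G_{\QQ_\ell}}$ — are needed, since a nonzero $b$ would produce a nonzero class in $H^1(G_{\QQ_\ell}, \chi)$ supported on inertia, which by local Tate duality / the tame relation exists only when $\chi|_{G_{\QQ_\ell}} \in \{1, \omega_p|_{G_{\QQ_\ell}}\}$ (and symmetrically for $c$ and $\chi^{-1}$). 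Making this last deduction rigorous at the level of the GMA over the (possibly non-reduced) ring $\calR^{\pd,\ell}_{\rhob_0}$ — rather than just over $\FF$ — via a successive-approximation / Nakayama argument is the technical heart of the proof.
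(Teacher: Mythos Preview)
Your overall strategy is correct and matches the paper's: build the GMA for $T^{\univ,\ell}$, show the restriction to $G_{\QQ_\ell}$ is unramified, deduce that $T^{\univ,\ell}$ factors through $G_{\QQ,Np}$, and then check the two universal maps are mutually inverse. Where you diverge is in the ``technical heart'': you propose to analyze $\rho(i_\ell)$ entry-by-entry via the tame relation $\rho(g_\ell)\rho(i_\ell)\rho(g_\ell)^{-1} = \rho(i_\ell)^\ell$ and a successive-approximation / Nakayama argument to kill the off-diagonal entries $b,c$. This can be made to work, but it is harder than necessary and you correctly flag it as the delicate point you have not actually carried out.

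The paper bypasses this entirely. It uses the last clause of Lemma~\ref{gmalem}, namely that $R[\rho(G_{\QQ_\ell})]$ is itself a sub-$R$-GMA $\begin{pmatrix} R & B_\ell \\ C_\ell & R\end{pmatrix}$, and then applies Lemma~\ref{extlem} \emph{to this local sub-GMA}: the minimal number of generators of $B_\ell$ (resp.\ $C_\ell$) is bounded by $\dim H^1(G_{\QQ_\ell},\chi|_{G_{\QQ_\ell}})$ (resp.\ $\dim H^1(G_{\QQ_\ell},\chi^{-1}|_{G_{\QQ_\ell}})$). The hypotheses $\chi|_{G_{\QQ_\ell}} \neq 1,\omega_p|_{G_{\QQ_\ell}},\omega_p^{-1}|_{G_{\QQ_\ell}}$ force both of these local $H^1$'s to vanish by the local Euler characteristic formula (since $\ell \neq p$, one has $\dim H^1 = \dim H^0 + \dim H^2$, and $H^0$ vanishes because $\chi|_{G_{\QQ_\ell}}\neq 1$, while $H^2$ vanishes by local duality because $\chi|_{G_{\QQ_\ell}}\neq \omega_p|_{G_{\QQ_\ell}}$; symmetrically for $\chi^{-1}$). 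Hence $B_\ell = C_\ell = 0$ in one stroke, so $\rho|_{G_{\QQ_\ell}}$ is diagonal, giving two characters of $G_{\QQ_\ell}$; these are unramified because $p \nmid \ell-1$. No tame relation, no approximation, no entry-by-entry computation is needed. The moral: the Nakayama argument you anticipate is already packaged inside Lemma~\ref{extlem}, and applying it to the \emph{local} sub-GMA rather than reasoning about the single element $\rho(i_\ell)$ is what makes the proof short.
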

\begin{proof}
From Lemma~\ref{gmalem}, there exists a faithful GMA $A^{\univ}$ over $\calR^{\pd,\ell}_{\rhob_0}$ and a representation $\rho : G_{\QQ,N\ell p} \to A^{\univ}$ such that $\tr(\rho)=T^{\univ,\ell}$, $\calR^{\pd,\ell}_{\rhob_0}[\rho(G_{\QQ,N\ell p})]=A^{\univ}$ and $\calR^{\pd,\ell}_{\rhob_0}[\rho(G_{\QQ_{\ell}})]$ is a sub $\calR^{\pd,\ell}_{\rhob_0}$-GMA of $A^{\univ}$. So $\calR^{\pd,\ell}_{\rhob_0}[\rho(G_{\QQ_{\ell}})] = \begin{pmatrix} \calR^{\pd,\ell}_{\rhob_0}  & B_{\ell}\\ C_{\ell} & \calR^{\pd,\ell}_{\rhob_0}\end{pmatrix}$, where $B_{\ell}$ and $C_{\ell}$ are $\calR^{\pd,\ell}_{\rhob_0}$-submodules of $B$ and $C$, respectively and hence, both of them are finitely generated $\calR^{\pd,\ell}_{\rhob_0}$-modules.

 As $\chi|_{G_{\QQ_{\ell}}} \neq \omega_p|_{G_{\QQ_{\ell}}},\omega^{-1}_p|_{G_{\QQ_{\ell}}}$,$1$, by local Euler characteristic formula, we get that $H^1(G_{\QQ_{\ell}},\chi|_{G_{\QQ_{\ell}}})=H^1(G_{\QQ_{\ell}},\chi^{-1}|_{G_{\QQ_{\ell}}})=0$. Therefore, we get, by Part~\eqref{partv} of Lemma~\ref{genlem}, that $B_{\ell} = C_{\ell}=0$.

Thus, we get characters $\tilde\chi_1, \tilde\chi_2 : G_{\QQ_{\ell}} \to (\calR^{\pd,\ell}_{\rhob_0})^*$ such that $\rho
(g) = \begin{pmatrix} \tilde\chi_1(g) & 0\\ 0 &\tilde\chi_2(g)\end{pmatrix}$ for all $g \in G_{\QQ_{\ell}}$. 
As $p \nmid \ell-1$, we get, by local class field theory, $\tilde\chi_1(I_{\ell})=\tilde\chi_2(I_{\ell})=1$. 
So the pseudo-character $t^{\univ,\ell}$ factors through $G_{\QQ,Np}$. Hence, this induces a map $f : \calR^{\pd}_{\rhob_0} \to \calR^{\pd,\ell}_{\rhob_0}$. Viewing $T^{\univ}$ as a pseudo-character of $G_{\QQ,N\ell p}$ gives us a map $f' : \calR^{\pd,\ell}_{\rhob_0} \to \calR^{\pd}_{\rhob_0}$. 

Now, for $g \in G_{\QQ,Np}$, $f(T^{\univ}(g)) = T^{\univ,\ell}(g')$ for any lift $g'$ of $g$ in $G_{\QQ,N\ell p}$. Thus, $f' \circ f(T^{\univ}(g)) = f'(T^{\univ,\ell}(g')) = T^{\univ}(g)$ for all $g \in G_{\QQ,Np}$. Therefore, $f' \circ f$ is the identity map. On the other hand, for $g \in G_{\QQ,N\ell p}$, $f'(T^{\univ,\ell}(g)) = T^{\univ}(g'')$, where $g''$ is the image of $g$ in $G_{\QQ,Np}$. So $f \circ f'(T^{\univ,\ell}(g)) = f(T^{\univ}(g'')) = T^{\univ,\ell}(g)$ for every $g \in G_{\QQ,N\ell p}$. Therefore, we get that $f \circ f'$ is identity. Hence, $f$ is an isomorphism. Thus, we get $ \calR^{\pd}_{\rhob_0} \simeq \calR^{\pd,\ell}_{\rhob_0}$.
\end{proof}

We now prove Theorem~\ref{thmb}.
\begin{proof}[Proof of Theorem~\ref{thmb}]
As $p \nmid \ell^2-1$ and $\chi^{-i}|_{G_{\QQ_\ell}} = \omega_p|_{G_{\QQ_\ell}}$, we see, from Lemma~\ref{cohomlem}, that
$\dim(H^1(G_{\QQ,N\ell p},\chi^i)) = 1$ and $\dim(H^1(G_{\QQ,N\ell p},\chi^{-i})) \leq m + 1$. Therefore, by Theorem~\ref{redprop}, we have for any non-zero $x \in H^1(G_{\QQ,Np},\chi^i)$, $(\calR^{\pd}_{\rhob_0})^{\red} \simeq (\calR^{\defo}_{\rhob_x})^{\red}$ and $(\calR^{\pd,\ell}_{\rhob_0})^{\red} \simeq (\calR^{\defo,\ell}_{\rhob_x})^{\red}$. The first part now follows from \cite[Theorem $4.7$]{Bo}.

If $m \leq 2$, then $\dim(H^1(G_{\QQ,N\ell p},\chi^{-i})) \leq 3$. Hence, in this case, by Theorem~\ref{genthm}, we have $R^{\pd}_{\rhob_0} \simeq R^{\defo}_{\rhob_x}$ and $R^{\pd,\ell}_{\rhob_0} \simeq R^{\defo,\ell}_{\rhob_x}$ for any non-zero $x \in H^1(G_{\QQ,Np},\chi^i)$. The second part now follows from \cite[Theorem $4.7$]{Bo}.
\end{proof}

Note that Theorem~\ref{thmb} does not give a precise description of the relations $r_i$'s even if we know how $\bar r_i$'s look like. So it is natural to ask if one can get results about the structure of $\calR^{\pd,\ell}_{\rhob_0}$ which are more precise than the ones obtained in Theorem~\ref{thmb}. We will focus on this question for the rest of the article. However, we will restrict ourself to the simplest case where $\rhob_0$ is unobstructed which will be introduced in the next subsection.
 
\subsection{Unobstructed pseudo-characters}
\label{unobssubsec}
We now introduce the notion of unobstructed pseudo-representations. In this case, we know the precise structure of $\calR^{\pd}_{\rhob_0}$ and our primary goal is to determine the structure of $\calR^{\pd,\ell}_{\rhob_0}$ as accurately as possible in this special scenario. Here we gather some results which will be used later on.
\begin{defi}
We say that the pseudo-character  associated to $\rhob_0$ (or by abuse of notation $\rhob_0$) is unobstructed if $\dim(H^1(G_{\QQ,Np},\chi)) = \dim(H^1(G_{\QQ,Np},\chi^{-1})) =1$.
\end{defi}
 Note that Vandiver's conjecture implies that $\rhob_0$ is unobstructed if $N=1$ (see \cite[Theorem 22]{BK}). Moreover, \cite[Theorem 22]{BK} also provides some examples of unobstructed $\rhob_0$ when $N=1$.
On the other hand, \cite[Lemma $2.3$]{D2} gives necessary and sufficient conditions for $\rhob_0$ to be unobstructed.
 In this case, by Lemma~\ref{shortlem}, Lemma~\ref{cohomlem} and Lemma~\ref{cohomlem2}, we know that $\dim(H^1(G_{\QQ,Np},\text{ad}(\rhob_x)))=3$ for any non-zero $x \in H^1(G_{\QQ,Np},\chi^i)$ with $i \in \{1,-1\}$. So we get the following result:
\begin{lem}
\label{unobslem}
Suppose $p \nmid \phi(N)$ and $\rhob_0$ is unobstructed. Then, for a non-zero $x \in H^1(G_{\QQ,Np},\chi^i)$ with $i \in \{1,-1\}$, the map $\Psi_x : \calR^{\pd}_{\rhob_0} \to \calR^{\defo}_{\rhob_x}$ is an isomorphism and both are isomorphic to $W(\FF)[[X,Y,Z]]$. 
\end{lem}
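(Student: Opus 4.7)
The plan is to combine the cohomological hypotheses that unobstructedness gives us with the representation-existence machinery from Section~\ref{repsec} and the standard computation of the deformation ring of $\rhob_x$.

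First I would pin down the cohomological invariants. By the hypothesis $p \nmid \phi(N)$, Lemma~\ref{cohomlem}(1) gives $\dim(H^1(G_{\QQ,Np},1)) = 1$ and $H^2(G_{\QQ,Np},1) = 0$, so with $m = n = k = 1$ the upper bound from Lemma~\ref{shortlem} reads $\dim(H^1(G_{\QQ,Np},\ad(\rhob_x))) \leq m+n+2k-1 = 3$. On the other hand, $\rhob_x$ is odd (its determinant equals $\det\rhob_0$) and its centralizer consists of scalars (because $\chi \neq 1$ makes $\rhob_x$ indecomposable), so Lemma~\ref{cohomlem2}(1) applies and gives
\[
\dim(H^2(G_{\QQ,Np},\ad(\rhob_x))) = \dim(H^1(G_{\QQ,Np},\ad(\rhob_x))) - 3.
\]
Together these force $\dim(H^1(G_{\QQ,Np},\ad(\rhob_x))) = 3$ and $H^2(G_{\QQ,Np},\ad(\rhob_x)) = 0$.

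Next I would invoke Mazur's unobstructed-deformations theorem: since $H^2(G_{\QQ,Np},\ad(\rhob_x)) = 0$ and $\dim(\tan(\calR^{\defo}_{\rhob_x})) = \dim(H^1(G_{\QQ,Np},\ad(\rhob_x))) = 3$, the ring $\calR^{\defo}_{\rhob_x}$ is formally smooth of relative dimension $3$ over $W(\FF)$, that is, $\calR^{\defo}_{\rhob_x} \simeq W(\FF)[[X,Y,Z]]$. In particular it is a domain, so $p$ is not a zero-divisor in it.

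Finally I would deduce that $\Psi_x$ is an isomorphism. The hypotheses $p \nmid \phi(N)$, $\dim(H^1(G_{\QQ,Np},\chi^i)) = 1$ and $\dim(H^1(G_{\QQ,Np},\chi^{-i})) = 1 \in \{1,2,3\}$ are exactly those required by Proposition~\ref{nonzeroprop}; combined with the fact (just established) that $p$ is not a zero-divisor in $\calR^{\defo}_{\rhob_x}$, the proposition gives that $\Psi_x : \calR^{\pd}_{\rhob_0} \to \calR^{\defo}_{\rhob_x}$ is an isomorphism. Chaining this with the previous step yields $\calR^{\pd}_{\rhob_0} \simeq \calR^{\defo}_{\rhob_x} \simeq W(\FF)[[X,Y,Z]]$, finishing the proof. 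There is no real obstacle here: every ingredient has been prepared in the preceding sections, and the role of this lemma is merely to package them together in the unobstructed case.
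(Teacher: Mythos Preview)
Your proof is correct and follows essentially the same route as the paper: the paper records just before the lemma that $\dim(H^1(G_{\QQ,Np},\ad(\rhob_x)))=3$ via Lemmas~\ref{shortlem}, \ref{cohomlem}, \ref{cohomlem2}, then in the proof obtains $H^2(G_{\QQ,Np},\ad(\rhob_x))=0$ by Euler characteristic, deduces $\calR^{\defo}_{\rhob_x}\simeq W(\FF)[[X,Y,Z]]$ (citing \cite[Theorem~2.4]{Bo2} rather than Mazur directly), and concludes via Proposition~\ref{nonzeroprop}. Your write-up is slightly more explicit about why $\dim H^1=3$ is forced, but the logical structure is identical.
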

\begin{proof}
Since $\rhob_0$ is odd and $p \nmid \phi(N)$, we get, by the global Euler characteristic formula, that $H^2(G_{\QQ,Np},1)=H^2(G_{\QQ,Np},\chi)=H^2(G_{\QQ,Np},\chi^{-1})=H^2(G_{\QQ,Np},\text{ad}(\rhob_x))=0$. Therefore, we get, from \cite[Theorem $2.4$]{Bo2}, that $\calR^{\defo}_{\rhob_x} \simeq W(\FF)[[X,Y,Z]]$. The result now follows from Proposition~\ref{nonzeroprop}.
\end{proof}

\begin{lem}
\label{redobslem}
Suppose $\rhob_0$ is unobstructed. Then, there exists a $z \in \calR^{\pd}_{\rhob_0}$ such that $T^{\univ} \pmod{(z)}$ is reducible.
\end{lem}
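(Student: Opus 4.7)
The plan is to realize the reducibility ideal of $T^{\univ}$ as a \emph{principal} ideal via the GMA machinery already developed, and then quotient by a generator. Concretely, Lemma~\ref{genlem} produces a faithful GMA $A = \begin{pmatrix} \calR^{\pd}_{\rhob_0} & \mathcal{B} \\ \mathcal{C} & \calR^{\pd}_{\rhob_0} \end{pmatrix}$ and a representation $\rho : G_{\QQ,Np} \to A^{*}$ with $\tr(\rho) = T^{\univ}$, $\calR^{\pd}_{\rhob_0}[\rho(G_{\QQ,Np})] = A$, and diagonal entries of $\rho(g)$ lifting $\chi_1(g), \chi_2(g)$. The key invariant is the reducibility ideal $\calI_{\rhob_0} := m'(\mathcal{B} \otimes_{\calR^{\pd}_{\rhob_0}} \mathcal{C})$, and Lemma~\ref{ideallem} already tells us that $T^{\univ} \pmod{\calI_{\rhob_0}}$ is reducible. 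So it suffices to show that $\calI_{\rhob_0}$ is a principal ideal.

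To do this, I would apply Lemma~\ref{extlem} to $(A,\rho)$. The hypotheses of that lemma are built into Lemma~\ref{genlem}, so it yields that $\mathcal{B}$ is generated by at most $\dim H^{1}(G_{\QQ,Np},\chi)$ elements and $\mathcal{C}$ by at most $\dim H^{1}(G_{\QQ,Np},\chi^{-1})$ elements, both as $\calR^{\pd}_{\rhob_0}$-modules. The unobstructedness hypothesis is precisely $\dim H^{1}(G_{\QQ,Np},\chi) = \dim H^{1}(G_{\QQ,Np},\chi^{-1}) = 1$, so each of $\mathcal{B}$ and $\mathcal{C}$ is cyclic. Writing $\mathcal{B} = \calR^{\pd}_{\rhob_0}\cdot b_{0}$ and $\mathcal{C} = \calR^{\pd}_{\rhob_0}\cdot c_{0}$ and using the $R$-bilinearity of $m'$, one gets $\calI_{\rhob_0} = \calR^{\pd}_{\rhob_0}\cdot m'(b_{0}\otimes c_{0})$. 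Hence $z := m'(b_{0}\otimes c_{0})$ is the desired element, and Lemma~\ref{ideallem} finishes the proof.

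The only point one might worry about is the degenerate possibility that $\mathcal{B}=0$ or $\mathcal{C}=0$, which would force $\calI_{\rhob_0} = (0)$ and so make $T^{\univ}$ itself reducible (one could still take $z=0$ and the lemma would hold trivially). However, this does not actually occur: by Lemma~\ref{unobslem}, $\calR^{\pd}_{\rhob_0} \simeq W(\FF)[[X,Y,Z]]$, and reducibility of $T^{\univ}$ would force (via the reduction mod $p$ together with Lemma~\ref{redlem} and Lemma~\ref{tandimlem}) the tangent space dimension of $R^{\pd}_{\rhob_0}$ to be at most $2k = 2$, whereas the unobstructed case gives $2k + mn = 3$. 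So in fact $z$ lies in the maximal ideal and is non-zero, which is exactly the form in which the lemma will be used subsequently.
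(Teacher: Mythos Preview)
Your proof is correct and follows essentially the same route as the paper: apply Lemma~\ref{genlem} to obtain the faithful GMA, use unobstructedness together with Lemma~\ref{extlem} to see that $\mathcal{B}$ and $\mathcal{C}$ are each cyclic, and then invoke Lemma~\ref{ideallem} to conclude that $T^{\univ}$ is reducible modulo the single generator $z = m'(b_0 \otimes c_0)$ of $\calI_{\rhob_0}$. The additional paragraph ruling out the degenerate case $z=0$ is not needed for the lemma as stated, but it is a correct observation and useful for how the lemma is applied later.
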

\begin{proof}
Let $A = \begin{pmatrix} \calR^{\pd}_{\rhob_0} & B\\ C & \calR^{\pd}_{\rhob_0}\end{pmatrix}$ be the GMA attached to the pseudo-character $T^{\univ} : G \to \calR^{\pd}_{\rhob_0}$ in Lemma~\ref{genlem}. Since $\rhob_0$ is unobstructed, Part~\eqref{partv} of Lemma~\ref{genlem} implies that both $B$ and $C$ are generated over $\calR^{\pd}_{\rhob_0}$ by at most $1$ element. The lemma now follows from Part~\eqref{partvi} of Lemma~\ref{genlem}.
\end{proof}

Recall that we already know that the deformation ring does not change after allowing ramification at a prime $\ell$ such that $\chi|_{G_{\QQ_\ell}} \neq \omega_p, \omega_p^{-1}, 1$. So we are not going to consider them anymore in the rest of the article.

\subsection{Generators of the co-tangent space of $\calR^{\defo,\ell}_{\rhob_x}$}
Now suppose $\rhob_0$ is unobstructed, $p \nmid \phi(N)$ and $\ell$ is a prime such that $\ell \nmid Np$, $p \nmid \ell-1$ and $\chi^i|_{G_{\QQ_\ell}} = \omega_p$ for some $i \in \{1,-1\}$. For such an $i$, let $x \in H^1(G_{\QQ,Np},\chi^{-i})$ be a non-zero element. Throughout this subsection, we are going to fix this set-up without mentioning it again. We will now give a set of generators for the co-tangent space of $\calR^{\defo,\ell}_{\rhob_x}$.

We first fix some more notation. Fix a lift $g_\ell$ of $\text{Frob}_\ell$ in $G_{\QQ_\ell}$ and fix a topological generator $i_\ell$ of the unique $\ZZ_p$-quotient of the tame inertia group at $\ell$. Let $\rho^{\univ,\ell}_x : G_{\QQ,N\ell p} \to \GL_2(\calR^{\defo,\ell}_{\rhob_x})$ be a universal deformation of $\rhob_x$ for $G_{\QQ,N\ell p}$ and let $\rho^{\univ}_x : G_{\QQ,N\ell p} \to \GL_2(\calR^{\defo}_{\rhob_x})$ be a universal deformation of $\rhob_x$ for $G_{\QQ,N p}$.

We now combine \cite[Lemma $4.8$]{Bo} and \cite[Lemma $4.9$]{Bo} to get the following:
\begin{lem}
\label{ramlem}
Suppose we are in the set-up fixed above. 
Then $\rho^{\univ,\ell}_x|_{I_\ell}$ factors through the unique $\ZZ_p$-quotient of the tame inertia group at $\ell$.
Moreover, after conjugation if necessary, we get $\rho^{\univ,\ell}_x(g_{\ell}) = \begin{pmatrix} \widehat{\chi_1(g_\ell)}(1+y) & 0\\ 0 & \widehat{\chi_2(g_\ell)}(1+y')\end{pmatrix}$  for some $y,y' \in \calR^{\defo,\ell}_{\rhob_x}$ and
\begin{enumerate}
\item If $i=1$ and $p \nmid \ell+1$, then $\rho^{\univ,\ell}_x (i_\ell) = \begin{pmatrix} 1 & w\\ 0 & 1 \end{pmatrix}$ for some $w \in \calR^{\defo,\ell}_{\rhob_x}$,
\item If $i=-1$ and $p \nmid \ell+1$, then $\rho^{\univ,\ell}_x (i_\ell) = \begin{pmatrix} 1 & 0\\ w & 1 \end{pmatrix}$ for some $w \in \calR^{\defo,\ell}_{\rhob_x}$,
\item If $p | \ell+1$, then $\rho^{\univ,\ell}_x (i_\ell) = \begin{pmatrix} \sqrt{1+uv} & u\\ v & \sqrt{1+uv} \end{pmatrix}$ for some $u,v \in \calR^{\defo,\ell}_{\rhob_x}$.
\end{enumerate}
\end{lem}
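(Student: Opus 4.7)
The approach is to factor $\rho^{\univ,\ell}_x|_{I_\ell}$ through the $\ZZ_p$-quotient of tame inertia, diagonalize the image of Frobenius by Hensel's lemma, and then exploit the tame relation $g_\ell i_\ell g_\ell^{-1}=i_\ell^\ell$ together with Cayley--Hamilton applied to $\tau:=\rho^{\univ,\ell}_x(i_\ell)$ to pin down its shape.

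For the factorization, $\rhob_x$ is unramified at $\ell$ (since $\ell\nmid Np$), so $\rho^{\univ,\ell}_x(I_\ell)$ lies in $1+\mathfrak{m}M_2(\calR^{\defo,\ell}_{\rhob_x})$, a pro-$p$ group. The wild inertia is pro-$\ell$ and $p\neq\ell$, hence maps trivially; the tame quotient is prime-to-$\ell$, so its image factors through the maximal pro-$p$ quotient $\ZZ_p$. For the diagonalization, the residual eigenvalues $\chi_1(\text{Frob}_\ell),\chi_2(\text{Frob}_\ell)$ of $\rhob_x(g_\ell)$ are distinct because $\chi(\text{Frob}_\ell)=\omega_p^i(\text{Frob}_\ell)=\ell^i\not\equiv 1\pmod p$ (using $p\nmid\ell-1$). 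Hensel's lemma splits the characteristic polynomial of $\sigma:=\rho^{\univ,\ell}_x(g_\ell)$ into distinct linear factors over $\calR^{\defo,\ell}_{\rhob_x}$; after choosing a basis in which $\rhob_x|_{G_{\QQ_\ell}}$ is diagonal and then further conjugating by a lift of the identity one obtains $\sigma=\mathrm{diag}(a,d)$ with $a=\widehat{\chi_1(\text{Frob}_\ell)}(1+y)$, $d=\widehat{\chi_2(\text{Frob}_\ell)}(1+y')$ for some $y,y'\in\mathfrak{m}$.

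Write $\tau=(\begin{smallmatrix}\alpha&\beta\\ \gamma&\delta\end{smallmatrix})$ with $\alpha,\delta\in 1+\mathfrak{m}$ and $\beta,\gamma\in\mathfrak{m}$. Cayley--Hamilton gives $\tau^\ell=A_\ell\tau+B_\ell I$, and applying the identity $\lambda^\ell=A_\ell\lambda+B_\ell$ to the two (residually trivial) eigenvalues of $\tau$ yields $A_\ell\equiv\ell\pmod{\mathfrak{m}}$. Comparing entries in $\sigma\tau\sigma^{-1}=\tau^\ell$ produces $(\alpha-\delta)(1-A_\ell)=0$, $\beta(a/d-A_\ell)=0$ and $\gamma(d/a-A_\ell)=0$. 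Taking determinants of the same relation gives $\det(\tau)^{\ell-1}=1$, whence $\det(\tau)=1$ since $\ell-1$ is a unit and $\det(\tau)\in 1+\mathfrak{m}$. When $p\nmid\ell+1$, the quantity $\ell-\ell^{-1}$ is a unit: if $i=1$ this makes $d/a-A_\ell$ a unit and forces $\gamma=0$, if $i=-1$ it makes $a/d-A_\ell$ a unit and forces $\beta=0$; in both cases $1-\ell$ is a unit, so $\alpha=\delta$, and $\alpha^2=\det(\tau)=1$ forces $\alpha=1$ (using $p$ odd). When $p\mid\ell+1$, both $a/d-A_\ell$ and $d/a-A_\ell$ lie in $\mathfrak{m}$ and neither off-diagonal entry is forced to vanish; however $1-A_\ell\equiv 2$ is still a unit so $\alpha=\delta$, and $\alpha^2-\beta\gamma=\det(\tau)=1$ together with the existence of a unique square root in $1+\mathfrak{m}$ (again using $p$ odd) yields $\alpha=\sqrt{1+uv}$ with $u=\beta$, $v=\gamma$.

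The main obstacle is not any single step but the careful bookkeeping in the last stage: verifying $A_\ell\equiv\ell\pmod{\mathfrak{m}}$ via the Cayley--Hamilton recurrence, tracking which of $a/d$, $d/a$ is residually $\ell$ versus $\ell^{-1}$ (which is what distinguishes case (1) from case (2) and dictates the orientation of the triangle), and making sure the change of basis in the diagonalization step is compatible with the data fixed by the universal property of $\calR^{\defo,\ell}_{\rhob_x}$ so that one really obtains a representative of the universal deformation (not just an abstractly isomorphic representation).
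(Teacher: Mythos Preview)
Your proof is correct. The paper itself does not give a proof of this lemma; it simply states that the result is obtained by combining \cite[Lemma~4.8]{Bo} and \cite[Lemma~4.9]{Bo}. Your direct argument --- pro-$p$ factorization through tame inertia, Hensel diagonalization of Frobenius, and Cayley--Hamilton analysis of the tame relation $\sigma\tau\sigma^{-1}=\tau^\ell$ --- is precisely the method underlying B\"ockle's lemmas. Indeed, the polynomials $h_i$ that the paper introduces later (satisfying $h_{i+1}-2\sqrt{1+UV}\,h_i+h_{i-1}=0$ with $h_0=0$, $h_1=1$) are exactly your coefficients $A_i$ in $\tau^i=A_i\tau+B_iI$, so your computation $A_\ell\equiv\ell\pmod{\mathfrak m}$ recovers the key input $h_\ell\equiv\ell\pmod{(UV)}$ used throughout the paper. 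Your case-tracking of which of $a/d-A_\ell$ or $d/a-A_\ell$ is a unit, governed by the sign of $i$, is also consistent with the paper's convention that the diagonalized Frobenius has $\widehat{\chi_1(g_\ell)}$ in the $(1,1)$ slot regardless of $i$.
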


Viewing $\rho^{\univ}_x$ as a representation of $G_{\QQ,N\ell p}$, we get a map $f : \calR^{\defo,\ell}_{\rhob_x} \to \calR^{\defo}_{\rhob_x}$.
\begin{lem}
\label{surjectlem}
 The morphism $f : \calR^{\defo,\ell}_{\rhob_x} \to \calR^{\defo}_{\rhob_x}$ is surjective and $\ker(f)$ is generated by the entries of the matrix $\rho^{\univ,\ell}_x(i_{\ell})-Id$.
\end{lem}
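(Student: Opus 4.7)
The map $f$ arises from the universal property of $\calR^{\defo,\ell}_{\rhob_x}$ applied to the deformation $\rho^{\univ}_x \circ \pi$ of $\rhob_x$, where $\pi : G_{\QQ,N\ell p} \to G_{\QQ,Np}$ is the natural surjection. My plan is to prove the two assertions separately: surjectivity by a tangent-space / generation argument, and the precise description of the kernel by a standard universal-property chase, exploiting Lemma~\ref{ramlem} to reduce the ramification at $\ell$ to the single element $i_\ell$.

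For surjectivity, I would argue that the entries of $\rho^{\univ}_x(g)$, as $g$ ranges over $G_{\QQ,Np}$, topologically generate $\calR^{\defo}_{\rhob_x}$ as a $W(\FF)$-algebra: the closed $W(\FF)$-subalgebra $R_0$ they generate carries the restriction of $\rho^{\univ}_x$, which is a deformation of $\rhob_x$ to $R_0$, and then universality forces the resulting map $\calR^{\defo}_{\rhob_x} \to R_0 \hookrightarrow \calR^{\defo}_{\rhob_x}$ to be the identity. Since every such entry is the image under $f$ of the corresponding entry of $\rho^{\univ,\ell}_x(\tilde g)$ for any lift $\tilde g$ of $g$, the image of $f$ contains a topological generating set, hence $f$ is surjective. (Alternatively, one may observe that the induced map on tangent spaces is the inflation map $H^1(G_{\QQ,Np},\ad\rhob_x) \hookrightarrow H^1(G_{\QQ,N\ell p},\ad\rhob_x)$, which is injective by inflation-restriction, so $f$ is surjective on cotangent spaces and thus surjective by Nakayama.)

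For the kernel, let $J \subset \calR^{\defo,\ell}_{\rhob_x}$ be the ideal generated by the entries of $\rho^{\univ,\ell}_x(i_\ell) - \mathrm{Id}$. The containment $J \subset \ker(f)$ is immediate, since $i_\ell \in \ker(\pi)$ implies $f\bigl(\rho^{\univ,\ell}_x(i_\ell) - \mathrm{Id}\bigr) = \rho^{\univ}_x(\pi(i_\ell)) - \mathrm{Id} = 0$. For the reverse, let $q : \calR^{\defo,\ell}_{\rhob_x} \to R' := \calR^{\defo,\ell}_{\rhob_x}/J$ be the quotient. By Lemma~\ref{ramlem}, $\rho^{\univ,\ell}_x|_{I_\ell}$ factors through the $\ZZ_p$-quotient of tame inertia at $\ell$, which is topologically generated by $i_\ell$; since $q \circ \rho^{\univ,\ell}_x$ sends $i_\ell$ to $\mathrm{Id}$, the composite $q \circ \rho^{\univ,\ell}_x$ is unramified at $\ell$ and hence factors through $G_{\QQ,Np}$, yielding a deformation $\tilde\rho : G_{\QQ,Np} \to \GL_2(R')$ of $\rhob_x$. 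The universal property of $\calR^{\defo}_{\rhob_x}$ produces a map $g : \calR^{\defo}_{\rhob_x} \to R'$ with $g \circ \rho^{\univ}_x = \tilde\rho$. Then $g \circ f$ and $q$ are two morphisms $\calR^{\defo,\ell}_{\rhob_x} \to R'$ whose compositions with $\rho^{\univ,\ell}_x$ both equal $\tilde\rho \circ \pi = q \circ \rho^{\univ,\ell}_x$; uniqueness in the universal property of $\calR^{\defo,\ell}_{\rhob_x}$ forces $g \circ f = q$, whence $\ker(f) \subset \ker(q) = J$.

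I do not expect a genuine obstacle here: the only non-formal input is Lemma~\ref{ramlem} (which reduces all of $I_\ell$ to $i_\ell$), and after that the proof is a standard manipulation of universal properties, entirely in the spirit of the analogous argument in \cite[Lemma 4.10]{Bo}. The only minor care needed is that the choice of basis in which $\rho^{\univ,\ell}_x$ takes the form given in Lemma~\ref{ramlem} is harmless, as deformations are considered up to strict equivalence.
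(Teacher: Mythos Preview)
Your argument is correct and essentially follows the paper's approach: both proofs hinge on the same universal-property chase, using Lemma~\ref{ramlem} to see that $\rho^{\univ,\ell}_x \pmod{J}$ is unramified at $\ell$ and hence descends to $G_{\QQ,Np}$, yielding a map $g : \calR^{\defo}_{\rhob_x} \to \calR^{\defo,\ell}_{\rhob_x}/J$. The only packaging difference is that the paper handles both claims at once by showing the induced map $f' : \calR^{\defo,\ell}_{\rhob_x}/J \to \calR^{\defo}_{\rhob_x}$ is an isomorphism (checking $f' \circ g$ and $g \circ f'$ are both identities), whereas you prove surjectivity separately before turning to the kernel; this makes the paper's argument marginally more economical but yours is equally valid.
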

\begin{proof}
Let $J$ be the ideal of $\calR^{\defo,\ell}_{\rhob_x}$ generated by the entries of the matrix $\rho^{\univ,\ell}_x(i_{\ell})-Id$ and $\phi : \calR^{\defo,\ell}_{\rhob_x} \to \calR^{\defo,\ell}_{\rhob_x}/J$ be the natural surjective map. As $\rho^{\univ}_x(i_{\ell})=Id$, we get that $J \subset \ker(f)$ which gives us a map $f' : \calR^{\defo,\ell}_{\rhob_x}/J \to \calR^{\defo}_{\rhob_x}$ such that $f' \circ \phi =f$. 
On the other hand, $\rho^{\univ,\ell}_x \pmod{J}$ is unramified at $\ell$ and hence, is a representation of $G_{\QQ,Np}$. Thus it induces a map $g : \calR^{\defo}_{\rhob_x} \to \calR^{\defo,\ell}_{\rhob_x}/J$ such that $g \circ \rho^{\univ}_x = \rho^{\univ,\ell}_x \pmod{J}$. Now $f' \circ g \circ \rho^{\univ}_x = \rho^{\univ}_x$ as representations of $G_{\QQ,Np}$ and $g \circ f' \circ \rho^{\univ,\ell}_x \pmod{J} = \rho^{\univ,\ell}_x \pmod{J}$ as representations of $G_{\QQ,N\ell p}$. Hence, we see that both $f' \circ g$ and $g \circ f'$ are identity maps. Hence, $f'$ is an isomorphism which proves the lemma.
\end{proof}

We are now ready to state the main result of this subsection.
\begin{lem}
\label{tangenlem}
 Suppose $\rhob_0$ is unobstructed and $p \nmid \phi(N)$. Let $\ell$ be a prime such that $\ell \nmid Np$, $p \nmid \ell-1$ and $\chi^i|_{G_{\QQ_\ell}} = \omega_p$ for some $i \in \{1,-1\}$. For such an $i$, let $x \in H^1(G_{\QQ,Np},\chi^{-i})$ be a non-zero element. Moreover, assume $\ell/\tilde\ell$ is a topological generator of $1+p\ZZ_p$.
Suppose $\rho^{\univ,\ell}_x(g_{\ell}) = \begin{pmatrix} \widehat{\chi_1(g_\ell)}(1+y) & 0\\ 0 & \widehat{\chi_2(g_\ell)}(1+y')\end{pmatrix}$. Then there exists an element $z \in \calR^{\defo,\ell}_{\rhob_x}$ such that the ideal generated by $p$, $y$, $y'$, $z$ and $\ker(f)$ is the maximal ideal of $\calR^{\defo,\ell}_{\rhob_x}$.
\end{lem}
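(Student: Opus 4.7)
The plan is to find an element $z$ making the quotient $R := \calR^{\defo,\ell}_{\rhob_x}/(p, y, y', z, \ker(f))$ equal to $\FF$, which is equivalent to the stated claim. Because $\rhob_0$ is unobstructed, Lemma~\ref{unobslem} and Lemma~\ref{surjectlem} identify $\calR^{\defo,\ell}_{\rhob_x}/\ker(f)$ with $\calR^{\defo}_{\rhob_x} \simeq \calR^\pd_{\rhob_0} \simeq W(\FF)[[X,Y,Z]]$ via the trace map $\Psi_x$. So $R$ is naturally a quotient of $\calR^\pd_{\rhob_0}$; writing $\tilde y, \tilde y', \tilde z \in \calR^\pd_{\rhob_0}$ for the images of $y, y', z$, we have $R \simeq \calR^\pd_{\rhob_0}/(p, \tilde y, \tilde y', \tilde z)$, and the task becomes choosing $z$ so that this last ideal is the maximal ideal.

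For the choice of $z$, I would apply Lemma~\ref{redobslem}: since $\rhob_0$ is unobstructed there exists $z_0 \in \calR^\pd_{\rhob_0}$ with $T^\univ \pmod{z_0}$ reducible, and I would take $z \in \calR^{\defo,\ell}_{\rhob_x}$ to be any lift of $z_0$ under $f$. In $R$, the representation $\rho^{\univ,\ell}_x$ factors through $G_{\QQ,Np}$ (since $\ker(f)$ is killed), and its trace is reducible, giving a decomposition $\tr \rho^{\univ,\ell}_x \equiv \psi_1 + \psi_2$ with characters $\psi_i : G_{\QQ,Np} \to R^*$ deforming $\chi_i$. Using the given form of $\rho^{\univ,\ell}_x(g_\ell)$ and the vanishing of $y, y'$ in $R$, the diagonal entries of $\rho^{\univ,\ell}_x(g_\ell)$ in $R$ are exactly $\chi_1(g_\ell)$ and $\chi_2(g_\ell)$. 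Hence $\psi_1(g_\ell), \psi_2(g_\ell)$ are roots of $(X-\chi_1(g_\ell))(X-\chi_2(g_\ell))$; the hypotheses $p \nmid \ell - 1$ and $\chi^i|_{G_{\QQ_\ell}} = \omega_p$ force $\chi_1(g_\ell) \neq \chi_2(g_\ell)$, so Hensel's lemma matches $\psi_i(g_\ell) = \chi_i(g_\ell)$ (without swap).

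Then I would leverage the hypothesis that $\ell/\tilde\ell$ topologically generates $1+p\ZZ_p$. Because $p \nmid \phi(N)$ (built into unobstructedness), the maximal pro-$p$ abelian quotient $G_{\QQ,Np}^{\mathrm{ab},p}$ is isomorphic to $1+p\ZZ_p$ via the cyclotomic character, under which the image of $g_\ell$ is $\ell/\tilde\ell$; hence $g_\ell$ topologically generates $G_{\QQ,Np}^{\mathrm{ab},p}$. The character $\psi_i \chi_i^{-1} : G_{\QQ,Np} \to 1 + \mathfrak m_R \subset R^*$ takes values in a pro-$p$ group (since $R$ is an $\FF$-algebra) and therefore factors through $G_{\QQ,Np}^{\mathrm{ab},p}$; being trivial on a topological generator, it is trivial. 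So $\psi_i = \chi_i$ and $\tr \rho^{\univ,\ell}_x \equiv \tr \rhob_0$ in $R$. Lemma~\ref{maxlem}, applied to the induced surjection $R^\pd_{\rhob_0} \to R$, then forces the kernel to be the maximal ideal of $R^\pd_{\rhob_0}$, so $R = \FF$, as desired. The main subtlety I anticipate is the Hensel step in the preceding paragraph: we need the correct matching $\psi_i \leftrightarrow \chi_i$ rather than the swap, for which the local hypothesis ensuring distinct residues of $\chi_1(g_\ell)$ and $\chi_2(g_\ell)$ is essential.
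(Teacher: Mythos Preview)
Your proof is correct and follows the same approach as the paper. The only difference is organizational: the paper invokes Lemma~\ref{maxgenlem} as a black box (applied to $\calR^{\defo}_{\rhob_x}$, after checking that the map $\calR^{\pd,\ell}_{\rhob_0} \to \calR^{\defo}_{\rhob_x}$ is surjective), whereas you unpack the argument of Lemma~\ref{maxgenlem} inline---the Hensel matching of $\psi_i(g_\ell)$ with $\chi_i(g_\ell)$, the topological-generator step, and the appeal to Lemma~\ref{maxlem}. One cosmetic point: when you write ``lift of $z_0$ under $f$'' you are implicitly identifying $\calR^{\pd}_{\rhob_0}$ with $\calR^{\defo}_{\rhob_x}$ via $\Psi_x$ first; the paper makes this explicit by writing $\Psi_x(z_0)$.
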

\begin{proof}
Let $z_0 \in \calR^{\pd}_{\rhob_0}$ be an element such that $T^{\univ} \pmod{(z_0)}$ is reducible. Such an element exists by Lemma~\ref{redobslem}. By Lemma~\ref{unobslem}, the map $\Psi_x : \calR^{\pd}_{\rhob_0} \to \calR^{\defo}_{\rhob_x}$ is an isomorphism. Hence, $\tr(\rho^{\univ}_x) \pmod{(\Psi_x(z_0))}$ is reducible. 

Viewing $\rho^{\univ}_x$ as a representation of $G_{\QQ,N\ell p}$, we get $f \circ \rho^{\univ,\ell}_x = \rho^{\univ}_x$. So we have $\rho^{\univ}_x(g_\ell) = \begin{pmatrix} \widehat{\chi_1(g_\ell)}(1+f(y)) & 0\\ 0 & \widehat{\chi_2(g_\ell)}(1+f(y'))\end{pmatrix}$. 

Following the proof of the last part of Lemma~\ref{gmalem}, we get that the set $\{p,f(y),f(y'),\Psi_x(z_0)\}$ generate the maximal ideal of $\calR^{\defo}_{\rhob_x}$. By Lemma~\ref{surjectlem}, $f$ is surjective. Hence, if $z \in \calR^{\defo,\ell}_{\rhob_x}$ is an element such that $f(z)= \Psi_x(z_0)$, then the ideal generated by $p$, $y$, $y'$, $z$ and $\ker(f)$ is the maximal ideal of $\calR^{\defo,\ell}_{\rhob_x}$.
\end{proof}

\subsection{Structure of $\calR^{\pd,\ell}_{\rhob_0}$ with unobstructed $\rhob_0$ and $p \nmid \ell^2-1$}
As we saw in Lemma~\ref{unobslem}, $\calR^{\pd}_{\rhob_0} \simeq W(\FF)[[X,Y,Z]]$ when $\rhob_0$ is unobstructed and $p \nmid \phi(N)$. In this sub-section, we are going to analyze how its structure changes after allowing ramification at a prime $\ell$ such that $\ell \nmid Np$ and $p \nmid \ell^2-1$.

For a non-zero $x \in H^1(G_{\QQ,Np},\chi^i)$ with $i \in \{1,-1\}$, let $\rho^{\univ,\ell}_x : G_{\QQ,N\ell p} \to \GL_2(R^{\defo,\ell}_{\rhob_x})$ be the universal deformation of $\rhob_x$ over $R^{\defo,\ell}_{\rhob_x}$. 
\begin{prop}
\label{structprop}
Suppose $\rhob_0$ is unobstructed and $p \nmid \phi(N)$. Let $\ell$ be a prime such that $p \nmid \ell^2-1$, $\chi^i|_{G_{\QQ_{\ell}}} = \omega_p|_{G_{\QQ_{\ell}}}$ for some $i \in \{1,-1\}$. Then, for any non-zero $x \in H^1(G_{\QQ,Np},\chi^{-i})$, $\calR^{\pd,\ell}_{\rhob_0} \simeq \calR^{\defo,\ell}_{\rhob_x}$.
\end{prop}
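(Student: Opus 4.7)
The plan is to apply Proposition~\ref{nondefprop} to the group $G_{\QQ,N\ell p}$, with the character called $\chi^i$ in that proposition corresponding to our $\chi^{-i}$. The cohomological hypotheses are verified via Lemma~\ref{cohomlem}: since $p\nmid\ell+1$, we have $\chi^{-i}|_{G_{\QQ_\ell}}\neq\omega_p$, so $\dim H^1(G_{\QQ,N\ell p},\chi^{-i})=1$ by part (4) and $\dim H^1(G_{\QQ,N\ell p},\chi^i)=2$ by part (3). The global Euler characteristic formula yields $H^2(G_{\QQ,N\ell p},\chi^{-i})=0$ and $\dim H^2(G_{\QQ,N\ell p},\chi^i)=1$, and $H^2(G_{\QQ,N\ell p},1)=0$ follows from $p\nmid\phi(N\ell)$ via Lemma~\ref{cohomlem}(1).

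The crux is to show $p$ is not a zero-divisor in $\calR^{\defo,\ell}_{\rhob_x}$. Retracing the proof of Lemma~\ref{shortlem} with the vanishing of $H^2(G_{\QQ,N\ell p},\chi^{\pm i})$ obtained above turns all inequalities into equalities, giving $\dim H^1(G_{\QQ,N\ell p},\ad(\rhob_x))=4$; the global Euler characteristic formula for $\ad(\rhob_x)$ then gives $\dim H^2(G_{\QQ,N\ell p},\ad(\rhob_x))=1$. Mazur's presentation therefore writes $\calR^{\defo,\ell}_{\rhob_x}$ as a quotient of $W(\FF)[[T_1,T_2,T_3,T_4]]$ by an ideal $I$ generated by at most one element, so $I=(0)$ or $I=(f)$ for some nonzero $f$.

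A Krull-dimension comparison modulo $p$ rules out the bad cases. If $I=(0)$, or if $I=(f)$ with $p\mid f$, then $R^{\defo,\ell}_{\rhob_x}=\calR^{\defo,\ell}_{\rhob_x}/(p)\simeq \FF[[T_1,T_2,T_3,T_4]]$ is regular of Krull dimension $4$. On the other hand, Theorem~\ref{genthm} applied to $G_{\QQ,N\ell p}$ identifies $R^{\defo,\ell}_{\rhob_x}$ with $R^{\pd,\ell}_{\rhob_0}$, and Theorem~\ref{ramstrlem}(2)---applicable because $m=1$ and $p\nmid\phi(N)$ by unobstructedness, and with $n=3$, $n'=0$ coming from $R^{\pd}_{\rhob_0}\simeq\FF[[X,Y,Z]]$---shows $R^{\pd,\ell}_{\rhob_0}\simeq \FF[[X_1,X_2,X_3,X]]/(X(\Phi-\ell))$, a ring of Krull dimension $3$ (since $X(\Phi-\ell)$ is a nonzero element of the maximal ideal). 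This contradicts dimension $4$, forcing $I=(f)$ with $p\nmid f$. Since $(p)$ and $(f)$ are then coprime principal ideals in the UFD $W(\FF)[[T_1,T_2,T_3,T_4]]$, a standard cancellation argument shows that $p$ is not a zero-divisor in $\calR^{\defo,\ell}_{\rhob_x}$, and Proposition~\ref{nondefprop} delivers the required isomorphism $\calR^{\pd,\ell}_{\rhob_0}\simeq\calR^{\defo,\ell}_{\rhob_x}$. The main obstacle is precisely this dimension comparison, which rests on the explicit structural description of $R^{\pd,\ell}_{\rhob_0}$ in the unobstructed case from Theorem~\ref{ramstrlem}(2).
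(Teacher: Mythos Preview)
Your overall strategy is the same as the paper's: reduce to showing $p$ is not a zero-divisor in $\calR^{\defo,\ell}_{\rhob_x}$, use the presentation coming from $\dim H^2(\ad\rhob_x)=1$, and rule out the bad case $R^{\defo,\ell}_{\rhob_x}\simeq\FF[[T_1,T_2,T_3,T_4]]$ by contradiction. The divergence is in how you obtain the contradiction, and there your argument has a genuine gap.

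You invoke Theorem~\ref{ramstrlem}(2) to conclude that $R^{\pd,\ell}_{\rhob_0}\simeq\FF[[X_1,X_2,X_3,X]]/(X(\Phi-\ell))$ has Krull dimension $3$, asserting that $X(\Phi-\ell)$ is nonzero. But Theorem~\ref{ramstrlem} only says \emph{there exists} such a $\Phi$; it does not assert $\Phi\neq\ell$. And this is exactly the issue: if we are in the hypothetical case $R^{\pd,\ell}_{\rhob_0}\simeq\FF[[T_1,T_2,T_3,T_4]]$, then necessarily $\Phi=\ell$ in that presentation (a regular local ring of dimension $4$ cannot be cut out by a nonzero relation from a $4$-variable power series ring). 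So the possibility $\Phi=\ell$ is precisely the scenario you are trying to exclude, and your appeal to Theorem~\ref{ramstrlem} begs the question. A minor related point: $R^{\pd}_{\rhob_0}\simeq\FF[[X,Y,Z]]$ only forces $\bar r_i=0$, not $n'=0$; the correct reason for $n'=0$ is that $H^2(G_{\QQ,Np},\ad\rhob_x)=0$ in the unobstructed case.

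The paper breaks this circularity by a direct construction. Assuming $R^{\pd,\ell}_{\rhob_0}\simeq\FF[[X,Y,Z,W]]$, one gets from the tame relation at $\ell$ that $(\phi_1/\phi_2-\ell)w=0$ with $w\neq0$, forcing $\phi_1/\phi_2=\ell$ everywhere on $R^{\pd,\ell}_{\rhob_0}$. The paper then exhibits the reducible pseudo-character $\chi_1^{\univ}+\chi_2:G_{\QQ,N\ell p}\to\FF[[T]]$ (deforming only $\chi_1$), under whose induced map $\phi_1/\phi_2$ is visibly nonconstant, contradicting $\phi_1/\phi_2=\ell$. This explicit witness is the missing ingredient; once you supply it, your argument becomes essentially the paper's proof.
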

\begin{proof}
 Without loss of generality, suppose $\chi|_{G_{\QQ_{\ell}}} = \omega_p|_{G_{\QQ_{\ell}}}$. By Lemma~\ref{cohomlem}, we have $\dim(H^1(G_{\QQ,N\ell p},\chi)) = 2$ and $\dim(H^1(G_{\QQ,N\ell p},\chi^{-1})) =1$. So by Proposition~\ref{nonzeroprop}, it suffices to prove that $p$ is not a zero divisor in $\calR^{\defo,\ell}_{\rhob_x}$ for any non-zero $x \in H^1(G_{\QQ,Np},\chi^{-1})$.

 By Lemma~\ref{tandimlem}, $\dim(\tan(R^{\pd,\ell}_{\rhob_0})) = 4$. By Theorem~\ref{generalthm}, $R^{\pd,\ell}_{\rhob_0} \simeq R^{\defo,\ell}_{\rhob_x}$ for any non-zero $x \in H^1(G_{\QQ,Np},\chi^{-1})$. Hence, we have $\dim(H^1(G_{\QQ,N\ell p},\text{ad}(\rhob_x))) = 4$ for any non-zero $x \in H^1(G_{\QQ,Np},\chi^{-1})$. By Lemma~\ref{cohomlem2}, this means that $\dim(H^2(G_{\QQ,N\ell p},\text{ad}(\rhob_x)))=1$. Therefore, by \cite[Theorem $2.4$]{Bo2}, $\calR^{\defo,\ell}_{\rhob_x} \simeq W(\FF)[[X,Y,Z,W]]/I$ where $I$ is either $(0)$ or a principal ideal of $W(\FF)[[X,Y,Z,W]]$.

Suppose $p$ is a zero divisor in $\calR^{\defo,\ell}_{\rhob_x}$. As $W(\FF)[[X,Y,Z,W]]$ is a regular local ring, it is a UFD (\cite[Theorem 19.19]{E}). This means that $I=(pf)$ for some $f \in W(\FF)[[X,Y,Z,W]]$. Thus, we get $R^{\defo,\ell}_{\rhob_x} \simeq \FF[[X,Y,Z,W]]$.

Fix a lift $g_\ell$ of $\text{Frob}_\ell$ in $G_{\QQ_\ell}$. From Lemma~\ref{ramlem}, we know that $\rho_x^{\univ,\ell}(g_\ell) = \begin{pmatrix} \phi_1 & 0 \\ 0 & \phi_2 \end{pmatrix}$, $\rho_x^{\univ,\ell}|_{I_{\ell}}$ factors through the $\ZZ_p$-quotient of the tame inertia group at ${\ell}$ and $\rho_x^{\univ,\ell}(i_{\ell}) = \begin{pmatrix} 1 & w \\ 0 & 1 \end{pmatrix}$ for some $w \in R^{\defo,\ell}_{\rhob_x}$. From the action of $\text{Frob}_{\ell}$ on the tame inertia group at $\ell$, we see that $(\phi_1/\phi_2-\ell)w=0$.

 If $w=0$, then the universal deformation $\rho^{\univ,\ell}_x$ factors through $G_{\QQ,Np}$. This would imply that $R^{\defo,\ell}_{\rhob_x} \simeq R^{\defo}_{\rhob_x}$ which is not true as we know $\dim(\tan(R^{\defo,\ell}_{\rhob_x})) =4$. Therefore, we see that $w \neq 0$. As $R^{\defo,\ell}_{\rhob_x}$ is an integral domain, we get that $\phi_1/\phi_2=\ell$.

By Lemma~\ref{tangenlem} and Lemma~\ref{surjectlem}, it follows that there exists a $z \in R^{\defo,\ell}_{\rhob_x}$ such that $w$, $z$ and $\phi_1-\chi_1(\text{Frob}_{\ell})$ generate the maximal ideal of $R^{\defo,\ell}_{\rhob_x}$ which contradicts the fact that $\dim(\tan(R^{\defo,\ell}_{\rhob_x})) =4$.
Hence, $R^{\defo,\ell}_{\rhob_x} \not\simeq \FF[[X,Y,Z,W]]$ and $p$ is not a zero-divisor in $\calR^{\defo,\ell}_{\rhob_x}$. This finishes the proof of the proposition.
\end{proof}

As a corollary, we get:
\begin{cor}
\label{strcor}
Suppose $\rhob_0$ is unobstructed and $p \nmid \phi(N)$. Let $\ell$ be a prime such that $p \nmid \ell^2-1$ and $\chi^i|_{G_{\QQ_{\ell}}} = \omega_p|_{G_{\QQ_{\ell}}}$ for some $i \in \{1,-1\}$. Then $\calR^{\pd,\ell}_{\rhob_0} \simeq W(\FF)[[X_1,X_2,X_3,X_4]]/(X_4 f)$ for some non-zero, non-unit $f \in W(\FF)[[X_1,X_2,X_3,X_4]]$.
\end{cor}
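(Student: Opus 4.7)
Without loss of generality assume $\chi|_{G_{\QQ_\ell}} = \omega_p|_{G_{\QQ_\ell}}$ (the case $i=-1$ is symmetric). Fix a non-zero $x \in H^1(G_{\QQ,Np},\chi^{-1})$. By Proposition~\ref{structprop}, $\Psi_x$ induces an isomorphism $\calR^{\pd,\ell}_{\rhob_0} \simeq \calR^{\defo,\ell}_{\rhob_x}$, so it suffices to present $\calR^{\defo,\ell}_{\rhob_x}$ in the claimed form. From the proof of Proposition~\ref{structprop} we have $\dim \tan(\calR^{\defo,\ell}_{\rhob_x}) = 4$ and $\dim H^2(G_{\QQ,N\ell p},\ad(\rhob_x)) = 1$, so by \cite[Theorem~2.4]{Bo2} there is a presentation $\calR^{\defo,\ell}_{\rhob_x} \simeq W(\FF)[[X_1,X_2,X_3,X_4]]/(g)$ for some (possibly zero) element $g$.

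The crux is to choose this presentation so that $X_4$ maps to the tame parameter $w$ provided by Lemma~\ref{ramlem}, and then to exhibit $X_4$ as a factor of $g$. By Lemma~\ref{surjectlem}, the natural surjection $f \colon \calR^{\defo,\ell}_{\rhob_x} \twoheadrightarrow \calR^{\defo}_{\rhob_x}$ has kernel $(w)$. Since $\dim \tan(\calR^{\defo}_{\rhob_x}) = 3$ by Lemma~\ref{unobslem}, the induced map on tangent spaces has one-dimensional kernel spanned by the image of $w$, so $w$ is non-zero in $\tan(\calR^{\defo,\ell}_{\rhob_x})$ and extends to a basis. Arranging the presentation so that $X_4 \mapsto w$, the isomorphism $\calR^{\defo,\ell}_{\rhob_x}/(w) \simeq \calR^{\defo}_{\rhob_x} \simeq W(\FF)[[X,Y,Z]]$ rewrites as $W(\FF)[[X_1,X_2,X_3]]/(\bar g) \simeq W(\FF)[[X,Y,Z]]$, where $\bar g$ denotes $g$ with $X_4$ set to $0$. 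Both rings have Krull dimension $4$, which forces $\bar g = 0$. Hence $g \in (X_4)$, say $g = X_4 f$.

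It remains to check that $f$ is neither zero nor a unit. If $f = 0$, then $\calR^{\defo,\ell}_{\rhob_x} \simeq W(\FF)[[X_1,X_2,X_3,X_4]]$ is a domain; however, the $\text{Frob}_\ell$-action on tame inertia gives the relation $(\phi_1/\phi_2 - \ell)w = 0$ in $\calR^{\defo,\ell}_{\rhob_x}$, and the proof of Proposition~\ref{structprop} establishes that $w \neq 0$ (else the natural map $\calR^{\defo,\ell}_{\rhob_x} \to \calR^{\defo}_{\rhob_x}$ would be an isomorphism, contradicting the tangent space dimension) and $\phi_1/\phi_2 \neq \ell$ (via the map $R^{\pd,\ell}_{\rhob_0} \to \FF[[T]]$ induced by the deformation $\chi_1^{\univ}+\chi_2$), contradicting the domain property. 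If $f$ were a unit, then $(g) = (X_4)$ and $\calR^{\defo,\ell}_{\rhob_x} \simeq W(\FF)[[X_1,X_2,X_3]]$, whose tangent space has dimension $3$, contradicting $\dim \tan = 4$.

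The main difficulty in this plan is arranging the presentation so that $X_4$ literally corresponds to $w$, making the quotient by $w$ visible as setting $X_4 = 0$ in the polynomial ring; once this identification is in place, the factorization $g = X_4 f$ and the non-triviality of $f$ follow from dimension counts and the relation $(\phi_1/\phi_2-\ell)w = 0$.
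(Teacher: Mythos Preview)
Your proof is correct and follows essentially the same approach as the paper. Both arguments use the presentation $\calR^{\defo,\ell}_{\rhob_x} \simeq W(\FF)[[X_1,X_2,X_3,X_4]]/(g)$ coming from Proposition~\ref{structprop} together with the surjection onto the level-$N$ ring $\calR^{\defo}_{\rhob_x} \simeq W(\FF)[[X,Y,Z]]$ and a Krull-dimension comparison to force $X_4 \mid g$. The paper phrases this step more abstractly (the kernel of $\calR^{\pd,\ell}_{\rhob_0} \to \calR^{\pd}_{\rhob_0}$ is a minimal prime, hence principal generated by an irreducible factor of $g$, which can be taken as a coordinate since the quotient is regular), whereas you constructively identify $X_4$ with the tame parameter $w$ from Lemma~\ref{ramlem} using Lemma~\ref{surjectlem}; your route has the mild advantage that it makes the subsequent proof of Theorem~\ref{ramunobsprop} more transparent.
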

\begin{proof}
From the proof of Proposition~\ref{structprop}, we see that $\calR^{\pd,\ell}_{\rhob_0} \simeq W(\FF)[[X_1,X_2,X_3,X_4]]/I$ where $I$ is a non-zero principal ideal contained in $(p,(X_1,X_2,X_3,X_4)^2)$. Since the natural map $ \calR^{\pd,\ell}_{\rhob_0} \to \calR^{\pd}_{\rhob_0}$ is surjective (\cite[Proposition 6.1]{R}) and $\calR^{\pd}_{\rhob_0} \simeq W(\FF)[[X,Y,Z]]$, it follows that its kernel is a minimal prime of $\calR^{\pd,\ell}_{\rhob_0}$ and it is a principal ideal. This finishes the proof of the corollary.
\end{proof}

We will now prove an improvement of Corollary~\ref{strcor} in certain cases.

\begin{thm}
\label{ramunobsprop}
Suppose $\rhob_0$ is unobstructed and $p \nmid \phi(N)$. Let $\ell$ be a prime such that $p \nmid \ell^2-1$, $\chi^i|_{G_{\QQ_{\ell}}} = \omega_p|_{G_{\QQ_{\ell}}}$ for some $i \in \{1,-1\}$ and $\ell/\tilde \ell$ is a topological generator of $1 + p\ZZ_p$. Then $\calR^{\pd,\ell}_{\rhob_0} \simeq W(\FF)[[X_1,X_2,X_3,X_4]]/(X_4 X_2)$.
\end{thm}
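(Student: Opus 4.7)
The plan is to upgrade the presentation from Corollary~\ref{strcor} by identifying the non-unit factor explicitly via the tame relation at $\ell$. By Proposition~\ref{structprop}, $\calR^{\pd,\ell}_{\rhob_0} \simeq \calR^{\defo,\ell}_{\rhob_x}$ for a nonzero $x \in H^1(G_{\QQ,Np},\chi^{-i})$. Without loss of generality take $i=1$; Lemma~\ref{ramlem} then gives
\begin{equation*}
\rho^{\univ,\ell}_x(g_\ell)=\begin{pmatrix}\widehat\chi_1(g_\ell)(1+y)&0\\0&\widehat\chi_2(g_\ell)(1+y')\end{pmatrix},\qquad \rho^{\univ,\ell}_x(i_\ell)=\begin{pmatrix}1&w\\0&1\end{pmatrix}
\end{equation*}
for some $y,y',w \in \calR^{\defo,\ell}_{\rhob_x}$. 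The generating set $\{p,y,y',z,w\}$ of the maximal ideal supplied by Lemma~\ref{tangenlem}, combined with $\dim\tan(R^{\defo,\ell}_{\rhob_x})=4$ (from the proof of Proposition~\ref{structprop}), shows that $\{y,y',z,w\}$ is an $\FF$-basis of the cotangent space. Set $S:=W(\FF)[[X_1,X_2,X_3,X_4]]$ and take the surjection $S \twoheadrightarrow \calR^{\defo,\ell}_{\rhob_x}$ sending $X_1,X_2,X_3,X_4$ to $y,y',z,w$. Since we have arranged $X_4\mapsto w$ and since $\ker(\calR^{\defo,\ell}_{\rhob_x}\to\calR^{\defo}_{\rhob_x})=(w)$ by Lemma~\ref{surjectlem}, the argument of Corollary~\ref{strcor} applies verbatim to give the principal defining ideal the form $(X_4 f)$ for some non-unit $f \in S$.

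To pin down $f$, I will use the tame relation $g_\ell i_\ell g_\ell^{-1}=i_\ell^\ell$. Comparing upper-right entries and using $\widehat\chi_1(g_\ell)/\widehat\chi_2(g_\ell)=\widehat{\omega_p(\mathrm{Frob}_\ell)}=\tilde\ell$, then clearing the unit $(1+y')$, yields
\begin{equation*}
w\bigl(\tilde\ell(1+y)-\ell(1+y')\bigr)=0\qquad\text{in }\calR^{\defo,\ell}_{\rhob_x}.
\end{equation*}
Lifting and setting $\Phi:=(\tilde\ell-\ell)+\tilde\ell X_1-\ell X_2 \in S$, this gives $X_4\Phi \in (X_4 f)$, whence $f \mid \Phi$ in the domain $S$. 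Next I will verify that $\Phi$ is a prime element of $S$: its image in $m_S/m_S^2$ contains the nonzero linear part $\tilde\ell \bar X_1 - \ell \bar X_2$, whose coefficients $\tilde\ell,-\ell$ are both nonzero in $\FF$, so $\Phi \notin m_S^2$ and $\Phi$ extends to a regular system of parameters in the regular local ring $S$. Hence $f=u\Phi$ for some unit $u \in S^\times$, and therefore $(X_4 f)=(X_4\Phi)$.

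The final step is a change of variables. Define the continuous $W(\FF)$-algebra endomorphism $\alpha\colon S\to S$ by $\alpha(X_2)=\Phi$ and $\alpha(X_j)=X_j$ for $j\neq 2$. The induced $\FF$-linear map on $m_S/m_S^2$, with respect to the basis $\{\bar p,\bar X_1,\bar X_2,\bar X_3,\bar X_4\}$, has determinant equal to $-\ell \pmod p$, which is nonzero in $\FF$; hence $\alpha$ is an automorphism of $S$. Since $\alpha(X_2 X_4)=\Phi X_4 = X_4\Phi$, the automorphism $\alpha$ sends the ideal $(X_4 X_2)$ onto $(X_4\Phi)=(X_4 f)$, inducing
\begin{equation*}
W(\FF)[[X_1,X_2,X_3,X_4]]/(X_4 X_2)\;\simeq\;S/(X_4 f)\;\simeq\;\calR^{\defo,\ell}_{\rhob_x}\;\simeq\;\calR^{\pd,\ell}_{\rhob_0},
\end{equation*}
as required. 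The main subtlety is cotangent-space bookkeeping in characteristic zero: the hypothesis that $\ell/\tilde\ell$ topologically generates $1+p\ZZ_p$ is precisely what is needed (via Lemma~\ref{tangenlem}) to include the ramification parameter $w$ in a basis of the cotangent space of $\calR^{\defo,\ell}_{\rhob_x}$; once that is in place, the irreducibility of $\Phi$ and the explicit change of variables are a straightforward check in the regular local ring $S$.
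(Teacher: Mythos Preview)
Your proof is correct and follows essentially the same route as the paper's: both use Proposition~\ref{structprop} to identify $\calR^{\pd,\ell}_{\rhob_0}$ with $\calR^{\defo,\ell}_{\rhob_x}$, invoke Lemma~\ref{ramlem} for the shape of $\rho^{\univ,\ell}_x$ at $\ell$, use Lemma~\ref{tangenlem} for a generating set of the maximal ideal, and exploit the tame relation $w(\phi_1-\ell\phi_2)=0$ together with the principality of the defining ideal. The only difference is the order of operations: the paper replaces $y'$ by the expression $pu+y-(pu-1)y'$ \emph{before} writing down the surjection from the four-variable power series ring, so that the relation reads $WX\in J$ directly and one concludes $J=(WX)$ by irreducibility of $W$ and $X$; you instead keep the natural generators $y,y',z,w$, identify the kernel as $(X_4\Phi)$, and then perform the automorphism $X_2\mapsto\Phi$ afterwards. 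These are the same argument with the change of variables moved from the setup to the conclusion.

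One small remark: when you say ``the argument of Corollary~\ref{strcor} applies verbatim'' to your specific surjection, there is a tacit step. One needs that the kernel $J$ of \emph{your} surjection $S\twoheadrightarrow\calR^{\defo,\ell}_{\rhob_x}$ is principal and contained in $(X_4)$. Principality follows because any two surjections from $S$ onto a ring with $5$-dimensional cotangent space differ by an automorphism of $S$ (or, more directly, because $\dim H^2(G_{\QQ,N\ell p},\ad\rhob_x)=1$). That $J\subset(X_4)$ follows since the preimage in $S$ of $\ker(\calR^{\defo,\ell}_{\rhob_x}\to\calR^{\defo}_{\rhob_x})=(w)$ is $(X_4)+J$, which has height $1$ (the quotient is $W(\FF)[[X,Y,Z]]$) and contains the prime $X_4$, hence equals $(X_4)$. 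This is straightforward but worth spelling out.
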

\begin{proof}
 Without loss of generality assume $\chi|_{G_{\QQ_{\ell}}} = \omega_p|_{G_{\QQ_{\ell}}}$. By Proposition~\ref{structprop}, we have $\calR^{\pd,\ell}_{\rhob_0} \simeq \calR^{\defo,\ell}_{\rhob_x}$ for any non-zero $x \in H^1(G_{\QQ,N\ell p},\chi^{-1})$. Therefore, there exists a representation $\rho : G_{\QQ,N\ell p} \to \GL_2(\calR^{\pd,\ell}_{\rhob_0})$ such that $\tr(\rho) = T^{\univ,\ell}$. 

Fix a lift $g_\ell$ of $\text{Frob}_\ell$ in $G_{\QQ_\ell}$. From Lemma~\ref{ramlem}, we know that $\rho(g_\ell) = \begin{pmatrix} \phi_1 & 0 \\ 0 & \phi_2 \end{pmatrix}$, $\rho|_{I_{\ell}}$ factors through the $\ZZ_p$-quotient of the tame inertia group at ${\ell}$ and $\rho(i_{\ell}) = \begin{pmatrix} 1 & w \\ 0 & 1 \end{pmatrix}$ for some $w \in R^{\pd,\ell}_{\rhob_0}$. 

From the proof of Proposition~\ref{structprop}, we also get that $w \neq 0$ and $w(\phi_1/\phi_2 -\ell)=0$ i.e. $w(\phi_1-\ell\phi_2)=0$. By Lemma~\ref{ramlem}, there exist $y$, $y' \in \calR^{\pd,\ell}_{\rhob_0}$ such that $\phi_1 = \widehat{\chi_1(g_\ell)}(1+y)$ and $\phi_2 = \widehat{\chi_2(g_\ell)}(1+y')$. Now, $\phi_1 - \ell\phi_2 = \widehat{\chi_1(g_\ell)}-\ell\widehat{\chi_2(g_\ell)} +  \widehat{\chi_1(g_\ell)}y - \ell \widehat{\chi_2(g_\ell)}y'$ and $\widehat{\chi_1(g_\ell)} = \tilde\ell\widehat{\chi_2(g_\ell)}$. As $\ell/\tilde\ell$ is a topological generator of $1+p\ZZ_p$, it follows that $1-\ell/\tilde\ell = pu$ for some $u \in \ZZ_p^*$. Hence, $\widehat{\chi_1(g_\ell)}^{-1}(\phi_1 - \ell\phi_2) = pu+y-(1-pu) y'$. So we have $w(pu+y-(1-pu) y')=0$.

By Lemma~\ref{tangenlem}, there exists a $z \in \calR^{\pd,\ell}_{\rhob_0}$ such that the set $\{p,y,y',z,w\}$ generates the maximal ideal of $\calR^{\pd,\ell}_{\rhob_0}$.
Therefore, the set $\{p,pu+y-(1-pu) y',y,z,w\}$ also generates the maximal ideal of $\calR^{\pd,\ell}_{\rhob_0}$. Hence, by \cite[Theorem 7.16 (b)]{E}, we get a surjective map $\psi : W(\FF)[[X,Y,Z,W]] \to  \calR^{\pd,\ell}_{\rhob_0}$ sending $X$ to $pu+y-(1-pu) y'$, $Y$ to $y$, $Z$ to $ z$ and $W$ to $w$. The relation $w(pu+y-(1-pu) y')=0$ implies that $W X \in J$.

By Corollary~\ref{strcor}, it follows that $\calR^{\pd,\ell}_{\rhob_0} \simeq W(\FF)[[X,Y,Z,W]]/I$ where $I$ is a principal ideal. Therefore, $J$ is also a principal ideal.
We already have $WX \in J$. Note that $W(\FF)[[X,Y,Z,W]]$ is a UFD (by \cite[Theorem 19.19]{E}) and both $W$, $X$ are irreducible elements of it. Hence, $J$ is either $(W)$, $(X)$ or $(WX)$. Since $\dim(\tan(R^{\pd,\ell}_{\rhob_0}))=4$, $J$ cannot be $(W)$ or $(X)$. Hence, $\calR^{\pd,\ell}_{\rhob_0} \simeq W(\FF)[[X,Y,Z,W]]/(WX)$.
\end{proof}

\begin{rem}
By Theorem~\ref{ramunobsprop}, we know that $\calR^{\defo,\ell}_{\rhob_x} \simeq W(\FF)[[X,Y,Z,W]]/(WX)$ for a suitable $\rhob_x$. It is not clear how to get this explicit structure of $\calR^{\defo,\ell}_{\rhob_x}$ directly from \cite[Theorem $4.7$]{Bo} or its proof.
\end{rem}

\subsection{Structure of $R^{\pd,\ell}_{\rhob_0}$ with unobstructed $\rhob_0$ and $p | \ell + 1$}
We now turn to the case where $\rhob_0$ is unobstructed and $\ell$ is a prime such that $\ell \nmid Np$ and $p | \ell + 1$. As we will see, this case is a bit more complicated than the previous case. This is also the case in the study undertaken in \cite{Bos} and \cite{Bo}. We begin by determining the explicit structure of $R^{\defo,\ell}_{\rhob_x}$ under certain hypotheses. 

Before proceeding further, we need a piece of notation. Let $\{h_i | i \in \ZZ, i \geq 0\}$ be the set of polynomials in $\FF[\sqrt{1+UV}]$ satisfying the recurrence relation $b_{i+1}-2(\sqrt{1+UV})b_i+b_{i-1} =0$ with $h_0=0$ and $h_1=1$ (see \cite{Bos} for more details). So $\{h_i | i \in \ZZ, i \geq 0\} \subset \FF[[U,V]]$. Note that $h_{\ell} \equiv \ell \pmod{(UV)}$. For a non-zero $x \in H^1(G_{\QQ,Np},\chi^i)$ with $i \in \{1,-1\}$, let $\tau^{\univ,\ell}_x : G_{\QQ,N\ell p} \to \GL_2(R^{\defo,\ell}_{\rhob_x})$ be the universal deformation of $\rhob_x$.

Note that if $p | \ell+1$ but $p^2 \nmid \ell+1$, then $\ell/\tilde\ell$ is a topological generator of $1+p\ZZ_p$.
\begin{lem}
\label{defstrlem}
Suppose $\rhob_0$ is unobstructed and $p \nmid \phi(N)$. Let $\ell$ be a prime such that $p | \ell+1$, $p^2 \nmid \ell+1$ and $\chi|_{G_{\QQ_{\ell}}} = \omega_p|_{G_{\QQ_{\ell}}}$. Let $x \in H^1(G_{\QQ,Np},\chi^i)$ be a non-zero element for $i \in \{1,-1\}$. Then, $$R^{\defo,\ell}_{\rhob_x} \simeq \FF[[X,Y,Z,U,V]]/(U((1+X)+h_{\ell}(1+Y)), V((1+Y)+h_{\ell}(1+X))).$$
\end{lem}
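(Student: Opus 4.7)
The plan is to build a natural set of five generators from the local structure at $\ell$, derive the two defining relations from the tame Galois relation, and then produce an inverse by constructing an explicit deformation over the candidate quotient $S$.

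First, I would collect the generators. By Lemma~\ref{cohomlem2}(2), applied with the decomposition $\rhob_x|_{G_{\QQ_{\ell}}} \cong \eta \oplus \omega_p\eta$ for $\eta = \chi_2|_{G_{\QQ_{\ell}}}$ (which holds because $\chi|_{G_{\QQ_{\ell}}}=\omega_p$ and $\rhob_x$ is unramified at $\ell$), $\tan(R^{\defo,\ell}_{\rhob_x})$ has dimension $5$. Lemma~\ref{ramlem}(3) lets me conjugate so that $\tau^{\univ,\ell}_x(g_\ell) = \operatorname{diag}(\widehat\chi_1(g_\ell)(1+X),\,\widehat\chi_2(g_\ell)(1+Y))$ and $\tau^{\univ,\ell}_x(i_\ell)=\begin{pmatrix}\sqrt{1+UV}&U\\ V&\sqrt{1+UV}\end{pmatrix}$ for elements $X,Y,U,V$ in $R^{\defo,\ell}_{\rhob_x}$. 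Since $p \mid \ell+1$ and $p^2 \nmid \ell+1$, $\ell/\tilde\ell$ topologically generates $1+p\ZZ_p$, so the characteristic-$p$ version of Lemma~\ref{tangenlem} yields a fifth element $Z$ making $\{X,Y,Z,U,V\}$ generate the maximal ideal. By Lemma~\ref{surjectlem}, $\ker(f)=(U,V)$, so $f(X),f(Y),f(Z)$ form a regular system of parameters of $R^{\defo}_{\rhob_x} \cong \FF[[\bar X,\bar Y,\bar Z]]$. This data gives a surjection $\pi:\FF[[X,Y,Z,U,V]] \twoheadrightarrow R^{\defo,\ell}_{\rhob_x}$.

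Next, I would impose the tame local relation $g_\ell i_\ell g_\ell^{-1}=i_\ell^\ell$. Writing $M=\tau^{\univ,\ell}_x(i_\ell)=aI+N$ with $a=\sqrt{1+UV}$ and $N=\begin{pmatrix}0&U\\ V&0\end{pmatrix}$, Cayley--Hamilton gives $M^2=2aM-I$, and induction yields $M^k=\alpha_k I + h_k N$ where both sequences satisfy $x_{k+1}=2ax_k-x_{k-1}$, with $h_k$ as in the lemma statement. Conjugating $M$ by the diagonal matrix $\tau^{\univ,\ell}_x(g_\ell)$ produces off-diagonals $(\phi_1/\phi_2)U$ and $(\phi_2/\phi_1)V$, whereas $M^\ell$ has off-diagonals $h_\ell U$ and $h_\ell V$. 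Equating and using $\chi_1(\text{Frob}_\ell) \equiv -\chi_2(\text{Frob}_\ell) \pmod{p}$ (which holds since $\chi|_{G_{\QQ_{\ell}}}=\omega_p$ and $p\mid\ell+1$), I obtain the two relations $U((1+X)+h_\ell(1+Y))=0$ and $V((1+Y)+h_\ell(1+X))=0$ in $R^{\defo,\ell}_{\rhob_x}$. Therefore $\pi$ descends to a surjection $\tilde\pi:S\twoheadrightarrow R^{\defo,\ell}_{\rhob_x}$, where $S$ is the ring on the right-hand side of the statement.

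To invert $\tilde\pi$, I would build a deformation $\tilde\rho:G_{\QQ,N\ell p}\to \GL_2(S)$ by declaring $\tilde\rho(i_\ell)=\begin{pmatrix}\sqrt{1+UV}&U\\ V&\sqrt{1+UV}\end{pmatrix}$ and defining $\tilde\rho$ on the quotient $G_{\QQ,Np}$ as $\iota_0\circ\rho^{\univ}_x$, where $\iota_0:R^{\defo}_{\rhob_x}\hookrightarrow S$ sends the regular parameters $f(X),f(Y),f(Z)$ to $X,Y,Z$. The hard part — and the main obstacle of the proof — is verifying $\tilde\rho(g_\ell)\tilde\rho(i_\ell)\tilde\rho(g_\ell)^{-1}=\tilde\rho(i_\ell)^\ell$ in $\GL_2(S)$. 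The off-diagonal equalities are just the defining relations of $S$ in reverse, but the diagonal equality $\alpha_\ell=a$ is subtle: I would note $\alpha_\ell^2-1=UV\cdot h_\ell^2$ (from $\det M^\ell=1$), and multiply the two defining relations of $S$ by $V$ and $U$ respectively and add to obtain $UV(1+h_\ell)(2+X+Y)=0$, whence $UV(1+h_\ell)=0$ (as $2+X+Y$ is a unit), and consequently $UV(h_\ell^2-1)=0$. Thus $\alpha_\ell^2=1+UV=a^2$ in $S$, and since $\alpha_\ell+a\equiv 2\pmod{\mathfrak{m}_S}$ is a unit, $\alpha_\ell=a$. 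With $\tilde\rho$ well-defined, universality produces $\psi:R^{\defo,\ell}_{\rhob_x}\to S$; tracking matrix entries on the generators $g_\ell,i_\ell$, and using the compatibility of $\iota_0$ with $f$ and with the universal property of $\rho^{\univ}_x$ on the $G_{\QQ,Np}$-part, I would check that $\tilde\pi\circ\psi$ and $\psi\circ\tilde\pi$ are the identity, completing the isomorphism.
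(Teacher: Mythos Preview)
Your first two steps---producing the five generators and deriving the two tame relations---match the paper's proof exactly. The divergence is in how you close the argument.

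The paper does \emph{not} construct an inverse. Instead, it invokes the obstruction bound: by Lemma~\ref{cohomlem2} one has $\dim H^2(G_{\QQ,N\ell p},\ad(\rhob_x))=2$, so by \cite[Theorem~2.4]{Bo2} the kernel $J_0$ of your surjection $\pi$ is generated by at most two elements. Since $h_\ell\equiv\ell\equiv -1\pmod{(UV)}$, the two relations reduce to $U(X-Y)$ and $V(Y-X)$ modulo $m_0^3$, hence are linearly independent in $J_0/m_0J_0$. Nakayama then forces $J_0$ to equal the ideal they generate. This is a one-line finish once the relations are in hand.

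Your third step, by contrast, has a genuine gap. You propose to define $\tilde\rho:G_{\QQ,N\ell p}\to\GL_2(S)$ by specifying $\tilde\rho(i_\ell)$ and declaring $\tilde\rho$ on ``the quotient $G_{\QQ,Np}$'' to be $\iota_0\circ\rho^{\univ}_x$. But $G_{\QQ,Np}$ is a quotient of $G_{\QQ,N\ell p}$, not a subgroup: there is no section, so this does not define a homomorphism on $G_{\QQ,N\ell p}$. What you would need is a presentation of (the relevant pro-$p$ quotient of) $G_{\QQ,N\ell p}$ by generators and relations, together with a verification that your proposed $\tilde\rho$ respects \emph{all} of those relations, not merely the local tame relation $g_\ell i_\ell g_\ell^{-1}=i_\ell^{\ell}$. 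The kernel of $G_{\QQ,N\ell p}\to G_{\QQ,Np}$ is the normal closure of the inertia at $\ell$, and its interaction with the rest of the group is governed by global Galois relations you have not addressed. Your careful check that $\alpha_\ell=a$ in $S$ is correct and nontrivial, but it only handles one local relation; it does not by itself produce a well-defined global $\tilde\rho$. The obstruction-theoretic argument in the paper is precisely what circumvents this difficulty.
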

\begin{proof}
By Lemma~\ref{ramlem}, it follows that $\tau^{\univ,\ell}_x|_{I_{\ell}}$ factors through the $\ZZ_p$-quotient of the tame inertia group at ${\ell}$, $\tau^{\univ,\ell}_x(i_{\ell}) = \begin{pmatrix} \sqrt{1+uv} & u \\ v & \sqrt{1+uv} \end{pmatrix}$ and $\tau^{\univ,\ell}_x(g_\ell) = \begin{pmatrix} \phi_1 & 0 \\ 0 & \phi_2 \end{pmatrix}$ for a fixed lift $g_\ell$ of $\text{Frob}_\ell$ in $G_{\QQ_\ell}$. Note that there exist $m, n \in R^{\defo,\ell}_{\rhob_x}$ such that $\phi_1=\chi_1(\text{Frob}_\ell)(1+m)$ and  $\phi_2=\chi_2(\text{Frob}_\ell)(1+n)$.

By Lemma~\ref{tangenlem}, there exists a $z \in R^{\defo,\ell}_{\rhob_x}$ such that the set $\{m,n,u,v,z\}$ generates the maximal ideal of $R^{\defo,\ell}_{\rhob_x}$.
Thus, by \cite[Theorem 7.16 (b)]{E}, we have a surjective map $\phi : \FF[[X,Y,Z,U,V]] \to R^{\defo,\ell}_{\rhob_x}$ of $W(\FF)$-algebras sending $X$ to $m$, $Y$ to $n$, $Z$ to $z$, $U$ to $u$ and $V$ to $v$. Let $J_0=\ker(\phi)$. 

From the action of $\text{Frob}_{\ell}$ on the tame inertia group at $\ell$, we see that $(\phi_1/\phi_2-h_{\ell})u=0$ and $(\phi_2/\phi_1-h_{\ell})v=0$. Note that, as $p | \ell+1$ and $\chi|_{G_{\QQ_{\ell}}}=\omega_p|_{G_{\QQ_{\ell}}}$, we have $\chi_1(\text{Frob}_{\ell})=-\chi_2(\text{Frob}_{\ell})$. Therefore, we have $((1+m)+h_{\ell}(1+n))u=0$ and $((1+n)+h_{\ell}(1+m))v=0$. So $((1+X)+h_{\ell}(1+Y))U$, $((1+Y)+h_{\ell}(1+X))V \in J_0$. 

By Lemma~\ref{cohomlem}, we know that $\dim(H^1(G_{\QQ,N\ell p}, \text{ad}(\rhob_x)))=5$ and $\dim(H^2(G_{\QQ,N\ell p}, \text{ad}(\rhob_x)))=2$. By \cite[Theorem $2.4$]{Bo2}, $R^{\defo,\ell}_{\rhob_x} \simeq \FF[[X_1,X_2,X_3,X_4,X_5]]/J$, where $J$ is generated by at most $2$ elements and $J \subset (X_1,X_2,X_3,X_4,X_5)^2$. 
Denote $\FF[[X,Y,Z,U,V]]$ by $R$ and its maximal ideal $(X,Y,Z,U,V)$ by $m_0$. Therefore, $J_0$ is generated by at most $2$ elements and $J_0 \subset m_0^2$.  

Note that $h_{\ell} \equiv \ell \pmod{(UV)}$. Since $p | \ell+1$, we get $((1+X)+h_{\ell}(1+Y)) \equiv (X- Y) \pmod{(UV)}$ and $((1+Y)+h_{\ell}(1+X)) \equiv (Y- X) \pmod{(UV)}$. So $(1+X)+h_{\ell}(1+Y)$, $(1+Y)+h_{\ell}(1+X) \in m_0 \setminus m_0^2$. As $m_0J_0 \subset m_0^3$, we see that the images of the elements $((1+Y)+h_{\ell}(1+X))V$ and $((1+X)+h_{\ell}(1+Y))U$ in $J_0/m_0J_0$ are linearly independent over $\FF$. As $J_0$ is generated by at most 2 elements, the dimension of $J_0/m_0J_0$ as a vector space over $\FF$ is at most $2$. Hence, it follows, from Nakayama's lemma, that $J_0 = (((1+Y)+h_{\ell}(1+X))V, ((1+X)+h_{\ell}(1+Y))U)$.
\end{proof}

We now turn our attention to the problem of finding the structure of $R^{\pd,\ell}_{\rhob_0}$ when $\rhob_0$ is unobstructed, $p | \ell+1$ and $\chi|_{G_{\QQ_\ell}} = \omega_p$. Note that in this case, we have $\dim(H^1(G_{\QQ,N\ell p},\chi))= \dim(H^1(G_{\QQ,N\ell p},\chi^{-1})) =2 $. So this case is different from the cases we have dealt with so far. Hence, we can not use the results obtained so far. 
However, we can still use the technique of comparing $R^{\pd,\ell}_{\rhob_0}$ with the universal deformation rings of residually non-split reducible representations.

\begin{thm}
\label{reducestrthm}
Suppose $\rhob_0$ is unobstructed and $p \nmid \phi(N)$. Let $\ell$ be a prime such that $p | \ell + 1$, $p^2 \nmid \ell+1$ and $\chi|_{G_{\QQ_{\ell}}} = \omega_p|_{G_{\QQ_{\ell}}}$. Then, $$(R^{\pd,\ell}_{\rhob_0})^{\red} \simeq \FF[[X,Y,Z,T_1,T_2]]/(T_1T_2,T_1Z,T_2Z).$$
\end{thm}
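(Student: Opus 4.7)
The plan is to combine the GMA techniques from Section~\ref{repsec} with the explicit structure of $R^{\defo,\ell}_{\rhob_x}$ from Lemma~\ref{defstrlem} in order to identify the three irreducible components of $(R^{\pd,\ell}_{\rhob_0})^{\red}$. First, I would set $R := (R^{\pd,\ell}_{\rhob_0})^{\red}$ and apply Lemma~\ref{gmalem} to produce a faithful GMA $A = \begin{pmatrix} R & B \\ C & R \end{pmatrix}$ with a representation $\rho : G_{\QQ,N\ell p} \to A^*$ satisfying $\tr(\rho) = (t^{\univ,\ell})^{\red}$. Since $\rhob_0$ is unobstructed and $p\mid\ell+1$, Lemma~\ref{cohomlem} gives $\dim H^1(G_{\QQ,N\ell p},\chi^{\pm 1}) = 2$, and Lemma~\ref{gengmalem} lets me write $B = Rb + Rb'$, $C = Rc + Rc'$, where $b,c$ are the off-diagonal entries of $\rho(i_\ell)$ (for $i_\ell$ a topological generator of the tame $\ZZ_p$-quotient of inertia at $\ell$), while $b',c'$ lift generators of $H^1(G_{\QQ,Np},\chi^{\pm 1})$.

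Next I would define $Z := m'(b'\otimes c')$, $T_1 := m'(b'\otimes c)$, and $T_2 := m'(b\otimes c')$ in $R$; here $Z$ generates the reducibility ideal of the unramified subpart (where the pseudo-character factors through $G_{\QQ,Np}$), while $T_1,T_2$ measure the ``mixed'' products. Together with two additional generators $X,Y$ coming from the character deformations of $\chi_1,\chi_2$ (available since $p\nmid\phi(N\ell)$, using that $p\nmid\ell-1$ as $p\mid\ell+1$ and $p>2$), a tangent-space argument along the lines of Lemma~\ref{maxgenlem} should show that $\{X,Y,Z,T_1,T_2\}$ generates the maximal ideal of $R$, yielding a surjection $\phi:\FF[[X,Y,Z,T_1,T_2]]\to R$.

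To derive the three relations, I would exploit the explicit structure of Lemma~\ref{defstrlem}. For $i\in\{1,-1\}$, the trace of $\tau^{\univ,\ell}_{x_i}$ factors through $R^{\pd,\ell}_{\rhob_0}$, inducing maps $\Psi_{x_i}:R\to (R^{\defo,\ell}_{\rhob_{x_i}})^{\red}$. Under these, $Z$ corresponds to the ``unramified reducibility'' direction (which in $R^{\defo,\ell}_{\rhob_x}$ is essentially $X-Y$ modulo terms in the nilradical coming from $h_\ell+1\in (UV)\FF[[UV]]$), while $T_1,T_2$ map to the off-diagonal parameters $U,V$. The relations $U\cdot((1+X)+h_\ell(1+Y))=0$ and $V\cdot((1+Y)+h_\ell(1+X))=0$ of Lemma~\ref{defstrlem} thus translate, after passage to the reduced ring, into $T_1 Z=0$ and $T_2 Z=0$ in $R$. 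The third relation $T_1T_2=0$ reflects the fact that the ``$T_1$-type'' and ``$T_2$-type'' ramified deformations correspond to disjoint irreducible components of $\operatorname{Spec}(R)$; I would derive it via the GMA axioms (in particular $m'(b_1\otimes c_1)b_2 = m'(b_2\otimes c_1)b_1$), the faithfulness of $A$ over the reduced ring, and the local relation $\rho(g_\ell)\rho(i_\ell)\rho(g_\ell)^{-1}=\rho(i_\ell)^\ell$ at $\ell$.

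Finally, I would upgrade the induced surjection $\bar\phi:\FF[[X,Y,Z,T_1,T_2]]/(T_1T_2,T_1Z,T_2Z)\to R$ to an isomorphism by matching irreducible components. The target ring has exactly three minimal primes $(T_1,T_2)$, $(T_1,Z)$, $(T_2,Z)$, each giving a power series ring in three variables. The unramified component of $R$ is identified with $R^{\pd}_{\rhob_0}\simeq\FF[[X,Y,Z]]$ via Lemma~\ref{unobslem}, accounting for the $(T_1,T_2)$-component, and the two ramified components are identified through $\Psi_{x_i}$ with the two ``ramified'' components of $(R^{\defo,\ell}_{\rhob_{x_i}})^{\red}$ singled out by Lemma~\ref{defstrlem}. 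The main obstacle will be the relation $T_1T_2=0$: it cannot be read off from any single $R^{\defo,\ell}_{\rhob_{x_i}}$ and requires a delicate combination of the local Frobenius--inertia relation at $\ell$ with the GMA axioms, reflecting the fact that a single universal pseudo-character cannot simultaneously realize both types of ramified non-split extensions. A secondary difficulty is ruling out extra relations in $\ker(\bar\phi)$, which I expect to handle by a Krull-dimension and component count at each minimal prime of $R$ using the identifications above.
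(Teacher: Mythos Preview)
Your choice of generators contains a genuine error that breaks the argument. You set $Z := m'(b'\otimes c')$, $T_1 := m'(b'\otimes c)$, $T_2 := m'(b\otimes c')$ and claim the relations $T_1Z=T_2Z=0$. But in the paper's explicit description (Lemma~\ref{redsurjlem}) these elements are called $z,x_2,x_1$ respectively, and the ring is $\FF[[X,Y,Z,X_1,X_2]]/(X_1X_2,X_1Y,X_2Y)$ with $Y=a'_0-d'_0$ the \emph{Frobenius eigenvalue difference}; the products $zx_1$ and $zx_2$ are \emph{nonzero} there. So your relations $T_1Z=T_2Z=0$ are simply false. The variable that actually enters the relations is $a'_0-d'_0$, not $m'(b'\otimes c')$. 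Your heuristic ``$Z$ corresponds to $X-Y$ under $\Psi_{x_i}$'' is incorrect: $m'(b'\otimes c')$ generates the reducibility ideal, which is a different object from the Frobenius-diagonal direction $X-Y$ in $R^{\defo,\ell}_{\rhob_x}$. Relatedly, your plan to read off relations in $R$ from relations in $(R^{\defo,\ell}_{\rhob_{x_i}})^{\red}$ is in the wrong direction: $\Psi_{x_i}$ goes from $R$ to the deformation ring, so relations downstairs do not lift.

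You are also missing the key lemma that drives the whole proof. In characteristic $p$, over every prime $P$ of $R^{\pd,\ell}_{\rhob_0}$, the element $\rho_P(i_\ell)$ is forced to be unipotent (Lemma~\ref{reducelem}: its eigenvalues are roots of unity of $p$-power order in a field of characteristic $p$). Hence in the reduced ring the inertia generator has $a=d=1$ and $m'(b\otimes c)=0$. The relation $T_1T_2=0$ --- which you flag as the ``main obstacle'' --- is then immediate: working in the total fraction field (Lemma~\ref{redgmalem}), $T_1T_2=(b'c)(bc')=(bc)(b'c')=0$. The other two relations come from combining $\rho(g_\ell)\rho(i_\ell)\rho(g_\ell)^{-1}=\rho(i_\ell)^\ell$ with $bc=0$ and $\chi_1(\mathrm{Frob}_\ell)=\ell\chi_2(\mathrm{Frob}_\ell)$, yielding $(a'_0-d'_0)b=(a'_0-d'_0)c=0$ and hence $(a'_0-d'_0)T_1=(a'_0-d'_0)T_2=0$. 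The role you envisage for $R^{\defo,\ell}_{\rhob_x}$ (via Lemma~\ref{defstrlem}) is not to produce relations but to exhibit three distinct primes of Krull dimension~$\ge 3$ in $R^{\pd,\ell}_{\rhob_0}$ (Lemma~\ref{nominprimelem}), which then pins down $\ker(g_0)$ exactly by an intersection-of-primes argument.
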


We will first prove a series of lemmas which will be used to prove Theorem~\ref{reducestrthm}. 

Let $P$ be a prime $R^{\pd,\ell}_{\rhob_0}$. Fix a lift $g_{\ell}$ of $\text{Frob}_{\ell}$ in $G_{\QQ_{\ell}}$. Let $A_P$ be the GMA obtained in Lemma~\ref{gmalem} for the tuple $(R^{\pd,\ell}_{\rhob_0}/P, \ell, t^{\univ,\ell} \pmod{P},g_{\ell})$. Let $A_P = \begin{pmatrix} R^{\pd,\ell}_{\rhob_0}/P & B_P\\ C_P & R^{\pd,\ell}_{\rhob_0}/P\end{pmatrix}$ and $\rho_P : G_{\QQ,N\ell p} \to A_P^*$ be the corresponding representation. By Part~\eqref{bhaag3} of Lemma~\ref{gmalem},  we see that $\rho_P|_{I_{\ell}}$ factors through the $\ZZ_p$-quotient of the tame inertia group at $\ell$. Fix a generator $i_{\ell}$ of this $\ZZ_p$-quotient. We will now use this notation throughout the paper.
\begin{lem}
\label{reducelem}
 Suppose $\ell$ is a prime such that $\ell \nmid Np$, $p\nmid \ell-1$ and $\chi|_{G_{\QQ_{\ell}}} \neq 1$. If $P$ is a prime of $R^{\pd,\ell}_{\rhob_0}$, then $t^{\univ,\ell}(gh) - t^{\univ,\ell}(g) \in P$ for all $g \in G_{\QQ_\ell}$ and $h \in I_{\ell}$.
\end{lem}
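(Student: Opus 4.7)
The plan is to pass to the fraction field of $R := R^{\pd,\ell}_{\rhob_0}/P$ and show that $\rho_P|_{G_{\QQ_\ell}}$ becomes an upper-triangular representation whose inertia image is unipotent, whence the trace identity $\bar t(gh) = \bar t(g)$ will follow by a direct matrix calculation.

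First I will set $R := R^{\pd,\ell}_{\rhob_0}/P$, a complete Noetherian local domain of characteristic $p$, let $K := \operatorname{Frac}(R)$, and invoke Lemma~\ref{redgmalem} (applicable since $A_P$ is faithful and $R$ is reduced) to embed $A_P$ as an $R$-subalgebra of $M_2(K)$. This turns $\rho_P|_{G_{\QQ_\ell}}$ into a bona fide representation $\rho : G_{\QQ_\ell} \to \GL_2(K)$ with $\tr(\rho) = \bar t|_{G_{\QQ_\ell}}$, $\rho(g_\ell) = \operatorname{diag}(a,d)$, and $a - d \in R^{\times}$ (since $\chi|_{G_{\QQ_\ell}} \neq 1$ forces $a \not\equiv d \bmod \mathfrak{m}_R$); moreover $\rho|_{I_\ell}$ factors through the $\ZZ_p$-quotient topologically generated by $i_\ell$ (by Lemma~\ref{tamelem}).

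The crucial step will be to show that both eigenvalues $\lambda,\mu \in \overline{K}$ of $\rho(i_\ell)$ equal $1$. From $g_\ell i_\ell g_\ell^{-1} = i_\ell^\ell$, the matrices $\rho(i_\ell)$ and $\rho(i_\ell)^\ell$ are conjugate by $\rho(g_\ell)$, so $\{\lambda^\ell,\mu^\ell\} = \{\lambda,\mu\}$ and hence $\lambda^{\ell^2-1} = 1$. Since $\rhob_0$ is unramified at $\ell$, the characteristic polynomial of $\rho(i_\ell)$ (with coefficients in $R$, determined by $\bar t$) reduces to $(x-1)^2$ modulo $\mathfrak{m}_R$, so inside a suitable local integral extension $R'$ of $R$ both $\lambda$ and $\mu$ lie in $1 + \mathfrak{m}_{R'}$. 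I will then exploit the fact that in characteristic $p$ any $x \in 1 + \mathfrak{m}_{R'}$ satisfying $x^N = 1$ must equal $1$: writing $N = p^k n$ with $\gcd(n,p) = 1$ and using the identity $x^{p^k} = 1 + (x-1)^{p^k}$, the equation $(x^{p^k})^n = 1$ together with $n$ being a unit and $x^{p^k} \in 1 + \mathfrak{m}_{R'}$ forces $x^{p^k} = 1$, whence $(x-1)^{p^k} = 0$ in the domain $R'$, so $x = 1$. Consequently $\lambda = \mu = 1$, and $\rho(i_\ell)$ is either $I$ or a non-trivial unipotent $I + N$ with $N^2 = 0$.

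Finally, I will deduce the desired reducibility. If $\rho(i_\ell) = I$ then $\rho|_{I_\ell} = 1$ and $\rho$ decomposes in the basis $\rho(g_\ell) = \operatorname{diag}(a,d)$ as $\psi_1 \oplus \psi_2$ with unramified $\psi_i$. If $\rho(i_\ell) = I + N$ with $N \neq 0$, then expanding $\rho(i_\ell)^\ell = I + \ell N$ in characteristic $p$ rewrites the tame relation as $\rho(g_\ell) N \rho(g_\ell)^{-1} = \ell N$; since $\ell \in \overline{K}^{\times}$, $\rho(g_\ell)$ stabilises $\ker N$, so in an adapted basis of $\overline{K}^2$ the representation $\rho$ is upper-triangular with diagonal characters $\psi_1, \psi_2$, and $\psi_i(i_\ell) = 1$ forces $\psi_i|_{I_\ell} = 1$ (as $\rho|_{I_\ell}$ is topologically generated by $\rho(i_\ell)$). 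In either case, for $g \in G_{\QQ_\ell}$ and $h \in I_\ell$,
\[
\bar t(gh) = \psi_1(g)\psi_1(h) + \psi_2(g)\psi_2(h) = \psi_1(g) + \psi_2(g) = \bar t(g),
\]
which is the required congruence. The main obstacle is the characteristic-$p$ analysis of the eigenvalues: $\lambda^{\ell^2-1} = 1$ admits non-trivial solutions in mixed characteristic when $p \mid \ell+1$, but the Frobenius identity $(1+y)^{p^k} = 1 + y^{p^k}$ in characteristic $p$ combined with the domain structure of $R'$ collapses them, eliminating the irreducible ``supercuspidal-type'' scenario and delivering reducibility.
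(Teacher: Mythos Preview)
Your proof is correct and follows the same strategy as the paper: embed $A_P$ into $M_2(K)$ via Lemma~\ref{redgmalem}, show that both eigenvalues of $\rho_P(i_\ell)$ equal $1$ using that $K$ has characteristic $p$, and deduce that $\rho_P(G_{\QQ_\ell})$ is upper-triangular with unramified diagonal characters. The only minor variation is in the eigenvalue step, where the paper argues via $\det(\rho_P(I_\ell))=1$ (from $p\nmid\ell-1$ and local class field theory) together with the $p$-power order of $\rho_P(i_\ell)$, while you reach the same conclusion from $\lambda^{\ell^2-1}=1$ combined with $\lambda\in 1+\mathfrak{m}_{R'}$ and the Frobenius identity $(1+y)^{p^k}=1+y^{p^k}$.
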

\begin{proof}
By Lemma~\ref{redgmalem}, we can choose $A_P$ to be a subalgebra of $M_2(K_P)$(see \cite[Lemma $2.2.2$]{Bel} as well).
By the action of $\text{Frob}_{\ell}$ on the tame inertia group by conjugation, we see that $\rho_P(i_{\ell})$ is conjugate to $\rho_P(i_{\ell})^{\ell}$. So if $a \in \bar K_P$ is an eigenvalue of $\rho_P(i_{\ell})$, then $a^{\ell}$ is also an eigenvalue of $\rho_P(i_{\ell})$. As $p \nmid \ell-1$, $\det(\rho_P(I_{\ell}))=1$. Hence, we get that either $a^{\ell}=a$ or $a^{\ell}=a^{-1}$ which means $a$ is an $m$-th root of unity for some $m \in \mathbb{N}$.
Since $K_P$ has characteristic $p$ and $i_{\ell}$ is a generator of the $\ZZ_p$-quotient of $I_{\ell}$, it follows that $1$ is the only eigenvalue of  $\rho_P(i_{\ell})$. 

So there exists some $Q \in \GL_2(K_P)$ such that $Q\rho_P(i_{\ell})Q^{-1} = \begin{pmatrix} 1 & w\\ 0 & 1 \end{pmatrix}$ for some $w \in K_P$. Thus, $Q\rho_P(I_{\ell})Q^{-1} = \{\begin{pmatrix} 1 & n.w\\ 0 & 1 \end{pmatrix} | 0 \leq n \leq p-1\}$. As $I_{\ell}$ is normal in $G_{\QQ_{\ell}}$, we see that $Q\rho_P(G_{\QQ_{\ell}})Q^{-1}$ is a subgroup of the group of upper triangular matrices in $\GL_2(K_P)$. Hence, we conclude that $\tr(\rho_P(gh))-\tr(\rho_P(g)) = 0$ for all $g \in G_{\QQ_\ell}$ and $h \in I_{\ell}$. Since $t^{\univ,\ell} \pmod{P} = \tr(\rho_P)$, the lemma follows.
\end{proof}

\begin{lem}
\label{redsurjlem}
Suppose $\rhob_0$ is unobstructed and $p \nmid \phi(N)$. Let $\ell$ be a prime such that $p | \ell + 1$, $p^2 \nmid \ell+1$ and $\chi|_{G_{\QQ_{\ell}}} = \omega_p|_{G_{\QQ_{\ell}}}$.
Then $(R^{\pd,\ell}_{\rhob_0})^{\red}$ is a quotient of $\FF[[X,Y,Z,X_1,X_2]]/(X_1Y,X_2Y,X_1X_2)$.
\end{lem}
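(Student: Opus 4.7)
\emph{Strategy.} The plan is to apply the GMA machinery of Lemmas~\ref{gmalem},~\ref{tamelem} and~\ref{gengmalem} to obtain an explicit representation $\rho$ of $G_{\QQ,N\ell p}$ into a GMA over $R := R^{\pd,\ell}_{\rhob_0}$, extract relations from the tame Frobenius identity $g_\ell i_\ell g_\ell^{-1} = i_\ell^\ell$, and then pass to the reduced quotient via Lemma~\ref{reducelem} to obtain the three claimed quadratic relations, after which Nakayama finishes the argument.

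\emph{The GMA and the local relations.} Fix a lift $g_\ell$ of $\text{Frob}_\ell$ in $G_{\QQ_\ell}$ and a topological generator $i_\ell$ of the $\ZZ_p$-quotient of the tame inertia at $\ell$. By Lemmas~\ref{gmalem} and~\ref{tamelem} there is a faithful GMA $A = \begin{pmatrix} R & B \\ C & R\end{pmatrix}$ and $\rho : G_{\QQ,N\ell p} \to A^\ast$ with $\tr\rho = t^{\univ,\ell}$, $\rho(g_\ell) = \begin{pmatrix} a & 0 \\ 0 & d\end{pmatrix}$, and $\rho(i_\ell) = \begin{pmatrix} s_1 & b_0 \\ c_0 & s_2 \end{pmatrix}$; Lemma~\ref{gengmalem} furnishes $b' \in B$, $c' \in C$ such that $\{b_0,b'\}$ and $\{c_0,c'\}$ generate $B$ and $C$ respectively. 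Comparing the diagonal entries of $\rho(i_\ell)^\ell$ and $\rho(g_\ell)\rho(i_\ell)\rho(g_\ell)^{-1}$ via the Cayley--Hamilton recursion $p_{n+1} = (s_1+s_2) p_n - d(i_\ell) p_{n-1}$ yields $(p_\ell - 1)(s_1 - s_2) = 0$; since $p_\ell \equiv \ell \equiv -1 \pmod{\mathfrak m}$, $p_\ell - 1$ is a unit and so $s_1 = s_2 =: s$. Writing $\rho(i_\ell) = sI + N$ with $N^2 = u \cdot I$ and $u := m'(b_0 \otimes c_0)$, the binomial expansion $(sI+N)^\ell = P(s,u) I + Q(s,u) N$, with $P = \sum_j \binom{\ell}{2j} s^{\ell-2j} u^j$ and $Q = \sum_j \binom{\ell}{2j+1} s^{\ell-2j-1} u^j$, compared with $\rho(g_\ell)\rho(i_\ell)\rho(g_\ell)^{-1} = sI + \begin{pmatrix} 0 & (a/d)b_0 \\ (d/a)c_0 & 0\end{pmatrix}$, gives the three equations $P(s,u) = s$, $(dQ - a) b_0 = 0$ in $B$, and $(aQ - d) c_0 = 0$ in $C$.

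\emph{Reduction and derivation of the three relations.} By Lemma~\ref{reducelem} applied with $(g,h) = (e, i_\ell)$, the element $2(s - 1) = t^{\univ,\ell}(i_\ell) - 2$ lies in every prime of $R$, so $s = 1$ in $R^{\red}$. Substituting into $P(s,u) = s$ yields $u \cdot (\binom{\ell}{2} + \binom{\ell}{4} u + \cdots) = 0$; the second factor has constant term $\binom{\ell}{2} = \ell(\ell-1)/2 \equiv 1 \pmod p$, hence is a unit, so $u_{00} := u = 0$ in $R^{\red}$. At $(s,u) = (1,0)$ we have $Q = \ell = -1$, and so the module relations become $(a+d) b_0 = 0$ in $B^{\red}$ and $(a+d) c_0 = 0$ in $C^{\red}$. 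Writing $a = -\alpha(1+y_1)$, $d = \alpha(1+y_2)$ with $\alpha := \widehat{\chi_2(\text{Frob}_\ell)} \in R^\ast$, we have $a+d = \alpha(y_2 - y_1)$, so with $Y := y_2 - y_1$ the relations read $Y b_0 = 0$ in $B^{\red}$ and $Y c_0 = 0$ in $C^{\red}$. Applying $m'(-\otimes c')$ and $m'(b'\otimes -)$ gives $Y u_{01} = 0$ and $Y u_{10} = 0$ in $R^{\red}$, where $u_{ij} := m'(b_i \otimes c_j)$ with $b_1 := b'$, $c_1 := c'$. The GMA symmetry gives $m'(b_0 \otimes c') c_0 = m'(b_0 \otimes c_0) c' = u_{00} c' = 0$ in $C^{\red}$, so $u_{01} c_0 = 0$ in $C^{\red}$, and applying $m'(b' \otimes -)$ yields $u_{01} u_{10} = 0$ in $R^{\red}$.

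\emph{Conclusion via Nakayama.} By Lemma~\ref{tandimlem}, $\dim_\FF \mathfrak m/\mathfrak m^2 = 2k + mn = 6$, and the Ext-theoretic description of the tangent space underlying its proof identifies the classes of $y_1, y_2, u_{00}, u_{01}, u_{10}, u_{11}$ as a basis. Since $u_{00}$ is nilpotent (being zero in $R^{\red}$), its image in $\mathfrak m^{\red}/(\mathfrak m^{\red})^2$ vanishes, and Nakayama's lemma shows that the images of $y_1, Y, u_{11}, u_{01}, u_{10}$ topologically generate $R^{\red}$. Therefore the continuous $\FF$-algebra homomorphism $\FF[[X,Y,Z,X_1,X_2]] \to R^{\red}$ sending $(X,Y,Z,X_1,X_2) \mapsto (y_1, y_2 - y_1, u_{11}, u_{01}, u_{10})$ is surjective, and its kernel contains $X_1Y$, $X_2 Y$, and $X_1 X_2$ by the previous paragraph, yielding the desired quotient presentation. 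I expect the main technical step to be the Chebyshev/Cayley--Hamilton computation identifying $P$, $Q$ and verifying $s_1 = s_2$; the subsequent identification of the tangent basis is standard given Lemma~\ref{tandimlem} but must be tracked carefully to name the five residual generators unambiguously.
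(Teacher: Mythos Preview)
Your argument is correct and follows the same overall strategy as the paper---build the GMA, exploit the tame conjugation relation $g_\ell i_\ell g_\ell^{-1}=i_\ell^{\ell}$, and extract three quadratic relations among five generators of the reduced maximal ideal---but the execution differs in two places worth noting.

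First, the paper works directly with the GMA over $(R^{\pd,\ell}_{\rhob_0})^{\red}$ rather than over $R$ and then reducing. Over the reduced ring, Lemma~\ref{redgmalem} realises $B^{\red},C^{\red}$ as fractional ideals, so products such as $bc$ are literal; applying Lemma~\ref{reducelem} with $g=e$ and $g=g_\ell$ immediately gives $a+d=2$ and $a^{\red}a+d^{\red}d=a^{\red}+d^{\red}$, whence $a=d=1$ in one step, and then $bc=0$ drops out of $\det\rho^{\red}(i_\ell)=1$. This bypasses your Cayley--Hamilton computation showing $s_1=s_2$ over $R$ and your binomial analysis of $P(s,u)=s$: both are valid, but the paper's route is shorter. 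Your derivation of $u_{01}u_{10}=0$ via the GMA axiom $m'(b_0\otimes c')c_0=m'(b_0\otimes c_0)c'$ (yielding $u_{01}u_{10}=u_{00}u_{11}$ in $R$) is the abstract form of what the paper does with literal fractional-ideal multiplication; the informal references to ``$B^{\red}$'' and ``$C^{\red}$'' are harmless since you only need the resulting scalar relations in $R^{\red}$.

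Second, your justification that $y_1,y_2,u_{00},u_{01},u_{10},u_{11}$ give a basis of $\mathfrak m/\mathfrak m^2$ by ``the Ext-theoretic description underlying Lemma~\ref{tandimlem}'' is correct in spirit but underspecified. The paper avoids this by applying Lemma~\ref{ideallem} (so $t^{\univ,\ell}\bmod I$ is reducible, $I$ being generated by the $u_{ij}$) together with Lemma~\ref{maxgenlem} (the hypothesis $p\,\|\,\ell+1$ ensures $\ell/\tilde\ell$ topologically generates $1+p\ZZ_p$) to conclude directly that $\{a_0',d_0'\}\cup I$ generates $\mathfrak m$, and then that five elements suffice in $(R^{\pd,\ell}_{\rhob_0})^{\red}$ since $u_{00}$ dies there. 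You can patch your write-up the same way and drop the appeal to Bella\"iche's tangent-space description.
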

\begin{proof}
Fix a lift $g_{\ell}$ of $\text{Frob}_{\ell}$ in $G_{\QQ_{\ell}}$. Let $A^{\red} = \begin{pmatrix} (R^{\pd,\ell}_{\rhob_0})^{\red} & B^{\red}\\ C^{\red} & (R^{\pd,\ell}_{\rhob_0})^{\red}\end{pmatrix}$ be the GMA for the tuple $((R^{\pd,\ell}_{\rhob_0})^{\red}, \ell, (t^{\univ,\ell})^{\red}, g_{\ell})$ obtained in Lemma~\ref{gmalem} and $\rho^{\red}$ be the corresponding representation. Let $K_0$ be the total fraction field of $(R^{\pd,\ell}_{\rhob_0})^{\red}$. By Lemma~\ref{redgmalem}, we can take $B^{\red}$ and $C^{\red}$ to be the fractional ideals of $K_0$ such that the map $m'(B^{\red} \otimes_{(R^{\pd,\ell}_{\rhob_0})^{\red}} C^{\red})$ coincides with the multiplication in $K_0$. 

From Lemma~\ref{gmalem}, we know that $\rho^{\red}(g_{\ell})=\begin{pmatrix} a^{\red} & 0 \\ 0 & d^{\red}\end{pmatrix}$ with $a^{\red}$ and $d^{\red}$ \emph{not} congruent modulo the maximal ideal of $(R^{\pd,\ell}_{\rhob_0})^{\red}$. From Part~\eqref{bhaag3} of Lemma~\ref{gmalem}, it follows that $\rho^{\red}(I_{\ell})$ is topologically generated by $\rho^{\red}(i_\ell)$ which means $\rho^{\red}(G_{\QQ_{\ell}})$ is topologically generated by $\rho^{\red}(g_{\ell})$ and $\rho^{\red}(i_{\ell})$. 

Suppose $\rho^{\red}(i_{\ell}) = \begin{pmatrix} a & b\\ c & d \end{pmatrix}$. From Lemma~\ref{reducelem}, we get that $a+d=2$, $ad-bc=1$ and $a^{\red}a+d^{\red}d=a^{\red}+d^{\red}$. If $a=1+\alpha$ and $d=1-\alpha$, then we have $a^{\red}(1+\alpha)+d^{\red}(1-\alpha) = a^{\red}+d^{\red}$. Simplifying, we get $\alpha(a^{\red}-d^{\red})=0$. As $a^{\red}-d^{\red} \in ((R^{\pd,\ell}_{\rhob_0})^{\red})^*$, we get $\alpha=0$. Hence, $a=d=1$ and $bc=0$.

By Lemma~\ref{gengmalem}, we see that $C^{\red}$ and $B^{\red}$ are generated by at most two elements and there exists $b' \in B^{\red}$ and $c' \in C^{\red}$ such that $\{b,b'\}$ is a set of generators of $B^{\red}$, while $\{c,c'\}$ is a set of generators of $C^{\red}$. Let $z=b'c'$, $x_1=bc'$ and $x_2= b'c$. Now, $a^{\red} = \chi_1(\text{Frob}_{\ell})(1+a_0)$ and $d^{\red} = \chi_2(\text{Frob}_{\ell})(1+d_0)$ for some $a_0,d_0 \in m^{\red}$ where $m^{\red}$ is the maximal ideal of $(R^{\pd,\ell}_{\rhob_0})^{\red}$. 

By Lemma~\ref{genlem} and Lemma~\ref{gmalem}, the ideal generated by the set $\{a_0,d_0,z,x_1,x_2\}$ is $m^{\red}$. Thus, by \cite[Theorem 7.16 (b)]{E}, we get a surjective local morphism of $\FF$-algebras $g_0 : \FF[[X,Y,Z,X_1,X_2]] \to (R^{\pd,\ell}_{\rhob_0})^{\red}$ such that $g_0(X) = a_0 + d_0$, $g_0(Y) = a_0 - d_0$, $g_0(Z) = z$, $g_0(X_1)=x_1$ and $g_0(X_2)=x_2$. 

Let $I_0 = \ker(g_0)$. As $bc=0$, we get $x_1.x_2=bc'.b'c=0$. So $X_1X_2 \in I_0$. Note that, from the action of $\text{Frob}_{\ell}$ on the tame inertia group, we get $\rho^{\red}(g_{\ell}i_{\ell}g_{\ell}^{-1})=\rho^{\red}(i_{\ell})^{\ell}$. Now, $\rho^{\red}(g_{\ell}i_{\ell}g_{\ell}^{-1}) = \begin{pmatrix} 1 & (a^{\red}/d^{\red})b \\ (d^{\red}/a^{\red})c & 1\end{pmatrix}$. As $bc=0$, we have $\rho^{\red}(i_{\ell})^{\ell} = \begin{pmatrix} 1 & \ell.b \\ \ell.c & 1\end{pmatrix}$. Thus, we have $(a^{\red}/d^{\red} - \ell)b=0$ i.e. $(a^{\red}-\ell.d^{\red})b=0$ and $(d^{\red}/a^{\red} - \ell)c=0$ i.e. $(d^{\red}-\ell.a^{\red})c=0$. As $\chi_1(\text{Frob}_{\ell})/\chi_2(\text{Frob}_{\ell})=\omega_p(\text{Frob}_{\ell})=\ell$, we get $(a_0-d_0)b=0$ and $(d_0-a_0)c=0$. Thus, $(a_0-d_0)x_1=(a_0-d_0)x_2=0$ and hence, $YX_1, YX_2 \in I_0$.
\end{proof}

\begin{lem}
\label{nominprimelem}
Suppose $\rhob_0$ is unobstructed and $p \nmid \phi(N)$. Let $\ell$ be a prime such that $p | \ell + 1$, $p^2 \nmid \ell+1$ and $\chi|_{G_{\QQ_{\ell}}} = \omega_p|_{G_{\QQ_{\ell}}}$. Then there exist distinct prime ideals $P_0$, $P_1$ and $P_2$ of $R^{\pd,\ell}_{\rhob_0}$ such that $\dim(R^{\pd,\ell}_{\rhob_0}/(P_i)) \geq 3$ for $i=0,1,2$.
\end{lem}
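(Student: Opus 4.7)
The plan is to produce three surjective pseudo-character maps $\Psi_i : R^{\pd,\ell}_{\rhob_0} \twoheadrightarrow A_i$ with $\dim(A_i) = 3$ for $i = 0, 1, 2$, and set $P_i := \ker(\Psi_i)$. Throughout, fix a lift $g_\ell$ of $\text{Frob}_\ell$ in $G_{\QQ_\ell}$ and a topological generator $i_\ell$ of the $\ZZ_p$-quotient of the tame inertia at $\ell$.

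For $P_0$, I take $\Psi_0 : R^{\pd,\ell}_{\rhob_0} \twoheadrightarrow R^{\pd}_{\rhob_0}$ to be the natural surjection (surjective by \cite[Proposition 6.1]{R}) induced by viewing $t^{\univ}$ as a pseudo-character of $G_{\QQ,N\ell p}$; Lemma~\ref{unobslem} gives $R^{\pd}_{\rhob_0} \simeq \FF[[X,Y,Z]]$, a $3$-dimensional domain, so $P_0$ is prime with $\dim(R^{\pd,\ell}_{\rhob_0}/P_0) = 3$. For $P_1$ and $P_2$, I fix non-zero elements $x \in H^1(G_{\QQ,Np},\chi)$ and $x' \in H^1(G_{\QQ,Np},\chi^{-1})$ and invoke Lemma~\ref{defstrlem}: both $R^{\defo,\ell}_{\rhob_x}$ and $R^{\defo,\ell}_{\rhob_{x'}}$ have the form $\FF[[X,Y,Z,U,V]]/(Ua, Vb)$ with $a, b$ as there. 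Because $h_\ell \equiv \ell \equiv -1$ modulo $(UV)$ in $\FF$, the minimal primes $(U,b)$ and $(V,a)$ have $3$-dimensional quotients $\FF[[X,Z,V]]$ (after the induced relation $Y = X$) and $\FF[[X,Z,U]]$ respectively. I define $\Psi_1$ as the composition of the trace map $\psi_x : R^{\pd,\ell}_{\rhob_0} \to R^{\defo,\ell}_{\rhob_x}$ with the quotient by $(U,b)$, and $\Psi_2$ as the analogous composition built from $x'$ and the quotient by $(V,a)$.

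The surjectivity of $\Psi_1$ (and symmetrically $\Psi_2$) will be verified by producing each of $X$, $Z$, $V$ in the image: the element $X$ comes from $\tr(\tau^{\univ,\ell}_x(g_\ell^2)) = 2\chi_2(\text{Frob}_\ell)^2 (1+X)^2$, whose linear coefficient in $X$ is a unit (using $\chi_1(\text{Frob}_\ell) = -\chi_2(\text{Frob}_\ell)$, which follows from $p \mid \ell+1$); $Z$ comes from the restriction to $G_{\QQ,Np}$, which factors through the surjection $R^{\pd}_{\rhob_0} \simeq \FF[[X,Y,Z]] \twoheadrightarrow \FF[[X,Z]]$ obtained by setting $Y = X$; and $V$ comes from the identity $\tr(\tau^{\univ,\ell}_x(g i_\ell)) - \tr(\tau^{\univ,\ell}_x(g)) = b_g V$ in the quotient (where $\tau^{\univ,\ell}_x(i_\ell)$ reduces to $\left(\begin{smallmatrix} 1 & 0 \\ V & 1 \end{smallmatrix}\right)$ modulo $(U,b)$), applied to any $g \in G_{\QQ,Np}$ whose upper-right entry $b_g$ is a unit---which exists because the cocycle representing $x$ is non-trivial.

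The distinctness argument is the subtlest step. For $g_0 \in G_{\QQ,Np}$ with the cocycle representing $x$ non-vanishing at $g_0$, set $s := t^{\univ,\ell}(g_0 i_\ell) - t^{\univ,\ell}(g_0)$. Then $\Psi_0(s) = 0$ (the pseudo-character modulo $P_0$ is unramified at $\ell$), while $\Psi_1(s)$ is a unit multiple of $V$, giving $s \in P_0 \setminus P_1$ and hence $P_0 \neq P_1$; a symmetric argument delivers $P_0 \neq P_2$. For $P_1 \neq P_2$, one selects $g_0$ so that the cocycle for $x$ is non-vanishing at $g_0$ but the cocycle for $x'$ vanishes at $g_0$: such $g_0$ exists generically since $c_x \in Z^1(G_{\QQ,Np},\chi)$ and $c_{x'} \in Z^1(G_{\QQ,Np},\chi^{-1})$ are cocycles for the distinct characters $\chi \neq \chi^{-1}$, with a separate case-analysis in the quadratic situation $\chi^2 = 1$ where one instead picks $x,x'$ to be linearly independent classes in the common $H^1$. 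With this choice, $\Psi_1(s)$ lies in $\mathfrak m \setminus \mathfrak m^2$ of $R^{\pd,\ell}_{\rhob_0}/P_1$ whereas $\Psi_2(s)$ lies in $\mathfrak m^2$ of $R^{\pd,\ell}_{\rhob_0}/P_2$ (the lower-left entry of $\tau^{\univ,\ell}_{x'}(g_0)$ being in the maximal ideal by the vanishing assumption). Since the $\mathfrak m$-adic filtration is intrinsic to each local ring, the asymmetric orders of $s$ preclude $P_1 = P_2$. The main obstacle will be precisely this cohomological selection of $g_0$, which reflects the genuine distinction between the upper-triangular and lower-triangular ramification structures detected by $P_1$ and $P_2$.
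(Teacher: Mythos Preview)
Your construction of $P_0$ matches the paper's, and your direct surjectivity arguments for $\Psi_1,\Psi_2$ (which the paper does not attempt) are essentially sound once fleshed out. The genuine gap is in the argument that $P_1 \neq P_2$.

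You need $g_0 \in G_{\QQ,Np}$ with $c_x(g_0) \neq 0$ and $c_{x'}(g_0) = 0$, but ``exists generically'' is not a proof. In the non-quadratic case one can justify this by observing that $c_x|_{\ker\chi}$ and $c_{x'}|_{\ker\chi}$ are nonzero homomorphisms cutting out $\FF_p$-extensions on which $\text{Gal}(K/\QQ)$ acts by $\chi$ and $\chi^{-1}$ respectively, so these extensions differ when $\chi \neq \chi^{-1}$ and one may choose $g_0$ in one kernel but not the other. But your fallback for the quadratic case $\chi^2=1$ is to ``pick $x,x'$ to be linearly independent classes in the common $H^1$'', and this is impossible: unobstructedness forces $\dim H^1(G_{\QQ,Np},\chi)=1$, so $x' = \lambda x$ for some $\lambda\in\FF^\times$, the cocycles $c_x,c_{x'}$ have identical zero loci, and no such $g_0$ exists. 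The quadratic case is not vacuous under the lemma's hypotheses: an odd quadratic $\chi$ with $\chi(\text{Frob}_\ell)=-1$ is compatible with $\chi|_{G_{\QQ_\ell}}=\omega_p|_{G_{\QQ_\ell}}$ since $\omega_p(\text{Frob}_\ell)\equiv\ell\equiv -1\pmod p$.

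The paper avoids this problem by a different architecture. It works with a \emph{single} nonzero $x_0\in H^1(G_{\QQ,Np},\chi)$ and locates all three primes inside the one ring $R^{\defo,\ell}_{\rhob_{x_0}}$, namely $Q_0=(u,v)$, $Q_1=(u,x-y)$, $Q_2=(v,x-y)$, setting $P_i=g^{-1}(Q_i)$ for the trace map $g:R^{\pd,\ell}_{\rhob_0}\to R^{\defo,\ell}_{\rhob_{x_0}}$. It does not prove surjectivity of $g_i$ for $i=1,2$; instead it shows each $P_i$ is properly contained in a prime with $2$-dimensional quotient (via a reducibility argument and Lemma~\ref{maxgenlem}), forcing $\dim(R^{\pd,\ell}_{\rhob_0}/P_i)\geq 3$. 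For $P_1\neq P_2$ the paper exhibits an explicit element of the trace image in $Q_2\setminus Q_1$: it computes $\tr\bigl((\rho(i_\ell)-\text{Id})(\rho(g_\ell)-b\cdot\text{Id})\rho(h)\bigr)$ for $h\in G_{\QQ,N\ell p}$ with unit upper-right entry and checks directly that this lies in $(v)$ but not in $(u,x-y)$. This calculation is internal to the single deformation ring $R^{\defo,\ell}_{\rhob_{x_0}}$, makes no appeal to a second cohomology class $x'$, and works uniformly whether or not $\chi$ is quadratic.
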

\begin{proof}
Fix a non-zero element $x_0 \in H^1(G_{\QQ,Np},\chi)$. 
Recall that, in Lemma~\ref{defstrlem}, we constructed an isomorphism $\phi : R:= \FF[[X,Y,Z,U,V]]/(U(X+h_\ell Y),V(Y+h_\ell X)) \to R^{\defo,\ell}_{\rhob_{x_0}}$ which sends images of $X$, $Y$, $U$ and $V$ in $R$ to $x$, $y$, $u$ and $v$, respectively, where $\tau^{\univ,\ell}_{x_0}(i_\ell) = \begin{pmatrix} \sqrt{1+uv} & u\\ v & \sqrt{1+uv}\end{pmatrix}$ and $\tau^{\univ,\ell}_{x_0}(g_\ell) = \begin{pmatrix}\chi_1(\text{Frob}_\ell)(1+x) & 0\\ 0 & \chi_2(\text{Frob}_\ell)(1+y)\end{pmatrix}$. Here $i_{\ell}$ is a topological generator of the $\ZZ_p$-quotient of the tame inertia group at $\ell$ and $g_\ell$ is a lift of $\text{Frob}_{\ell}$ in $G_{\QQ,N\ell p}$.

Hence, it follows that $Q_0=(u,v)$, $Q_1=(u,x-y)$ and $Q_2=(v,x-y)$ are $3$ distinct primes ideals of $R^{\defo,\ell}_{\rhob_{x_0}}$ such that $R^{\defo,\ell}_{\rhob_{x_0}}/Q_i \simeq \FF[[X,Y,Z]]$ for $i=0,1,2$.

Let $g : R^{\pd,\ell}_{\rhob_0} \to R^{\defo,\ell}_{\rhob_{x_0}}$ be the map induced by $\tr(\tau^{\univ,\ell}_{x_0})$. For $i=0,1,2$, we get a morphism $g_i : R^{\pd,\ell}_{\rhob_0} \to R^{\defo,\ell}_{\rhob_{x_0}}/Q_i$ composing $g$ with the natural surjective morphism $R^{\defo,\ell}_{\rhob_{x_0}} \to R^{\defo,\ell}_{\rhob_{x_0}}/Q_i$. Let $P_i$ be $\ker(g_i)$ for $i=0,1,2$.

By Lemma~\ref{unobslem} and Lemma~\ref{surjlem}, there is a surjective map $f : R^{\defo,\ell}_{\rhob_{x_0}} \to R^{\pd}_{\rhob_0}$ such that $f \circ \tr(\tau^{\univ,\ell}_{x_0}) = t^{\univ}$ and $\ker(f) = (u,v)$.  So $f \circ g \circ t^{\univ,\ell}=t^{\univ}$. Hence, by \cite[Proposition 6.1]{R}, $f \circ g : R^{\pd,\ell}_{\rhob_0} \to R^{\pd}_{\rhob_0}$ is surjective. From the definition of $P_0$, we see that $P_0 = \ker(f \circ g)$. Since $\rhob_0$ is unobstructed and $p \nmid \phi(N)$, Lemma~\ref{unobslem} implies that $\dim(R^{\pd,\ell}_{\rhob_0}/P_0) = 3$.

We will denote $\tau^{\univ,\ell}_{x_0}$ by $\rho$ for the rest of the proof. 
From the description of $\rho(g_{\ell})$ and \cite[Lemma $2.4.5$]{Bel}, it follows that there exist ideals $B$ and $C$ of $R^{\defo,\ell}_{\rhob_{x_0}}$ such that $R^{\defo,\ell}_{\rhob_{x_0}}[\rho(G_{\QQ,N\ell p})] = \begin{pmatrix} R^{\defo,\ell}_{\rhob_{x_0}} & B \\ C & R^{\defo,\ell}_{\rhob_{x_0}} \end{pmatrix}$.
 As $\rho$ is a deformation of $\rhob_{x_0}$, it follows that $B = R^{\defo,\ell}_{\rhob_{x_0}}$.

Now let $h := \begin{pmatrix} 1 & 1 \\ 0  & 1\end{pmatrix} \in R^{\defo,\ell}_{\rhob_{x_0}}[\rho(G_{\QQ,N\ell p})]$.
Then $\tr(h.\rho(i_{\ell}))-\tr(h) = v.\alpha$ for some $\alpha \in (R^{\defo,\ell}_{\rhob_{x_0}})^{\times}$.
Observe that $\tr(h.\rho(i_{\ell}))-\tr(h) \in \text{Im}(g)$, $\tr(h.\rho(i_{\ell}))-\tr(h) \in Q_2$ but $\tr(h.\rho(i_{\ell}))-\tr(h) \not\in Q_1$.
Therefore, $\tr(h.\rho(i_{\ell}))-\tr(h) \in P_2$ but $\tr(h.\rho(i_{\ell}))-\tr(h) \not\in P_1$ which means $P_1 \neq P_2$.

From above, we know that the map $g_0$ induces an isomorphism $R^{\pd,\ell}_{\rhob_0}/P_0 \simeq R^{\defo,\ell}_{\rhob_{x_0}}/(u,v)$. Hence, the map $\eta : R^{\pd,\ell}_{\rhob_0} \to  R^{\defo,\ell}_{\rhob_{x_0}}/(u,v,x-y)$ obtained by composing $g$ with the natural map $R^{\defo,\ell}_{\rhob_{x_0}} \to  R^{\defo,\ell}_{\rhob_{x_0}}/(u,v,x-y)$ is a surjective map. Now, $R_0:= R^{\defo,\ell}_{\rhob_{x_0}}/(u,v,x-y) \simeq \FF[[X,Y]]$. 
Denote the $R_0$-valued representation $\rho \pmod{(u,v,x-y)}$ by $\rho_0$.

Now, $\rho (i_\ell) \pmod{Q_1}$ is a non-identity lower triangular matrix with diagonal entries $1$. 
So if $t^{\univ,\ell} \pmod{P_1} = \tr(\rho) \pmod{Q_1}$ is unramified at $\ell$, then $\tr(\rho) \pmod{Q_1}$ is reducible which means $\tr(\rho_0)$ is also reducible.
However, $\rho_0(g_\ell) = \begin{pmatrix} \chi_1(\text{Frob}_\ell)(1+\alpha) & 0 \\ 0 &  \chi_2(\text{Frob}_\ell)(1+\alpha) \end{pmatrix}$ for some $\alpha \in R_0$.
So the last part of Lemma~\ref{gmalem} implies that $(\alpha)$ is the maximal ideal of $R_0$ contradicting the fact that $R_0 \simeq \FF[[X,Y]]$. 
Hence, $\tr(\rho)$ is not reducible which means $t^{\univ,\ell} \pmod{P_1}$ is not unramified at $\ell$.

On the other hand, $\rho (i_\ell) \pmod{Q_2}$ is a non-identity upper triangular matrix with diagonal entries $1$. 
Then, using the logic of the previous paragraph, we conclude that $t^{\univ,\ell} \pmod{P_2}$ is not unramified at $\ell$.
Therefore, we get that $P_0 \not\subset P_i$ for $i=1,2$ which means $P_0$, $P_1$ and $P_2$ are distinct.

Note that $\ker(\eta)$ is a prime ideal of $R^{\pd,\ell}_{\rhob_0}$ and $P_0 \neq \ker(\eta)$.
Now $P_i \subset \ker(\eta)$ for $i=0,1,2$. Hence, we conclude, using previous paragraph that $P_i \neq \ker(\eta)$ for $i=1,2$.

Thus we conclude that all $P_0$, $P_1$ and $P_2$ are proper subsets of $\ker(\eta)$.
As $\dim(R^{\pd,\ell}_{\rhob_0}/\ker(\eta)) = 2$ and $P_i$'s are prime ideals for $i=0,1,2$, we get that $\dim(R^{\pd,\ell}_{\rhob_0}/P_i) \geq 3$ for $i=1,2$.
\end{proof}

We are now ready to prove Theorem~\ref{reducestrthm}.
\begin{proof}[Proof of Theorem~\ref{reducestrthm}]

 From Lemma~\ref{redsurjlem}, we know that there exists a surjective morphism $g: \FF[[X,Y,Z,X_1,X_2]] \to (R^{\pd,\ell}_{\rhob_0})^{\red}$ such that $(X_1X_2,X_1Y,X_2Y) \subset \ker(g)$. We will denote $\ker(g)$ by $I_0$ for the rest of the proof. For $i=0,1,2$, let $P'_i$ be the kernel of the map $g_i :  \FF[[X,Y,Z,X_1,X_2]] \to R^{\pd,\ell}_{\rhob_0}/P_i$ obtained by composing $g$ with the surjective map $(R^{\pd,\ell}_{\rhob_0})^{\red} \to R^{\pd,\ell}_{\rhob_0}/P_i$. Here, the primes $P_i$ are the ones appearing in Lemma~\ref{nominprimelem}. Each $P'_i$ is a prime of $\FF[[X,Y,Z,X_1,X_2]]$ containing $I_0$ and in particular, $(X_1X_2, YX_1, YX_2) \subset P'_i$ for $i=0,1,2$. So each $P'_i$ contains one of the $(Y,X_1)$, $(Y,X_2)$ or $(X_1,X_2)$. 

Now, the Krull dimension of $R^{\pd,\ell}_{\rhob_0}/P_i$ and hence, that of $\FF[[X,Y,Z,X_1,X_2]]/P'_i$ is at least $3$ for $i=0,1,2$. Therefore, every $P'_i$ is either $(Y,X_1)$, $(Y,X_2)$ or $(X_1,X_2)$. Since $P_0$, $P_1$ and $P_2$ are distinct prime ideals of $R^{\pd,\ell}_{\rhob_0}$ (by Lemma~\ref{nominprimelem}), $P'_0$, $P'_1$ and $P'_2$ are distinct prime ideals of $\FF[[X,Y,Z,X_1,X_2]]$. Hence, we have $\{P'_0,P'_1,P'_2\}= \{(Y,X_1), (Y,X_2),(X_1,X_2)\}$. So $I_0 \subset P'_0 \cap P'_1 \cap P'_2 = (Y,X_1) \cap (Y,X_2) \cap (X_1,X_2)$.

 Note that $ (Y,X_2) \cap (Y,X_1) = (Y, X_1X_2)$. If $Yf \in (X_1,X_2)$, then $f \in (X_1,X_2)$ and hence, $Yf \in (YX_1,YX_2)$. Therefore, $(Y,X_1X_2) \cap (X_1,X_2)=(YX_1, YX_2, X_1X_2)$. Hence, $I_0 \subset (YX_1,YX_2,X_1X_2)$. This implies that $I_0= (YX_1,YX_2,X_1X_2)$ and hence, $(R^{\pd,\ell}_{\rhob_0})^{\red} \simeq \FF[[X,Y,Z,X_1,X_2]]/(YX_1, YX_2, X_1X_2)$.
\end{proof}

\begin{rem}
The proof of Theorem~\ref{reducestrthm}, description of the GMA $A^{\red}$, and \cite[Proposition $1.7.4$]{BC} together imply that there does not exists a representation $\rho : G_{\QQ,N\ell p} \to \GL_2((R^{\pd,\ell}_{\rhob_0})^{\red})$ such that $\tr(\rho)= (t^{\univ,\ell})^{\red}$.
\end{rem}

It is natural to ask if the same approach can give us the structure of $(\calR^{\pd,\ell}_{\rhob_0})^{\red}$ as well. But the method does not work. More specifically, Lemma~\ref{reducelem} is not true for $\calR^{\pd,\ell}_{\rhob_0}$. Indeed, let $x \in H^1(G_{\QQ,Np},\chi^i)$ be a non-zero element with $i \in \{1,-1\}$ and $\mathcal{O}$ be the ring of integers in the finite extension of $\QQ_p$ obtained by attaching all the $p$-th roots of unity to $\QQ_p$. Let $\zeta_p$ be a primitive $p$-th root of unity. It can be checked that there exists a $W(\FF)$-algebra morphism $\calR^{\defo,\ell}_{\rhob_x} = W(\FF)[[X,Y,Z,U,V]]/(U((1+X)+h_{\ell}(1+Y)), V((1+Y)+h_{\ell}(1+X))) \to \mathcal{O}[[Z]]$ sending both $U$ and $V$ to $\frac{\zeta_p -\zeta^{-1}_p}{2}$, $X$ and $Y$ to $0$ and $Z$ to $Z$. Composing this map with the map $\calR^{\pd,\ell}_{\rhob_0} \to \calR^{\defo,\ell}_{\rhob_x}$, we get a map $f : \calR^{\pd,\ell}_{\rhob_0} \to \mathcal{O}[[Z]]$. Observe that $f \circ T^{\univ,\ell}|_{G_{\QQ_{\ell}}}$ is not reducible and $\ker(f)$ is a prime ideal. See \cite[Section $3$]{Bos} for a similar analysis. Thus, the ring $\calR^{\pd,\ell}_{\rhob_0}$ has more than $3$ minimal primes and probably has a more complicated structure.

\begin{cor}
Suppose $\rhob_0$ is unobstructed and $p \nmid \phi(N)$. Let $\ell$ be a prime such that $p | \ell + 1$, $p^2 \nmid \ell+1$ and $\chi|_{G_{\QQ_{\ell}}} = \omega_p|_{G_{\QQ_{\ell}}}$. Then $R^{\pd,\ell}_{\rhob_0}$ is not reduced ring.
\end{cor}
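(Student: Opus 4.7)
The plan is to exhibit a mismatch between $\dim(\tan(R^{\pd,\ell}_{\rhob_0}))$ and $\dim(\tan((R^{\pd,\ell}_{\rhob_0})^{\red}))$. Since the surjection $R^{\pd,\ell}_{\rhob_0} \twoheadrightarrow (R^{\pd,\ell}_{\rhob_0})^{\red}$ is an isomorphism when the source is reduced, if the two tangent space dimensions disagree, then $R^{\pd,\ell}_{\rhob_0}$ must carry a nonzero nilradical.

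First, I would read off the reduced side from Theorem~\ref{reducestrthm}: we have $(R^{\pd,\ell}_{\rhob_0})^{\red} \simeq \FF[[X,Y,Z,X_1,X_2]]/(X_1X_2, X_1Y, X_2Y)$, and since each of the three relations lies in the square of the maximal ideal of $\FF[[X,Y,Z,X_1,X_2]]$, none contributes to the cotangent space. Hence $\dim(\tan((R^{\pd,\ell}_{\rhob_0})^{\red})) = 5$.

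Next, I would apply Lemma~\ref{tandimlem} to $G_{\QQ,N\ell p}$. The hypothesis $H^2(G_{\QQ,N\ell p},1) = 0$ is verified via Lemma~\ref{cohomlem}(1): since $\rhob_0$ unobstructed gives $p \nmid \phi(N)$, and since $p \mid \ell+1$ (with $p$ odd) forces $p \nmid \ell-1$, we have $p \nmid \phi(N\ell)$, so $\dim(H^1(G_{\QQ,N\ell p},1))=1$ and $H^2(G_{\QQ,N\ell p},1)=0$. For $\chi$: unobstructedness says $\dim(H^1(G_{\QQ,Np},\chi))=1$ and by hypothesis $\chi|_{G_{\QQ_\ell}} = \omega_p|_{G_{\QQ_\ell}}$, so Lemma~\ref{cohomlem}(3) yields $\dim(H^1(G_{\QQ,N\ell p},\chi))=2$. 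For $\chi^{-1}$: because $p \mid \ell+1$, the character $\omega_p|_{G_{\QQ_\ell}}$ has order $2$, whence $\chi^{-1}|_{G_{\QQ_\ell}} = \omega_p^{-1}|_{G_{\QQ_\ell}} = \omega_p|_{G_{\QQ_\ell}}$, and again Lemma~\ref{cohomlem}(3) gives $\dim(H^1(G_{\QQ,N\ell p},\chi^{-1}))=2$. Plugging $k=1$, $m=n=2$ into Lemma~\ref{tandimlem} yields $\dim(\tan(R^{\pd,\ell}_{\rhob_0})) = 2k + mn = 2 + 4 = 6$.

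Since $6 \neq 5$, the surjection $R^{\pd,\ell}_{\rhob_0} \twoheadrightarrow (R^{\pd,\ell}_{\rhob_0})^{\red}$ cannot be an isomorphism, so the nilradical of $R^{\pd,\ell}_{\rhob_0}$ is nonzero. The only non-routine input is the computation of $\dim(H^1(G_{\QQ,N\ell p},\chi^{-1}))$, which hinges on the order-$2$ observation for $\omega_p|_{G_{\QQ_\ell}}$ when $p \mid \ell+1$; everything else is bookkeeping with the cited lemmas.
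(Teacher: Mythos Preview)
Your proof is correct and follows essentially the same approach as the paper: compute $\dim(\tan(R^{\pd,\ell}_{\rhob_0}))=6$ via Lemma~\ref{tandimlem} and Lemma~\ref{cohomlem}, observe from Theorem~\ref{reducestrthm} that $\dim(\tan((R^{\pd,\ell}_{\rhob_0})^{\red}))=5$, and conclude. The paper's proof is a two-line sketch of exactly this argument; you have simply filled in the cohomological bookkeeping (including the order-$2$ observation for $\omega_p|_{G_{\QQ_\ell}}$) that the paper leaves implicit.
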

\begin{proof}
 Lemma~\ref{tandimlem} and Lemma~\ref{cohomlem} imply $\dim(\tan(R^{\pd,\ell}_{\rhob_0}))=6$. Now the corollary follows directly from Theorem~\ref{reducestrthm}.
\end{proof}

Though we do not determine the explicit structure of $R^{\pd,\ell}_{\rhob_0}$ in this case, we can still prove the following theorem:
\begin{thm}
\label{lcithm}
Suppose $\rhob_0$ is unobstructed and $p \nmid \phi(N)$. Let $\ell$ be a prime such that $p | \ell + 1$, $p^2 \nmid \ell+1$ and $\chi|_{G_{\QQ_{\ell}}} = \omega_p|_{G_{\QQ_{\ell}}}$. Then $R^{\pd,\ell}_{\rhob_0}$ is not a local complete intersection ring.
\end{thm}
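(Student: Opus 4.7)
The plan is to show that in a minimal presentation
\[
\pi : S := \FF[[X, Y, Z, X_1, X_2, W]] \twoheadrightarrow R := R^{\pd,\ell}_{\rhob_0},
\]
the defining ideal $I := \ker\pi$ satisfies $\mu(I) \geq 4$. By Lemma~\ref{tandimlem} combined with Lemma~\ref{cohomlem}, $\dim \tan(R) = 2 \cdot 1 + 2 \cdot 2 = 6$, and Theorem~\ref{reducestrthm} gives $\dim R = \dim R^{\red} = 3$. Hence if $R$ were a local complete intersection, one would have $\mu(I) = 6 - 3 = 3$, so the bound $\mu(I) \geq 4$ proves the theorem.

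To build the presentation, I invoke Lemma~\ref{gmalem} for a faithful GMA $A = \begin{pmatrix} R & B \\ C & R \end{pmatrix}$ and a representation $\rho : G_{\QQ,N\ell p} \to A^*$ with $\tr\rho = t^{\univ,\ell}$, $\rho(g_\ell)$ diagonal for a fixed lift $g_\ell$ of $\text{Frob}_\ell$, and (by Lemma~\ref{tamelem}) $\rho|_{I_\ell}$ factoring through the $\ZZ_p$-tame quotient generated by some $i_\ell$. Lemma~\ref{gengmalem} supplies generators $\{b,b'\}$ of $B$ and $\{c,c'\}$ of $C$ with $b,c$ the off-diagonal entries of $\rho(i_\ell)$. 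Set $W := m'(b \otimes c)$, $X_1 := m'(b \otimes c')$, $X_2 := m'(b' \otimes c)$, $Z := m'(b' \otimes c')$, and $X := y+y'$, $Y := y-y'$ from $\rho(g_\ell) = \mathrm{diag}(\chi_1(\text{Frob}_\ell)(1+y), \chi_2(\text{Frob}_\ell)(1+y'))$. The hypothesis $p^2 \nmid \ell+1$ makes $\ell/\tilde\ell$ a topological generator of $1 + p\ZZ_p$, so Lemma~\ref{maxgenlem} (applied with $J = m'(B \otimes C)$, reducible modulo by Lemma~\ref{ideallem}) gives $\mathfrak{m}_R = (X, Y, Z, X_1, X_2, W)$. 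Since $\dim\tan(R) = 6$, this generating set is minimal, producing $\pi$ with $I \subset \mathfrak{m}_S^2$.

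Next I exhibit four elements of $I$ with linearly independent leading forms in $\mathfrak{m}_S^2/\mathfrak{m}_S^3$. The GMA axiom $m'(b \otimes c) \cdot b' = m'(b' \otimes c) \cdot b$, paired with $c'$, gives $r_1 := WZ - X_1 X_2 \in I$. The tame conjugation relation $\rho(g_\ell) \rho(i_\ell) \rho(g_\ell)^{-1} = \rho(i_\ell)^\ell$, together with Cayley--Hamilton applied to $\rho(i_\ell)$ (using pseudo-trace $\tau := t^{\univ,\ell}(i_\ell)$ and pseudo-determinant $\delta := d^{\univ,\ell}(i_\ell)$), yields $(\phi_1/\phi_2 - h_\ell)\, b = 0$ in $B$ and $(\phi_2/\phi_1 - h_\ell)\, c = 0$ in $C$, where $h_\ell$ is the Chebyshev-type polynomial defined by $h_0 = 0$, $h_1 = 1$, $h_{n+1} = \tau h_n - \delta h_{n-1}$. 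Pairing with the generators on the opposite side gives
\[
r_2 := (\phi_1/\phi_2 - h_\ell) X_1, \quad r_3 := (\phi_2/\phi_1 - h_\ell) X_2, \quad r_4 := (\phi_1/\phi_2 - h_\ell) W
\]
as further elements of $I$. Now $\omega_p(\text{Frob}_\ell) = \ell \equiv -1 \pmod p$ gives $\phi_1/\phi_2 \equiv -1 - Y$ and $\phi_2/\phi_1 \equiv -1 + Y$ modulo $\mathfrak{m}^2$. By Lemma~\ref{reducelem} both $\tau - 2$ and $\delta - 1$ lie in the nilradical (hence in $\mathfrak{m}$); Taylor expansion of $h_\ell(\tau, \delta)$ around $(2,1)$ gives $h_\ell(2,1) = \ell \equiv -1$, and a direct recursion shows $\partial_\tau h_\ell|_{(2,1)} = \ell(\ell^2-1)/6 \equiv 0 \pmod p$ precisely because $p \mid \ell+1$, while $\partial_\delta h_\ell|_{(2,1)}$ may be nonzero. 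Thus $h_\ell \equiv -1 + \lambda W \pmod{\mathfrak{m}^2}$ for some $\lambda \in \FF$, and the leading forms of $r_1, r_2, r_3, r_4$ in $\mathfrak{m}_S^2/\mathfrak{m}_S^3$ become
\[
WZ - X_1 X_2,\quad -YX_1 - \lambda W X_1,\quad YX_2 - \lambda W X_2,\quad -YW - \lambda W^2,
\]
involving monomials in the four pairwise-disjoint sets $\{WZ, X_1X_2\}$, $\{YX_1, WX_1\}$, $\{YX_2, WX_2\}$, $\{YW, W^2\}$. They are therefore linearly independent in $\mathfrak{m}_S^2/\mathfrak{m}_S^3$; since $\mathfrak{m}_S I \subset \mathfrak{m}_S^3$, they remain independent in $I/\mathfrak{m}_S I$, and Nakayama's lemma gives $\mu(I) \geq 4$, contradicting complete intersection.

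The most delicate step will be the Taylor expansion of $h_\ell$ around $(\tau, \delta) = (2,1)$: one must carefully verify that the $(\tau-2)$-coefficient $\ell(\ell^2-1)/6$ vanishes modulo $p$ precisely under the hypothesis $p \mid \ell+1$, and pin down the $(\delta - 1)$-contribution in terms of the chosen generators of $\mathfrak{m}_R$. Once this expansion is in hand, the pairwise-disjoint monomial supports of the four leading forms make the linear independence robust to the precise value of $\lambda$, and the conclusion follows from Nakayama.
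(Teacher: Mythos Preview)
Your overall strategy is sound and in fact slightly slicker than the paper's. Both arguments set up the same minimal presentation $\pi:S=\FF[[X,Y,Z,X_1,X_2,W]]\twoheadrightarrow R$ and extract from the tame conjugation relation three elements of $I$ with leading forms $YX_1+\ast W X_1$, $YX_2+\ast W X_2$, $YW+\ast W^2$. The paper then argues by contradiction: if $\mu(I)=3$ these three must generate $I$, whence $I\subset(Y,W)$, forcing $\dim R\geq 4$. You instead add the GMA relation $r_1=WZ-X_1X_2$ (coming from $m'(b\otimes c)\,m'(b'\otimes c')=m'(b'\otimes c)\,m'(b\otimes c')$) and observe that its leading form lies outside $(Y,W)$, so together with the other three it gives four independent elements in $I/\mathfrak m_S I$. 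This is a genuine simplification: no contradiction step is needed.

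There is, however, a real gap in your justification of $h_\ell\equiv -1+\lambda W\pmod{\mathfrak m^2}$. Two problems: first, $\delta=d^{\univ,\ell}(i_\ell)=1$ exactly (since $p\nmid\ell-1$ forces the determinant to be unramified at $\ell$), so the $\partial_\delta h_\ell$ term contributes nothing, and your $\lambda W$ cannot come from there. Second, the claim $\partial_\tau h_\ell|_{(2,1)}=\ell(\ell^2-1)/6\equiv 0\pmod p$ fails when $p=3$ (the hypothesis $p^2\nmid\ell+1$ then gives $v_3(\ell(\ell^2-1)/6)=0$), and the paper allows $p=3$. So as written, your leading-form computation for $r_2,r_3,r_4$ is unjustified. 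The fix is to bypass the Taylor expansion entirely and show directly that $\tau-2\in(W)$: since $R[\rho(G_{\QQ_\ell})]$ is a sub-GMA with off-diagonal $R$-modules generated by $b$ and $c$, its reducibility ideal is $(m'(b\otimes c))=(W)$; hence modulo $(W)$ the diagonal entries of $\rho|_{G_{\QQ_\ell}}$ are characters, which are trivial on $I_\ell$ because $p\nmid\ell-1$. Thus $a-1,d-1\in(W)$, so $\tau-2\in(W)$ and $h_\ell(\tau,1)=-1+(\text{element of }(W))$, giving $h_\ell\equiv -1+\lambda W\pmod{\mathfrak m^2}$ for all odd $p$. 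With this correction your four leading forms are exactly as stated and the proof goes through.
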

\begin{proof}
We use a strategy similar to the one used in the proof of Theorem~\ref{reducestrthm}. Namely, we first find a set of generators of the co-tangent space of $R^{\pd,\ell}_{\rhob_0}$ and then find the relations between them using GMAs. After assuming that $R^{\pd,\ell}_{\rhob_0}$ is a local complete intersection ring, we will find a subset of these relations which will generate all the relations in $R^{\pd,\ell}_{\rhob_0}$. But the description of this subset will give a contradiction to Theorem~\ref{reducestrthm} which will complete the proof.

 Fix a lift $g_{\ell}$ of $\text{Frob}_{\ell}$ in $G_{\QQ_{\ell}}$. Let $A^{\pd} = \begin{pmatrix} R^{\pd,\ell}_{\rhob_0} & B^{\pd}\\ C^{\pd} & \calR^{\pd,\ell}_{\rhob_0}\end{pmatrix}$ be the GMA associated to the tuple $(R^{\pd,\ell}_{\rhob_0}, \ell, t^{\univ,\ell},g_{\ell})$ in Lemma~\ref{gmalem} and $\rho : G_{\QQ,N\ell p} \to (A^{\pd})^*$ be the corresponding representation. By Part~\eqref{bhaag3} of Lemma~\ref{gmalem}, $\rho|_{I_{\ell}}$ factors through the $\ZZ_p$ quotient of the tame inertia group at $\ell$. Suppose $\rho(i_{\ell}) = \begin{pmatrix} a & b\\ c & d\end{pmatrix}$. By Lemma~\ref{gmalem}, we know that $\rho(g_{\ell}) = \begin{pmatrix} a_0 & 0\\ 0 & d_0\end{pmatrix}$. 

Let ${I}^{\ell}_{\rhob_0}:=m(B^{\pd} \otimes_{R^{\pd,\ell}_{\rhob_0}} C^{\pd})$. From Lemma~\ref{gengmalem}, it follows that there exists $b' \in B^{\pd}$ and $c' \in C^{\pd}$ such that $\{b,b'\}$ is a set of generators of $B^{\pd}$, while $\{c,c'\}$ is a set of generators of $C^{\pd}$. Thus, the ideal $I^{\ell}_{\rhob_0}$ is generated by the set $\{m'(b \otimes c), m'(b' \otimes c), m'(b \otimes c'), m'(b' \otimes c')\}$. Let $z=m'(b' \otimes c')$, $x_1=m'(b \otimes c')$, $x_2=m'(b' \otimes c)$ and $x_3=m'(b \otimes c)$. 

Now, $a_0 = \chi_1(\text{Frob}_{\ell})(1+a'_0)$ and $d_0 = \chi_2(\text{Frob}_{\ell})(1+d'_0)$ for some $a'_0,d'_0 \in \mathfrak{m}^{\ell}$ where $\mathfrak{m}^{\ell}$ is the maximal ideal of $R^{\pd,\ell}_{\rhob_0}$. 
From last part of Lemma~\ref{gmalem}, we see that the ideal generated by the set $\{a'_0,d'_0,z,x_1,x_2,x_3\}$ is $\mathfrak{m}^{\ell}$. 
Thus, we get a surjective local morphism of $\FF$-algebras $g_0 : \FF[[X,Y,Z,X_1,X_2,X_3]] \to R^{\pd,\ell}_{\rhob_0}$ such that $g_0(X) = a'_0 + d'_0$, $g_0(Y) = a'_0 - d'_0$, $g_0(Z) = z$, $g_0(X_1)=x_1$, $g_0(X_2)=x_2$ and $g_0(X_3)=x_3$. Let $J_0 = \ker(g_0)$. Denote the maximal ideal $(X,Y,Z,X_1,X_2,X_3)$ by $m_0$ and $ \FF[[X,Y,Z,X_1,X_2,X_3]]$ by $R_0$. We know that $\dim(\tan(R^{\pd,\ell}_{\rhob_0})) = 6$. Hence, $J_0 \subset m_0^2$. Suppose $R^{\pd,\ell}_{\rhob_0}$ is a local complete intersection ring. The Krull dimension of $R^{\pd,\ell}_{\rhob_0}$ is $3$ by Theorem~\ref{reducestrthm}. This means that $J_0$ is generated by $3$ elements.

Note that if $g \in G_{\QQ_{\ell}}$ and $\rho(g) = \begin{pmatrix} a_g & b_g\\c_g & d_g\end{pmatrix}$, then we get two characters $c_1$, $c_2 : G_{\QQ_{\ell}} \to (R^{\pd,\ell}_{\rhob_0}/(x_3))^*$ sending $g$ to $a_g \pmod{(x_3)}$ and $d_g \pmod{(x_3)}$, respectively. Moreover, $c_1$ and $c_2$ are deformations of $\chi_1|_{G_{\QQ_{\ell}}}$ and $\chi_2|_{G_{\QQ_{\ell}}}$, respectively. As $p \nmid \ell-1$, this means that $c_1(I_{\ell})=c_2(I_{\ell})=1$. So we have $a=1+x_3a'$ and $d=1+x_3d'$.

 From the action of the Frobenius on the tame inertia, we get that $\rho(zi_{\ell}z^{-1})=\rho(i_{\ell})^{\ell}$. As $x_3=m'(b \otimes c)$, we see, by induction, that for a positive integer $n$, $$\rho(i_{\ell})^n = \begin{pmatrix} 1 + x_3a'_n & b(n+x_3b'_n)\\ c(n+x_3c'_n) & 1+x_3d'_n\end{pmatrix}$$ for some $a'_n,b'_n,c'_n,d'_n \in R^{\pd,\ell}_{\rhob_0}$. Therefore, we get that $$\begin{pmatrix} a & (a_0/d_0)b\\ (d_0/a_0)c & d\end{pmatrix} =  \begin{pmatrix} 1 + x_3a'_{\ell} & b(\ell+x_3b'_{\ell})\\ c(\ell+x_3c'_{\ell}) & 1+x_3d'_{\ell}\end{pmatrix}.$$

Thus, $(a_0/d_0)b = b(\ell+x_3b'_{\ell})$ implies that $m'((a_0/d_0 -\ell-x_3b'_{\ell})b \otimes C^{\pd})=0$ and $(d_0/a_0)c = c(\ell+x_3c'_{\ell})$ implies that $m'((d_0/a_0 -\ell-x_3c'_{\ell})c \otimes B^{\pd})=0$. Therefore, we have $x_3(a_0/d_0 -\ell-x_3b'_{\ell})=0$, $x_1(a_0/d_0 -\ell-x_3b'_{\ell})=0$, $x_3(d_0/a_0 -\ell-x_3c'_{\ell})=0$ and $x_2(d_0/a_0 -\ell-x_3b'_{\ell})=0$. As $p | \ell+1$ and $\chi_1(\text{Frob}_{\ell})=\ell\chi_2(\text{Frob}_{\ell})$, we get the following relations from the relations above: there exists $b'', c'' \in R^{\pd,\ell}_{\rhob_0}$ such that
$x_3(a'_0-d'_0+x_3b'')=0$, $x_1(a'_0-d'_0+x_3b'')=0$, $x_3(d'_0-a'_0+x_3c'')=0$ and $x_2(d'_0-a'_0+x_3c'')=0$.

Thus, $J_0$ contains the elements $X_3Y + X_3^2q_1$, $X_1Y + X_1X_3q_2$ and $-X_2Y + X_2X_3q_3$ for some $q_1$, $q_2$, $q_3 \in R_0$. As minimum number of generators of $J_0$ is $3$, it follows, by Nakayama's lemma, that $J_0/m_0J_0$ is an $\FF$ vector space of dimension $3$. Since $m_0J_0 \subset m_0^3$, we see that the images of $X_3Y + X_3^2q_1$, $X_1Y + X_1X_3q_2$ and $-X_2Y + X_2X_3q_3$ inside $J_0/m_0J_0$ are linearly independent over $\FF$. Therefore, they form an $\FF$-basis of the vector space $J_0/m_0J_0$. Hence, by Nakayama's lemma, we get that $J_0 = (X_3Y + X_3^2q_1, X_1Y + X_1X_3q_2, -X_2Y + X_2X_3q_3)$.

In particular, $J_0 \subset (X_3,Y)$. This implies that the Krull dimension of $R^{\pd,\ell}_{\rhob_0}$ is $4$. However, we know that  the Krull dimension of $R^{\pd,\ell}_{\rhob_0}$ is $3$. Hence, we get a contradiction to the hypothesis that $J_0$ is generated by $3$ elements. Therefore, $R^{\pd,\ell}_{\rhob_0}$ is not a local complete intersection ring.
\end{proof}

\begin{cor}
\label{lcicor}
Suppose $\rhob_0$ is unobstructed and $p \nmid \phi(N)$. Let $\ell$ be a prime such that $\ell \equiv -1 \pmod{p}$, $\chi|_{G_{\QQ_{\ell}}} = \omega_p|_{G_{\QQ_{\ell}}}$ and $-\ell$ is a topological generator of $1 + p\ZZ_p$. Then $\calR^{\pd,\ell}_{\rhob_0}$ is not a local complete intersection ring.
\end{cor}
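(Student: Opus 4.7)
The plan is to derive Corollary~\ref{lcicor} from Theorem~\ref{lcithm} via a short Cohen--Macaulay argument relating the complete intersection property of $\calR^{\pd,\ell}_{\rhob_0}$ to that of its mod-$p$ reduction $R^{\pd,\ell}_{\rhob_0}$.

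First I would check that the hypotheses of Theorem~\ref{lcithm} hold. The group $1+p\ZZ_p$ is topologically isomorphic to $\ZZ_p$, and an element of $1+p\ZZ_p$ is a topological generator precisely when it lies outside $1+p^2\ZZ_p$. Hence $-\ell$ being a topological generator of $1+p\ZZ_p$ is equivalent to $p \mid \ell+1$ together with $p^2 \nmid \ell+1$. Combined with the hypotheses $\chi|_{G_{\QQ_\ell}} = \omega_p|_{G_{\QQ_\ell}}$ and $\rhob_0$ unobstructed, Theorem~\ref{lcithm} applies and yields that $R^{\pd,\ell}_{\rhob_0}$ is not a local complete intersection ring.

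Next I would assume, for contradiction, that $\calR^{\pd,\ell}_{\rhob_0}$ is a local complete intersection ring and split into two cases based on whether $p$ is a zero-divisor in $\calR^{\pd,\ell}_{\rhob_0}$. Being a local complete intersection, $\calR^{\pd,\ell}_{\rhob_0}$ is Cohen--Macaulay, hence equidimensional, with its set of zero-divisors equal to the union of its minimal primes. Fix a presentation $\calR^{\pd,\ell}_{\rhob_0} \simeq W(\FF)[[X_1,\ldots,X_n]]/(f_1,\ldots,f_r)$ with $(f_1,\ldots,f_r)$ a regular sequence. If $p$ is a non-zero-divisor in $\calR^{\pd,\ell}_{\rhob_0}$, then $(f_1,\ldots,f_r,p)$ is a regular sequence in $W(\FF)[[X_1,\ldots,X_n]]$, so $R^{\pd,\ell}_{\rhob_0} = \calR^{\pd,\ell}_{\rhob_0}/(p)$ is itself a local complete intersection, contradicting Theorem~\ref{lcithm}.

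If instead $p$ is a zero-divisor, then $p$ lies in some minimal prime $\mathfrak{p}$ of $\calR^{\pd,\ell}_{\rhob_0}$, and equidimensionality gives $\dim(\calR^{\pd,\ell}_{\rhob_0}/\mathfrak{p}) = \dim(\calR^{\pd,\ell}_{\rhob_0})$. Since $(p) \subset \mathfrak{p}$, the quotient $R^{\pd,\ell}_{\rhob_0}$ surjects onto $\calR^{\pd,\ell}_{\rhob_0}/\mathfrak{p}$, and so $\dim(R^{\pd,\ell}_{\rhob_0}) \geq \dim(\calR^{\pd,\ell}_{\rhob_0})$, which combined with the trivial opposite bound forces equality. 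By Theorem~\ref{reducestrthm}, the reduced ring $(R^{\pd,\ell}_{\rhob_0})^{\red}$ has Krull dimension $3$, so $\dim(R^{\pd,\ell}_{\rhob_0}) = \dim(\calR^{\pd,\ell}_{\rhob_0}) = 3$. However, the natural surjection $\calR^{\pd,\ell}_{\rhob_0} \twoheadrightarrow \calR^{\pd}_{\rhob_0} \simeq W(\FF)[[X,Y,Z]]$ provided by Lemma~\ref{unobslem} forces $\dim(\calR^{\pd,\ell}_{\rhob_0}) \geq 4$, a contradiction. The main obstacle in this plan is the second case: one must invoke the equidimensionality of Cohen--Macaulay local rings to translate the statement ``$p$ is a zero-divisor'' into an equality of Krull dimensions, which is then refuted by comparing against the four-dimensional unramified pseudo-deformation ring $\calR^{\pd}_{\rhob_0}$.
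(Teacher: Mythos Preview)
Your proof is correct and reaches the same conclusion as the paper, but via a somewhat different route. The paper first establishes $\dim \calR^{\pd,\ell}_{\rhob_0} = 4$ (using $\dim R^{\pd,\ell}_{\rhob_0} = 3$ from Theorem~\ref{reducestrthm} together with the surjection onto $\calR^{\pd}_{\rhob_0} \simeq W(\FF)[[X,Y,Z]]$), then uses the explicit fact $\dim(\tan(R^{\pd,\ell}_{\rhob_0})) = 6$ to write $\calR^{\pd,\ell}_{\rhob_0} \simeq W(\FF)[[X_1,\dots,X_6]]/J$. If $\calR^{\pd,\ell}_{\rhob_0}$ were LCI then $J$ would be generated by a regular sequence of length $3$; reducing modulo $p$ gives $R^{\pd,\ell}_{\rhob_0} \simeq \FF[[X_1,\dots,X_6]]/\bar J$ with $\bar J$ generated by at most $3$ elements, and since $\dim R^{\pd,\ell}_{\rhob_0} = 3$ these generators form a regular sequence in the Cohen--Macaulay ring $\FF[[X_1,\dots,X_6]]$, forcing $R^{\pd,\ell}_{\rhob_0}$ to be LCI and contradicting Theorem~\ref{lcithm}. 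No case split on whether $p$ is a zero-divisor is needed.

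Your argument instead bypasses the explicit tangent-space computation by a clean abstract Cohen--Macaulay dichotomy: either $p$ is regular (so LCI passes to the quotient directly), or $p$ lies in a minimal prime and equidimensionality forces $\dim \calR^{\pd,\ell}_{\rhob_0} = \dim R^{\pd,\ell}_{\rhob_0} = 3$, contradicting the surjection onto the four-dimensional ring $\calR^{\pd}_{\rhob_0}$. This is a perfectly valid and arguably more portable argument, since it would apply verbatim in any situation where one knows $R^{\pd,\ell}_{\rhob_0}$ is not LCI, $\dim R^{\pd,\ell}_{\rhob_0} < \dim \calR^{\pd}_{\rhob_0}$, and nothing about tangent dimensions. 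The paper's approach, on the other hand, is slightly more concrete and avoids the case analysis at the cost of invoking the specific value $\dim(\tan(R^{\pd,\ell}_{\rhob_0})) = 6$.
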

\begin{proof}
Since $\calR^{\pd,\ell}_{\rhob_0}/(p) \simeq R^{\pd,\ell}_{\rhob_0}$, we see, from Theorem~\ref{reducestrthm}, that the Krull dimension of $\calR^{\pd,\ell}_{\rhob_0}$ is either $3$ or $4$. As $\rhob_0$ is unobstructed and $p \nmid \phi(N)$, we know that $\calR^{\pd}_{\rhob_0} \simeq W(\FF)[[X,Y,Z]]$. We have surjective map $\calR^{\pd,\ell}_{\rhob_0} \to \calR^{\pd}_{\rhob_0}$ induced from the surjection $G_{\QQ,N\ell p} \to G_{\QQ,Np}$. Hence, the Krull dimension of $\calR^{\pd,\ell}_{\rhob_0}$ is $4$. As $\dim(\tan(R^{\pd,\ell}_{\rhob_0}))=6$, we know that $\calR^{\pd,\ell}_{\rhob_0} \simeq W(\FF)[[X,Y,Z,X_1,X_2,X_3]]/J$ for some ideal $J$ of $W(\FF)[[X,Y,Z,X_1,X_2,X_3]]$. If $\calR^{\pd,\ell}_{\rhob_0}$ is a local complete intersection ring, then $J$ is generated by $3$ elements. But this would imply that $R^{\pd,\ell}_{\rhob_0}$ is a local complete intersection ring which is not true by Theorem~\ref{lcithm}. Hence, we see that $\calR^{\pd,\ell}_{\rhob_0}$ is not a local complete intersection ring.
\end{proof}

\section{Applications to Hecke algebras}
\label{heckesec}
In this section, we will use the results proved so far to determine structure of big $p$-adic Hecke algebras in some cases and prove `big' $R=\TT$ in those cases. We begin by defining the big $p$-adic Hecke algebra.

Let $M_k(N, W(\FF))$ be the space of modular cuspforms of level $\Gamma_1(N)$ and weight $k$ with Fourier coefficients in $W(\FF)$. We view it as a subspace of $W(\FF)[[q]]$ via $q$-expansions. Let $M_{\leq k}(N,W(\FF)) := \sum_{i=0}^{k} M_k(N, W(\FF)) \subset W(\FF)[[q]]$. Let $\TT^{\Gamma_1(N)}_k$ be the $W(\FF)$-subalgebra of $\text{End}_{W(\FF)}(M_{\leq k}(N, W(\FF)))$ generated by the Hecke operators $T_q$ and $S_q$ for primes $q \nmid Np$ (see \cite[Definition 1.7, Definition 1.8]{Em} for the action of these Hecke operators on $q$-expansions). Let $\TT^{\Gamma_1(N)} := \varprojlim_k \TT^{\Gamma_1(N)}_k$.

Given a modular form $f$, let $\mathcal{O}_f$ be the ring of integers of the finite extension of $\QQ_p$ containing all the Fourier coefficients of $f$. Now suppose $\rhob_0$ is modular of level $N$ i.e. there exists an eigenform $f$ of level $\Gamma_1(N)$ such that the semi-simplification of the reduction of the $p$-adic Galois representation attached to $f$ modulo the maximal ideal of $\mathcal{O}_f$ is $\rhob_0$. Then we get a maximal ideal $m_{\rhob_0}$ of $\TT^{\Gamma_1(N)}$ corresponding to $\rhob_0$ (see \cite[Section 1]{D} and \cite[Section 1.2]{BK}). Let $\TT^{\Gamma_1(N)}_{\rhob_0}$ be the localization of $\TT^{\Gamma_1(N)}$ at $m_{\rhob_0}$. So $\TT^{\Gamma_1(N)}_{\rhob_0}$ is a complete Noetherian local $W(\FF)$-algebra with residue field $\FF$ (see \cite[Section 1]{D} and \cite[Section 1.2]{BK}).

Let $\ell$ be a prime not dividing $Np$. After replacing $N$ by $N\ell$ everywhere in the construction of $\TT^{\Gamma_1(N)}_{\rhob_0}$, we get $\TT^{\Gamma_1(N\ell)}_{\rhob_0}$. Thus we have a natural morphism $\psi : \TT^{\Gamma_1(N\ell)}_{\rhob_0} \to \TT^{\Gamma_1(N)}_{\rhob_0}$ obtained by restriction of the Hecke operators acting on the space of modular forms of level $\Gamma_1(N\ell)$ to the space of modular forms of level $\Gamma_1(N)$.

\begin{prop}
\label{surjprop}
\begin{enumerate}
\item There exists a pseudo-representation $(\tau^{\Gamma_1(N)}, \delta^{\Gamma_1(N)}) : G_{\QQ,Np} \to \TT^{\Gamma_1(N)}_{\rhob_0}$ deforming $(\tr(\rhob_0), \det(\rhob_0))$ such that $\tau^{\Gamma_1(N)}(\text{Frob}_q) = T_q$ for all primes $q \nmid Np$ and the morphism $\phi' : \calR^{\pd}_{\rhob_0} \to \TT^{\Gamma_1(N)}_{\rhob_0}$ induced from it is surjective.
\item There exists a pseudo-representation $(\tau^{\Gamma_1(N\ell)}, \delta^{\Gamma_1(N\ell)}) : G_{\QQ,N\ell p} \to \TT^{\Gamma_1(N\ell)}_{\rhob_0}$ deforming $(\tr(\rhob_0), \det(\rhob_0))$ such that $\tau^{\Gamma_1(N\ell)}(\text{Frob}_q) = T_q$ for all primes $q \nmid N\ell p$ and the morphism $\phi: \calR^{\pd,\ell}_{\rhob_0} \to \TT^{\Gamma_1(N\ell)}_{\rhob_0}$ induced from it is surjective.
\item The natural morphism $\psi : \TT^{\Gamma_1(N\ell)}_{\rhob_0} \to \TT^{\Gamma_1(N)}_{\rhob_0}$ is surjective.
\end{enumerate}
\end{prop}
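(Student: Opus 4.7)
The plan is to build the pseudo-representations of (1) and (2) by assembling the Galois representations attached to individual Hecke eigenforms in the $\rhob_0$-component, use Chebotarev density to push their values into the Hecke algebra, and then derive (3) by comparing two natural maps out of $\calR^{\pd,\ell}_{\rhob_0}$ via universal properties.

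For (1), I would begin with the collection of normalized eigenforms $f$ (of all weights, modulo Galois conjugacy) contributing to $\TT^{\Gamma_1(N)}_{\rhob_0}$, together with the continuous Galois representations $\rho_f \colon G_{\QQ,Np} \to \GL_2(\mathcal{O}_f)$ attached by Deligne (or Deligne--Serre in weight one). Assembling the pseudo-representations $(\tr\rho_f, \det\rho_f)$ yields a continuous pseudo-representation of $G_{\QQ,Np}$ into $\prod_f \mathcal{O}_f$ deforming $(\tr\rhob_0,\det\rhob_0)$ and carrying each $\text{Frob}_q$ with $q \nmid Np$ to the tuple $(a_q(f))_f$, which is precisely the image of $T_q$. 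For each weight $k$, the localized Hecke algebra $\TT^{\Gamma_1(N)}_{k,\rhob_0}$ embeds as a finitely generated $W(\FF)$-submodule of $\prod_f \mathcal{O}_f$, hence as a $p$-adically closed subset; since Frobenius elements are dense in $G_{\QQ,Np}$ by Chebotarev and their images lie in this submodule, continuity forces the assembled pseudo-representation to take values in $\TT^{\Gamma_1(N)}_{k,\rhob_0}$. Passing to the inverse limit over $k$ produces $(\tau^{\Gamma_1(N)}, \delta^{\Gamma_1(N)})$, with $\delta^{\Gamma_1(N)}$ determined by $\tau^{\Gamma_1(N)}$ since $p$ is odd. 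The induced map $\phi'$ is surjective because its image contains every $T_q = \tau^{\Gamma_1(N)}(\text{Frob}_q)$ together with the diamond operators recoverable from $\delta^{\Gamma_1(N)}(\text{Frob}_q)$, and these topologically generate $\TT^{\Gamma_1(N)}_{\rhob_0}$. Part (2) then follows from an identical argument with $N$ replaced by $N\ell$ and $G_{\QQ,Np}$ by $G_{\QQ,N\ell p}$; the eigenforms new at $\ell$ merely contribute Galois representations ramified at $\ell$, which is allowed inside $G_{\QQ,N\ell p}$.

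For (3), I would invoke \cite[Proposition 6.1]{R} to obtain a surjection $\pi \colon \calR^{\pd,\ell}_{\rhob_0} \twoheadrightarrow \calR^{\pd}_{\rhob_0}$ coming from restricting pseudo-characters along $G_{\QQ,N\ell p} \twoheadrightarrow G_{\QQ,Np}$, and then establish $\psi \circ \phi = \phi' \circ \pi$. Both composites are $W(\FF)$-algebra morphisms $\calR^{\pd,\ell}_{\rhob_0} \to \TT^{\Gamma_1(N)}_{\rhob_0}$, so by universality they are pinned down by the continuous pseudo-representations of $G_{\QQ,N\ell p}$ they induce on $\TT^{\Gamma_1(N)}_{\rhob_0}$. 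Each such pseudo-representation sends $\text{Frob}_q$ to $T_q$ for every $q \nmid N\ell p$, so continuity together with Chebotarev density forces them to coincide, yielding $\psi \circ \phi = \phi' \circ \pi$. The right-hand side is a composition of surjections, so $\psi \circ \phi$ is surjective, and hence $\psi$ is surjective.

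The only step that requires real attention is verifying that the assembled pseudo-representations in (1) and (2) land in the Hecke algebras themselves rather than in the larger products $\prod_f \mathcal{O}_f$; everything else reduces to applying universal properties together with Chebotarev density.
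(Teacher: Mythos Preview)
Your proposal is correct and matches the paper's approach. For (1) and (2) the paper simply cites \cite[Lemma~4, Section~2]{D}, whose content is exactly the eigenform-by-eigenform assembly plus Chebotarev argument you sketch; for (3) the paper likewise verifies $\psi \circ \phi = \phi' \circ \pi$ by comparing the induced pseudo-characters on Frobenii and applying the universal property of $\calR^{\pd,\ell}_{\rhob_0}$, then concludes surjectivity of $\psi$ from surjectivity of the right-hand side.
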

\begin{proof}
The first two parts follow from \cite[Lemma 4]{D} and \cite[Section 2]{D}. 
For the last part, we view $\tau^{\Gamma_1(N)}$ as a pseudo-character of $G_{\QQ,N\ell p}$ and denote it by $\tau$. We know that $\tau^{\Gamma_1(N)}(\text{Frob}_q) = T_q$ and that $\tau^{\Gamma_1(N\ell)}(\text{Frob}_q) = T_q$ for all primes $q \nmid N\ell p$. By Cebotarev density theorem, we know that the set $\{\text{Frob}_q | q \nmid N\ell p\}$ is dense in $G_{\QQ,N p}$. Hence, we have $\psi \circ \tau^{\Gamma_1(N\ell)} = \tau$ which means $\psi \circ \phi \circ T^{\univ,\ell} = \tau$. 

On the other hand, if $f : \calR^{\pd,\ell}_{\rhob_0} \to  \calR^{\pd}_{\rhob_0}$ is the natural morphism obtained by viewing $T^{\univ}$ as pseudo-character of $G_{\QQ,N\ell p}$, then $\phi' \circ f \circ T^{\univ,\ell} = \tau$. The universal property of $\calR^{\pd,\ell}_{\rhob_0}$ implies that $\psi \circ \phi = \phi' \circ f$. Therefore, the surjectivity of $\phi'$ implies the surjectivity of $\psi$.
\end{proof}

\begin{rem}
Suppose $p \nmid \phi(N)$, $\rhob_0$ is modular of level $N$ and unobstructed. Let $\ell$ be a prime such that $\ell \nmid Np$ and $\chi^i|_{G_{\QQ_\ell}} = \omega_p$ for some $i \in \{1,-1\}$. Moreover assume that either $p \nmid \ell^2-1$ or $p | \ell + 1$ and $p^2 \nmid \ell+1$.
Then combining Proposition~\ref{surjprop}, Corollary~\ref{strcor}, proof of Corollary~\ref{lcicor} and the Gouvea-Mazur infinite fern argument (\cite[Corollary 2.28]{Em}), we get that $\TT^{\Gamma_1(N\ell)}_{\rhob_0}$ is equidimensional of Krull dimension $4$. This proves \cite[Conjecture 2.9]{Em} in some special cases.
\end{rem}

We say that an eigenform $h$ of level $N\ell$ lifts $\rhob_0$ if the semi-simplification of the reduction of the $p$-adic Galois representation attached to it modulo the maximal ideal of $\mathcal{O}_h$ is isomorphic to $\rhob_0$.
\begin{thm}
\label{rthm}
Suppose $p \nmid \phi(N)$, $\rhob_0$ is modular of level $N$ and unobstructed. Let $\ell$ be a prime such that $\ell \nmid Np$, $p \nmid \ell^2-1$, $\chi^i|_{G_{\QQ_\ell}} = \omega_p|_{G_{\QQ_\ell}}$ for some $i \in \{1,-1\}$ and $\ell/\tilde\ell$ is a topological generator of $1+p\ZZ_p$. Suppose there exists an eigenform $g$ of level $\Gamma_1(N\ell)$ lifting $\rhob_0$ which is new at $\ell$. Then the surjective morphism $\phi : \calR^{\pd,\ell}_{\rhob_0} \to \TT^{\Gamma_1(N\ell)}_{\rhob_0}$ is an isomorphism and $$\TT^{\Gamma_1(N\ell)}_{\rhob_0} \simeq W(\FF)[[X_1,X_2,X_3,X_4]]/(X_2X_4).$$
\end{thm}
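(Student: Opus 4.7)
The plan is to identify the minimal primes of $\TT^{\Gamma_1(N\ell)}_{\rhob_0}$ with those of $\calR^{\pd,\ell}_{\rhob_0}$ and thereby force $\ker\phi = 0$; the stated presentation of $\TT^{\Gamma_1(N\ell)}_{\rhob_0}$ then follows from Theorem~\ref{ramunobsprop}. First, I assemble the ingredients. Theorem~\ref{ramunobsprop} gives $\calR^{\pd,\ell}_{\rhob_0} \simeq W(\FF)[[X_1,X_2,X_3,X_4]]/(X_2X_4)$, which is reduced with exactly two minimal primes $P_1 = (X_4)$ and $P_2 = (X_2)$ satisfying $P_1 \cap P_2 = (0)$; each $\calR^{\pd,\ell}_{\rhob_0}/P_i$ is a regular local domain of Krull dimension $4$. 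Lemma~\ref{unobslem} gives $\calR^{\pd}_{\rhob_0} \simeq W(\FF)[[X,Y,Z]]$, a regular local domain of dimension $4$; combined with the Gouvea-Mazur infinite fern, this forces the surjection $\phi' : \calR^{\pd}_{\rhob_0} \twoheadrightarrow \TT^{\Gamma_1(N)}_{\rhob_0}$ of Proposition~\ref{surjprop} to be an isomorphism. The ring $\TT^{\Gamma_1(N\ell)}_{\rhob_0}$ is reduced by standard arguments (it embeds in a product of discrete valuation rings via the classical eigensystems) and is equidimensional of Krull dimension $4$ by the remark following Proposition~\ref{surjprop}. I will use repeatedly the identity $\psi \circ \phi = \phi' \circ f$, where $f : \calR^{\pd,\ell}_{\rhob_0} \twoheadrightarrow \calR^{\pd}_{\rhob_0}$ is the ``forget ramification at $\ell$'' map; from the proof of Theorem~\ref{ramunobsprop} one reads off $\ker f = P_1$.

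Next, for any minimal prime $Q$ of $\TT^{\Gamma_1(N\ell)}_{\rhob_0}$, the isomorphism $\calR^{\pd,\ell}_{\rhob_0}/\phi^{-1}(Q) \simeq \TT^{\Gamma_1(N\ell)}_{\rhob_0}/Q$ exhibits the left side as a domain of Krull dimension $4$; since $\calR^{\pd,\ell}_{\rhob_0}$ itself has dimension $4$, the prime $\phi^{-1}(Q)$ has height $0$, forcing $\phi^{-1}(Q) \in \{P_1, P_2\}$. Surjectivity of $\phi$ makes the pullback $Q \mapsto \phi^{-1}(Q)$ injective on primes, so $\TT^{\Gamma_1(N\ell)}_{\rhob_0}$ has at most two minimal primes.

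The main step is to rule out the case where $\TT^{\Gamma_1(N\ell)}_{\rhob_0}$ has a unique minimal prime $Q = (0)$. In that situation $\ker\phi = \phi^{-1}(Q) \in \{P_1, P_2\}$. If $\ker\phi = P_2$, the inclusion $\ker\phi \subseteq \ker(\psi \circ \phi) = \ker(\phi' \circ f) = \ker f = P_1$ forces $P_2 \subseteq P_1$, contradicting the fact that $P_1, P_2$ are distinct minimal primes. If instead $\ker\phi = P_1$, then $\phi$ descends to an isomorphism $\calR^{\pd,\ell}_{\rhob_0}/P_1 \simeq \TT^{\Gamma_1(N\ell)}_{\rhob_0}$; combining with $\psi \circ \phi = \phi' \circ f$ and surjectivity of $\phi$, one computes $\ker \psi = \phi(\ker f) = \phi(P_1) = 0$, so $\psi$ is an isomorphism. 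But then the Hecke eigensystem of the newform $g$ would agree with that of some level-$N$ eigenform $f_0$ on every $T_q$ with $q \nmid N\ell p$; Chebotarev then forces $\rho_g \simeq \rho_{f_0}$ as representations of $G_{\QQ,N\ell p}$, contradicting the ramification of $\rho_g$ at $\ell$ (while $\rho_{f_0}$ is unramified at $\ell$) forced by newness of $g$ at $\ell$.

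Consequently $\TT^{\Gamma_1(N\ell)}_{\rhob_0}$ has exactly two minimal primes $Q_1, Q_2$ with $\{\phi^{-1}(Q_1), \phi^{-1}(Q_2)\} = \{P_1, P_2\}$, and hence
$$\ker\phi = \phi^{-1}(Q_1 \cap Q_2) = \phi^{-1}(Q_1) \cap \phi^{-1}(Q_2) = P_1 \cap P_2 = (0),$$
so $\phi$ is an isomorphism. The main conceptual hurdle is the Chebotarev contradiction in the subcase $\ker\phi = P_1$: this is the unique point at which newness of $g$ at $\ell$ enters irrevocably, and it is precisely this hypothesis that prevents the oldform component of $\TT^{\Gamma_1(N\ell)}_{\rhob_0}$ from absorbing the entire Hecke algebra.
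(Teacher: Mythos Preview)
Your proof is correct and follows essentially the same strategy as the paper: both use the presentation of $\calR^{\pd,\ell}_{\rhob_0}$ from Theorem~\ref{ramunobsprop}, the equidimensionality of $\TT^{\Gamma_1(N\ell)}_{\rhob_0}$ coming from Gouvea--Mazur, and the newness of $g$ to rule out the degenerate case in which $\psi$ is an isomorphism; you simply organize the argument more explicitly as a minimal-prime enumeration and make the use of reducedness of $\TT^{\Gamma_1(N\ell)}_{\rhob_0}$ overt (the paper uses it tacitly in the line ``Hence, we have $\TT^{\Gamma_1(N\ell)}_{\rhob_0} \simeq W(\FF)[[X,Y,Z]]$'').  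One small imprecision: in the subcase $\ker\phi = P_1$ you do not actually obtain a \emph{classical} level-$N$ eigenform $f_0$ from the isomorphism $\psi$, only a $W(\FF)$-point of $\TT^{\Gamma_1(N)}_{\rhob_0}$; what you really get is that $\tr(\rho_g) = \phi_g \circ \tau^{\Gamma_1(N\ell)}$ factors through $G_{\QQ,Np}$, and then absolute irreducibility of $\rho_g$ forces $\rho_g$ itself to be unramified at $\ell$, which is exactly how the paper closes the argument.
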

\begin{proof}
Without loss of generality, assume $\chi|_{G_{\QQ_\ell}} = \omega_p$. Suppose $\phi$ is not an isomorphism.
By Theorem~\ref{ramunobsprop}, we know that $\calR^{\pd,\ell}_{\rhob_0} \simeq W(\FF)[[X_1,X_2,X_3,X_4]]/(X_2X_4)$. 
By Gouvea-Mazur infinite fern argument (\cite[Corollary 2.28]{Em}), we know that if $P$ is a minimal prime of $\TT^{\Gamma_1(N\ell)}_{\rhob_0}$, then $\TT^{\Gamma_1(N\ell)}_{\rhob_0}/P$ has Krull dimension at least $4$. 
Hence, we have $\TT^{\Gamma_1(N\ell)}_{\rhob_0} \simeq W(\FF)[[X,Y,Z]]$.

As $\rhob_0$ is unobstructed and $p \nmid \phi(N)$, it follows from \cite[Corollary 2.28]{Em} and Lemma~\ref{unobslem}, that $\phi' : \calR^{\pd}_{\rhob_0} \to \TT^{\Gamma_1(N)}_{\rhob_0}$ is an isomorphism and both are isomorphic to $W(\FF)[[X,Y,Z]]$.
Therefore, we get that the surjective map $\psi : \TT^{\Gamma_1(N\ell)}_{\rhob_0} \to \TT^{\Gamma_1(N)}_{\rhob_0}$ is an isomorphism. 

By Lemma~\ref{ramlem} and Proposition~\ref{structprop}, there exists a representation $\rho : G_{\QQ, N\ell p} \to \GL_2(\calR^{\pd,\ell}_{\rhob_0})$ such that $\tr(\rho) = T^{\univ,\ell}$ and there exists a $w \in \calR^{\pd,\ell}_{\rhob_0}$ such that $\rho(I_\ell)$ is the cyclic group generated by $\begin{pmatrix} 1 & w \\ 0 & 1 \end{pmatrix}$. Moreover, Lemma~\ref{surjectlem} implies that $(w)$ is the kernel of the natural surjective map $f : \calR^{\pd,\ell}_{\rhob_0} \to \calR^{\pd}_{\rhob_0}$. As $\psi \circ \phi = \phi' \circ f$ and $\psi$ is an isomorphism, we see that $\phi(w)=0$.

Let $g$ be an eigenform of level $\Gamma_1(N\ell)$ lifting $\rhob_0$ which is new at $\ell$. So we get a morphism $\phi_g : \TT^{\Gamma_1(N\ell)}_{\rhob_0} \to \mathcal{O}_g$ sending each Hecke operator to its $g$ eigenvalue. Let $\rho_g : G_{\QQ,N\ell p} \to \GL_2(\mathcal{O}_g)$ be the $p$-adic Galois representation attached to $g$. Let $\rho'_g = \phi_g \circ \phi \circ \rho$. Then $\rho'_g : G_{\QQ,N\ell p} \to \GL_2(\mathcal{O}_g)$ is a representation such that $\tr(\rho'_g) = \tr(\rho_g)$ and $\rho'_g$ is unramified at $\ell$. As $\rho_g$ is absolutely irreducible, we see, by Brauer Nesbitt theorem, that $\rho_g \simeq \rho'_g$ over $\bar\QQ_p$. This means $\rho_g$ is unramified at $\ell$ contradicting the assumption that $g$ is new at $\ell$. Hence, $\phi$ is an isomorphism.
\end{proof}

As corollaries, we get:

\begin{cor}
\label{rtcor}
Suppose $\rhob_0$ is unobstructed, $p \nmid \phi(N)$, the Artin conductor of $\rhob_0$ divides $N$, $\chi_2$ is unramified at $p$ and $\det(\rhob_0) = \psi\omega_p^{k_0-1}$ with $2 < k_0 < p$ and $\psi$ unramified at $p$. 
Let $\ell$ be a prime such that $\ell \nmid Np$, $p \nmid \ell^2-1$, $\ell/\tilde\ell$ is a topological generator of $1+p\ZZ_p$ and $\chi|_{G_{\QQ_{\ell}}} = \omega_p^{-1}|_{G_{\QQ_{\ell}}}$.
Then, we have: $$\calR^{\pd,\ell}_{\rhob_0} \simeq \TT^{\Gamma_1(N\ell)}_{\rhob_0} \simeq W(\FF)[[X_1,X_2,X_3,X_4]]/(X_2X_4).$$
\end{cor} 
\begin{proof}
From \cite[Lemma $2.5$]{D2}, it follows that $\rhob_0$ is modular of level $N$ and by \cite[Theorem B]{D2}, we get the existence of an eigenform $g$ of level $\Gamma_1(N\ell)$ lifting $\rhob_0$ which is new at $\ell$.
The corollary now follows from Theorem~\ref{rthm}.
\end{proof}

\begin{cor}
\label{rcor}
Suppose $N=1$, $\rhob_0 = 1 \oplus \omega_p^k$ for some odd $2 < k < p-3$ and $\ell$ is a prime such that $\ell \nmid Np$, $p \nmid \ell^2-1$ and $p \mid \mid \ell^{k+1}-1$. Moreover suppose either $p$ is a regular prime or $p$ does not divide $B_{k+1}B_{p-k}$, where $B_k$ is the $k$-th Bernoulli number.
Then, we have: $$\calR^{\pd,\ell}_{\rhob_0} \simeq \TT^{\Gamma_1(\ell)}_{\rhob_0} \simeq W(\FF)[[X_1,X_2,X_3,X_4]]/(X_2X_4).$$
\end{cor}
\begin{proof}
Note that if $\ell \not\equiv \pm 1 \pmod{p}$ and $p \mid \mid \ell^{k+1}-1$, then $p \nmid \ell^2-1$ and $\ell/\tilde\ell$ is a topological generator of $1+p\ZZ_p$.
If either $p$ is regular or $p \nmid B_{k+1}B_{p-k}$, then either \cite[Lemma $21$]{BK} or \cite[Theorem 22]{BK} implies that $1 + \omega_p^k$ is an unobstructed pseudo-character of $G_{\QQ,p}$.
Since $p \mid \ell^{k+1}-1$, we have $\omega_p^{k}|_{G_{\QQ_\ell}} = \omega_p^{-1}|_{G_{\QQ_\ell}}$.
The corollary now follows directly from Corollary~\ref{rtcor}.
\end{proof}

\begin{rem}
One can also use \cite[Theorem $1$]{BM} instead of Corollary~\ref{rtcor} to prove Corollary~\ref{rcor}.
\end{rem}

{\bf Examples:} The hypotheses of Corollary~\ref{rcor} are satisfied in the following cases:
\begin{enumerate}
\item $p=13$, $\rhob_0 = 1 \oplus \omega_p^3$ and $\ell \equiv 5 \pmod{169}$,
\item $p=17$, $\rhob_0 = 1 \oplus \omega_p^3$ and $\ell \equiv 4 \pmod{289}$,
\item $p=37$, $\rhob_0 = 1 \oplus \omega_p^3$ and $\ell \equiv 6 \pmod{1369}$.
\end{enumerate}

We now give some examples satisfying the hypotheses of Theorem~\ref{rthm} for $\rhob_0= 1 \oplus \omega_p$.
Note that these cases are not covered in \cite[Theorem A]{D2}.
Let $E_k$ be the Eisenstein series of weight $k$ and for a modular form $f$, denote its $n$-th Fourier coefficient by $a_n(f)$. We now consider $M_i(N,\ZZ_p)$ as a submodule of $\ZZ_p[[q]]$ via $q$-expansions. Let $M_i(N,\FF_p)$ be the image of $M_i(N,\ZZ_p)$ in $\FF_p[[q]]$ under the reduction modulo $p$ map $\ZZ_p[[q]] \to \FF_p[[q]]$.
\begin{lem}
Let $p=5,7,11$ and $\ell$ be a prime such that $\ell \not\equiv \pm 1 \pmod{p}$ and $p^2 \nmid \ell^{p-1}-1$. Then the tuple $(p,\ell,1 \oplus \omega_p)$ satisfies the hypotheses of Theorem~\ref{rthm}.
\end{lem}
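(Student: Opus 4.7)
Set $N = 1$ and $\rhob_0 = 1 \oplus \omega_p$, so that $\chi = \omega_p^{-1}$. The conditions at $\ell$ are immediate. Since $\chi^{-1}|_{G_{\QQ_\ell}} = \omega_p|_{G_{\QQ_\ell}}$, the requirement $\chi^i|_{G_{\QQ_\ell}} = \omega_p$ holds with $i = -1$. The assumption $\ell \not\equiv \pm 1 \pmod{p}$ directly gives $p \nmid \ell^2 - 1$. Combining $p^2 \nmid \ell^{p-1} - 1$ with Fermat's little theorem shows $\ell/\tilde{\ell} - 1 \in p\ZZ_p \setminus p^2\ZZ_p$, so $\ell/\tilde{\ell}$ is a topological generator of $1 + p\ZZ_p$. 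For unobstructedness, each $p \in \{5,7,11\}$ is a regular prime, so Vandiver's conjecture holds at $p$ and \cite[Theorem 22]{BK} applies to show that $\rhob_0 = 1 \oplus \omega_p$ is unobstructed; in particular $\calR^{\pd}_{\rhob_0} \simeq W(\FF)[[X,Y,Z]]$.

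\textbf{Construction of the $\ell$-new eigenform.} It remains to exhibit a cuspidal eigenform $g$ of level $\Gamma_1(\ell)$, new at $\ell$, with semisimplified residual representation isomorphic to $1 \oplus \omega_p$. The plan is to obtain $g$ as a mod-$p$ congruence with the Eisenstein series $E_{p+1}$ of weight $p+1$ and level $1$. Since $a_q(E_{p+1}) = 1 + q^p \equiv 1 + q \pmod{p}$ for primes $q \ne p$ by Fermat, the pseudo-representation attached to $E_{p+1}$ reduces mod $p$ to $(\tr(\rhob_0),\det(\rhob_0))$. One then considers the ordinary $\ell$-stabilization $E^{(\ell)}_{p+1}(q) := E_{p+1}(q) - \ell^p E_{p+1}(q^\ell) \in M_{p+1}(\Gamma_0(\ell),\ZZ_p)$ (with $U_\ell$-eigenvalue $1$) and its image in $M_{p+1}(\ell,\FF_p) \subset \FF_p[[q]]$ as set up just before the statement. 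A dimension count involving the oldform embedding $M_{p+1}(1,\FF_p)^{\oplus 2} \to M_{p+1}(\ell,\FF_p)$ and the corresponding cuspidal and Eisenstein subspaces shows that the $q$-expansion of $E^{(\ell)}_{p+1} \pmod{p}$ coincides with that of some cuspform of level $\Gamma_0(\ell)$ and weight $p+1$. Choosing a cuspidal Hecke eigenform component $g$ with this congruence yields the desired lift of $\rhob_0$. One then uses the fact that no cuspform of level $1$ in weight $p + 1$ is congruent mod $p$ to $E_{p+1}$ — trivially true for $p = 5, 7$ since $S_{p+1}(1) = 0$, and for $p = 11$ by the direct check $\tau(2) \equiv 9 \not\equiv 3 \equiv \sigma_{11}(2) \pmod{11}$ — to conclude that $g$ is new at $\ell$ (no oldform from level $1$ can realize the congruence).

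\textbf{Main obstacle.} The hardest step will be the construction of $g$: since $\ell \not\equiv \pm 1 \pmod{p}$, Mazur's classical Eisenstein-ideal machinery at weight $2$ does not apply directly, and one is forced into weight $p + 1$, where the congruence between $E^{(\ell)}_{p+1}$ and a cuspform of level $\ell$ has to be extracted by comparing mod-$p$ dimensions of the cuspidal and Eisenstein subspaces against the image of the oldform embedding from level $1$. The most delicate verification is that the cuspidal eigenform produced in this way is genuinely new at $\ell$ (rather than an $\ell$-oldform lifted from a level-$1$ cuspform), which is precisely what the level-$1$ non-congruence check above is designed to guarantee.
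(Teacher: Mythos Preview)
Your verification of the conditions on $\ell$ and of unobstructedness is fine. The genuine gap is in the construction of the $\ell$-new cuspform.

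The specific claim that the $q$-expansion of $E^{(\ell)}_{p+1} \pmod p$ coincides with that of some cuspform is false. With the normalization $a_1(E_{p+1})=1$, the constant term of $E^{(\ell)}_{p+1}=E_{p+1}(q)-\ell^p E_{p+1}(q^\ell)$ is $-\tfrac{B_{p+1}}{2(p+1)}(1-\ell^p)$. For $p\in\{5,7,11\}$ one has $B_{p+1}\in\{1/42,-1/30,-691/2730\}$, each a $p$-adic unit, and since $\ell\not\equiv 1\pmod p$ the factor $1-\ell^p$ is also a unit; so the constant term does not vanish mod $p$. Even if you reinterpret the goal as matching Hecke eigensystems rather than $q$-expansions, a bare ``dimension count'' does not produce a cuspidal congruence: all the mod-$p$ Hecke eigenvalues of your Eisenstein stabilization, including $T_p\equiv 1+p^p\equiv 1$, are already realized by an Eisenstein series at level $\ell$, so Deligne--Serre lifting could simply hand you back an Eisenstein series. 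There is no eigenvalue in weight $p+1$ that separates the two.

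The paper's proof works in weight $2p$ precisely to manufacture such a separating eigenvalue. One takes $f_\ell=\tfrac{-B_{p-1}}{4(p-1)}\bigl(E_{p-1}(q)-E_{p-1}(q^\ell)\bigr)$, which has zero constant term, reduces mod $p$, and applies the Ramanujan operator $\Theta$ to obtain $F_\ell:=\Theta\bar f_\ell\in M_{2p}(\ell,\FF_p)$. This $F_\ell$ is nonzero with $T_q$-eigenvalue $1+q$ for $q\nmid \ell p$, $U_\ell$-eigenvalue $1$, and, crucially, $T_p$-eigenvalue $0$ (since $\Theta$ multiplies $a_n$ by $n$). A Deligne--Serre lift $G_\ell$ then has $T_p$-eigenvalue $\equiv 0$, whereas the only weight-$2p$ level-$\ell$ Eisenstein series with $U_\ell$-eigenvalue $\equiv 1$ has $T_p$-eigenvalue $1+p^{2p-1}\equiv 1$; this forces $G_\ell$ to be cuspidal. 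Newness at $\ell$ is then checked as you propose, but in weight $2p$: for $p=5,7$ there are no level-$1$ cuspforms of weight $2p$, and for $p=11$ the unique one is $\Delta E_{10}\equiv\Delta\pmod{11}$, ruled out by $\tau(2)\not\equiv 3\pmod{11}$. The $\Theta$-trick is the missing idea in your plan.
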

\begin{proof}
By \cite[Theorem 22]{BK}, we know that $1 \oplus \omega_p$ is unobstructed. So we only need to check that there exists a newform of level $\Gamma_0(\ell)$ lifting $\rhob_0$.

Let $f_\ell := \frac{-B_{p-1}}{4(p-1)}(E_{p-1}(q)-E_{p-1}(q^{\ell}))$. Now $f_\ell \in M_{p-1}(\ell, \ZZ_p)$. Let $\bar f_\ell$ be the image of $f_\ell$ in $M_{p-1}(\ell,\FF_p)$. So we have $F_\ell := \Theta\bar f_\ell \in M_{2p}(\ell,\FF_p)$, where $\Theta$ is the Ramanujan theta operator. Note that $F_\ell \neq 0$.

Note that the action of the Hecke operators $T_q$ for primes $q \neq \ell$ and $U_\ell$ on $M_{2p}(\ell,\ZZ_p)$ descends to $M_{2p}(\ell,\FF_p)$. Moreover, the action of $T_p$ on $M_{2p}(\ell,\FF_p)$ coincides with action of $U_p$ i.e. if $f \in M_{2p}(\ell,\FF_p)$ and $f = \sum a_n(f)q^n$, then $T_p f= \sum a_{pn}(f)q^n$. 

By \cite[Fact 1.6]{J}, it follows that for a prime $q \neq \ell,p$, $T_q F_\ell = (1+q)F_\ell$, $U_\ell F_\ell = F_\ell$ and $T_p F_\ell = 0$. As all these Hecke operators commute with each other, we get, by Deligne-Serre Lemma, that there exists a $G_\ell \in M_{2p}(\Gamma_0(\ell), \bar\QQ_p)$ such that:
\begin{enumerate}
\item $G_\ell$ is an eigenform for $U_\ell$ and for all $T_q$ where $q \neq \ell$ is a prime,
\item Modulo the maximal ideal of $\mathcal{O}_{G_\ell}$, its $T_q$ eigenvalue reduces to $1+q$ for $q \nmid p\ell$, $T_p$ eigenvalue reduces to $0$ and $U_\ell$ eigenvalue reduces to $1$.
\end{enumerate}

Thus $G_\ell$ is an eigenform lifting $1 \oplus \omega_p$. As $\ell \not\equiv 1 \pmod{p}$, the only Eisenstein series of weight $2p$ and level $\Gamma_0(\ell)$ with $U_\ell$ eigenvalue $1 \pmod{p}$ is $E_{2p}(q) - \ell^{2p-1}E_{2p}(q^{\ell})$. But the $T_p$ eigenvalue of $E_{2p}(q) - \ell^{2p-1}E_{2p}(q^{\ell})$ is $1+p^{2p-1}$. Hence, $G_\ell$ is a cuspform. 

If $p=5,7$, then there are no cuspforms of weight $2p$ and level $1$. Hence, $G_\ell$ has to be a newform when $p=5,7$. Now suppose $p=11$. Then the only cusp eigenform of weight $22$ and level $1$ is $\Delta E_{10}$. As $E_{10} \equiv 1 \pmod{11}$, $\Delta E_{10} \equiv \Delta \pmod{11}$. Let $\rho_{\Delta}$ be the $11$-adic Galois representation attached to $\Delta$. As $\tau(2) = -24 \not\equiv 3 \pmod{11}$, it follows that the semi-simplification of $\rho_{\Delta} \pmod{11}$ is not $1 \oplus \omega_p$. Hence, we see that $\Delta E_{10} \neq G_\ell$. Hence, $G_\ell$ has to be a newform when $p=11$. This finishes the proof of the lemma.
\end{proof}


\begin{thebibliography}{9}

\bibitem{B} J. Bella\"{i}che,
\emph{Pseudodeformations},
Mathematische Zeitschrift 270 (2012), no. 3-4, 1163-1180.

\bibitem{Bel} J. Bella\"{i}che,
\emph{Image of pseudo-representations and coefficients of modular forms modulo $p$,}
Adv. Math., 353 (2019), 647-721.

\bibitem{BC} J. Bella\"{i}che and G. Chenevier,
\emph{Families of Galois representations and Selmer groups,}
Asterisque 324 (2009), pp. xii+314.

\bibitem{BK} J. Bella\"{i}che and C. Khare,
\emph{Level $1$ Hecke algebras of modular forms modulo $p$,}
Compositio Mathematica 151. 3 (2015), 397-415.

\bibitem{BM} N. Billerey and R. Menares,
\emph{On the modularity of reducible mod $\ell$ Galois representations,}
Math. Res. Lett. 23 (2016), no. 1, 15--41.

\bibitem{BCh} F. Bleher and T. Chinburg,
\emph{Universal deformation rings need not be complete intersections,}
Math. Ann. 337 (2007), no. 4, 739-767.

\bibitem{Bo} G. B\"{o}ckle,
\emph{Explicit universal deformations of even Galois representations,}
Math. Nachr. 206 (1999), 85-110.

\bibitem{Bo2} G. B\"{o}ckle,
\emph{A local-to-global principle for deformations of Galois representations,}
J. Reine Angew. Math. 509 (1999), 199-236.

\bibitem{Bo3} G. B\"{o}ckle,
\emph{The generic fiber of the universal deformation space associated to a tame Galois representation,}
Manuscripta Math. 96 (1998), no. 2, 231-246.

\bibitem{Bo4} G. B\"{o}ckle,
\emph{On the density of modular points in universal deformation spaces,}
Amer. J. Math. 123 (2001), no. 5, 985-1007.

\bibitem{Bos} N. Boston,
\emph{Families of Galois representations - Increasing the ramification,}
Duke Math. J., 66 (1992), no. 3, 357-367.

\bibitem{C} G. Chenevier,
\emph{The $p$-adic analytic space of pseudocharacters of a profinite group and pseudorepresentations over arbitrary rings,}
Proceedings of the LMS Durham Symposium, Automorphic Forms and Galois Representations (2011).

\bibitem{D} S. V. Deo,
\emph{Structure of Hecke algebras of modular forms modulo $p$,}
Algebra \& Number Theory, 11 (2017), no. 1, 1-38.

\bibitem{D2} S. V. Deo,
\emph{On density of modular points in pseudo-deformation rings,}
Preprint, Available at https://arxiv.org/pdf/2105.05823.pdf

\bibitem{E} D. Eisenbud,
\emph{Commutative algebra with a view toward algebraic geometry,}
Graduate Texts in Mathematics, 150, Springer-Verlag, New York, 1995.

\bibitem{Em} M. Emerton,
\emph{$p$-adic families of modular forms (after Hida, Coleman and Mazur),}
S\'{e}minaire Bourbaki, Vol. 2009/2010, Expos\'{e}s 1012-1026, Ast\'{e}risque No. 339(2011), Exp. no. 1012, vii, 31-61.


\bibitem{J} N. Jochnowitz,
\emph{Congruences between systems of eigenvalues of modular forms,}
Trans. Amer. Math. Soc. 270 (1982), no. 1, 269-285.

\bibitem{Ki} M. Kisin,
\emph{The Fontaine-Mazur conjecture for $\GL_2$,}
J.A.M.S. 22 (3) (2009), 641-690.

\bibitem{M} B. Mazur,
\emph{Deforming Galois representations,}
Galois groups over $\mathbb{Q}$ (Berkeley, CA, 1987), 385-437, Math. Sci. Res. Inst. Publ., 16, Springer, New York, 1989.

\bibitem{Ra} R. Ramakrishna,
\emph{On a Variation of Mazur's deformation functor,}
Compositio Mathematica 87 (1993), 269-286.

\bibitem{R}  R. Rouquier,
\emph{Caract\'{e}risation des caract\'{e}res et pseudo-caract\'{e}res,}
J. Algebra 180(2) (1996), 571-586.

\bibitem{T}  J. Tate,
\emph{Relation between $K_2$ and Galois cohomology,}
Invent. Math 36 (1976), 257-274.

\bibitem{WE} C. Wang-Erickson,
\emph{Deformations of residually reducible Galois representations via $A_{\infty}$-algebra structure on Galois cohomology ,}
Preprint, Available at https://arxiv.org/pdf/1809.02484.pdf

\bibitem{Wa} L. Washington,
\emph{Galois Cohomology,}
Modular forms and Fermat's last theorem (Boston, MA, 1995), 101-120, Springer, New York, 1997.

\bibitem{Y} H. Yoo,
\emph{Non-optimal levels of a reducible mod $\ell$ modular representation,}
Trans. Amer. Math. Soc., Vol. 371(6) (2019) 3805-3830.


\end{thebibliography}
\end{document}